\documentclass[pdflatex,sn-mathphys-num]{sn-jnl}
\usepackage{graphicx}%
\usepackage{multirow}%
\usepackage{amsmath,amssymb,amsfonts}%
\usepackage{amsthm}%
\usepackage{mathrsfs}%
\usepackage[title]{appendix}%
\usepackage{xcolor}%
\usepackage{textcomp}%
\usepackage{manyfoot}%
\usepackage{booktabs}%
\usepackage{algorithm}%
\usepackage{algorithmicx}%
\usepackage{algpseudocode}%
\usepackage{listings}%

\usepackage{tikz}
\usepackage{amsmath}
\usepackage{amssymb}
\usepackage{amsthm}
\usepackage{quiver}
\usepackage{tikz-cd}

\newtheorem{theorem}{Theorem}
\newtheorem{example}{Example}
\newtheorem{lemma}{Lemma}
\newtheorem{proposition}{Proposition}
\newtheorem{remark}{Remark}
\newtheorem{corollary}{Corollary}
\newtheorem{definition}{Definition}
\numberwithin{equation}{section}

\newcommand\inlineeqno{\stepcounter{equation}\ (\theequation)}

\newcommand{\tot}{\ensuremath{\mathrm{Tot}}}

\newcommand{\weil}{\ensuremath{\mathrm{Weil}}}

\newcommand{\SmMan}{\ensuremath{\mathrm{SmMan}}}
\newcommand{\ParMfld}{\ensuremath{\sf{ParMfld}}}

\newcommand{\Adj}{\ensuremath{\mathrm{Ad}}}
\newcommand{\downsquigarrow}{\mathbin {\raisebox{.1em}{\rotatebox[origin=c]{-90}{$\rightsquigarrow$}}}}

\usepackage[english]{babel}





\title{Lie groups in tangent join restriction categories}

\author*[1]{\fnm{Robin} \sur{Cockett}}\email{robin@ucalgary.ca}
\author*[1]{\fnm{Florian} \sur{Schwarz}}\email{florian.schwarz@ucalgary.ca}
\affil[1]{University of Calgary}

\begin{document}
\abstract{
Principal bundles have at least three different definitions, depending on the category of geometric objects studied.

In Differential Geometry, they are defined as locally trivial projection map of smooth manifolds with an atlas whose transition maps are given by group multiplication.
In Topology they are $G$-equivariantly trivial $G$-spaces.
In Algebraic Geometry, they are Étale locally isotrivial geometric quotients of $G$-varieties.

The goal of this work is to have a categorical notion that recovers all of them.
While they are different structures, they are all locally isomorphic to the Cartesian product of a base space with a group.

There are a variety of other results on group objects and their tangent bundle. In particular we show that the tangent bundle is the product of the tangent space and the group object and that the tangent space has an external Lie algebra structure, generalizing the correspondence between Lie groups and Lie algebras.

In order to give a purely categorical definition of a principal bundle, we formulate this notion in the language of join restriction categories. Restriction categories were developed by Cockett and Lack to generalize partial maps (maps defined only on a subset of the domain) and have since then found applications in mathematics and computer science.
Join restriction categories, as described by Guo are restriction categories where local restrictions can be joined to obtain a global map.
Together with a manifold construction due to Grandis, that allows us to glue together objects, we can describe principal bundles entirely in the language of join-restriction categories.
} 
\keywords{tangent categories, restriction categories, principal bundles, group objects, Lie algebras}

\maketitle
\tableofcontents 

\section{Introduction}

Tangent categories \cite{rosicky,Cockett2014DifferentialST} are categories equipped with an endofunctor and certain natural transformations which capture the behaviour of the tangent bundle in the category of smooth manifolds. They are a minimal setting for studying abstract differential geometry and, indeed, many aspects of differential geometry have been successfully reformulated in tangent categories. For example, vector fields, vector bundles \cite{cockett2016diffbundles,McAdam},  connections \cite{cockett2017connections}, de Rham cohomology \cite{Cruttwell2018}, and differential equations \cite{cockett2019diffeq}, all have been reformulated in tangent categories.

An important result in differential geometry which is studied in this paper is the correspondence between Lie groups and Lie algebras, described for example in \cite{duistermaat_2000}. One direction of this correspondence is the observation that a Lie group -- a group object in the category of manifolds and smooth maps -- induces a Lie algebra by considering the tangent space over the identity.  This is called the Lie algebra of the Lie group and it is classically used as a description of a Lie groups local structure.

The paper starts by introducing the various categorical structures used in the paper: tangent categories and join restriction categories.   This allows a general description of the Lie algebra of a group object in a tangent category.  The construction of the Lie algebra requires that the pullback of the projection of the tangent bundle of the group over the group unit exists. This relatively mild assumption suffices to allow one to prove the classical result that differential bundle of a group object is ``trivial'' in the sense of being the product of the Lie algebra and the group object, see Theorem \ref{thm:group_tangent}.

Classically there is an alternate description of the Lie algebra of a Lie group as the space of left-invariant vector fields of the Lie group (see Section II.4.11 of \cite{michor2008topics} or in Chapter 6 of \cite{Waldmann2021}).  In Proposition \ref{prop:left-invariant_isomorphic}, we show that this correspondence holds generally for group objects in any tangent category. 

Interestingly, we do not need to assume the existence of negatives in the tangent bundles:  negatives are a consequence of the group structure and the existence of the required pullback.  Thus, these classical results from differential geometry hold in great generality in any tangent category with the appropriate pullbacks.

Similar results for the tangent bundle of a group object in a tangent category have been developed in \cite{Loryaintablian2025differentiablegroupoidobjectsabstract}, however this paper takes a different direction by exploring the tangent bundle of group objects and principal bundles in restriction categories.

The second main objective of the paper is to develop the abstract theory of principal bundles.   These, as we explain, can be described in any join restriction category which permits the construction of manifolds. In classical differential geometry, given a Lie group $G$, a principal $G$-bundle is a map of manifolds $E \to M$ that is locally given by the projection $U \times G \to M$ for some open subset $U \subset M$, and where the gluing between the different local pieces is mediated by the action of the group itself. 

A principal $G$-bundle over a base space $M$ is a total space $E$ with a projection $E \xrightarrow{q} M$.  Examples of principal $G$-bundles include the trivial bundle, $M \times G \xrightarrow{\pi_0} M$ and also spaces which are twisted over the base as in the second example in Figure \ref{fig:twist}.

\begin{figure}
    \centering
    \includegraphics[width=0.3\linewidth]{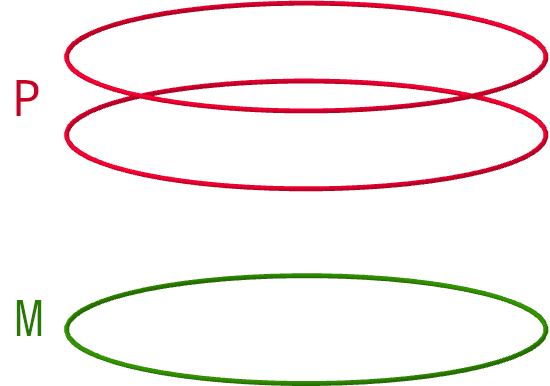}\qquad 
    \includegraphics[width=0.3\linewidth]{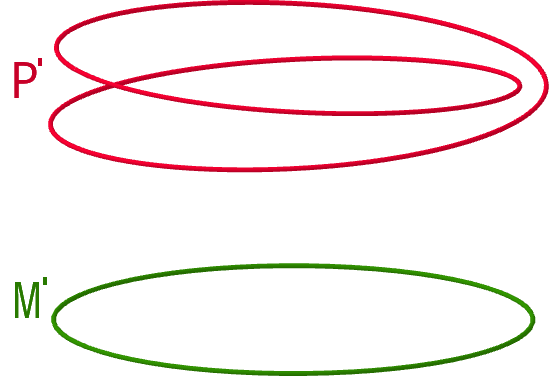}
    \caption{Two examples of principal $C_2$-bundles over $S^1$}
    \label{fig:twist}
\end{figure}

Locally, structure, such as that of $G$-bundles, may be expressed as a requirement on partial maps and, as such, one can use the theory of restriction categories \cite{cockett2002restriction} to explore this sort of structure.  Restriction categories provides a algebraic formulation of partial map categories.  A partial map is a map that is defined only on a subset of its domain.  These occur frequently and fundamentally in both analysis and geometry.  For example, the function $f(x)=\frac{1}{x}$ is not defined on all real numbers: it is a partial map $f: \mathbb{R} \to \mathbb{R}$.

Restriction categories were systematically developed in \cite{cockett2002restriction}, albeit the ideas themselves had a long provenance with origins in semigroup theory. Menger and his students  \cite{Menger} developed the basic idea of describing partial maps by what is here called a restriction. Subsequently, although independently, Di Paolo and Heller \cite{diP&H} used partiality to in developing a categorical approach to recursion theory. They based their formulation of partiality on the behavior of the product. Robinson and Rosolini \cite{robinson&rosolini} abstracted this notion of partiality based on the partial product by introducing p-categories. The first fully general formulations of restriction categories were given by Hans-Jürgen Hoehnke, who called them ``fractal categories" in \cite{Hoehnke_certain_fractal_categories}, and by Marco Grandis, who called them ``e-cohesive categories'' \cite{Grandis1990CohesiveCA}.  Significantly for the development below, Marco Grandis also developed the manifold construction for join restriction categories.

Join restriction categories were further studied in \cite{cockett&manes} and then in Xiuzhan Guo's thesis \cite{guo_joins_2012}. Join restriction categories are restriction categories in which morphisms that coincide on the intersection of their domains can be extended to a morphism defined on the union of their domains. This property allows the abstract construction of manifolds using atlases and gluing (see Section \ref{sec:res_cat} below).  Join restriction categories and their manifolds will be the basic setting in which we formulate the notion of a principal bundle.

The main objective of this work is to establish the theory of principal bundles in tangent categories. To achieve this we shall end up working in tangent join restriction categories (see Section \ref{ssec.tangent-join-restcats}).

The first results in this paper are about the tangent space of a group object in a Cartesian tangent category, significantly Theorem \ref{thm:group_tangent} establishes the triviality of the tangent bundle $T(G)$. Table \ref{tab:translations}, then describes all the structure maps of $T(G)$ in terms of the local triviality structure. Theorem \ref{thm:addition_and_multiplication_on_TG} establishes the interaction of tangent vector addition and group multiplication, and Theorem \ref{thm:Lie_Algebra} shows the external Lie Algebra structure on the tangent space.

The next main results pertain to principal bundles in restriction categories and restriction tangent categories, as defined in Definition \ref{def:G-bundle}. Theorems \ref{thm:right_action_principal}, \ref{thm:total_right_torsor} and \ref{thm:vertical_bundle} show that they fulfill the desired properties. Proposition \ref{prop:diffgeo_principal_bundles} and Proposition \ref{prop:topology_principal_bundles} show that they recover the classical notions from differential geometry and point-set topology.

\textbf{Acknowledgements.} For their ideas, comments, questions and discussions we would like to thank Geoff Vooys, Geoffrey Cruttwell and Rory Lucyshyn-Wright. Thanks go also to Kalin Krishna for pointing out the related work in \cite{Loryaintablian2025differentiablegroupoidobjectsabstract}. Special thanks goes to Kristine Bauer for her extensive writing support.
Florian Schwarz was funded by the Alberta Innovates Graduate Student Scholarship. Robin Cockett was partially funded by NSERC.

\section{Tangent categories}\label{sec:tan_cat}

 In differential geometry every manifold $M$ has a tangent bundle $T(M)$ which is also a manifold. It consists of a vector space over each point of the original manifold, this data can be packaged as a natural projection map $p:T(M) \rightarrow M$ where, for $m \in M$, $T(M)_m = p^{-1}(m)$ is the vector space of tangent vectors at the point $m$. 

 An axiomatization of tangent bundles was provided by \cite{rosicky} and then later refined by \cite{Cockett2014DifferentialST}. In this section we review tangent categories.

The behavior of the tangent bundle, $T^2(M) := T(T(M))$ is important  for axiomatizing the properties of the tangent bundle.  There are two important maps involving this iterated tangent bundle: the {\em vertical lift}, $\ell$, and the {\em canonical flip}, $c$.  For every vector in the tangent bundle one can consider its ``lift'' to the tangent bundle of the tangent bundle:
$$\ell: T(M) \rightarrow T(T(M)); (m,v)  \mapsto (m,0,0,v)$$
where here an element of $T(M)$ is regarded as a position $m$ and a vector $v \in T(M)_m$ in the space  over $m$.  The double tangent space $T(T(M))$ then can be described by four coordinates: a position in $T(M)$ and a vector of double the dimension.
Similarly, one can swap the tangent vectors of the iterated tangent bundle to obtain the ``flip'':
$$ c:T^2M \rightarrow T^2M; (m,v,w,u) \mapsto (m,w,v,u)$$
the definition of tangent categories is determined by the special properties of these natural transformations. 

In preparation for defining tangent categories it is useful to consider additive bundles:

\subsection{Basic definitions}
\begin{definition}[\cite{Cockett2014DifferentialST}, Definition 2.1]\label{def:additive_bundle}
Let $\mathbb{X}$ be a category and $A\in \mathbb{X}$ an object. An additive bundle over $A$ is
\begin{itemize}
    \item An object $X$ together with a morphism $p: X\rightarrow A$ such that pullback powers of $p$ exist.
    \item Morphisms $+ : X_2 \rightarrow X$ and $0: A \rightarrow X$ such that
    \begin{itemize}
        \item $+p= \pi_0 p = \pi_1 p$ and $0p= 1_A$
        \item the associative, commutative and unit diagrams below commute:
    \end{itemize}
\end{itemize}
{
\begin{tikzpicture}
\path (0,1.5) node(a) {$X_3$}
(2.5,1.5) node (b) {$X_2$}
(0,0) node (c) {$X_2$}
(2.5,0) node (d) {$X$};
\draw [->] (a) -- node[above] {$\langle \langle \pi_0, \pi_1\rangle+,\pi_2\rangle$} (b);
\draw [->] (a) -- node[right] {$\langle \pi_0 , \langle \pi_1 , \pi_2 \rangle + \rangle $} (c);
\draw [->] (c) -- node[below] {$+$} (d);
\draw [->] (b) -- node[right] {$+$} (d);
\end{tikzpicture}
\begin{tikzpicture}
\path (0,1.5) node(a) {$X_2$}
(0,0) node (c) {$X_2$}
(2.5,0) node (d) {$X$};
\draw [->] (a) -- node[above] {$+$} (d);
\draw [->] (a) -- node[left] {$\langle \pi_1 , \pi_0 \rangle $} (c);
\draw [->] (c) -- node[below] {$+$} (d);
\end{tikzpicture}
\begin{tikzpicture}
\path (0,1.5) node(a) {$X$}
(0,0) node (c) {$X_2$}
(2.5,0) node (d) {$X$};
\draw [->] (a) -- node[above] {$1_X$} (d);
\draw [->] (a) -- node[left] {$\langle p0 , 1_X \rangle $} (c);
\draw [->] (c) -- node[below] {$+$} (d);
\end{tikzpicture}
}\end{definition}

A morphism of additive bundles consists of morphisms which commute with this structure:

\begin{definition}[\cite{Cockett2014DifferentialST}, Definition 2.2]
For two additive bundles $X \xrightarrow{p} A$ and $Y \xrightarrow{q} B$ an additive bundle morphism is a pair of maps $f: X \rightarrow Y$ and $g: A \rightarrow B$ such that the following diagrams commute:
$$
\begin{tikzpicture}
\path (0,1.5) node(a) {$X$}
(2.5,1.5) node (b) {$Y$}
(0,0) node (c) {$A$}
(2.5,0) node (d) {$B$};
\draw [->] (a) -- node[above] {$f$} (b);
\draw [->] (a) -- node[left] {$p$} (c);
\draw [->] (c) -- node[above] {$g$} (d);
\draw [->] (b) -- node[left] {$q$} (d);
\end{tikzpicture}
\begin{tikzpicture}
\path (0,1.5) node(a) {$X_2$}
(2.5,1.5) node (b) {$Y_2$}
(0,0) node (c) {$X$}
(2.5,0) node (d) {$Y$};
\draw [->] (a) -- node[above] {$\langle \pi_0 f , \pi_1 f \rangle $} (b);
\draw [->] (a) -- node[left] {$+$} (c);
\draw [->] (c) -- node[above] {$f$} (d);
\draw [->] (b) -- node[left] {$+$} (d);
\end{tikzpicture}
\begin{tikzpicture}
\path (0,1.5) node(a) {$A$}
(2.5,1.5) node (b) {$B$}
(0,0) node (c) {$X$}
(2.5,0) node (d) {$Y$};
\draw [->] (a) -- node[above] {$g$} (b);
\draw [->] (a) -- node[left] {$0$} (c);
\draw [->] (c) -- node[above] {$f$} (d);
\draw [->] (b) -- node[left] {$0$} (d);
\end{tikzpicture}
$$
\end{definition}
Equipped with additive bundles, we can now define the notion of a tangent category.

\begin{definition}[\cite{Cockett2014DifferentialST}, Definition 2.3]\label{def:tan_cat}
A tangent category ($\mathbb X , T , p , 0, \ell , c$) consists of a category $\mathbb X$ and the following data:
\begin{itemize}
\item A functor $T: \mathbb X \rightarrow \mathbb X$, called the tangent functor with a natural transformation $p: T \rightarrow 1$ such that pullback powers of $p_M$ exist for all $M \in \mathbb X_0$ and $T$ preserves these pullback powers.
\item Natural transformations $+: T_2 \rightarrow T$ and $0: 1 \rightarrow T$ making each $p_M: TM \rightarrow M$ an additive bundle.
\item  A natural transformation $\ell: T \rightarrow T^2$, called the vertical lift, such that 
$$
(\ell_M,0_M):(p_M,+,0_M) \rightarrow (Tp_M,T(+_M),T(0_M)) 
$$
is an additive bundle morphism.
\item a natural transformation $c: T^2 \rightarrow T^2$ with $c^2 = 1$ and $lc = l$, such that 
$$
(c_M,1):(Tp_M,T(+_M),T(0_M)) \rightarrow (p_{TM},+_{TM},0_{TM})
$$
is an additive bundle morphism.
\end{itemize}
In addition the following diagrams must commute:
$$
\begin{tikzpicture}
\path (0,1.5) node(a) {$T$}
(2,1.5) node (b) {$T^2$}
(0,0) node (c) {$T^2$}
(2,0) node (d) {$T^3$};
\draw [->] (a) -- node[above] {$\ell$} (b);
\draw [->] (a) -- node[left] {$\ell$} (c);
\draw [->] (c) -- node[above] {$\ell_T$} (d);
\draw [->] (b) -- node[left] {$T(\ell)$} (d);
\end{tikzpicture}
\begin{tikzpicture}
\path (0,1.5) node(a) {$T^3$}
(2,1.5) node (b) {$T^3$}
(4,1.5) node (c) {$T^3$}
(0,0) node (d) {$T^3$}
(2,0) node (e) {$T^3$}
(4,0) node (f) {$T^3$};
\draw [->] (a) -- node[above] {$T(c)$} (b);
\draw [->] (b) -- node[above] {$c_T$} (c);
\draw [->] (d) -- node[above] {$T(c)$} (e);
\draw [->] (e) -- node[above] {$c_T$} (f);
\draw [->] (a) -- node[left] {$c_T$} (d);
\draw [->] (c) -- node[right] {$T(c)$} (f);
\end{tikzpicture}
\begin{tikzpicture}
\path (0,1.5) node(a) {$T^2$}
(2,1.5) node(e) {$T^3$}
(4,1.5) node (b) {$T^3$}
(0,0) node (c) {$T^2$}
(4,0) node (d) {$T^3$};
\draw [->] (a) -- node[above] {$\ell_T$} (e);
\draw [->] (e) -- node[above] {$T(c)$} (b);
\draw [->] (a) -- node[left] {$c$} (c);
\draw [->] (c) -- node[above] {$T(\ell)$} (d);
\draw [->] (b) -- node[left] {$c_T$} (d);
\end{tikzpicture}$$
and such that the following is an equalizer diagram:
\begin{center}
\begin{tikzpicture}
\path (-3,0) node (a) {$T_2M$}
(2,0) node (b) {$T^2M$}
(5,0) node (c) {$TM$};
\path (2.2,0.1) node (b0) {}
(4.8,0.1) node (c0) {};
\path (2.2,-0.1) node (b1) {}
(4.8,-0.1) node (c1) {};
\draw[->] (b0) -- node[above] {$T(p)$} (c0);
\draw[->] (b1) -- node[below] {$T(p)p 0$} (c1);
\draw[->] (a) -- node[above] {$u=\langle \pi_0 \ell , \pi_1 0_T\rangle T(+)$} (b);
\end{tikzpicture}
\end{center}
\end{definition}
Here the morphism $\langle \pi_0 \ell , \pi_1 0_T\rangle$ is the induced morphism determined by the universal property of pullbacks:
\begin{center}
\begin{tikzpicture}
\path (0,0) node (a) {$T_2M$}
(2,1.5) node (b0) {$TM$}
(2,-1.5) node (b1) {$TM$}
(6,1.5) node (c0) {$T^2M$}
(6,-1.5) node (c1) {$T^2M$}
(4,0) node (d) {$TT_2M$}
(8,0) node (e) {$TM$};
\draw[->] (a) -- node[above] {$\pi_0$} (b0);
\draw[->] (a) -- node[below] {$\pi_1$} (b1);
\draw[->] (b1) -- node[above] {$0_T$} (c1);
\draw[->] (b0) -- node[above] {$\ell$} (c0);
\draw[<-] (c0) -- node[below] {$~~T (\pi_0)$} (d);
\draw[<-] (c1) -- node[above] {$~~T (\pi_1)$} (d);
\draw[->,dashed] (a) -- node[above] {$u$} (d);
\draw[->] (c0) -- node[right] {$T(p)$} (e);
\draw[->] (c1) -- node[right] {$T(p)$} (e);
\end{tikzpicture}
\end{center}
Note that we can use the universal property of the pullback as $T$ preserves pullback powers. In order to show that the outer diagram commutes, we need to use that 
$$(\ell,0):(p,+,0) \rightarrow (Tp, T+, T0)$$
is a morphism of additive bundles. This ensures that it preserves the projection, that is following the diagram 
commutes:
\begin{center}
\begin{tikzpicture}
\path (0,1.5) node(a) {$TM$}
(2,1.5) node (b) {$T^2M$}
(0,0) node (c) {$M$}
(2,0) node (d) {$TM$};
\draw [->] (a) -- node[above] {$\ell$} (b);
\draw [->] (a) -- node[left] {$p$} (c);
\draw [->] (c) -- node[above] {$0$} (d);
\draw [->] (b) -- node[right] {$T(p)$} (d);
\end{tikzpicture}
\end{center}
Therefore we obtain $$\pi_0 \ell T(p) = \pi_0 p 0$$ and as $T_2M$ is the pullback of $p$ and $p$, $\pi_0 p = \pi_1p$ and thus 
$$\pi_0 \ell T(p) = \pi_0 p 0 = \pi_1 p 0$$
which together with the naturality of the $0$-map $p0 = 0 T(p)$ gives that the outer diagram commutes:
$$
\pi_0 \ell T(p) = \pi_0 p 0 = \pi_1 p 0 = \pi_1 0 T(p)
$$
Thus there is a unique induced morphism $u = \langle \pi_0 \ell , \pi_1 0_T\rangle $ which was used in the definition.

There are at least three different definitions for the functors between tangent categories. Strong morphism will be the most relevant in this development.

\begin{definition}[\cite{Cockett2014DifferentialST}, Definition 2.7] ~
\begin{enumerate}[(i)]
\item A morphism of tangent categories from $(\mathbb X, T, p , 0, \ell , c)$ to $(\mathbb X', T', p' , 0', \ell ' , c')$ is a functor $F: \mathbb X \rightarrow \mathbb X'$ and a natural transformation $\alpha: TF \rightarrow FT'$ such that the following diagrams commute:
\begin{center}
\begin{tikzpicture}
\path (0,1.5) node(a) {$TF$}
(2.5,1.5) node (b) {$FT'$}
(2.5,0) node (d) {$F$};
\draw [->] (a) -- node[above] {$\alpha$} (b);
\draw [->] (a) -- node[left] {$pF$} (d);
\draw [->] (b) -- node[left] {$Fp$} (d);
\end{tikzpicture}
\begin{tikzpicture}
\path (0,1.5) node(a) {$F$}
(0,0) node (c) {$TF$}
(2.5,0) node (d) {$FT'$};
\draw [->] (a) -- node[above] {$F0'$} (d);
\draw [->] (a) -- node[left] {$0F$} (c);
\draw [->] (c) -- node[above] {$\alpha$} (d);
\end{tikzpicture}
\begin{tikzpicture}
\path (0,1.5) node(a) {$T_2F$}
(2.5,1.5) node (b) {$FT'_2$}
(0,0) node (c) {$TF$}
(2.5,0) node (d) {$FT'$};
\draw [->] (a) -- node[above] {$\alpha_2$} (b);
\draw [->] (a) -- node[left] {$+F$} (c);
\draw [->] (b) -- node[left] {$F+'$} (d);
\draw [->] (c) -- node[above] {$\alpha$} (d);
\end{tikzpicture}
\end{center}
\begin{center}
\begin{tikzpicture}
\path (0,1.5) node(a) {$TF$}
(2.5,1.5) node (b) {$FT'$}
(0,0) node (c) {$T^2F$}
(2.5,0) node (d) {$FT'^2$};
\draw [->] (a) -- node[above] {$\alpha$} (b);
\draw [->] (a) -- node[left] {$\ell$} (c);
\draw [->] (b) -- node[left] {$F\ell'$} (d);
\draw [->] (c) -- node[above] {$(T\alpha) (\alpha T)$} (d);
\end{tikzpicture}
\begin{tikzpicture}
\path (0,1.5) node(a) {$T^2F$}
(2.5,1.5) node (b) {$FT'^2$}
(0,0) node (c) {$T^2F$}
(2.5,0) node (d) {$FT'^2$};
\draw [->] (a) -- node[above] {$(T\alpha) (\alpha T)$} (b);
\draw [->] (a) -- node[left] {$c$} (c);
\draw [->] (b) -- node[left] {$Fc'$} (d);
\draw [->] (c) -- node[above] {$(T\alpha) (\alpha T)$} (d);
\end{tikzpicture}
\end{center}
\item A morphism is called strong if $\alpha$ is invertible and $F$
 preserves equalizers and pullbacks of the tangent structure
\end{enumerate}
\end{definition}
\begin{definition}[\cite{Cockett2014DifferentialST}, Definition 2.8]
A \textbf{Cartesian tangent category} is a tangent category that has all finite products (including a final object), where the product functor
$\times: \mathbb X \times \mathbb X \to \mathbb X, (X,Y) \mapsto X \times Y$
with the natural isomorphism $\alpha: T(X) \times T(Y) \rightarrow T(X \times Y)$ forms a strong morphism of tangent categories.
\end{definition}
\subsection{Examples of Tangent categories}
\begin{example}[\cite{Cockett2014DifferentialST}, page 335]
\label{ex:manifolds_as_a_tangent_cat} The category \SmMan{} of smooth
manifolds motivated this definition, thus smooth manifolds should be expected to form a tangent category. In this case we have:
\begin{enumerate}[(i)]
    \item $p: T(M) \to M$ is the tangent bundle (and for maps the tangent map).
    \item $T_k(M)_m := p_{T_k(M)}^{-1}(m)= (p^{-1}_{T(M)}(m))^k =: (T(M)_m)^k$ has the $k^{\rm th}$-power of the tangent space $k$ times at each base point $m$.
    \item $0: M \rightarrow T(M) $ sends $m\in M$ to the tangent vector $0$ in the fibre $T(M)_m$.
    \item $+: T_2(M) \rightarrow T(M)$ has the fibre over $m$ given by: $$+_m: (T(M)_m)^2 \to T(M)_m; (v_m, w_m) \mapsto v_m + w_m$$
    \item As explained earlier, the vertical lift is given by: $$\ell:T(M) \rightarrow T(T(M)); (m,v) \mapsto (m,0,0,v)$$
    \item Also as explained earlier, the canonical flip is $$c: T(T(M)) \rightarrow T(T(M); (m,u,v,w) \mapsto (m,v,u,w).$$
\end{enumerate}
\end{example}

\begin{example}[Finite-dimensional $\mathbb{N}$-modules]\label{ex:modules_as_a_tangent_cat} A basic algebraic example is given by the category $\mathbb N^\bullet$, the category of finitely generated free commutative monoids. The objects are free finite dimensional $\mathbb{N}$-modules, $\mathbb{N}^k$ for $k \in \mathbb{N}$, and the morphisms are $\mathbb{N}$-linear maps.
This category then has tangent structure (it is a Cartesian differential category \cite{cart-diff}, which is a special kind of tangent category), given by:
\begin{enumerate}[(i)]
    \item $T(\mathbb N^k)= \mathbb N^k \times \mathbb N^k = \mathbb N^{2k}, T(f) = f \times f$, i.e. the tangent functor doubles the dimension.
    \item $p := \pi_0 : \mathbb{N}^k \times \mathbb{N}^k \to \mathbb{N}^k$ is the projection to the first component, which is the component holding the base point.
    \item $0:= \langle 1_\mathbb{N}^k , 0 \rangle: \mathbb{N}^k \to \mathbb N^k \times \mathbb{N}^k$ inserts the base point into the first component and zero into the second.
    \item $+:  \mathbb{N}^k \times \mathbb{N}^k \times \mathbb{N}^k \to \mathbb{N}^k \times \mathbb{N}^k$ is given by sending $(\vec u,\vec v,\vec w) \mapsto (\vec u, \vec v + \vec w)$.
    \item $\ell : (\mathbb{N}^k)^2 \to (\mathbb{N}^k)^4 $ is given by inserting zeros in the two middle components, i.e. $(\vec v, \vec w) \mapsto (\vec v, 0 ,0 , \vec w)$.
    \item $c : (\mathbb{N}^k)^4 \to (\mathbb{N}^k)^4$ is given by switching the two middle components, i.e. $(\vec t, \vec u , \vec v, \vec w) \mapsto (\vec t , \vec v, \vec u, \vec w)$.
\end{enumerate}

\begin{example}[Weil-algebras]
A more involved -- but important -- example is the category of Weil-algebras, $\weil$. The key building block of this category is the polynomial algebra $W:=  \mathbb{N}[x]/\langle x^2 \rangle$ called the dual numbers. We define product powers $W^n := \mathbb N [x_1 , ... , x_n]/\langle x_ix_j | i,j \in \{1, ... , n\} \rangle$ (with $W^0 := \mathbb{N}$) and coproducts $W^n \otimes W^k := \mathbb{N}[x_1, ... , x_n, y_1 , ... ,y_k] /\langle x_i^2, x_ix_j, y_i^2, y_iy_j \rangle$. Of particular significance  is the category $\weil_1$ whose objects are coproducts of product-powers of $W$. Morphisms are algebra-maps that preserve the constant terms (or augmentation) . More details on $\weil_1$ can be found in \cite{Leung2017}.

The category $\weil_1$ has a tangent structure given by the following:
\begin{enumerate}[(i)]
    \item $T(A) := A \otimes W , T(f) := f \otimes 1_\weil$, the tangent structure is given by tensoring with the dual numbers.
    \item $p:= 1_A \otimes \hat p: A \otimes W \to A \otimes \mathbb N = A$ is given by the identity of $A$ and the projection $\hat p : W \to N, a+bx \mapsto a$ that is the projection to the constant term of a polynomial.
    \item $0: = 1_A \otimes \hat 0 : A \to $ is given by the identity of $A$ and the projection $\hat{0}$ where $\hat{0}: \mathbb{N} \to W$ sends the natural number $n$ to the polynomial $n+0x$.
    \item $\ell := 1_A \otimes \hat \ell: A \otimes W \to A \otimes W \otimes W$ is given by the identity on $A$ and the map $\hat \ell: W \to W \otimes W ; a + bx \mapsto a+bxy$ (where $W \otimes W = \mathbb N[x,y]/\langle x^2 , y^2 \rangle $).
    \item $c := 1_A \otimes \hat{c}: A \otimes W \otimes W \to A \otimes W \otimes W$ is given by the identity of $A$ and the map $\hat{c}: W \otimes W \mapsto W \otimes W; a+bx + cy +dxy \mapsto a+ by+ cx+ dxy$.
\end{enumerate}
\end{example}
In \cite{Leung2017} it is shown that a tangent category can equivalently be defined as a category $\mathbb{X}$ with a strong monoidal functor $\hat T: \weil_1 \to  \mathrm{End}(\mathbb X)$ preserving certain pullbacks.
\end{example}


\section{Join restriction categories}
\label{sec:res_cat}

This section recalls the definition of join restriction categories. These will be the basis for our description of both fibre and principal bundles, see section \ref{sec:principal_bundles_part}.

Restriction categories, as defined in \cite{cockett2002restriction}, are a generalization of categories of partial maps. An important structure one can have in a restriction category is joins, which can be used to construct piecewise defined functions.


\subsection{Restriction categories}
 Restriction categories as introduced in \cite{cockett2002restriction}, capture the structure of partial maps. Here we review the definition and some results on restriction categories which will be needed in the sequel.
 
\begin{definition}[\cite{cockett2002restriction}, section 2.1.1, \cite{guo_joins_2012}, Definition 3.12 and \cite{guo_joins_2012}, Lemma 1.6.3] \label{def:restriction_categories} ~
\begin{enumerate}[(i)]
\item A \textbf{restriction category} is a category with a structure that assigns to each morphism $f:A\to B$ a morphism $\bar f : A \rightarrow A$ such that: 
\begin{center}
\begin{tabular}{llcll}
 {\bf [R.1]} & $\bar f f =f $ & ~ & {\bf [R.3]} & $\overline {{\bar g} f} = \bar g \bar f$ \\
 {\bf [R.2]} & $\bar g \bar f = \bar f \bar g$ & ~ & {\bf [R.4]} & $f \bar h = \overline{f h} f$
 \end{tabular}
 \end{center}
 
\item Two parallel morphisms $f,g: X \to Y$ in a restriction category are \textbf{compatible}, written $f \smile g$ if $\bar f g = \bar g f$
\item Two parallel morphisms in a restriction category are partially ordered by $f \leq g$ if and only if $\bar f g = f$.
\item A functor $F: \mathbb X \to \mathbb Y$ between restriction categories $\mathbb X$ and $\mathbb Y$ is a \textbf{restriction functor} if it preserves the restriction structure, i.e. $F(\bar f) = \overline{F(f)}$ for all morphisms $f$ in $\mathbb X$
\end{enumerate}
\end{definition}
\begin{lemma}
Let $f$ and $g$ be morphisms in a restriction category, then 
\begin{center}
\begin{tabular}{ll}
    (i)~~~ $\bar f \bar f = \bar f$ & ~~~~(ii)~~  $\bar {\bar f}= \bar f$ \\
    (iii)~~ $\overline{f \bar g} = \overline{fg}$ &~~~ (iv)~~~ $\bar f \smile \bar g$ \\
    (v)~~ $f \leq g \Rightarrow f \smile g$. & ~ 
\end{tabular}
\end{center}
\end{lemma}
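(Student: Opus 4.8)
The plan is to verify each of the five identities in turn, using only the four axioms [R.1]--[R.4] together with, where convenient, the identities proved earlier in the chain (so (i) and (ii) may be used in proving (iii)--(v)).

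First I would prove (i), $\bar f \bar f = \bar f$. Apply [R.4] with $h := f$: this gives $f\bar f = \overline{ff}\, f$, but that is not quite what I want directly. Instead, the cleanest route is to apply [R.3] with $g := f$: $\overline{\bar f f} = \bar f \bar f$, and by [R.1] $\bar f f = f$, so the left side is $\bar f$, yielding $\bar f \bar f = \bar f$. For (ii), $\overline{\bar f} = \bar f$: apply [R.1] to the morphism $\bar f$ in place of $f$, giving $\overline{\bar f}\,\bar f = \bar f$; separately, taking $g := 1$ appropriately or using [R.3] with the roles arranged so that $\overline{\bar f \cdot 1} = \overline{\bar f}\,\bar 1$ — more simply, [R.3] with $f := 1_A$ and $g := f$ gives $\overline{\bar f \, 1_A} = \bar f \,\overline{1_A}$; one checks $\overline{1_A} = 1_A$ (from [R.1]: $\overline{1_A} 1_A = 1_A$, and idempotency) so this reads $\overline{\bar f} = \bar f$. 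I would streamline the write-up once the dependencies are clear.

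For (iii), $\overline{f\bar g} = \overline{fg}$: start from [R.4], which gives $f\bar g = \overline{fg}\, f$. Taking restrictions of both sides and using [R.3] (with the outer bar applied to a composite of the form $\overline{fg}\cdot f$) gives $\overline{f\bar g} = \overline{\,\overline{fg}\, f\,} = \overline{fg}\,\bar f$. Now $\overline{fg}\,\bar f$: since $\overline{fg} \le \bar f$ — because $\bar f \cdot fg = fg$ by [R.1] so $\overline{fg} = \overline{\bar f fg} = \bar f\,\overline{fg}$ using [R.3] and then commutativity [R.2] — we get $\overline{fg}\,\bar f = \overline{fg}$, as desired. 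For (iv), $\bar f \smile \bar g$, I must show $\overline{\bar f}\,\bar g = \overline{\bar g}\,\bar f$; by (ii) this is just $\bar f \bar g = \bar g \bar f$, which is exactly [R.2]. Finally for (v), $f \le g \Rightarrow f \smile g$: the hypothesis is $\bar g f = f$; I need $\bar f g = \bar g f$. From $\bar g f = f$ take restrictions: $\overline{\bar g f} = \bar f$, and by [R.3] the left side is $\bar g \bar f$, so $\bar g \bar f = \bar f$, i.e.\ $\bar f \le \bar g$ in the idempotent sense. Then $\bar f g = \bar f \bar g g = (\bar f \bar g) g$ (using [R.1] to insert $\bar g$) $= \bar f g$... here I must be careful; the right computation is $\bar g f = \bar g \bar g f$ is trivial, so instead use: $\bar f g \overset{[R.1]}{=} \bar f \bar g g \overset{[R.2]}{=} \bar g \bar f g$, and $\bar f g = $ (apply [R.4]-style reasoning or note $\bar f \le \bar g$) — cleanly, $\bar g f = \bar g (\bar g f) = \bar g f$ is vacuous, so the key step is $\bar f = \bar g \bar f$ (just shown) hence $\bar f g = \bar g \bar f g$, and symmetrically one relates this to $\bar g f$; I would finish by observing $\bar g f = f$ gives $\bar f = \overline{\bar g f}= \bar g\bar f$ so $\bar g f = \bar g \bar f f = \bar f f = f$ consistently, and $\bar f g = \bar g \bar f g$, while $\bar g f = f = \bar f f = \bar f \bar g f = \bar g \bar f f$; matching the two requires $\bar f g = \bar f$ restricted appropriately — the honest statement is that $f \le g$ means $f = \bar f g$ already (since $\bar g f = f$ and $\overline{f} = \overline{\bar g f} = \bar g \bar f$, one deduces $f = \bar f g$ is equivalent), from which $\bar f g = f = \bar g f$ is immediate.

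The main obstacle I anticipate is purely bookkeeping: part (iii) and part (v) each require chaining three or four axioms in the right order, and it is easy to apply [R.4] with the roles of $f$ and $h$ transposed. I expect (v) in particular to need the preliminary observation that $f \le g$ is equivalent to $f = \bar f g$ (not merely $f = \bar g f$), which follows by taking restrictions and using [R.3]; once that equivalence is in hand, compatibility $\bar f g = \bar g f$ drops out since both equal $f$. None of the steps is deep — the whole lemma is a finite diagram chase in the equational theory of restriction categories — so the real care is in presenting each derivation as a short, unambiguous chain of rewrites citing the axiom used at each step.
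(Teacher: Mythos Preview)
Parts (i)--(iv) are correct and essentially match the paper's arguments; your (iv) is in fact cleaner than the paper's, which re-derives (iii) along the way, whereas you observe it is immediate from (ii) and [R.2].

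The genuine problem is (v). You open with ``the hypothesis is $\bar g f = f$'', but in this paper $f \leq g$ is defined as $\bar f g = f$, not $\bar g f = f$. Starting from the wrong hypothesis sends the argument into a tangle: you derive $\bar g \bar f = \bar f$ from a premise you do not have, then try to recover by asserting that the two formulations are equivalent, without actually proving it. You eventually acknowledge that ``$f \le g$ means $f = \bar f g$ already'', but never produce a clean derivation of $\bar f g = \bar g f$ from that. With the correct hypothesis the proof is a three-step chain, exactly as the paper gives it:
\[
\bar f g \overset{[\mathrm{R.1}]}{=} \bar f\, \bar g\, g \overset{[\mathrm{R.2}]}{=} \bar g\, \bar f\, g \overset{f\leq g}{=} \bar g f.
\]
You in fact wrote the first two steps of this chain in the middle of your discussion, so the content is present; the issue is purely that the misremembered definition derails the organization. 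Fix the hypothesis and (v) collapses to one line.
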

The first point means that the restriction of a morphism is idempotent, thus, morphisms of the form $\bar f$ are called \textbf{restriction idempotent}.
The second point, $\overline{\overline{g}} = \bar g$, means that a repeated restriction is the same as a single restriction.

Equations (i)-(iii) are proven as part of Lemma 2.1 in \cite{cockett2002restriction} and (iv) and (v) are proven as part of Lemma 3.1.3 in \cite{guo_joins_2012}. We will still prove them here, as the proofs demonstrate how to work with restriction categories.
\begin{proof}~
\begin{enumerate}[(i)]
    \item $\bar f \bar f \overset{\bf [R.3]}{=} \overline{\bar f f} \overset{\bf [R.1]}{=} \bar f$.
    \item This follows from {\bf [R.3]} if $f=1$.
    \item[(iv)] $\overline{f \bar g} \overset{\bf [R.4]}{=} \overline{\overline{fg}f} \overset{\bf [R.3]}{=} \overline{fg} \bar f \overset{\bf [R.2]}{=} \bar f \overline{fg}  \overset{\bf [R.3]}{=} \overline{\bar f f g} \overset{\bf [R.1]}{=} \overline{fg}$.
    \item[(iv)] $
    \overline{f \bar g} \overset{\bf [R.4]}{=}
    \overline{\overline{fg}f} \overset{\bf [R.3]} , {\bf [R.2]|}{=}
    \bar f \overline{fg} \overset{\bf [R.3]}{=}
    \overline{\bar f f g} \overset{\bf [R.1]}{=}
    \overline{fg}$
    and thus $\overline{f \bar g} = \overline{fg}$. This implies that
    $\overline {\overline f} \bar g \overset{(i)}{=}  \bar f \bar g  \overset{\bf [R.2]}= \bar g \bar f
    \overset{(i)}{=} \bar f \overline {\overline g}$
    and thus $\bar f \smile \bar g$.
    \item[(v)] $
    \bar f g \overset{\bf [R.1]}{=} \bar f \bar g g \overset{\bf [R.2]}{=} \bar g \bar f g \overset{f \leq g}{=} \bar g f$
    and thus $f \smile g$.
\end{enumerate}
\end{proof}

\begin{remark}\label{rmk:partial_set_map_interpretations}
The motivating example of a restriction category is the category of sets and partial maps. The objects are sets and the morphisms $A \to B$ are set maps of the form $U \to B$ where $U \subset A$ is a subset of $A$. Given a partial map $f: A \to B$, its restriction idempotent $\bar f$ is defined as the inclusion $U \hookrightarrow A$ of the domain of definition $U$ into the domain $A$ of $f$. We often interpret a restriction idempotent to describe the subset $U \subset A$. In this example 
\begin{enumerate}[(i)]
    \item $\bar f g$ is the restriction of $g$ to $f$'s domain of definition,
    \item and $\overline{fg}$ describes the preimage $f^{-1}(V)$ of $g$'s domain of definition $V$ under $f$.
\end{enumerate}
With these interpretations in mind, the properties [R.1]-[R.4] are just common properties of partial maps.
\end{remark}

As shown in section 2.1.3 of \cite{cockett2002restriction}, restriction categories generalize partial map categories. The corresponding notion of being a total morphism and a partial isomorphism need to be described:
\begin{definition}\label{def:total_map_partial_inverse} ~
\begin{enumerate}[(i)]
    \item A \textbf{total morphism} is a morphism $f: A \to B$ such that $\bar f = 1_A$
    \item A \textbf{partial isomorphism} is a morphism $f:A\to B$ such that there is a morphism $f^*:B \rightarrow A$ such that $ff^* =\bar f$ and $f^*f = \bar {f^*}$. The morphism $f^*$ is called the \textbf{partial inverse} of $f$.
\end{enumerate}
\end{definition}
Partial inverses are unique and will be denoted by $f^*$.

Given a restriction category $\mathbb X$ one can take its full subcategory of total morphisms and obtain a category $\mathrm{Tot}(\mathbb X)$. On the other hand, given a category $\mathbb Y$ one can see it as a restriction category $\mathrm{Triv}(\mathbb X)$ by defining $\bar f = 1$ for all morphisms $f$. 

\begin{proposition}
The assignments 
 $\mathrm{Tot}:\sf{ResCat} \to \sf{Cat}$ and $\mathrm{Triv}:\sf{Cat} \to \sf{ResCat}$ are functors and $\mathrm{Tot}$ is right-adjoint to the inclusion $\mathrm{Triv}$. Thus the 1-category $\sf{Cat}$ of categories is a coreflective subcategory of $\sf{ResCat}$.
\end{proposition}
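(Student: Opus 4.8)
The plan is to verify, in order: (1) that $\mathrm{Triv}$ is a well-defined functor which is moreover full and faithful; (2) that $\mathrm{Tot}$ is a well-defined functor; (3) that there is a bijection $\sf{ResCat}(\mathrm{Triv}(\mathbb Y),\mathbb X)\cong\sf{Cat}(\mathbb Y,\mathrm{Tot}(\mathbb X))$ natural in $\mathbb Y$ and $\mathbb X$; and (4) that (1) together with (3) yields the coreflective subcategory statement. None of this requires any serious work; the steps are essentially unwinding of definitions.

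For (1): given a category $\mathbb Y$, setting $\bar f := 1$ for every morphism makes [R.1]--[R.4] hold trivially, since each side of each axiom collapses to a morphism or an identity; thus $\mathrm{Triv}(\mathbb Y)$ is a restriction category. Any functor $F\colon\mathbb Y\to\mathbb Y'$ is automatically a restriction functor $\mathrm{Triv}(\mathbb Y)\to\mathrm{Triv}(\mathbb Y')$, because $F(\bar f)=F(1)=1=\overline{F(f)}$, so $\mathrm{Triv}$ is functorial, being the identity on underlying objects and arrows. Since $\mathrm{Triv}$ is the identity on hom-sets it is faithful, and it is full because \emph{every} functor between trivial restriction categories preserves the (trivial) restriction and hence is a restriction functor.

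For (2): the only thing to check is that the total morphisms of $\mathbb X$ form a subcategory. Identities are total, since [R.1] applied to $1_A$ gives $\overline{1_A}\,1_A=1_A$, hence $\overline{1_A}=1_A$. Composites of total morphisms are total: if $f\colon A\to B$ and $g\colon B\to C$ satisfy $\bar f=1_A$ and $\bar g=1_B$, then by part (iii) of the preceding Lemma and $\bar g=1_B$ we get $\overline{fg}=\overline{f\bar g}=\bar f=1_A$. Finally, a restriction functor $F$ preserves totality, since $\overline{F(f)}=F(\bar f)=F(1)=1$, so it restricts to a functor $\mathrm{Tot}(F)\colon\mathrm{Tot}(\mathbb X)\to\mathrm{Tot}(\mathbb Y)$; functoriality of $\mathrm{Tot}$ is then immediate.

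For (3) and (4), the crucial observation is that any restriction functor $G\colon\mathrm{Triv}(\mathbb Y)\to\mathbb X$ sends every morphism to a \emph{total} one, because $\overline{G(f)}=G(\bar f)=G(1)=1$. Hence $G$ corestricts uniquely to a functor $\widehat G\colon\mathbb Y\to\mathrm{Tot}(\mathbb X)$; conversely, any functor $H\colon\mathbb Y\to\mathrm{Tot}(\mathbb X)$ composed with the inclusion $\mathrm{Tot}(\mathbb X)\hookrightarrow\mathbb X$ is a restriction functor out of $\mathrm{Triv}(\mathbb Y)$ (restriction-preservation is automatic, as the source has trivial restrictions and the images are total). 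These two operations are mutually inverse and manifestly natural in both arguments, giving the asserted hom-set bijection; equivalently one can package this as a unit--counit adjunction, noting that $\mathrm{Tot}\circ\mathrm{Triv}=1_{\sf{Cat}}$ so the unit is the identity, taking the counit $\varepsilon_{\mathbb X}\colon\mathrm{Triv}(\mathrm{Tot}(\mathbb X))\to\mathbb X$ to be the evident inclusion, and checking that both triangle identities reduce to identities. Since by (1) the left adjoint $\mathrm{Triv}$ is full and faithful, this is precisely the statement that $\sf{Cat}$ is a coreflective subcategory of $\sf{ResCat}$. I anticipate no real obstacle; the only point demanding a moment's attention is keeping straight the direction of the adjunction and recognizing that restriction functors out of a trivial restriction category are exactly ordinary functors landing in the subcategory of total maps.
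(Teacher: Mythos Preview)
Your proposal is correct and follows essentially the same route as the paper: both establish the adjunction via the hom-set bijection $\sf{ResCat}(\mathrm{Triv}(\mathbb Y),\mathbb X)\cong\sf{Cat}(\mathbb Y,\mathrm{Tot}(\mathbb X))$, the key observation in each case being that a restriction functor out of a trivial restriction category lands in total maps. Your version is somewhat more thorough (explicitly checking that total maps are closed under composition and that $\mathrm{Triv}$ is full and faithful, and offering the unit--counit packaging), but there is no substantive difference in strategy.
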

\begin{proof}
In order to obtain functors we first need to define $\mathrm{Tot}$ and $\mathrm{Triv}$ on morphisms. Let $F: \mathbb X \to \mathbb Y$ be a restriction functor. Then as restriction functors preserve totals we can define $\mathrm{Tot}(F):= F|_{\mathrm{Tot}(X)}:\mathrm{Tot}(\mathbb X) \to \mathrm{Tot}(\mathbb Y)$. As $\mathrm{Tot}(F)$ it is a restriciton of $F$ to a subcategory, $\mathrm{Tot}$ is automatically functorial.

Similarly, given a functor $F: \mathbb X \to \mathbb Y$ of ordinary categories, we can see it as a restriction functor $\mathrm{Triv}(F): \mathrm{Triv}(\mathbb X) \to \mathrm{Triv}(\mathbb Y)$. The functor $\mathrm{Triv}$ is automatically functorial as it does not change the underlying assignment of objects and morphisms.

Next we show the adjunction by showing that there is an isomorphism between the hom-sets
$$
\sf{ResCat}(\mathrm{Triv}(\mathbb X), \mathbb Y) \cong \sf{Cat}( \mathbb X , \mathrm{Tot}(\mathbb Y))
$$
that is natural in functors $\mathbb X \to \mathbb X'$ and restriction functors $\mathbb Y \to \mathbb Y'$.
Given a restriction functor $F:\mathrm{Triv}(\mathbb X) \to \mathbb Y$, this functor preserves total morphisms. As every morphism in $\mathrm{Triv}(\mathbb X)$ is total this means that every morphism in the image of the functor is total. Thus the assignment of objects and morphisms that is $F$ forms a functor $\varphi(F): \mathbb X \to \mathrm{Tot}(\mathbb Y)$ that sends an object $X$ of $\mathbb X$ to $F(X)$ and a morphism $f$ of $X$ to $F(f)$.

As the morphism $\varphi: \sf{ResCat}(\mathrm{Triv}(\mathbb X), \mathbb Y) \to \sf{Cat}( \mathbb X , \mathrm{Tot}(\mathbb Y))$ does not change the assignments of objects and morphisms, it is automatically natural and injective. It is surjective because every functor $\mathbb X \to \mathrm{Tot}(\mathbb Y)$ can be seen as a restriction functor.
\end{proof}

\subsection{Limits and products}
As restriction categories are enriched in partial orders, it turns out to be natural to consider lax limits:
\begin{definition}[\cite{cockett_lack_2007}, section 4.4]\label{def:restriction_limit}
Let $\mathbb X$ be a restriction category, $\mathcal C$ a finite category and $S:\mathcal C \to \mathbb X$ a diagram. 
\begin{enumerate}[(i)]
    \item A lax cone of the diagram $S$ is an object $L$ of $\mathbb X$ together with (for every object $C$ of $\mathcal C$ a morphism $q_C:L\to S(C)$) such that for every morphism $c\in \mathrm{Hom}_\mathcal{C}(C, D)$, $S(c)q_C \leq q_D$.
    \item A restriction limit is a lax cone $(L,p_C)$ with total morphisms $p_C$ such that for every other cone $(M,q_C)$ there is a unique morphism $f:M \to L$ such that for all objects $C$ of  $\mathcal C$,
$fp_C = e q_C$ holds, where $e=\prod_{C \in \mathcal C_0}\bar q_C$.
\end{enumerate}
\end{definition}
An example for this definition is:
\begin{example}
Let $\mathbb X$ be the restriction category with topological spaces as objects and partial continuous maps defined only on closed subsets as morphisms. Then the pullback of the maps $i_{[0,1]},i_{[-1,0]}:\mathbb R \to \mathbb R$ given by the inclusion of $[0,1]$ and the inclusion of $[-1,0]$ is the object $\{0\}$ with the projections $p_0,p_1$ embedding it into $\mathbb R$ as zero.
\begin{center}
\begin{tikzpicture}
\path 
(-1.5,3) node (A) {$\mathbb R$}
(2,2) node (p) {$\mathbb R$}
(0,2) node(z) {$\{0\}$}
(0,0) node (m) {$\mathbb R$}
(2,0) node (R) {$\mathbb R$}
;
\draw [->] (A) -- node[above] {$q_0 = i_0$} (p);
\draw [->] (A) -- node[left] {$q_1 = i_{[0,1]}$} (m);
\draw [->] (p) -- node[right] {$i_{[0,1]}$} (R);
\draw [->] (m) -- node[below] {$i_{[-1,0]}$} (R);
\draw [->] (z) -- node[below] {$p_0=i_0$} (p);
\draw [->] (z) -- node[right] {$p_1=i_0$} (m);
\draw [->,dashed] (A) -- node[right] {} (z);
\end{tikzpicture}
\end{center}
Now let us consider another cone $(\mathbb R, q_0, q_1)$ with $q_0:\mathbb R \to \mathbb R$ sending 0 to 0 and not defined anywhere else and $q_1: \mathbb R \to \mathbb R$ embedding the interval $[0,1]$. Then the unique map $f:\mathbb R \to 0$ fulfilling the conditions is the map sending 0 to 0 and that is nowhere else defined. 

This is an example where $f p_1 \neq q_1$ as $q_1$ is defined on $[0,1]$ whereas $f p_1$ is defined only on $\{0\}$. However, as $e$ is the restriction to $\{0\}$, $f p_1 = q_1 e$ holds.
\end{example}
In order to have a better grasp on restriction pullbacks, we will expand that definition in the special case of pullbacks of total morphisms. We will heavily rely on this Lemma in the proof of Theorem \ref{thm:vertical_bundle}.
\begin{lemma}\label{lem:restriction_pullback_of_total}
    Let $A \xrightarrow{f} B \xleftarrow{g} C$ be a cospan of total morphisms in a restriction category. Let $X$ be an object and let $p_A: X \to A$ and $p_C: X \to C$ be total morphisms such that the diagram
    \[\begin{tikzcd}
    	X & A \\
    	C & B
    	\arrow["{p_A}", from=1-1, to=1-2]
    	\arrow["{p_C}"', from=1-1, to=2-1]
    	\arrow["f", from=1-2, to=2-2]
    	\arrow["g"', from=2-1, to=2-2]
    \end{tikzcd}\]
    commutes, i.e. $p_A f = p_C g$. Then the following are equivalent:
    \begin{enumerate}[(i)]
        \item For any object $Z$ with morphisms $r_A: Z \to A$, $r_C:Z \to C$ such that the diagram
        \[\begin{tikzcd}
        	Z \\
        	& X & A \\
        	& C & B
        	\arrow["{r_A}", curve={height=-6pt}, from=1-1, to=2-3]
        	\arrow["{r_C}"', curve={height=6pt}, from=1-1, to=3-2]
        	\arrow["{p_A}", from=2-2, to=2-3]
        	\arrow["{p_C}"', from=2-2, to=3-2]
        	\arrow["f", from=2-3, to=3-3]
        	\arrow["g"', from=3-2, to=3-3]
        \end{tikzcd}\]
        commutes (i.e. $r_Af = r_Cg$), there is a unique morphism $\phi: Z \to X$ with $\phi p_A = r_A$ and $\phi p_C = r_C$.
        \item The cone $(X,(q_A,q_Af,q_C))$ is the restriction pullback of the cospan $A \xrightarrow{f} B \xleftarrow{g} C$.
    \end{enumerate}
\end{lemma}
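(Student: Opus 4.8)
The plan is to prove the equivalence by unwinding Definition \ref{def:restriction_limit} in the special case where all legs and all arrows of the cospan are total. The key observation is that when $f,g,p_A,p_C$ are total, the lax cone condition $S(c)q_C \le q_D$ becomes an honest equality $p_Af = p_Cg$ (since a total morphism $h$ with $h \le k$ satisfies $h = \bar h k = k$), so the square in the statement being a commuting square of total maps already makes $(X,(p_A,p_Af,p_C))$ a lax cone — indeed a cone with total legs. Thus the only content of ``restriction pullback'' that remains to be checked is the universal property in part (ii): for every lax cone $(Z,(r_A,r_B,r_C))$ there is a unique $\phi\colon Z\to X$ with $\phi p_A = e r_A$, $\phi p_C = e r_C$, $\phi (p_Af) = e r_B$, where $e = \bar r_A\,\bar r_B\,\bar r_C$.

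First I would record that for any lax cone $(Z,(r_A,r_B,r_C))$ over the cospan, the compatibility inequalities $f r_A \le r_B$ and $g r_C \le r_B$ (from $S(c)q_C\le q_D$) give, via [R.4] and the restriction laws, the identities $\bar r_B \,r_A f = \bar r_B\, r_C g$ together with $\overline{r_A f} = \bar r_A \bar r_B$ and $\overline{r_C g} = \bar r_C \bar r_B$ — so $e r_A$ and $e r_C$ have the \emph{same} restriction idempotent $e$ and satisfy $(e r_A) f = (e r_C) g$. This is exactly the input needed to apply hypothesis (i) to the \emph{restricted} cone: set $r_A' := e r_A$ and $r_C' := e r_C$; these are maps $Z \to A$, $Z \to C$ with $r_A' f = r_C' g$, so (i) produces a unique $\phi\colon Z \to X$ with $\phi p_A = r_A'$ and $\phi p_C = r_C'$. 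One then checks $\phi(p_Af) = r_A' f$, and that $r_A' f = e r_A f = e r_B$ (using $\overline{r_Af}=\bar r_A\bar r_B$, so $e\, r_Af = \bar r_C\,\overline{r_Af}\,r_B \cdot(\dots)$ — more cleanly, $e = \overline{r_Af}\,\bar r_C$, hence $e r_A f = \overline{r_Af}\,\bar r_C\, r_Af = \bar r_C\, \overline{r_Af}\, r_B = e r_B$). Uniqueness of $\phi$ in (ii) follows from uniqueness in (i) since any competitor also satisfies the two equations $\phi p_A = e r_A$, $\phi p_C = e r_C$. That establishes (i) $\Rightarrow$ (ii).

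For the converse (ii) $\Rightarrow$ (i): given a cospan-commuting diagram with \emph{total} $r_A, r_C$ (so $r_A f = r_C g$ is a genuine cone), form the lax cone $(Z,(r_A, r_Af, r_C))$; since all three legs are total, $e = \bar r_A\,\overline{r_Af}\,\bar r_C = 1_Z$, so the restriction pullback property from (ii) yields a unique $\phi\colon Z\to X$ with $\phi p_A = r_A$, $\phi p_C = r_C$ (and automatically $\phi(p_Af) = r_Af$), which is precisely the statement of (i).

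I expect the main obstacle to be purely bookkeeping: correctly passing between the lax-cone inequalities and the restriction-idempotent identities (repeated use of [R.1]--[R.4] and the derived laws (i)--(v) of the Lemma), and in particular verifying that the restricted cone $(er_A, er_B, er_C)$ really has all three legs sharing the idempotent $e$ and satisfies $er_Af = er_B$ — this is the one place where the lax (rather than strict) nature of restriction cones genuinely enters, and it must be handled carefully so that hypothesis (i), which speaks only about cones over the cospan (two legs), can be legitimately invoked to reconstruct the three-legged restriction-limit datum.
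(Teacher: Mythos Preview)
Your $(i)\Rightarrow(ii)$ direction is essentially the paper's argument: you restrict the lax cone by the idempotent $e=\bar r_A\bar r_B\bar r_C$ and apply (i) to the resulting strict cone. The paper writes $r_A'=\bar q_C q_A$, $r_C'=\bar q_A q_C$ instead, but since $\bar q_A\le\bar q_B$ and $\bar q_C\le\bar q_B$ these agree with your $e r_A$, $e r_C$.

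There is, however, a genuine gap in your $(ii)\Rightarrow(i)$ direction. You write ``given a cospan-commuting diagram with \emph{total} $r_A,r_C$'', but condition (i) quantifies over \emph{all} morphisms $r_A,r_C$, not just total ones. This is not a harmless simplification: your own $(i)\Rightarrow(ii)$ argument invokes (i) on the maps $e r_A$ and $e r_C$, which are typically not total, so if (i) only held for total test maps the equivalence would break. The repair is the observation the paper makes: from $r_Af=r_Cg$ and the totality of $f,g$ one gets
\[
\bar r_A=\overline{r_Af}=\overline{r_Cg}=\bar r_C,
\]
so forming the lax cone $(r_A,r_Af,r_C)$ one finds $e=\bar r_A\,\overline{r_Af}\,\bar r_C=\bar r_A$, whence $e r_A=r_A$ and $e r_C=r_C$. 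The restriction pullback then yields $\phi$ with $\phi p_A=r_A$ and $\phi p_C=r_C$ on the nose, with no totality hypothesis on $r_A,r_C$ needed.
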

The key difference between Lemma \ref{lem:restriction_pullback_of_total}(i) and Definition \ref{def:restriction_limit} is that in Lemma \ref{lem:restriction_pullback_of_total}(i) equalities hold whereas in Definition \ref{def:restriction_limit} cones commute only up to inequality and $f p_C$ differs from $q_C$ by a restriction idempotent. Thereby condition (i) is easier and closer to the classical definition of a limit.
\begin{proof}
    The implication $(ii) \Rightarrow (i)$ follows from Definition \ref{def:restriction_limit} since, due to the totality of $f$ and $g$, $\bar r_A = \bar r_C$ and thus $\phi p_A = \bar r_C r_A = r_A$.

    For the implication $(i) \Rightarrow (ii)$, suppose $(i)$ and a lax cone $(Z,(q_A,q_B,q_C))$. Since $f$ is total and $(Z,(q_A,q_B,q_C))$ is a cone, we know that $q_Af \leq q_B$ which implies $\bar q_A \leq \bar q_B $ and analogously $\bar q_C \leq \bar q_B$.
    Define $r_A = \bar q_C q_A$ and $r_C = \bar q_A q_C$. Since $(Z,(q_A,q_B,q_C))$ was a lax cone, they fulfill that
    $$
    r_A f = \bar q_C q_A f = \bar q_C \bar q_A q_B = \bar q_A q_C g = r_C g .
    $$
    Thus, by the assumption $(i)$, there is a unique morphism $\phi: Z \to X$ with $\phi p_A = r_A$ and $\phi p_C = r_C$. From this it follows that $\phi$ is a unique morphism $Z \to X$ fulfilling
    $$
    \phi p_A \!=\!\bar q_C q_A \!=\! \bar q_C \bar q_B \bar q_A q_A , \quad \phi p_A f \!=\! \bar q_C q_A f \!=\! \bar q_C \bar q_B \bar q_A q_B \quad  \text{and} \quad \phi p_C \!=\! \bar q_A q_C \! = \!\bar q_C \bar q_B \bar q_A q_C .
    $$
    Thus the cone $(X,(q_A,q_Af,q_C))$ fulfills Definition \ref{def:restriction_limit} proving the implication $(i) \Rightarrow (ii)$.
\end{proof}

Using restriction limits we can define additional structures that we will use in section \ref{sec:principal_bundles_part}.
A \textbf{Cartesian restriction category} is a restriction category that has all restriction products and a restriction terminal object.
A \textbf{restriction tangent category} is a restriction category that has a tangent category structure where all the limits are restriction limits. It will be defined in more detail in Definition \ref{def:tan_res_cat}.
\subsection{Joins, atlases and gluings}
Defined in \cite{guo_joins_2012},
a \textbf{join restriction category} $\mathbb X$ is a restriction category such that for each $A,B \in X$ and each set $S\subset \mathbb X(A,B)$ of compatible morphisms there is a join (least upper bound, or supremum)
$$
\bigvee_{s \in S} s \in \mathbb X(A,B)
$$
which fulfills the following properties
\begin{enumerate}[(i)]
\item $\overline{\bigvee_{s \in S} s} = \bigvee_{s \in S} \bar s$
\item $g(\bigvee_{s \in S} s) = \bigvee_{s \in S} (gs)$
\item it is a join (supremum) with respect to the partial ordering $\leq$, i.e. 
$f_i \leq g \forall\iota\Rightarrow \bigvee f_i \leq g$
\end{enumerate}

\begin{remark}\label{rmrk:commutative_diagrams_in_restriction}
We will sometimes draw commutative diagrams in restriction categories. When we say that a diagram commutes, that means that the compositions are equal as morphisms in the restriction category. In particular, their domains of definition coincide.
For example, when the diagram 
\[
\begin{tikzcd}
	A && B \\
	C && D
	\arrow["f", from=1-1, to=1-3]
	\arrow["h"', from=1-1, to=2-1]
	\arrow["g", from=1-3, to=2-3]
	\arrow["k"', from=2-1, to=2-3]
\end{tikzcd}
\]
commutes, it means that $fg=hk$, implying that both $fg \smile hk$ and $\overline{fg} = \overline{hk}$.
\end{remark}

In classical differential geometry, manifolds are described by atlases which consist of local neighbourhoods and instructions for how to glue them together. Now in order to describe objects that are built piecewise out of multiple components in a join restriction category, we define atlases to consist of objects $(U_i)_{i \in I}$ as components and morphisms $u_{ij}$ as instructions how to glue $U_i$ and $U_j$ together. This is known as the manifold construction that was introduced by Grandis in \cite{Grandis1990CohesiveCA} and specified for join restriction categories by Cockett and Cruttwell in \cite{Cockett2014DifferentialST}.
\begin{definition}[\cite{Grandis1990CohesiveCA},\cite{Cockett2014DifferentialST}]\label{def:atlas}
In a join restriction category, an \textbf{atlas} is a collection of objects $\{U_i\}_{i \in I}$ and morphisms $\{u_{ij}\}_{i,j \in I}$ with $u_{ij}: U_i \to U_j$, fulfilling 
\begin{enumerate}[(i)]
\item $u_{ii}u_{ij} = u_{ij}: U_i \to U_j$
\item $u_{ij}u_{jk}\leq u_{ik}:U_i \to U_k$
\item $u_{ij}u_{ji} = \overline{u_{ij}}: U_i \to U_i$.
\end{enumerate}
\end{definition}
\begin{definition}[\cite{Grandis1990CohesiveCA},\cite{Cockett2014DifferentialST}] \label{def:atlas-morphism}
An atlas morphism $(U_i,u_{ij}) \to (V_k,v_{kl})$ is a family of morphisms $A_{ik}:U_i \to V_k$ such that
\begin{enumerate}[(i)]
\item $u_{ii}A_{ik} = A_{ik}:U_i \to V_k$
\item $A_{ik}v_{kk} = A_{ik}:U_i \to V_k$
\item $u_{ij}A_{jk} \leq A_{ik}: U_i \to V_k$
\item $A_{ik}v_{kl} \leq A_{il}: U_i \to V_l$
\item $A_{ik}v_{kl} =\overline{A_{ik}}A_{il}: U_i \to V_l$.
\end{enumerate}
\end{definition}
Condition \ref{def:atlas-morphism}.(iv) is actually a direct consequence of condition \ref{def:atlas-morphism}.(v). 
A trivial example of an atlas is obtained by choosing an object $U=U_0$, taking the index-set to be $\{0\}$ and $u_{00}=1_U$. We will denote this atlas as $\mathrm{At}(1_U)$

An atlas is a specification of how to glue things together. The gluing construction from \cite{Grandis1990CohesiveCA} and \cite{Cockett2014DifferentialST} makes this possible.
\begin{definition}\label{def:gluing}
Let $(U_i,u_{ij})$ be an atlas in a join restriction category $\mathbb X$. Then a \textbf{gluing} of it is an object $G$ together with an atlas morphism $g:(U_i,u_{ij}) \to \mathrm{At}(1_G)$ that fulfills a universal property: for any object $X$ and atlas morphism $f: (U_i,u_{ij}) \to \mathrm{At}(1_X)$ there is a unique morphism $\check f: G \to X$ of $\mathbb X$ such that $f_i = g_i \check f$ and if $f_i\leq f_i'$ then $\check f \leq \check f'$.
\end{definition}

Equivalently but more explicitly, the gluing can be characterized by the components of the atlas morphism $g$. Since the target has only one object, the components $A_{ik}$ (as in Definition \ref{def:atlas-morphism}) can be written as $g_i$ with only one index.

\begin{remark}\label{rmk:gluing_explicit} A gluing for $\{u_{ij}\}$ is an object $G_U$ with a family of partial isomorphisms $g_i: U_i \to G_U$ (with partial inverses $g_i^*$) such that 
\begin{enumerate}[(i)]
    \item $u_{ij}g_{j}\leq g_i$,
    \item $u_{ij} = g_ig_j^*$, and
    \item $1_{G_U} = \bigvee_i g_i^* g_i$.
\end{enumerate}
\end{remark}

In fact condition $(i)$ is redundant and follows from condition $(ii)$ since $u_{ij}g_j = g_i g_j^* g_j = g_i \bar g_j \leq g_i$.

\subsection{Tangent join restriction categories} \label{ssec.tangent-join-restcats}
The category of smooth manifolds and smooth maps was the motivating example for tangent categories. This category has total maps as morphisms. The category of smooth manifolds and smooth partial maps, defined on open subsets, is a restriction category. It is the motivating example of tangent restriction categories which encode both the local structure and the tangent structure of manifolds.
Tangent join restriction categories combine the tangent category structure from Section \ref{sec:tan_cat} with the join restriction category structure from Section \ref{sec:res_cat}.
This definition was given in \cite{Cockett2014DifferentialST}.
\begin{definition}[Definition 6.14 of \cite{Cockett2014DifferentialST} ]
\label{def:tan_res_cat} ~ \\
A \textbf{tangent restriction structure} for a join restriction category $\mathbb X$ consists of a restriction preserving functor $T: \mathbb X \to \mathbb X$ and associated total transformations such that 
\begin{itemize}
\item for each $M \in \mathbb X$, we have total morphisms $p_M: TM \to M$, restriction pullbacks $T_n(M)$ and total morphisms $+_M: T_2M \to TM, 0_M : M \to TM$ such that for each $f:M \to N$, the pair $(Tf,f)$ is an additive bundle morphism;
\item for each $n,k \in \mathbb N, T^n$ preserves the restriction pullbacks of $k$ copies of $p$.
\item there is a total natural transformation $T \xrightarrow{\ell} T^2$ such that for each $M$, the pair $(\ell_M, 0_M)$ is an additive bundle morphism from $(Tp:T^2M \to TM)$ to $(p_T: T^2M \to TM)$;
\item there is a total natural transformation $T^2 \xrightarrow{c} T^2$ such that for eacch $M$ the pair $(c_M,1)$ is an additive bundle morphism from $(Tp: T^2M \to TM)$ to $(p_T: T^2M \to TM)$;
\item we have $c^2 = 1, \ell c = \ell$ and the following diagrams commute:
\[\begin{tikzcd}[column sep=scriptsize]
	T & {T^2} && {T^3} & {T^3} & {T^3} && {T^2} & {T^3} & {T^3} \\
	{T^2} & {T^3} && {T^3} & {T^3} & {T^3} && {T^2} && {T^3}
	\arrow["\ell", from=1-1, to=1-2]
	\arrow["\ell"', from=1-1, to=2-1]
	\arrow["{\ell_T}"', from=2-1, to=2-2]
	\arrow["{T(\ell)}", from=1-2, to=2-2]
	\arrow["{T(c)}", from=1-4, to=1-5]
	\arrow["{c_T}", from=1-5, to=1-6]
	\arrow["{T(c)}", from=1-6, to=2-6]
	\arrow["{c_T}"', from=2-5, to=2-6]
	\arrow["{T(c)}"', from=2-4, to=2-5]
	\arrow["{c_T}"', from=1-4, to=2-4]
	\arrow["{\ell_T}", from=1-8, to=1-9]
	\arrow["{T(c)}", from=1-9, to=1-10]
	\arrow["{c_T}", from=1-10, to=2-10]
	\arrow["c"', from=1-8, to=2-8]
	\arrow["{T(\ell)}"', from=2-8, to=2-10]
\end{tikzcd}\]
\item the diagram
\[\begin{tikzcd}
	{T_2M} && {T^2M} \\
	M && TM
	\arrow["{\langle \pi_0 \ell , \pi_10_T \rangle T(+)}", from=1-1, to=1-3]
	\arrow["{T(p)}", from=1-3, to=2-3]
	\arrow["{0_M}"',from=2-1, to=2-3]
	\arrow["{\pi_0p = \pi_1p}"', from=1-1, to=2-1]
\end{tikzcd}\]
is a restriction pullback.
\end{itemize}
If $\mathbb X$ has restriction products and $T$ preserves them, then $(\mathbb X, T)$ is a \textbf{Cartesian tangent join restriction category}.
\end{definition}
The motivating example for a cartesian tangent join restriction category is the category of smooth manifolds and partial smooth maps. We will analyze this example further in section \ref{subsubsec:classical}.

If $(\mathbb X,T)$ is any tangent category it can be turned into a tangent restriction category by defining $\bar f = 1_A$ for all morphisms $f:A \to B$ in $\mathbb X$.

If $\mathbb X$ is any restriction category it can be turned into a tangent restriction category by choosing the tangent structure $T := 1_\mathbb X$ to be the identity.

\begin{proposition}
Let $(\mathbb X, T)$ be a tangent restriction category. Then $(\mathrm{Tot}(\mathbb X), \mathrm{Tot}(T)) $ is an ordinary tangent category.
\end{proposition}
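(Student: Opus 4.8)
The plan is to transport each component of the tangent structure on $\mathbb X$ along the inclusion $\mathrm{Tot}(\mathbb X)\hookrightarrow\mathbb X$, using two facts: that $T$, being restriction preserving, sends total maps to total maps (if $\bar f=1$ then $\overline{T(f)}=T(\bar f)=1$), so that $T$ restricts to an endofunctor $\mathrm{Tot}(T)$ of $\mathrm{Tot}(\mathbb X)$, functoriality being automatic as for $\mathrm{Tot}$ of any restriction functor; and that, by Lemma \ref{lem:restriction_pullback_of_total}, a restriction pullback of a cospan of total maps with total projections is an ordinary pullback in $\mathrm{Tot}(\mathbb X)$, with the same mediating maps (which are again total: if $p_A$ is total and $\phi p_A=r_A$ with $r_A$ total then $\bar\phi=\overline{\phi p_A}=\bar r_A=1$).

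Granting this, I would first note that the total transformations $p,0,+,\ell,c$ have total components, so they survive as natural transformations of the restricted functors in $\mathrm{Tot}(\mathbb X)$, naturality being inherited from $\mathbb X$. Next, for each $M$ and $n$ the restriction pullback $T_n(M)$ of $n$ copies of $p_M\colon TM\to M$ has total projections, hence is a genuine pullback in $\mathrm{Tot}(\mathbb X)$ by Lemma \ref{lem:restriction_pullback_of_total}, so the pullback powers of $p_M$ exist there; moreover the mediating maps agree with those produced by the restriction-pullback universal property in $\mathbb X$, so the functors $T_n$, the pairings occurring in the additive-bundle axioms, and all other induced maps are literally the same in $\mathbb X$ and in $\mathrm{Tot}(\mathbb X)$, and the hypothesis that $T^n$ preserves the restriction pullbacks of $k$ copies of $p$ becomes exactly the statement that $\mathrm{Tot}(T)^n$ preserves those pullbacks. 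Every remaining clause of Definition \ref{def:tan_cat} --- the associativity, commutativity and unit diagrams making each $p_M$ an additive bundle; the additive-bundle-morphism conditions for $(Tf,f)$, $(\ell_M,0_M)$ and $(c_M,1)$; the equations $c^2=1$ and $\ell c=\ell$; and the three coherence diagrams for $\ell$ and $c$ --- is a diagram of total maps commuting in $\mathbb X$, hence commuting in $\mathrm{Tot}(\mathbb X)$.

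The only clause phrased differently in Definition \ref{def:tan_res_cat} is the universality of the vertical lift: there it is the requirement that $\langle\pi_0\ell,\pi_1 0_T\rangle T(+)$ exhibit $T_2M$ as the restriction pullback of $T(p)\colon T^2M\to TM$ along $0_M$, whereas Definition \ref{def:tan_cat} wants it to equalize $T(p)$ and $T(p)p0$. I would bridge this with the elementary observation that for a split mono $m$ with retraction $r$, the pullback of any $f$ along $m$ coincides with the equalizer of $f$ and $frm$: a cone $(z,w)$ over such a pullback is forced to satisfy $w=zfr$, so these cones are precisely the forks $zf=zfrm$. Applying this with $m=0_M$, $r=p_M$ (so $0_Mp_M=1_M$ by the additive-bundle unit law) and $f=T(p)$, and invoking Lemma \ref{lem:restriction_pullback_of_total} once more to view the restriction pullback of Definition \ref{def:tan_res_cat} as an ordinary pullback in $\mathrm{Tot}(\mathbb X)$, produces exactly the equalizer demanded by Definition \ref{def:tan_cat} (with equalizing map $\langle\pi_0\ell,\pi_1 0_T\rangle T(+)$).

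The main obstacle --- and essentially the only step that is not routine bookkeeping --- is this equalizer-versus-pullback-along-a-split-mono identification, together with the systematic use of Lemma \ref{lem:restriction_pullback_of_total} to descend the restriction pullbacks $T_n(M)$ and all the mediating maps from $\mathbb X$ to $\mathrm{Tot}(\mathbb X)$; once those are in hand, the verification of each tangent-category axiom is immediate because every map involved is total and every equation already holds in $\mathbb X$.
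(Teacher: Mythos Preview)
Your proof is correct and follows essentially the same approach as the paper: restrict $T$ and the total transformations $p,0,+,\ell,c$ to $\mathrm{Tot}(\mathbb X)$, then use that restriction limits of total maps are ordinary limits in $\mathrm{Tot}(\mathbb X)$ (the paper cites \cite{cockett_lack_2007}, \S4.4, for this rather than Lemma~\ref{lem:restriction_pullback_of_total}). You are actually more careful than the paper on one point: the paper's proof simply asserts that ``the universality of the vertical lift holds because restriction limits of total morphisms in $\mathbb X$ become limits in $\mathrm{Tot}(\mathbb X)$'' without addressing that Definition~\ref{def:tan_cat} phrases this as an equalizer while Definition~\ref{def:tan_res_cat} phrases it as a pullback along $0_M$; your pullback-along-a-split-mono argument correctly fills that gap.
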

\begin{proof}
Let $(\mathbb X, T, p, 0, + , \ell, c)$ be a tangent restriction category.
We proceed by checking that $\tot(\mathbb X), \tot (T), , p, 0, + , \ell, c)$ fulfills the axioms in Definition \ref{def:tan_cat}. As $T$ was a restriction functor, $\tot (T)$ is an endofunctor $\tot(\mathbb X) \to \tot(\mathbb X)$. Since $p,0,+,\ell, c$ were total morphisms in $\mathbb X$ they are morphisms in $\tot (\mathbb X)$. For every object $X \in \mathrm{Obj}(\mathbb X)= \mathrm{Obj}(\tot (\mathbb X))$ pullback powers of $\tot(T)(X) \xrightarrow{p} X$ exist and are given by
$
\tot(T)_n(X) = T_n(X)
$
because restriction limits of total morphisms are limits in $\tot(\mathbb X)$ according to section 4.4 of \cite{cockett_lack_2007}. All the equalities hold in $\tot(\mathbb X)$ as they hold in $\mathbb X$. The universality of the vertical lift holds because restriction limits of total morphisms in $\mathbb X$ become limits in $\tot (\mathbb X)$.
\end{proof}

Interestingly we did not need to ask for the tangent functor to be compatible with joins, only with restrictions. The reason is the following lemma which says that compatibility with the restriction implies compatibility with the join for a tangent structure. We will use it later in Section \ref{sec:vertical_bundle}.

\begin{lemma}\label{lem:join_and_tangent}~
\begin{enumerate}[(i)]
    \item 
    Let $(\mathbb X , T , p , 0, \ell , c)$ be a Cartesian tangent join restriction category. Then for any parallel morphisms $f,g$ in $\mathbb X$, $f \leq g$ if and only if $T(f) \leq T(g)$ .
    \item Let $(\mathbb X , T , p , 0, \ell , c)$ be a Cartesian tangent join restriction category. Then for any parallel morphisms $f,g$ in $\mathbb X$,
    $f \smile g$ if and only if $T(f) \smile T(g)$.
    \item Let $(\mathbb X , T , p , 0, \ell , c)$ be a Cartesian tangent join restriction category and let $(f_i)_{i \in I}$ be a compatible family of morphisms in $\mathbb X$. Then
    $$
    \bigvee_{i \in I} T(f_i) =  T\left(\bigvee_{ i \in I}f_i \right).
    $$
\end{enumerate}
\end{lemma}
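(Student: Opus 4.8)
The plan is to prove the three statements in order, using (i) and (ii) as stepping stones toward (iii). For part (i), the key observation is that $f \leq g$ in a restriction category is equivalent to $\bar f g = f$, or equivalently (using the partial-order characterization) that $g$ agrees with $f$ on the domain of $f$. Since $T$ is a restriction functor, $T(\bar f) = \overline{T(f)}$, so applying $T$ to $\bar f g = f$ immediately yields $\overline{T(f)}\, T(g) = T(f)$, i.e. $T(f) \leq T(g)$; this gives the forward direction. For the converse, I would use the projection $p$: from $T(f) \leq T(g)$ and naturality of $p$ (which gives $T(f)\, p = p\, f$ and $T(g)\, p = p\, g$), together with the fact that $p$ is total and $0$ is a section of $p$, one can recover $f \leq g$. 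Concretely, precomposing with $0$ and using naturality $0\, T(f) = f\, 0$ should let one transport the inequality down from $TM$ to $M$; the restriction-idempotent bookkeeping here is the only slightly fiddly part.

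For part (ii), recall that $f \smile g$ means $\bar f g = \bar g f$. The forward direction again follows directly by applying the restriction functor $T$: $T(\bar f g) = T(\bar g f)$ becomes $\overline{T(f)}\, T(g) = \overline{T(g)}\, T(f)$. For the converse, I would again descend along $p$ and $0$ as in part (i), using naturality of $p$ and $0$ and totality of $p$. Alternatively — and perhaps more cleanly — one can deduce (ii) from (i) using the general fact (Lemma, part (v), and standard restriction-category theory) that compatibility of $f$ and $g$ is equivalent to the existence of a common lower bound, namely $f \wedge g$ exists and $f, g$ are both $\geq$ it; if the paper has meets of compatible-bounded families available this routes through (i) directly, but the direct computation with $p$ and $0$ is self-contained and I would prefer it.

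For part (iii), once we know $T$ reflects and preserves compatibility (part (ii)), the family $(T(f_i))_{i \in I}$ is compatible, so $\bigvee_i T(f_i)$ exists. The inequality $\bigvee_i T(f_i) \leq T(\bigvee_i f_i)$ is immediate from part (i): each $f_i \leq \bigvee_j f_j$, hence $T(f_i) \leq T(\bigvee_j f_j)$, and the join is the least upper bound. For the reverse inequality I would exploit the equalizer/restriction-pullback structure of the tangent axioms, but the cleaner route is: since $p$ is total and $0$ is a section, the morphism $T(\bigvee_i f_i)$ is determined on its domain, and one shows $\overline{T(\bigvee_i f_i)} = \overline{\bigvee_i T(f_i)}$ by computing $\overline{T(\bigvee_i f_i)} = T\big(\overline{\bigvee_i f_i}\big) = T\big(\bigvee_i \bar f_i\big)$ and reducing to the statement for restriction idempotents, then observing both sides have the same restriction and are bounded above by $T(\bigvee_i f_i)$, forcing equality. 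The genuinely load-bearing step is handling $T$ of a join of restriction idempotents: this is where one must use that the tangent structure interacts with the join structure at all, and the trick is that a restriction idempotent $e$ on $M$ pulls back (via $p$) and pushes forward (via $0$) compatibly, so $T(e)$ is pinned down by $p_M^{-1}$ of the subobject $e$ together with the section. I expect this restriction-idempotent case to be the main obstacle; everything else is formal manipulation with [R.1]–[R.4], naturality of $p$ and $0$, and the join axioms (i)–(iii).
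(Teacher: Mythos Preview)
Your approach to (i) and (ii) is essentially the paper's. The forward directions are identical; for the converse in (i) the paper does exactly what you sketch, writing it as the one-line identity $\bar f g = 0\,T(\bar f g)\,p = 0\,\overline{T(f)}\,T(g)\,p = 0\,T(f)\,p = f$, using $0\,T(h)\,p = h$ (naturality of $0$ and $0p=1$). Part (ii) is then literally ``analogous to (i)'' in the paper.

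For (iii) you have the easy inequality $\bigvee_i T(f_i)\le T(\bigvee_i f_i)$ and you correctly identify that it suffices to show the two sides have the same restriction idempotent. Where you diverge from the paper is in how you propose to compute that restriction. You write $\overline{T(\bigvee_i f_i)} = T(\bigvee_i \bar f_i)$ and then say the ``load-bearing step'' is handling $T$ of a join of restriction idempotents. This is an unnecessary detour: you have reduced (iii) to a special case of (iii), and your proposed resolution (``$T(e)$ is pinned down by $p_M^{-1}$ of the subobject $e$'') is left vague.

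The paper's move is cleaner and is in fact the precise version of your pullback intuition: since $p$ is \emph{total} and \emph{natural}, for any $h$ one has $\overline{T(h)} = \overline{T(h)\,p} = \overline{p\,h}$. This single observation lets you compute
\[
\overline{T\Big(\bigvee_i f_i\Big)} \;=\; \overline{p\,\bigvee_i f_i} \;=\; \bigvee_i\,\overline{p\,f_i} \;=\; \bigvee_i\,\overline{T(f_i)\,p} \;=\; \overline{\bigvee_i T(f_i)},
\]
using only the join axioms and naturality/totality of $p$, with no separate idempotent case needed. So your plan is not wrong, but the ``main obstacle'' you anticipate dissolves once you use $p$ to transport the restriction computation down to the base rather than trying to push $T$ through the join directly.
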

\begin{proof}~
\begin{enumerate}[(i)]
    \item If $f\leq g$, that means that $\bar f g = f$. Thus, applying $T$ to the equation we obtain that 
    $$
    \overline{T(f)}T(g) = T(\bar f)T(g) = T(\bar f g) = T(f)
    $$
    which means that $T(f) \leq T(g)$. Conversely, if $T(f)\leq T(g)$, that means that $\overline{T(f)}T(g) = T(f)$. Due to naturality of $p$, the equation $f = 0T(h)p$ holds for any morphism $h$. Thus we obtain that
    $$
    \bar f g = 0 T(\bar f g) p = 0 \overline{T(f)} T(g) p = 0 T(f) p = f
    $$
    which means that $f \leq g$.
    \item This is analogous to (i).
    \item Due to (i), $f_i \leq \bigvee_{i \in I} f_i$ implies that $F(f_i) \leq F (\bigvee_{i \in I} f_i)$. Thus, since the join is a supremum, this means that 
    $$
    \bigvee_{i \in I} F(f_i) \leq F \left(\bigvee_{i \in I} f_i\right).
    $$
    Next we will use that the projection $p:T \to 1_\mathbb X$ is a natural transformation with total components, thus post-composition with it does not change the restriction idempotent. Thus,
    $$
    \overline{F\left(\bigvee_{i \in I} f_i \right)} = \overline{F\left(\bigvee_{i \in I} f_i \right)p} = \overline{p \bigvee_{i \in I} f_i} = \bigvee_{i \in I} \overline{p f_i} = \bigvee_{i \in I} \overline{F(f_i) p} = \overline{\bigvee_{i \in I} F(f_i)}
    $$
    holds. This implies the required equation.
\end{enumerate}
\end{proof}
Note that the inequality $\bigvee_{i \in I} F(f_i) \leq F \left(\bigvee_{i \in I} f_i\right).$ holds for any restriction preserving functor, while equality requires the projection natural transformation.

\section{The tangent bundle of a group object}\label{sec:The tangent bundle of a group object}
In classical Lie theory, the tangent bundle of a Lie group $G=(G,\cdot, u)$ is trivial in the sense that $T(G) = G \times \mathfrak{g}$, where $\mathfrak g = T(G)_u$ is the Lie algebra of $G$.
In this section we will show that the tangent bundle of a group object in any tangent category is also trivial. This recovers the classical result, but is more general, since it now holds in many different categories.

While Section \ref{sec:principal_bundles_part} is about join restriction categories, this section does not assume join restriction structures. The results pertain to Cartesian tangent categories in general.
Let us start by stating the definition of a group object:
\begin{definition}\label{def:group_object}
A \textbf{group object} $(G,m,u,\iota)$ in a Cartesian category $\mathbb X$ is an object $G$ together with morphisms $m : G \times G \to G$, $u: 1 \to G$ and $\iota: G \to G$ (where $1$ is the terminal object) such that
\begin{enumerate}[(i)]
    \item $(m\times 1_G)m = (1_G \times m) m \quad :\quad  G \times G \times G \to G$
    \item $\langle !u , 1_G \rangle m = 1_G = \langle 1_G, !u \rangle m \quad :\quad  G \to G$
    \item $\langle \iota,1_G \rangle m = !u = \langle 1_G,\iota \rangle m  \quad :\quad  G \to G$,
\end{enumerate}
where $!:A \to 1$ is the unique morphism to the terminal object and $\langle f, g\rangle$ is the induced morphism into a pullback or product. In the special case of morphisms between products, we use the notation $f \times g := \langle \pi_0 f, \pi_1 g\rangle $.
\end{definition}
While a group object consists of $G$, $m$, $u$ and $\iota$  we often denote the group object $(G,m,u,\iota)$ by $G$ for brevity.
The concept of a group object is used in most areas of mathematics.
In the category of sets and functions, groups are exactly the group objects.
In the category of algebraic varieties and algebraic maps, algebraic groups are exactly the group objects.
In the category of topological spaces with continuous maps, topological groups are exactly the group objects.
In the category of smooth manifolds with smooth maps, Lie groups are exactly the group objects.

\begin{lemma}\label{lemma:tangent_group}
If $G$ is a group object in a Cartesian tangent category $(\mathbb X, T)$, $T(G)$ is also group object.
\end{lemma}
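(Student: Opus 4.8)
The plan is to use the functoriality of $T$ together with the fact that $(\mathbb X, T)$ is \emph{Cartesian}, so that the natural isomorphism $\alpha_{X,Y}: T(X) \times T(Y) \to T(X \times Y)$ is available. The idea is that applying $T$ to the group structure maps of $G$ and correcting by $\alpha$ produces group structure maps on $T(G)$. Explicitly, I would define the multiplication on $T(G)$ to be $\alpha_{G,G}\, T(m): T(G) \times T(G) \to T(G)$, the unit to be $T(u): T(1) \cong 1 \to T(G)$ (using that $T$ preserves the terminal object, which holds in a Cartesian tangent category since $\times$ with the empty product is part of the strong morphism structure), and the inverse to be $T(\iota): T(G) \to T(G)$.

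The key steps, in order, are: first, record the compatibility of $\alpha$ with the projections and with iterated products, i.e. that $\alpha_{G,G}\times 1$ composed with $\alpha_{G\times G, G}$ agrees with $1\times \alpha_{G,G}$ composed with $\alpha_{G,G\times G}$ after suitably reassociating, which is exactly the coherence making $(\times,\alpha)$ a monoidal/strong morphism of tangent categories. Second, apply $T$ to each of the three group axioms in Definition \ref{def:group_object}. For associativity, apply $T$ to $(m\times 1_G)m = (1_G\times m)m$ and insert $\alpha$'s; the naturality of $\alpha$ rewrites $T(m\times 1_G)$ as $(\alpha_{G,G}^{-1}\times 1)\cdots$ and the coherence from the first step lets one match both sides. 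For the unit axioms one similarly applies $T$ to $\langle !u,1_G\rangle m = 1_G$, using naturality of $\alpha$ and the fact that $T$ sends $!: G\to 1$ to $!: T(G)\to T(1)\cong 1$, to get $\langle !\,T(u), 1_{T(G)}\rangle\,(\alpha_{G,G}T(m)) = 1_{T(G)}$; the inverse axiom is handled the same way with $T(\iota)$. Third, conclude that $(T(G), \alpha_{G,G}T(m), T(u), T(\iota))$ satisfies Definition \ref{def:group_object}.

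The main obstacle will be the bookkeeping around $\alpha$: one must be careful that the various instances $\alpha_{G,G}$, $\alpha_{G\times G,G}$, $\alpha_{G,G\times G}$ are threaded through correctly and that the associativity coherence of $\alpha$ (as part of the data of a Cartesian tangent category) is invoked in precisely the right form, since $T$ does not strictly preserve products. A clean way to avoid drowning in diagrams is to observe that $F := T$ together with $\alpha$ is, by the definition of Cartesian tangent category, a \emph{finite-product-preserving} functor up to the coherent isomorphism $\alpha$, and any such functor sends group objects to group objects — this is a standard fact about product-preserving functors and internal algebraic structures, and the proof is just the diagram chase sketched above. So the real content is simply: a Cartesian tangent category has $T$ preserving finite products up to coherent iso, hence $T$ preserves group objects.
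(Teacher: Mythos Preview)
Your proposal is correct and matches the paper's proof: it defines the multiplication as $\alpha_{G,G}\,T(m)$, the unit via $1 \cong T(1) \xrightarrow{T(u)} T(G)$, and the inverse as $T(\iota)$, and then observes that the group axioms follow from functoriality of $T$ together with the product-preservation isomorphism $\alpha$. The paper compresses the verification into a single sentence (``these data satisfy the required identities as $T$ is functorial''), whereas you spell out the coherence bookkeeping, but the argument is the same.
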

\begin{proof}
Since $(\mathbb X, T)$ is a Cartesian tangent category, $\langle T(\pi_0), T(\pi_1)\rangle : T(G \times G) \to T(G) \times T(G)$ is an isomorphism. The multiplication is
$
T(G) \times T(G) \overset{\langle T(\pi_0), T(\pi_1)\rangle}\cong T(G \times G) \xrightarrow{T(m)} T(G),
$
the unit is given by
$
1 \overset{0_1}{\cong} T(1) \xrightarrow{T(u)} T(G),
$
and the inverse is given by 
$
T(G) \xrightarrow{T(\iota)} T(G) .
$
These data satisfy the required identities as $T$ is functorial.
\end{proof}

\subsection{Trivializing the tangent bundle}\label{sec:trivializing_the_tangent_bundle}
The purpose of this section is to define an isomorphism between $T(G)$ and $G \times T(G)_u$. Here $T(G)_u$ is the pullback
\[\begin{tikzcd}
	{T(G)_u} && {T(G)} \\
	1 && G
	\arrow["{p_u^*}", from=1-1, to=1-3]
	\arrow["{!}"', from=1-1, to=2-1]
	\arrow["\lrcorner"{anchor=center, pos=0.125}, draw=none, from=1-1, to=2-3]
	\arrow["p", from=1-3, to=2-3]
	\arrow["u"', from=2-1, to=2-3]
\end{tikzcd}\]
which we require to exist.

While the first component of the isomorphism is simply the projection $p:T(G) \to G$, the second component is defined using the universal property of the pullback defining $T(G)_u$. The idea of the second component is to start with a tangent vector $v_g \in T(G)$ over the basepoint $g$ and use the group multiplication with $g^{-1}$ to move the tangent vector $v_g$ over the unit. Concretely, the multiplication $T(m)$ multiplies $p\iota0$ of a tangent vector to the tangent vector, as shown in the diagram below.

\[\begin{tikzcd}
	&& {T(G)} \\
	&&& {G \times T(G)} \\
	&&&& {G \times T(G)} \\
	&& {T(G)_u} &&& {T(G) \times T(G)} \\
	1 &&&& {T(G)} \\
	&& G
	\arrow["{\langle p,1_{T(G)}\rangle}", from=1-3, to=2-4]
	\arrow["{\langle p \iota0,1\rangle T(m)}"{pos=0.7}, dashed, from=1-3, to=4-3]
	\arrow[from=1-3, to=5-1]
	\arrow["{\iota \times 1_{T(G)}}", from=2-4, to=3-5]
	\arrow["{0 \times 1_{T(G)}}", from=3-5, to=4-6]
	\arrow[from=4-3, to=5-1]
	\arrow[from=4-3, to=5-5]
	\arrow["\lrcorner"{anchor=center, pos=0.125, rotate=-45}, draw=none, from=4-3, to=6-3]
	\arrow["{T(m)}", from=4-6, to=5-5]
	\arrow["u"', from=5-1, to=6-3]
	\arrow["p", from=5-5, to=6-3]
\end{tikzcd}\]
Together they form the morphism 
$$
\phi = \langle p, \langle !, \langle p \iota 0, 1 \rangle  T(m)\rangle \rangle : T(G) \to G \times T(G)_u.
$$

\begin{theorem}\label{thm:group_tangent}
Suppose $G$ is a group object in a Cartesian tangent category such that the pullback $T(G)_u$
exists. Then 
$
T(G) \cong G \times T(G)_u
$.
The isomorphism is given as
$$
\phi = \langle p, \langle !, \langle p \iota 0, 1 \rangle  T(m)\rangle \rangle : T(G) \to G \times T(G)_u
$$
and its inverse is
$$
\phi^{-1} = ( 0 \times p^*_u ) T(m) : G \times T(G)_u \to T(G).
$$
\end{theorem}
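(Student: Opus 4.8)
The plan is to verify that $\phi$ and $\phi^{-1}$ as defined are mutually inverse by direct computation in the Cartesian tangent category, using only functoriality of $T$, the group axioms from Definition \ref{def:group_object}, naturality of $p$ and $0$, and the compatibility of $\times$ with $T$ (so that $\langle T(\pi_0),T(\pi_1)\rangle$ is an iso and intertwines the obvious things). First I would check $\phi^{-1}\phi = 1_{T(G)}$. Composing, the first component of $\phi^{-1}\phi$ is $(0\times p_u^*)T(m)\,p$; since $(p,!,p)$-type naturality gives $T(m)p = (p\times p)m$ on the relevant pullback (more precisely $T(m)p = \langle T(\pi_0)p, T(\pi_1)p\rangle m$ after transporting along the product iso), and $0p = 1$, this collapses to $\langle 1_G, (\text{basepoint of the }T(G)_u\text{-component})\rangle m$. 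The $T(G)_u$-component of $\phi$ is $\langle p\iota 0,1\rangle T(m)$, whose basepoint is obtained by post-composing with $p$: using naturality of $p$ again, $\langle p\iota 0,1\rangle T(m)p = \langle p\iota 0, 1\rangle (p\times p) m = \langle p\iota, p\rangle m = p\langle \iota,1_G\rangle m = p\,!\,u$, which is the basepoint over $u$ as required — so the first component of $\phi^{-1}\phi$ is $\langle p, p\iota\rangle m \cdot(\text{reassembled})$... the cleanest route is to observe $\phi^{-1}\phi$ has first component $p\cdot$(something) and show it equals $p$ by the group axiom $\langle 1_G,\iota\rangle m\cdot(\ldots)$, then handle the tangent part.

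More efficiently, I would compute $\phi^{-1}\phi$ in one stroke. We have $\phi^{-1} = (0\times p_u^*)T(m)$, so $\phi\phi^{-1}$-style expansions aside, $\phi^{-1}\phi = (0\times p_u^*)T(m)\cdot\langle p, \langle !, \langle p\iota 0,1\rangle T(m)\rangle\rangle$. Post-composing with $p$ and with $p_u^*$ (the two "legs" of the target after unfolding $T(G)_u$) and using functoriality $T(m)T(m)$-type associativity coming from group axiom (i), plus $0p = 1$, $T(0)T(p) = 1$, and naturality, everything reduces to the group identities $\langle 1_G,\iota\rangle m = u\,!$ and $\langle\iota,1_G\rangle m = u\,!$ applied inside $T$. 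The key algebraic fact I would isolate and prove as a sub-step is that
\[
\langle p\iota 0, 1_{T(G)}\rangle T(m)\cdot \langle p 0, 1\rangle T(m) = \langle p 0, 1\rangle T(m),
\]
or rather the cancellation $\langle p\iota0,1\rangle T(m)$ followed by multiplication back by $0$ over the original basepoint returns the identity; this is exactly the image under $T$ of "multiply by $g^{-1}$, then multiply by $g$", i.e. of $\langle p\iota, 1_G\rangle\cdot(p0\times\ldots)$, collapsing via associativity and $\langle \iota,1_G\rangle m = u\,!$ together with the unit axiom (ii). For the reverse composite $\phi\phi^{-1} = 1_{G\times T(G)_u}$, since the target is a product it suffices to check the two components; the $G$-component uses $0p = 1_G$ and the unit axiom, and the $T(G)_u$-component is checked by post-composing with the mono $p_u^*: T(G)_u \to T(G)$ (legitimate since $p_u^*$ is the pullback leg along which $T(G)_u$ embeds, and $p_u^*$ is monic because $u$ is — or one just uses the universal property of the pullback directly), then running the same "$g^{-1}$ then $g$" cancellation the other way.

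The main obstacle I expect is bookkeeping with the product iso $\langle T(\pi_0),T(\pi_1)\rangle: T(G\times G)\xrightarrow{\sim} T(G)\times T(G)$ and the pullback $T(G)_u$: one must be careful that $T(m): T(G\times G)\to T(G)$ is being precomposed with the correct "pairing" and that, when a composite is claimed to factor through $T(G)_u$, its composite with $p$ genuinely lands on $u$ — this is the naturality-of-$p$ computation $\langle p\iota 0,1\rangle T(m)p = p\langle\iota,1_G\rangle m = p\,u\,!$ flagged above, and one needs $!$ to be handled correctly since $T(G)_u$ sits over $1$. None of the individual steps is deep, but there is real risk of sign/order errors in the pairings, so I would set up notation once (writing $\tilde m := \langle T(\pi_0),T(\pi_1)\rangle^{-1}T(m)$ for the multiplication on $T(G)$ from Lemma \ref{lemma:tangent_group}) and then verify the two triangle identities using: (a) functoriality of $T$, (b) the group axioms, (c) $0p = 1$, $p0 = 0T(p)$, $0T(0)=0_T 0$ naturality squares, and (d) the fact that $0$ followed by $T$ of a group map behaves like the original group map on basepoints. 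Once $\tilde m$ is in play, $\phi^{-1} = (0\times p_u^*)\tilde m\cdot\langle T(\pi_0),T(\pi_1)\rangle^{-1}$... — in fact it is cleaner to absorb the iso into the statement — and both checks become formal consequences of $G$ being a group object and $T(G)$ being the group object of Lemma \ref{lemma:tangent_group}, mirroring the classical proof that $G\times\mathfrak g\to TG$, $(g,v)\mapsto (T_e L_g)(v)$, is a diffeomorphism.
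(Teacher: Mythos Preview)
Your proposal takes essentially the same approach as the paper: verify that $\langle p\iota 0,1\rangle T(m)$ factors through $T(G)_u$ via the naturality-of-$p$ computation, then check both composites $\phi\phi^{-1}$ and $\phi^{-1}\phi$ are identities by direct expansion using the group axioms (associativity, unit, inverse) and the fact that $u0 = 0T(u)$ is the unit for $T(m)$. The paper executes exactly this, component by component, with the same ingredients you list.

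One small wobble: your displayed ``key algebraic fact'' $\langle p\iota 0, 1\rangle T(m)\,\langle p0,1\rangle T(m) = \langle p0,1\rangle T(m)$ is not quite the right identity (after the first map the basepoint is $u$, so the second left-multiplication is by $u$, and the composite equals $\langle p\iota 0,1\rangle T(m)$ rather than $\langle p0,1\rangle T(m)$). You immediately self-correct to the right intuition (``multiply by $g^{-1}$, then by $g$''), and the paper's actual manipulation is the associativity regrouping $\langle p0, \langle p\iota 0,1\rangle T(m)\rangle T(m) = \langle \langle p0, p\iota 0\rangle T(m), 1\rangle T(m) = \langle p\langle 1,\iota\rangle m\,0,1\rangle T(m) = \langle !u0,1\rangle T(m) = 1$. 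Also, there is no need to invoke monicity of $p_u^*$ (which would require $u$ monic, not assumed); as you note parenthetically, the universal property of the pullback suffices, and that is what the paper uses.
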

\begin{proof}
The morphism $\langle p \iota 0 , 1 \rangle T(m): T(G) \to T(G)$ satisfies 
$$
\langle p \iota 0 , 1 \rangle T(m) p = \langle p \iota 0 , 1 \rangle p m =
\langle p \iota 0 p , p \rangle m = p \langle \iota , 1 \rangle m = p ! 0 = !0
$$
and therefore it induces a morphism $\langle ! , \langle p \iota 0, 1 \rangle  T(m)\rangle : T(G) \to T(G)_u$. The composition $p! = !$, because it is the unique morphism from $T(G)$ to the terminal object $1$.
The candidate for its inverse is
$$
( 0 \times p^*_u ) T(m) : G \times T(G)_u \to T(G)
$$
where $p^*_u$ is the pullback of $p:T(G)\to G$ along $u: 1 \to G$ defining $T(G)_u$:
\[\begin{tikzcd}
	{T(G)_u} & {T(G)} \\
	{1} & G
	\arrow["{!}"', from=1-1, to=2-1]
	\arrow["{p^*_u}", from=1-1, to=1-2]
	\arrow["u"', from=2-1, to=2-2]
	\arrow["p", from=1-2, to=2-2]
\end{tikzcd}\]
Now we need to check that they are inverse to each other by checking that both ways of composing yield the identity morphism. First, we have
\begin{align*}
\langle p, \langle ! , \langle p \iota 0, 1 \rangle  T(m)\rangle \rangle ( 0 \times p^*_u ) T(m) &= \langle p0, \langle p \iota 0, 1 \rangle  T(m) \rangle T(m) 
\\
\text{(by associativity)} \qquad  
&= \langle \langle p0, p \iota 0 \rangle T(m), 1\rangle T(m)
\\
&= \langle p \langle 1, \iota \rangle m 0 , 1 \rangle T(m)
\\
&= \langle p ! u 0, 1 \rangle T(m)\\
&= 1,
\end{align*}
where the last equality holds because $u 0 = 0 T(u)$ by naturality of $0$ and this is the unit for the multiplication $T(m)$ from Lemma \ref{lemma:tangent_group}. On the other hand, 

\begin{align*}
&( 0 \times p^*_u ) T(m) \langle p, \langle !, \langle p \iota 0, 1 \rangle  T(m)\rangle \rangle 
\\
&=  \langle ( 0 \times p^*_u ) T(m) p ~ , ~   \langle !, \langle ( 0 \times p^*_u ) T(m) p \iota 0, ( 0 \times p^*_u ) T(m) \rangle  T(m)\rangle \rangle
\\
\text{(by naturality of $p$)} \qquad
&= \langle ( 0 p \times p^*_u p ) m ~ , ~  \langle !, \langle ( 0 p \times p^*_u p ) m \iota 0, ( 0 \times p^*_u ) T(m) \rangle  T(m)\rangle \rangle
\\
&= \langle ( 1 \times ! u ) m ~ , ~  \langle ! \langle ( 1 \times ! u ) m \iota 0, ( 0 \times p^*_u ) T(m) \rangle  T(m) \rangle \rangle
\\
&= \langle \pi_0 ~ , ~  \langle !, \langle \pi_0 \iota 0, ( 0 \times p^*_u ) T(m) \rangle  T(m)] \rangle
\\
\text{(by associativity)} \qquad
&=\langle \pi_0 ~ , ~  \langle !, \langle  \langle \pi_0 \iota 0, \pi_0 0 \rangle T(m), \pi_1 p^*_u \rangle T(m)\rangle \rangle
\\
&=\langle \pi_0 ~ , ~  \langle !, \langle  \pi_0 ! u 0, \pi_1 p^*_u \rangle T(m)\rangle \rangle
\\
&=\langle \pi_0 ~ , ~  \langle !, p_u^* \langle  \pi_0 ! u 0, \pi_1 p_u^* \rangle T(m)\rangle \rangle
\\
&= \langle \pi_0, \pi_1 \rangle = 1.
\end{align*}
Here again the last equality holds as $u 0 = 0 T(u)$ by naturality of $0$ and this is the unit for the multiplication $T(m)$ from Lemma \ref{lemma:tangent_group}.

As the first component of the isomorphism $\langle p, \langle ! , \langle p \iota 0, 1 \rangle  T(m)\rangle \rangle$ is $p$, it follows directly that 
$$
\langle p, \langle ! , \langle p \iota 0, 1 \rangle  T(m)\rangle \rangle \pi_0 = p
$$
and therefore $p$ corresponds to $\pi_0$.
\end{proof}
\subsection{Structure maps of the tangent bundle of a group objects}
We will now continue to analyze the behaviour of $G$ and $T(G)_u$. In particular, $G$ acts on $T(G)_u$  in a way that recovers the adjoint action of a Lie group on its Lie algebra in differential geometry. In the classical case in which $G$ is a Lie group, it is given by sending a group element $g \in G$ and a vector $v \in T(G)_u$ to $g\cdot v \cdot g^{-1}$. This provides intuition for what follows.

Recall that Definition \ref{def:group_object} guarantees that the multiplication $m: G \times G \to G$ is associative. Thus, we denote for $k \in \mathbb N^+$ the unique morphism from $G^k$ to $G$ multiplying all components together as  $m_k  : G^k \to G $. One expression of $m_k$ as a composition is $(m \times 1_G \times ... \times 1_G) ... m$, but any other composition of $m$ that leads to a morphism $G^k \to G$ is the same morphism.

\begin{definition}\label{def:adjoint_actions}~
$$
\mathrm{Ad}_\ell : G \times T(G)_u \to T(G)_u
$$
is defined as the morphism
$$
\langle ! , \langle \pi_0 0_G , \pi_1 p^*_u, \pi_0 \iota 0 \rangle T(m_3) \rangle
$$
which is the induced morphism into the pullback 
\[\begin{tikzcd}
	{G\times T(G)_u} \\
	& {T(G)_u} & {T(G)} \\
	& {1} & G
	\arrow["{!}"', from=2-2, to=3-2]
	\arrow["{p^*_u}"', from=2-2, to=2-3]
	\arrow["u"', from=3-2, to=3-3]
	\arrow["p", from=2-3, to=3-3]
	\arrow["\lrcorner"{anchor=center, pos=0.125}, draw=none, from=2-2, to=3-3]
	\arrow["{!}"', from=1-1, to=3-2]
	\arrow["{\langle \pi_0 0 , \pi_1p_u^*, \pi_0 \iota 0\rangle T(m_3)}", from=1-1, to=2-3]
\end{tikzcd},\]
where $m_3= (m \times 1) m: G \times G \times G \to G$ is the multiplication of three group elements in $G$.

The \textbf{adjoint right group action} is adjoint action with the inverse of a group element, i.e. it is defined by
$$\Adj_r =
\langle ! , \langle \pi_1 \iota 0 , \pi_0 p^*_u, \pi_1 0 \rangle T(m_3) \rangle : T(G)_u \times G \to T(G)_u.
$$
\end{definition}
Here the $\ell$ and $r$ in the index keep track which of them is a left- and which of them is a right-action. The actions $\Adj_r$ and $\Adj_\ell$ are the generalizations of $g v g^{-1}$ and $g^{-1} v g$ in the classical case.

In order to see that Definition \ref{def:adjoint_actions} is well-defined we need to check that it actually induces a morphism into the pullback, 
that is $\langle \pi_0 0 , \pi_1 p^*_u, \pi_0\iota0 \rangle T(m_3) p = !u$:
\begin{align*}
&\langle \pi_0 0 , \pi_1 p^*_u, \pi_0\iota0 \rangle T(m_3) p 
\\
\overset{\text{naturality of }p}{=}&
\langle \pi_0 0 p , \pi_1 p^*_u p, \pi_0\iota0 p \rangle m_3 
\\ \overset{0p=1 ~,~ p_u^* p = !u}{=}&
\langle \pi_0 , ! u , \pi_0 \iota \rangle m_3 
\\\overset{\iota \text{ is inversion}}{=}& !u .
\end{align*}
\begin{definition}
In a Cartesian category $\mathbb X$, given an object $A$ and a group object $G$, a \textbf{left $G$-action} on $A$ is a morphism $a: G \times A \to A$ such that the diagrams
\[\begin{tikzcd}
	A & {G \times A} & {G \times G \times A} & {G \times A} \\
	& A & {G \times A} & A
	\arrow["{u \times 1_A}", from=1-1, to=1-2]
	\arrow[from=1-1, to=2-2]
	\arrow["a", from=1-2, to=2-2]
	\arrow["{m \times 1_A}", from=1-3, to=1-4]
	\arrow["{1_G \times a}"', from=1-3, to=2-3]
	\arrow["a"', from=1-4, to=2-4]
	\arrow["a"', from=2-3, to=2-4]
\end{tikzcd}\]
commute. A \textbf{right $G$-action} on $A$ is a morphism $a: A \times G \to A$ such that the diagrams
\[\begin{tikzcd}
	A & {A \times G} & {A \times G \times G} & {A \times G} \\
	& A & {A \times G} & A
	\arrow["{1_A \times u}", from=1-1, to=1-2]
	\arrow[from=1-1, to=2-2]
	\arrow["a", from=1-2, to=2-2]
	\arrow["{1_A \times m}", from=1-3, to=1-4]
	\arrow["{a \times 1_G}"', from=1-3, to=2-3]
	\arrow["a"', from=1-4, to=2-4]
	\arrow["a"', from=2-3, to=2-4]
\end{tikzcd}\]
commute.
\end{definition}

\begin{lemma}[Left action]The morphism $\Adj{}_\ell$ is a left group action
Analogously $\Adj{}_r$ is a right group action
\end{lemma}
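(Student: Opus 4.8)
The plan is to verify the two group-action axioms for $\Adj_\ell$ directly from its defining formula, by exploiting the same two computational tools used throughout Section~\ref{sec:The tangent bundle of a group object}: the naturality of $p$ and $0$, and the fact that $T$ preserves the Cartesian structure so that $T(m_k)$ inherits associativity and the unit/inverse laws from $m$. Since $\Adj_\ell$ lands in the pullback $T(G)_u$, and $p^*_u$ is monic (being a pullback of the identity-like structure), it suffices in each case to check equality after post-composing with $p^*_u:T(G)_u \to T(G)$; this reduces both axioms to identities between morphisms into $T(G)$ built out of $T(m_3)$, $T(m_5)$, and the structure maps $0,\iota,p$.

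First I would record the componentwise description: for a "point" $(g,v_u)$ of $G \times T(G)_u$, writing $v:=v_u p^*_u \in T(G)$, we have $\Adj_\ell(g,v_u)\, p^*_u = \langle g0_G,\ v,\ g\iota 0\rangle T(m_3)$, which is the abstract version of $g\cdot v\cdot g^{-1}$ (with $g0_G$ and $g\iota 0$ denoting the images of $g$ and $g^{-1}$ under $G \xrightarrow{0} TG$). For the unit axiom I would compute $\Adj_\ell(u\times 1)$ restricted along $p^*_u$: this becomes $\langle !u0_G,\ v,\ !u\iota 0\rangle T(m_3)$; using $u\iota = !u$ (well, $\langle !u,\iota\rangle$-type reasoning — more precisely $u\iota 0 = u0T(\iota)$ by naturality and the formula $\langle\iota,1\rangle m = !u$) and the fact that $!u0 = 0_1 T(u)$ is the unit for $T(m)$, the outer two factors collapse and we are left with $v$, giving $\Adj_\ell(u\times 1_{T(G)_u}) = \pi_1$ after accounting for the $!$-component. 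For the associativity axiom I would expand both $(m\times 1)\Adj_\ell$ and $(1_G\times \Adj_\ell)\Adj_\ell$ after post-composition with $p^*_u$; the left side gives $\langle (g\cdot h)0,\ v,\ (g\cdot h)\iota 0\rangle T(m_3)$, and since $(g\cdot h)\iota = h\iota\cdot g\iota$ (the anti-homomorphism property of $\iota$, provable from the group axioms), and $0$ is a homomorphism $G\to TG$ for the respective multiplications, this rearranges via associativity of $T(m)$ into $\langle g0,\ h0,\ v,\ h\iota 0,\ g\iota 0\rangle T(m_5)$ — which is precisely what the right side yields by first forming $\langle h0,\ v,\ h\iota 0\rangle T(m_3)$ inside $T(G)_u$ and then applying $\Adj_\ell$ again with $g$.

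The analogous statement for $\Adj_r$ is formally dual: one checks $\Adj_r(1\times u) = \pi_0$ and the right-associativity square by the same post-composition-with-$p^*_u$ technique, now using $\langle \pi_1\iota 0, \pi_0 p^*_u, \pi_1 0\rangle T(m_3)$ and the identities $(g\cdot h)\iota = h\iota \cdot g\iota$, naturality of $p$ and $0$, and associativity of $T(m)$; I would either write it out symmetrically or simply remark that it follows by the same argument with the roles of the two tensor factors swapped.

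The main obstacle I anticipate is purely bookkeeping: the morphisms $\langle g0, v, g\iota 0\rangle T(m_3)$ and their iterates live over nested pullbacks, so one must repeatedly invoke that $T$ preserves these pullbacks (to justify forming $\langle -, -, -\rangle T(m_3)$ inside $T(G)_u$ and to commute $p^*_u$ past the pairings) and carefully track the $!$-components of the induced maps into $T(G)_u$. The genuinely mathematical input — that $\iota$ is an anti-homomorphism and that $0$ and $T$-structure maps satisfy the expected naturality squares — is elementary and already implicitly used in the proof of Theorem~\ref{thm:group_tangent}; once those are in hand, both axioms reduce to associativity of $T(m)$ together with the unit and inverse laws, exactly as in the classical computation $g(hvh^{-1})g^{-1} = (gh)v(gh)^{-1}$.
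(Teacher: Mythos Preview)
Your proposal is correct and takes essentially the same approach as the paper: the paper's proof is the single sentence ``The proof that $\Adj_\ell$ is a left group action and $\Adj_r$ is a right group action follows from the associativity of $m$,'' and your argument simply spells out what that sentence means by post-composing with the monic $p_u^*$ and reducing both axioms to the associativity, unit, and inverse laws for $T(m)$ (together with naturality of $0$ and the anti-homomorphism property of $\iota$). Your version is considerably more detailed than the paper's, but the mathematical content is the same.
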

The proof that $\Adj_l$ is a left group action and $\Adj_r$ is a right group action follows from the associativity of $m$.

In this section we will use the trivialization from Theorem \ref{thm:group_tangent} to express the natural transformations in the tangent structure of Definition \ref{def:tan_cat} and the group structure morphisms of Definition \ref{def:group_object} on $T(G)$ through structures on $G \times T(G)_u$. The result is summarized in Table \ref{tab:translations}.

An observation from Table \ref{tab:translations} is that the tangent structure acts on the $T(G)_u$ component and not on the $G$ component. The group structure involves the $G$ component as well.

\begin{table}[ht!]
    \centering
\renewcommand{\arraystretch}{1.5} 
{\small
\begin{tabular}{c|c}
Operation on $T(G)$ & 
Operation on $G \times T(G)_u$\\ \hline
$ +: T_2 G \to T(G)$ &  $1 \times +_u: G \times T(G)_u \times T(G)_u \to G \times T(G)_u$  \\
$0: G \to T(G)$ & $\langle 1 , !0_u \rangle : G \to G \times T(G)_u$
\\
$p: T(G) \to G$ & $\pi_0 : G \times T(G)_u \to G$
\\
$\ell : T(G) \to TT(G)$ & $\langle \pi_0 , !0_u, !0_u , \pi_1 \rangle : G \!\times\! T(G)_u \to G \!\times\! T(G)_u \!\times\! T(G)_u \!\times\! T(G)_u$
\\
$c: TT(G) \to TT(G)$ &
$\langle \pi_0 , \pi_2 , \pi_1 , \pi_3 \rangle : G \times T(G)_u^3 \to G \times T(G)_u^3$
\\
$Tm\!:\!T(G) \! \times\! T(G)\! \to \! T(G)$ &
$(1 \! \times \! s \times 1)(m\!\times\!T(m)_u)\!:\! G \!\times\! T(G)_u \!\times\! G \!\times\! T(G)_u \to G \!\times\! T(G)_u$
\\
$Tu : 1 \to T(G)$ & $\langle u, 0_u\rangle : 1 \to G \times T(G)_u$
\\ $T \iota : T(G) \to T(G)$ & $s(T(\iota)_u \times \iota): G \times T(G)_u \to T(G)_u \times G $
\end{tabular}
}

\caption{Structure morphisms on $T(G)$ and their translation to the local coordinates $G \times T(G)_u$. The meanings of $0_u$, $+_u$ and $s$ are discussed in Proposition \ref{prop:trivial-translations} and Lemma \ref{lemma:the_swap}}.
\label{tab:translations}
\end{table}

Lines 1,2,3 and 7 of the table (pertaining to $+$, $0$, $p$ and $u$) follow very directly from the definition of the isomorphism $\phi$ in Theorem \ref{thm:group_tangent}. We show these equations in Proposition \ref{prop:trivial-translations}.

The next entry of Table \ref{tab:translations}, pertaining to $\ell$, is shown in Proposition \ref{prop:lift in T(G)_u world}. 
The following entry, pertaining to $c$ is shown in in Proposition \ref{prop:canonical_flip_translation}.
The following entry, pertaining to $T(m)$, is shown in Proposition \ref{prop:multiplication in T(G)_u world}. 
The last entry, pertaining to $T(\iota)$, is shown in Proposition \ref{prop:inverse in T(G)_u world}.

In all the propositions throughout the rest of this section, the vertical arrows are isomorphisms built from $\phi$ that translate the operations on $T(G)$ into operations on $G \times T(G)_u$. The horizontal arrows are the morphisms in Table \ref{tab:translations}.
\begin{proposition}\label{prop:trivial-translations}
The diagrams \\
\noindent{
\small
\begin{tabular}{cc}
$
\begin{tikzcd}
	{G \!\times \!T(G)_u \!\times\! T(G)_u} && {G \!\times\! T(G)_u} \\
	{T_2G} && TG
	\arrow["{1_G \!\times\! +_u}", from=1-1, to=1-3]
	\arrow["{\langle \langle \pi_0 , \pi_1 \rangle \phi^{-1}, \langle \pi_0, \pi_2\rangle \phi^{-1}\rangle}","\cong"', from=1-1, to=2-1]
	\arrow["{\phi^{-1}}","\cong"', from=1-3, to=2-3]
	\arrow["{+}"', from=2-1, to=2-3]
\end{tikzcd} \inlineeqno
$
&
$\begin{tikzcd}
	G && {G \!\times\! T(G)_u} \\
	G && TG
	\arrow["{\langle 1_G, !u0_u\rangle}", from=1-1, to=1-3]
	\arrow["{1_G}"',"\cong", from=1-1, to=2-1]
	\arrow["{\phi^{-1}}","\cong"', from=1-3, to=2-3]
	\arrow["0"', from=2-1, to=2-3]
\end{tikzcd} \inlineeqno$ 
\\
$\begin{tikzcd}
	{G \!\times\! T(G)_u} && G \\
	TG && G
	\arrow["{\pi_0}", from=1-1, to=1-3]
	\arrow["{\phi^{-1}}"',"\cong", from=1-1, to=2-1]
	\arrow["{1_G}","\cong"', from=1-3, to=2-3]
	\arrow["p"', from=2-1, to=2-3]
\end{tikzcd} \inlineeqno$
& 
$\begin{tikzcd}
	1 && {G \!\times\! T(G)_u} \\
	{T(1)} && TG
	\arrow["{\langle u,u0_u\rangle}", from=1-1, to=1-3]
	\arrow["{0_1}"',"\cong", from=1-1, to=2-1]
	\arrow["{\phi^{-1}}","\cong"', from=1-3, to=2-3]
	\arrow["{T(u)}"', from=2-1, to=2-3]
\end{tikzcd} \inlineeqno$
\end{tabular}}

\noindent{
commute where $\phi^{-1}: G \times T(G)_u \to T(G)$ is the isomorphism from Theorem \ref{thm:group_tangent}, $0_u := \langle ! , u 0_G\rangle: 1 \to T(G)_u$ is the zero morphism into $T(G)_u$ induced by $0_G: G \to T(G)$ and $+_u = \langle ! , \langle p_u^* , p_u^* \rangle + \rangle: T(G)_u \times T(G)_u \to T(G)_u$ is the addition on $T(G)_u$ induced by $+$.}
\end{proposition}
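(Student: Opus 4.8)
The plan is to verify each of the four squares by a direct computation from the explicit formulas of Theorem~\ref{thm:group_tangent}. Write $\psi := \langle !, \langle p\iota 0, 1\rangle T(m)\rangle\colon T(G)\to T(G)_u$ for the second component of $\phi$, so that $\phi = \langle p,\psi\rangle$, $\psi p^*_u = \langle p\iota 0,1\rangle T(m)$, and (as $\phi^{-1}\phi = 1$) $\phi^{-1}\psi = \pi_1$. The ingredients I would use throughout are: the identities $p^*_u p = !u$, $0_u p^*_u = u0_G$ and $+_u p^*_u = \langle p^*_u,p^*_u\rangle +$ coming from the pullback $T(G)_u$ and the maps induced into it; the naturality of $p$, $0$ and $+$ (so in particular $T(m)\,p = (p\times p)\,m$, and $T$ of any morphism is an additive bundle morphism); the additive-bundle axioms for $p_G\colon TG\to G$ (from which $0$-sections of $p_G$ are idempotent for $+$); the group axioms for $G$ of Definition~\ref{def:group_object}; and the group structure on $T(G)$ of Lemma~\ref{lemma:tangent_group}, whose unit $0_1 T(u)$ equals $u0_G$ by naturality of $0$.

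The squares for $p$, $0$ and $T(u)$ are short, and I would dispatch them first. For $p$: $\phi^{-1}p = (0\times p^*_u)T(m)\,p = (0\times p^*_u)(p\times p)\,m = (1_G\times !u)\,m = \pi_0$, the last step by the unit axiom of $G$. For $0$: composing the top edge with $\phi^{-1} = (0\times p^*_u)T(m)$ and using $0_u p^*_u = u0_G$ gives $\langle 0_G, !u0_G\rangle T(m) = 0_G\langle 1_{TG}, !u0_G\rangle T(m) = 0_G$ by the unit axiom of the group object $T(G)$, while the opposite route round that square is $1_G\, 0_G = 0_G$. The $T(u)$ square is the same computation with $1_G$ replaced by $u$: composing the top edge with $\phi^{-1}$ gives $\langle u0_G, u0_G\rangle T(m) = u0_G = 0_1 T(u)$, again by the unit axiom and naturality of $0$.

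The square for $+$ is the substantial one. Put $a := \langle\pi_0,\pi_1\rangle\phi^{-1}$ and $b := \langle\pi_0,\pi_2\rangle\phi^{-1}$, both $G\times T(G)_u^2\to TG$ with $ap = bp = \pi_0$ (by the $p$-square), so $\langle a,b\rangle$ lands in $T_2G$ and the lower-left leg of the square is $h := \langle a,b\rangle +$. Since $\phi$ is invertible, the square commutes iff $h\phi = 1_G\times +_u$; writing $\phi = \langle p,\psi\rangle$, this is the conjunction of $hp = \pi_0$ — immediate from $+\,p = \pi_0 p$ and the $p$-square — and $h\psi = \langle\pi_1,\pi_2\rangle +_u$. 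The latter may be tested after postcomposing with $p^*_u$ (legitimate as $T(G)_u$ is a pullback); unfolding $\psi p^*_u = \langle p\iota 0,1\rangle T(m)$, $+_u p^*_u = \langle p^*_u,p^*_u\rangle +$ and using $hp = \pi_0$, it reduces to the identity
\[
\big\langle \pi_0\iota 0,\ h \big\rangle T(m) \;=\; \langle \pi_1 p^*_u,\ \pi_2 p^*_u \rangle + .
\]
This is where additivity enters. Because $ap = bp = \pi_0$, the pairs $\langle\pi_0\iota 0, a\rangle = a\langle p\iota 0,1\rangle$ and $\langle\pi_0\iota 0, b\rangle = b\langle p\iota 0,1\rangle$ are sections of $TG\times TG$ lying over the common morphism $\langle\pi_0\iota,\pi_0\rangle\colon G\times T(G)_u^2\to G\times G$. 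Since the coherence isomorphism $TG\times TG\cong T(G\times G)$ of the Cartesian tangent structure is an isomorphism of additive bundles over $G\times G$, and $\pi_0\iota 0$ is a $0$-section (so idempotent for $+$), the fibrewise sum of these two pairs is $\langle\pi_0\iota 0, h\rangle$ — its first coordinate being $(\pi_0\iota 0)+(\pi_0\iota 0) = \pi_0\iota 0$, its second being $\langle a,b\rangle + = h$. As $T(m)$ is that coherence isomorphism followed by $T$ applied to the multiplication $G\times G\to G$ — an additive bundle morphism by naturality of $+$ — it preserves the sum, so $\langle\pi_0\iota 0, h\rangle T(m) = \langle\, \langle\pi_0\iota 0, a\rangle T(m),\ \langle\pi_0\iota 0, b\rangle T(m)\,\rangle +$. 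Finally $\langle\pi_0\iota 0, a\rangle T(m) = a\langle p\iota 0,1\rangle T(m) = a\psi p^*_u = \pi_1 p^*_u$ (using $\psi p^*_u = \langle p\iota 0,1\rangle T(m)$ and $a\psi = \langle\pi_0,\pi_1\rangle\phi^{-1}\psi = \pi_1$), and likewise $\langle\pi_0\iota 0, b\rangle T(m) = \pi_2 p^*_u$; this establishes the displayed identity and with it the $\psi$-part.

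The hard part is the bookkeeping in the $+$-square: transporting the additive-bundle structure correctly along $TG\times TG\cong T(G\times G)$, noticing that the ``first coordinate'' $\pi_0\iota 0$ is a $0$-section — hence idempotent for $+$ — so that the additivity of $T(m)$ can be applied coordinatewise, and keeping straight which pullback ($T(G)_u$ or $T_2G$) each universal property is being invoked for. Once those points are made precise, the remaining three squares and the leftover manipulations are routine consequences of the tangent and group axioms.
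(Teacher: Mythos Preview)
Your proof is correct and, for the $p$, $0$ and $T(u)$ squares, essentially identical to the paper's. For the $+$-square the paper takes a shorter route: rather than composing the left side with $\phi$ and testing $p$- and $\psi$-components separately, it just computes the \emph{right} path $(1_G\times +_u)\,\phi^{-1}$ directly. Using $+_u\,p_u^* = \langle p_u^*,p_u^*\rangle +$ and $\phi^{-1} = (0\times p_u^*)T(m)$ this is $(0\times \langle p_u^*,p_u^*\rangle +)\,T(m)$, and a single application of naturality of $+$ (i.e.\ $T_2(m)\,+ = +\,T(m)$) rewrites it as $\big\langle\langle \pi_0 0,\pi_1 p_u^*\rangle,\langle \pi_0 0,\pi_2 p_u^*\rangle\big\rangle\,T_2(m)\,+ = \langle a,b\rangle\,+$, the left path. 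This sidesteps your idempotent-$0$-section bookkeeping entirely: you never need to isolate the observation that the first coordinate $\pi_0\iota 0$ is fixed under $+$. Your route via $\phi$ and the additive-bundle property of $T(m)$ is logically equivalent---both arguments rest on the same naturality---but the paper's direction (going down $\phi^{-1}$ rather than up $\phi$) makes the calculation about half as long.
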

In this proposition the vertical arrows are isomorphisms and 
express the group action in terms of the tangent space at the unit.

Here, and and in the following proofs when we will show that a diagram
\[\begin{tikzcd}
	A && B \\
	C && D
	\arrow["f", from=1-1, to=1-3]
	\arrow["h"', from=1-1, to=2-1]
	\arrow["g", from=1-3, to=2-3]
	\arrow["k"', from=2-1, to=2-3]
\end{tikzcd}\]
commutes, we will refer to $hk$ as the ``left path" and to $fg$ as the ``right path".

\begin{proof}
Diagram 4.1 commutes because its right path $(1_G \times +_u) \phi^{-1}$ is
$$
(1_G \times +_u)\langle 0 \times p_u^*\rangle T(m)= (0 \times (p_u^*, p_u^*)+)T(m) = 
\langle \langle \pi_0 0, \pi_1 p_u^* \rangle , \langle \pi_0 0 , \pi_2 p_u^* \rangle \rangle T_2(m) + 
$$,
which equals to the left path $\langle \langle \pi_0 , \pi_1 \rangle \phi^{-1}, \langle \pi_0, \pi_2\rangle \phi^{-1}\rangle +$.
Here we used the naturality of $+$ for the second equality and the definition of $\phi^{-1}$ as $\langle \pi_0 0G, \pi_1 p_u^* \rangle T(m)$ to conclude that it equals to the left path.

Diagram 4.2 commutes because its right path is
$$
\langle 1_G , !u0_u \rangle ( 0 \times \pi_1 p_u^*) T(m) =
\langle 0 , !u0 \rangle T(m)= 0 = 1_G 0
$$
since $u0$ is the unit of the group object $T(G)$.

Diagram 4.3 commutes because its left path is 
$$
\langle \pi_0 0 , \pi_1 p_u^* \rangle T(m) p = 
\langle \pi_0 0 p , \pi_1 p_u^* p \rangle m =
\langle \pi_0 , \pi_1 ! u \rangle m = 
\pi_0.
$$

Diagram 4.4 commutes because the right path is
$$
\langle u , u 0_u \rangle \langle \pi_0 0, \pi_1 p_u^* \rangle T(m) = 
\langle u 0, u 0 \rangle T(m) = u 0 = 0 T(u).
$$
The vertical arrow $0_1$ is an isomorphism because $\mathbb X$ is a Cartesian tangent category. The vertical arrow $\langle \pi_0 0 , \pi_1 p_u^* \rangle T(m)$ is an isomorphism as we proved in Theorem \ref{thm:group_tangent}. Therefore its pullback square is also an isomorphism.
\end{proof}

In order to prove that the remaining lines in the table are correct, the following Lemma is useful. It identifies $G \times T_uG$ with $T_uG \times G$ in a way that is compatible with the group multiplication and thereby tells us how to move group elements past each other.

\begin{lemma}[Switching order]\label{lemma:the_swap}~
\begin{enumerate}[(i)]
\item  The diagram 
\[\begin{tikzcd}
	{T(G)_u \times G} && {G \times T(G)_u} \\
	{T(G) \times T(G)} && {T(G) \times T(G)} \\
	& T(G)
	\arrow["Tm", from=2-3, to=3-2]
	\arrow["Tm"', from=2-1, to=3-2]
	\arrow["{p^*_u\times 0}"', from=1-1, to=2-1]
	\arrow["{0 \times p^*_u}", from=1-3, to=2-3]
	\arrow["{\langle \pi_1, \mathrm{Ad}_r\rangle}", from=1-1, to=1-3]
\end{tikzcd}\]
commutes.
\item The composition $T(G)_u \times G \xrightarrow{\langle p^*_u, 0\rangle T(m)} T(G) \xrightarrow{\cong} 
G \times T(G)_u$ is $\langle \pi_1 , \Adj_r \rangle$.
\item The morphism $s:=\langle \pi_1 , \mathrm{Ad}_r \rangle : T(G)_u \times G \to G \times T(G)_u $ is an isomorphism and its inverse is $s^{-1} := \langle \mathrm{Ad}_\ell, \pi_0 \rangle $.
\end{enumerate}
\end{lemma}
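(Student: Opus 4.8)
The plan is to prove the three parts in the order $(i) \Rightarrow (ii) \Rightarrow (iii)$, since $(i)$ is a diagram chase, $(ii)$ is an immediate reformulation of $(i)$ together with Theorem \ref{thm:group_tangent}, and $(iii)$ then reduces to a computation in the group object $T(G)$ using $(ii)$.

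For part $(i)$, I would unfold both composites down to $T(G)$ using the definitions of $\Adj_r$ and $\Adj_\ell$ from Definition \ref{def:adjoint_actions} and the naturality of $p$ and $0$. The right path is $\langle \pi_1, \Adj_r\rangle (0 \times p^*_u) T(m) = \langle \pi_1 0, \Adj_r p^*_u\rangle T(m)$, and since $\Adj_r$ is by construction the induced map into the pullback $T(G)_u$ with $\Adj_r p^*_u = \langle \pi_1 \iota 0, \pi_0 p^*_u, \pi_1 0\rangle T(m_3)$, this becomes $\langle \pi_1 0, \langle \pi_1 \iota 0, \pi_0 p^*_u, \pi_1 0\rangle T(m_3)\rangle T(m)$. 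Using associativity of $T(m)$ (i.e. that $T(m_k)$ is the unique multiplication map, pushed through $T$) this equals $\langle \pi_1 \iota 0, \pi_0 p^*_u, \pi_1 0, \pi_1 0\rangle T(m_4)$. The left path is $\langle p^*_u, 0\rangle T(m) = \langle \pi_0 p^*_u, \pi_1 0\rangle T(m)$. To see these agree I would insert $\langle \pi_1 \iota, \pi_1\rangle m = \pi_1 !u$ (the inverse law) followed by the unit law, so that the extra factors $\pi_1 \iota 0$ and the first $\pi_1 0$ in the right path collapse: formally, group the first and third slots of $T(m_4)$ and use that $T$ applied to $(\iota \times 1)m = ! u$ gives $T(\iota \times 1)T(m) = T(!)T(u) = !\, u 0$ after observing $T(!) = !\,0_1^{-1}$ on the relevant object and $u0 = 0T(u)$ by naturality; then the remaining two slots give exactly the left path. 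This bookkeeping with $m_k$-associativity is the main obstacle — everything else is naturality — so I would state once, as a preliminary remark, that any two parenthesizations of iterated $T(m)$ agree, which follows by applying the functor $T$ to the corresponding statement for $m$ in Definition \ref{def:group_object}.

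Part $(ii)$ is then immediate: the isomorphism $T(G) \xrightarrow{\cong} G \times T(G)_u$ is $\phi$ from Theorem \ref{thm:group_tangent}, and composing the bottom-then-right path of the square in $(i)$ with $\phi$ identifies $\langle p^*_u, 0\rangle T(m)\phi$ with $\langle \pi_1, \Adj_r\rangle$ directly, since $\phi^{-1} = (0 \times p^*_u)T(m)$ means the square of $(i)$ literally says $\langle \pi_1, \Adj_r\rangle\phi^{-1} = \langle p^*_u, 0\rangle T(m)$, i.e. $\langle p^*_u, 0\rangle T(m)\phi = \langle \pi_1, \Adj_r\rangle$.

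For part $(iii)$, I would show $s s^{-1} = 1_{T(G)_u \times G}$ and $s^{-1} s = 1_{G \times T(G)_u}$ by transporting along $\phi$. By part $(ii)$, $s$ is the composite $T(G)_u \times G \xrightarrow{\langle p^*_u, 0\rangle T(m)} T(G) \xrightarrow{\phi} G \times T(G)_u$; a symmetric computation (swapping the roles of the two factors, now multiplying on the left by $\Adj_\ell$) shows that $s^{-1} = \langle \Adj_\ell, \pi_0\rangle$ is the composite $G \times T(G)_u \xrightarrow{(p^*_u \times 0)^{\mathrm{swap}}\cdots} T(G) \xrightarrow{\phi} T(G)_u \times G$, or more directly I would verify $\langle \Adj_\ell, \pi_0\rangle$ is a two-sided inverse by a bracket computation: $s^{-1} s = \langle \Adj_\ell, \pi_0\rangle\langle \pi_1, \Adj_r\rangle = \langle \pi_0, \langle \Adj_\ell, \pi_0\rangle \Adj_r\rangle$, and one checks $\langle \Adj_\ell, \pi_0\rangle\Adj_r = \pi_1$ by expanding both $\Adj_\ell$ and $\Adj_r$ into $T(m_3)$'s, concatenating into a $T(m_6)$, and cancelling the $g\, g^{-1}$ pairs via the inverse and unit laws — this is the classical identity $g^{-1}(g v g^{-1})g = v$ written in the internal language. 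Symmetrically $s s^{-1} = 1$. Again the only real work is the $m_k$-associativity/cancellation, which the preliminary remark handles, so I expect $(iii)$ to follow the same pattern as $(i)$ with longer but equally mechanical bracket manipulations.
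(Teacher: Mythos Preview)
Your overall strategy matches the paper's: part (i) is a bracket computation using associativity of $T(m)$ together with the inverse and unit laws of the group object $T(G)$; part (ii) is then immediate from (i) and Theorem \ref{thm:group_tangent}; and part (iii) is a direct expansion of $\Adj_\ell$ and $\Adj_r$ followed by cancellation. So the architecture is right.

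There is, however, a concrete error in your computation for (i). From
\[
\langle \pi_1 0,\; \langle \pi_1 \iota 0,\; \pi_0 p^*_u,\; \pi_1 0\rangle T(m_3)\rangle\, T(m)
\]
associativity gives
\[
\langle \pi_1 0,\; \pi_1 \iota 0,\; \pi_0 p^*_u,\; \pi_1 0\rangle\, T(m_4),
\]
not $\langle \pi_1 \iota 0,\; \pi_0 p^*_u,\; \pi_1 0,\; \pi_1 0\rangle\, T(m_4)$ as you wrote: the outer factor $\pi_1 0$ stays in the first slot. More seriously, your plan to ``group the first and third slots of $T(m_4)$'' is not valid, since $T(G)$ is not assumed abelian and non-adjacent factors cannot be paired. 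With the correct ordering the cancellation is between the \emph{adjacent} first two slots:
\[
\langle \pi_1 0,\; \pi_1 \iota 0\rangle T(m) \;=\; \pi_1 \langle 1,\iota\rangle m\, 0 \;=\; \pi_1\, !\, u\, 0,
\]
which is the unit of $T(G)$, leaving exactly $\langle \pi_0 p^*_u,\; \pi_1 0\rangle T(m)$, the left path. This is how the paper argues. Once (i) is corrected, your (ii) and (iii) go through; the paper does (iii) by the same direct expansion you describe (reaching $T(m_5)$ rather than $T(m_6)$, but the mechanism is identical).
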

\begin{proof}~
\begin{enumerate}[(i)]
\item Putting everything together the right path in the diagram is 
\begin{align*}
\langle \pi_1, \Adj_r \rangle (p^*_u \times 0) T(m) &= \langle \pi_1 0, \Adj_r p^*_u \rangle T(m)
\\
= \langle \pi_1 0, \pi_1\iota0 , \pi_0 p_u^* , \pi_1 0 \rangle T(m_4)
&= \langle \pi_0 p_u^* , \pi_1 0 \rangle T(m).
\end{align*}
which is exactly the left path in the diagram.
\item follows now by composing with the inverse of the right vertical arrows as the isomorphism between $T(G)$ and $G \times T(G)_u$ is $$(0 \times p_u^*)T(m): G \times T(G)_u \to T(G).$$
\item is a straightforward calculation:
\begin{align*}
\langle \pi_1 , \Adj_r \rangle \langle \Adj_\ell , \pi_0\rangle 
&=\langle \pi_0 , \Adj_r \langle \Adj_\ell , \pi_0 \rangle \rangle 
= \langle \pi_0 , \langle ! , \langle \pi_0 \iota 0 , \Adj_\ell p_u^* , \pi_0 0 \rangle T(m_3) \rangle \rangle 
\\ &= \langle \pi_0 , \langle ! , \langle \pi_0 \iota 0 , 
\langle \pi_0 0 , \pi_1 p_u^* , \pi_0 \iota 0 \rangle T(m_3) 
 , \pi_0 0 \rangle T(m_3) \rangle \rangle 
\\&= \langle \pi_0 , \langle ! , \langle \pi_0 \iota 0 , 
 \pi_0 0 , \pi_1 p_u^* , \pi_0 \iota 0 
 , \pi_0 0 \rangle T(m_5) \rangle \rangle 
\\&= \langle \pi_0 , \langle ! , \pi_1 p_u^* \rangle \rangle = \langle \pi_0 , \pi_1 \rangle = 1_{G \times T(G)_u}
\end{align*}
The composition in the inverse composition is similar:
\begin{align*}
\langle \Adj_\ell , \pi_0\rangle \langle \pi_1 , \Adj_r \rangle  
&=\langle \langle !, \langle  \pi_1  0 , \langle \pi_1 \iota 0, \pi_0 p_u^* , \pi_1 0 \rangle T(m_3) , \pi_1 \iota 0 \rangle T(m_3), \pi_1 \rangle
\\&= \langle \langle ! , \pi_0 p_u^* \rangle , \pi_1 \rangle = \langle \pi_0 , \pi_1 \rangle = 1_{G \times T(G)_u}
\end{align*}
\end{enumerate}

\end{proof}
The diagram in Lemma \ref{lemma:the_swap} means that the morphism $s := \langle \pi_1 , \Adj_r \rangle$ is the isomorphism that
identifies $T(G)_u \times G$ with $G \times T(G)_u$ in a way that is compatible with the group multiplication.
\begin{proposition}[The multiplication] \label{prop:multiplication in T(G)_u world}
The diagram
\[\begin{tikzcd}
	{G \times T(G)_u \times G \times T(G)_u} & {G \times G \times T(G)_u \times T(G)_u} & {G \times T(G)_u} \\
	{T(G) \times T(G)} && T(G)
	\arrow["{1 \times s \times 1}", from=1-1, to=1-2]
	\arrow["Tm", from=2-1, to=2-3]
	\arrow["{m \times T(m)_u}", from=1-2, to=1-3]
	\arrow["{\phi^{-1}}", from=1-3, to=2-3]
	\arrow["{(\phi^{-1})^2}"', from=1-1, to=2-1]
\end{tikzcd}\]
commutes, where $T(m)_u: T(G)_u \times T(G)_u \to T(G)_u$ is induced by the multiplication $Tm: T(G) \times T(G) \to T(G)$. 
\end{proposition}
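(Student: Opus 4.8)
The plan is to unfold the definition $\phi^{-1}=(0\times p_u^{*})T(m)$ (Theorem~\ref{thm:group_tangent}) on both the bottom path and the top path of the square, and to reduce everything to a computation inside the group object $T(G)$ of Lemma~\ref{lemma:tangent_group} (following the paper's convention, I write $T(m)\colon T(G)\times T(G)\to T(G)$ for the multiplication of $T(G)$, with the Cartesian isomorphism $T(G)\times T(G)\cong T(G\times G)$ suppressed, and $\cdot$ for this multiplication). Reading a generalized element of $G\times T(G)_u\times G\times T(G)_u$ as $(g_1,v_1,g_2,v_2)$ and abbreviating $p_u^{*}(v_i)$ by $v_i$, the left-then-bottom path $(\phi^{-1})^2\,T(m)$ sends it to $\phi^{-1}(g_1,v_1)\cdot\phi^{-1}(g_2,v_2)=\bigl(0(g_1)\cdot v_1\bigr)\cdot\bigl(0(g_2)\cdot v_2\bigr)$, which by associativity of $T(m)$ is the four-fold product $0(g_1)\cdot v_1\cdot 0(g_2)\cdot v_2$.

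The key step is to move the group element $0(g_2)$ leftward past the tangent vector $v_1$. Re-associating the middle two factors, $v_1\cdot 0(g_2)$ is exactly $(p_u^{*}\times 0)T(m)$ applied to $(v_1,g_2)$, so Lemma~\ref{lemma:the_swap}(i) rewrites it as $s(v_1,g_2)$ fed into $(0\times p_u^{*})T(m)=\phi^{-1}$; since $s=\langle\pi_1,\mathrm{Ad}_r\rangle$, this is $0(g_2)\cdot p_u^{*}(\mathrm{Ad}_r(v_1,g_2))$. Substituting back and re-associating once more, the bottom path becomes
\[
\bigl(0(g_1)\cdot 0(g_2)\bigr)\cdot\bigl(p_u^{*}(\mathrm{Ad}_r(v_1,g_2))\cdot p_u^{*}(v_2)\bigr).
\]

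It remains to identify the two bracketed products. First, $0\colon G\to T(G)$ is a morphism of group objects: $0$ is a natural transformation $1\Rightarrow T$ and the Cartesian tangent isomorphism intertwines $0_G\times 0_G$ with $0_{G\times G}$, so $0_{G\times G}T(m)=m\,0_G$ becomes $(0_G\times 0_G)\cdot T(m)=m\,0_G$; hence $0(g_1)\cdot 0(g_2)=0(m(g_1,g_2))$. Second, by the very definition of the induced multiplication $T(m)_u$ — the unique map into the pullback $T(G)_u$ with $(p_u^{*}\times p_u^{*})T(m)=T(m)_u\,p_u^{*}$, which is well defined because $u\cdot u=u$, exactly as $+_u$ and the $\mathrm{Ad}$'s were built in Proposition~\ref{prop:trivial-translations} and Definition~\ref{def:adjoint_actions} — we get $p_u^{*}(\mathrm{Ad}_r(v_1,g_2))\cdot p_u^{*}(v_2)=p_u^{*}\bigl(T(m)_u(\mathrm{Ad}_r(v_1,g_2),v_2)\bigr)$. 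Therefore the bottom path equals $0(m(g_1,g_2))\cdot p_u^{*}\bigl(T(m)_u(\mathrm{Ad}_r(v_1,g_2),v_2)\bigr)=\phi^{-1}\bigl(m(g_1,g_2),\,T(m)_u(\mathrm{Ad}_r(v_1,g_2),v_2)\bigr)$, which is precisely the image of $(g_1,v_1,g_2,v_2)$ under the top-then-right path $(1\times s\times 1)(m\times T(m)_u)\phi^{-1}$. A point-free version is obtained by replacing each ``apply to $(g_1,v_1,g_2,v_2)$'' with the corresponding tupling of projections and carrying the suppressed associativity and coherence isomorphisms explicitly.

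The main obstacle is not conceptual — Lemma~\ref{lemma:the_swap} does the real work, by telling us how a group element and a tangent-at-the-unit vector commute past each other. The friction is bookkeeping: writing the four-fold product precisely (which copy of $T(G)$ each factor occupies), threading the suppressed isomorphism $T(G)\times T(G)\cong T(G\times G)$ through every reassociation, and verifying the two ``induced map'' identities ($0$ is a group homomorphism; $(p_u^{*}\times p_u^{*})T(m)=T(m)_u\,p_u^{*}$), both of which reduce to naturality of $0$ and $p$, the Cartesian tangent coherences, and idempotence of $u$.
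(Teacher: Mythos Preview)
Your proof is correct and follows essentially the same route as the paper's: both expand the bottom path to the four-fold product $\langle 0, p_u^*, 0, p_u^*\rangle T(m_4)$, invoke Lemma~\ref{lemma:the_swap} to swap the middle two factors, and then identify the result with the top path via naturality of $0$ and the defining property $(p_u^*\times p_u^*)T(m)=T(m)_u\,p_u^*$. The only difference is presentation --- you use generalized-element notation where the paper stays point-free --- and you yourself note how to translate between the two.
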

\begin{proof}
The left path in the square is, using associativity,
$$
\langle 0 , p^*_u, 0 , p^*_u\rangle T(m_4).
$$
The right path in the square is
\begin{align*}
(1 \times s \times 1) (m \times T(m)_u) \langle 0 , p_u^* \rangle Tm &=
(1 \times s \times 1) \langle \langle \pi_0 , \pi_1 \rangle m 0 , \langle \pi_2 , \pi_3 \rangle T(m)_u p_u^* \rangle T(m) 
\\ &=
(1 \times s \times 1) \langle \langle \pi_0 0, \pi_1 0 \rangle T(m), \langle \pi_2 p_u^*, \pi_3 p_u^* \rangle T(m)  \rangle T(m)  
\\&=
(1 \times s \times 1) \langle 0 \times 0 \times p_u^* \times p_u^* \rangle T(m_4) 
\intertext{Now it follows from part (ii) of Lemma \ref{lemma:the_swap} that this equals
}
&= \langle 0, p_u^* , 0 , p_u^*\rangle T(m_4)
\end{align*}
which was the left path.
\end{proof}
We can also formulate the inverse $T(\iota)$ for $T(G)$ in terms of $G \times T(G)_u$.
\begin{proposition}[The inverse]\label{prop:inverse in T(G)_u world}
Suppose $G$ is a group object in a tangent category and $T(G)_u$ exists. Then the following diagram commutes:
\[\begin{tikzcd}
	{G \times T(G)_u} && {G \times T(G)_u} \\
	{T(G)} && {T(G)}
	\arrow["{\langle \pi_0 \iota , \Adj_\ell T(\iota)_u\rangle}", from=1-1, to=1-3]
	\arrow["{ \phi^{-1}}"', from=1-1, to=2-1]
	\arrow["{T(\iota)}"', from=2-1, to=2-3]
	\arrow["\phi"', from=2-3, to=1-3]
\end{tikzcd}\]
commutes.
\end{proposition}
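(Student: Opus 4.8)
The plan is to show that the two composites $\phi^{-1}T(\iota)$ and $\langle\pi_0\iota,\Adj_\ell T(\iota)_u\rangle\phi^{-1}$, both of type $G\times T(G)_u\to T(G)$, coincide; since $\phi\phi^{-1}=1$ this is equivalent to the asserted commutativity. I will substitute $\phi^{-1}=(0\times p_u^*)T(m)$ from Theorem \ref{thm:group_tangent}, reading each ``$T(m_k)$'' as the $k$-ary multiplication of the group object $T(G)$ of Lemma \ref{lemma:tangent_group}. The one external input needed is the elementary fact that in any Cartesian category the inverse of a group object $H$ reverses the order of products, i.e.\ $m\iota=\langle\pi_1,\pi_0\rangle(\iota\times\iota)m$, together with its consequences $\iota\iota=1$ and $u\iota=u$; all three follow from uniqueness of inverses. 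Applied to $H=T(G)$ the first gives $T(m)T(\iota)=\langle\pi_1,\pi_0\rangle(T(\iota)\times T(\iota))T(m)$. I also note that $T(\iota)_u$ is well defined: from $u\iota=u$ and naturality of $p$ one gets $p_u^*T(\iota)p=\,!\,u$, so $p_u^*T(\iota)$ factors through the pullback $T(G)_u$ as a map $T(\iota)_u$ with $T(\iota)_up_u^*=p_u^*T(\iota)$.

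For the left path, I feed the order-reversal identity into $\phi^{-1}T(\iota)=(0\times p_u^*)T(m)T(\iota)$ and simplify the resulting pairing using $T(\iota)_up_u^*=p_u^*T(\iota)$ and naturality of $0$ (so $0\,T(\iota)=\iota\,0$), obtaining
\[
\phi^{-1}T(\iota)=\langle\,\pi_1T(\iota)_up_u^*,\ \pi_0\iota0\,\rangle\,T(m).
\]
For the right path I unfold $\Adj_\ell$ from Definition \ref{def:adjoint_actions} (using that $\langle!,X\rangle p_u^*=X$), and apply the order-reversal identity again, now to the three-fold product appearing inside; together with $0\,T(\iota)=\iota\,0$ and $\iota\iota=1$ this collapses the inner term to $\langle\pi_0 0,\pi_1T(\iota)_up_u^*,\pi_0\iota0\rangle T(m_3)$, so by associativity the right path becomes the four-fold product $\langle\pi_0\iota0,\ \pi_0 0,\ \pi_1T(\iota)_up_u^*,\ \pi_0\iota0\rangle T(m_4)$.

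It then remains to match the two. The first two factors of the four-fold product multiply to the unit of $T(G)$: since the zero section $0\colon G\to T(G)$ intertwines $m$ with $T(m)$ (naturality of $0$), one has $\langle\pi_0\iota0,\pi_0 0\rangle T(m)=\langle\pi_0\iota,\pi_0\rangle m\,0=\pi_0\langle\iota,1\rangle m\,0=\,!\,u0$, and $u0$ is exactly the group unit of $T(G)$ (it equals $0_1T(u)$, the unit in Lemma \ref{lemma:tangent_group}). Grouping these two factors by associativity and discarding the unit leaves $\langle\pi_1T(\iota)_up_u^*,\pi_0\iota0\rangle T(m)$, which is precisely the left path; this proves the proposition.

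The computation is essentially forced once the order-reversal lemma for group objects is available; the only points requiring care are keeping track of which tangent vector sits over which base point, the identity $p_u^*T(\iota)=T(\iota)_up_u^*$, and consistently treating every ``$T(m_k)$'' as the multiplication of the group $T(G)$ rather than of $G$. I expect this bookkeeping, rather than any genuine difficulty, to be the main obstacle. As a conceptual cross-check one can instead note that $\phi$ is an isomorphism of group objects (Proposition \ref{prop:multiplication in T(G)_u world}), so $\phi^{-1}T(\iota)\phi$ is the inverse map for the transported multiplication $(1\times s\times1)(m\times T(m)_u)$; verifying that $\langle\pi_0\iota,\Adj_\ell T(\iota)_u\rangle$ is a one-sided inverse for that multiplication and invoking uniqueness of inverses yields the same conclusion without ever mentioning negatives or Theorem \ref{thm:addition_and_multiplication_on_TG}.
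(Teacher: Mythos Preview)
Your argument is correct and relies on exactly the same ingredients as the paper's proof: the order-reversal law $T(m)T(\iota)=\langle\pi_1,\pi_0\rangle(T(\iota)\times T(\iota))T(m)$, naturality of $0$ and $p$, $\iota\iota=1$, associativity, and that $u0$ is the unit of $T(G)$. The only difference is organizational: the paper computes $\phi^{-1}T(\iota)\phi$ and checks its two components into $G$ and $T(G)_u$ separately (the first via $T(\iota)p=p\iota$, the second by factoring out a global $T(\iota)$), whereas you compare $\phi^{-1}T(\iota)$ with $\langle\pi_0\iota,\Adj_\ell T(\iota)_u\rangle\phi^{-1}$ inside $T(G)$ and finish by cancelling $\langle\pi_0\iota0,\pi_0 0\rangle T(m)=\,!\,u0$ from a four-fold product. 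Both routes are equally short; your final cross-check via uniqueness of inverses for the transported multiplication is a nice sanity check but not needed.
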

\begin{proof}
In the first component the diagram commutes because
$$
(0 \times p_u^*) T(m) T(\iota) p= (0p,p_u^*p)m \iota = \pi_0 \iota.
$$
In the second component we obtain:
\begin{align*}
(0 \times p_u^*) T(m) T(\iota)\langle ! , \langle p \iota 0,1\rangle T(m)\rangle &= \langle ! , \langle (0 \times p_u^*) T(m) T(\iota)p \iota 0 , (0 \times p_u^*) T(m) T(\iota)\rangle T(m ) \rangle \\
&= \langle ! , \langle (0 \times p_u^*) T(m) , (0 p \times p_u^* p) m \iota 0 \rangle T(m ) T(\iota)\rangle \\
&= \langle ! , \langle \pi_0 0 , \pi_1 p_u^* ,\pi_0 \iota 0 \rangle T(m_3) T(\iota)\rangle = \mathrm{Ad}_l T(\iota)_u
\end{align*}
\end{proof}
It is known from Theorem 4.15 of \cite{Cockett2014DifferentialST} that $T(G)_u$ is a differential object. The following Lemma recalls what that means.

For $f: X \to TT(G)_u$, let $\{f\}: X \to T(G)_u$ denote the unique morphism such that $f = \langle \{f\} \ell_u , f p 0\rangle  T(+_u)$ where $\ell_u: T(G)_u \to T(T(G)_u)$ and $+_u : T(G)_u \times T(G)_u \to T(G)_u$ are the morphisms induced by $\ell: T(G) \to TT(G)$ and $+: T_2G \to T(G)$, following Lemma 2.10 of \cite{cockett2016diffbundles}.
Then the morphism 
$$
T(T(G)_u) \xrightarrow{\langle p, \{1\}\rangle} T(G)_u \times T(G)_u
$$
is an isomorphism with inverse
$$
T(G)_u \times T(G)_u \xrightarrow{(0 \times \ell_u )T(+_u)} T(T(G)_u)$$

The first composition is
$$
\langle p , \{1\} \rangle (0 \times \ell_u) T(+_u) = \langle 1 p 0 , \{1\} \ell_u \rangle T(+_u) = 1
$$
by definition of $\{\bullet \}$
The other composition is 
\begin{align*}
(0 \times \ell_u)  T(+_u) \langle p , \{1\} \rangle &= \langle (0 \times \ell_u) T(\sigma) p, \{(0 \times \ell_u) T(+_u) \}\rangle
\\&= 
\langle (0p \times \ell_u p) +_u ,\{ \langle \pi_0 0, \pi_1 \ell_u\rangle  T(+_u) \}\rangle
\\&= 
\langle (1 \times ! 0_u) +_u , \langle \{\pi_0 0\}, \{ \pi_1 \ell_u \}\rangle  +_u \rangle
\\&=
\langle \pi_0 , \pi_0 ! 0_u , \pi_1 \rangle +_u
\\&= \langle \pi_0 , \pi_1 \rangle = 1
\end{align*}
where $0_u: 1 \to T(G)_u$ is induced by $u0:1 \to T(G)$. There we used that $\{0\}=!0_u$ and $\{ \ell_u \} = 1$ from Lemma 2.12 of \cite{cockett2016diffbundles}.

\begin{remark}
We provided an explicit isomorphism $T(T(G)_u) \cong T(G)_u \times T(G)_u$. However, one could alternatively simply apply Theorem 4.15 of \cite{Cockett2014DifferentialST} which shows that $T(G)_u$ is a differential object. According to Definition 4.8 of \cite{Cockett2014DifferentialST} this implies that $T(T(G)_u) \cong T(G)_u \times T(G)_u$ and therefore proves that there is an isomorphism $T(T(G)_u) \cong T(G)_u \times T(G)_u$. 
\end{remark}

\begin{proposition}[The lift]\label{prop:lift in T(G)_u world}
Suppose $G$ is a group object in a tangent category and the pullback $T(G)_u$ exists and is preserved by $T$. Then the following diagram commutes:
\begin{equation*}
    \begin{tikzcd}
	{G \times T(G)_u} && {G \times T(G)_u \times T(G)_u \times T(G)_u} \\
	&& {T(G) \times TT(G)_u} \\
	T(G) && TT(G)
	\arrow["{\langle\pi_0, !0_u,!0_u,\pi_1\rangle}", from=1-1, to=1-3]
	\arrow["\ell"', from=3-1, to=3-3]
	\arrow["{\phi^{-1}}"',"\cong", from=1-1, to=3-1]
	\arrow["{T (\phi)}"',"\cong", from=3-3, to=2-3]
	\arrow["{\phi \times \langle p , \{1\}\rangle}"',"\cong", from=2-3, to=1-3]
\end{tikzcd} 
\end{equation*}
Here $\phi = \langle p , \langle ! ,\langle p \iota 0, p_u^* \rangle Tm \rangle \rangle$ is the isomorphism from Theorem \ref{thm:group_tangent}.
\end{proposition}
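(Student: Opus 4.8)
The plan is to precompose the diagram with the isomorphism $\phi$ of Theorem~\ref{thm:group_tangent} and to split the codomain $G\times T(G)_u^{3}$ as $(G\times T(G)_u)\times(T(G)_u\times T(G)_u)$, so that the assertion becomes the pair of block equalities coming from
$$
\ell\,T(\phi)\,(\phi\times\langle p,\{1\}\rangle)\;=\;\phi\,\langle \pi_0,!0_u,!0_u,\pi_1\rangle\colon T(G)\longrightarrow G\times T(G)_u^{3}.
$$
Write $\phi=\langle p,\psi\rangle$ with $\psi=\langle !,\langle p\iota 0,1\rangle T(m)\rangle$. Since the ambient tangent structure is Cartesian, $T(\phi)$ is identified with $\langle T(p),T(\psi)\rangle$, so $\ell\,T(\phi)$ is $\langle \ell\,T(p),\,\ell\,T(\psi)\rangle$; the first block is then controlled by $\ell\,T(p)$ and the second by $\ell\,T(\psi)$.

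For the first (``position'') block I would use that $(\ell,0)$ is an additive bundle morphism, which gives the projection identity $\ell\,T(p)=p\,0$ already used after Definition~\ref{def:tan_cat}; composing with $\phi=\langle p,\psi\rangle$ and using $0\,p=1$ together with the short computation $0_G\psi=\,!0_u$ (expand $\psi$, apply naturality of $0$, and use the group axiom $\langle \iota,1\rangle m=\,!u$) gives $\langle p,!0_u\rangle$, which is exactly $\phi\langle\pi_0,!0_u\rangle$. The first component of the second block is handled the same way: by naturality of $p$ and the other projection identity $\ell\,p_{TM}=p\,0$ it equals $\ell\,T(\psi)\,p_{T(G)_u}=\ell\,p_{TM}\,\psi=p\,0\,\psi=\,!0_u$. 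Hence everything reduces to the single nontrivial equality $\{\ell\,T(\psi)\}=\psi$.

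The crux is this last equality, and the key observation is that under $\phi$ the morphism $\psi$ is the projection $\pi_1\colon G\times T(G)_u\to T(G)_u$, so by the $p,0,+$ rows of Table~\ref{tab:translations} (that is, Proposition~\ref{prop:trivial-translations}) the map $\psi$ is an additive bundle morphism from the differential bundle $p\colon T(G)\to G$ to the differential bundle $T(G)_u\to 1$ over $!\colon G\to 1$; moreover, since $T$ preserves the defining pullback, $\ell_u$ is the restriction of $\ell$ along the monic $p_u^{*}$, i.e.\ $\ell_u T(p_u^{*})=p_u^{*}\ell$. I would then establish $\ell\,T(\psi)=\psi\,\ell_u$, either by invoking that an additive bundle morphism between differential bundles preserves the lift \cite{cockett2016diffbundles}, or concretely by composing with the monic $T(p_u^{*})$: since $\psi p_u^{*}=\langle p\iota 0,1\rangle T(m)=:\theta$ this reduces to $\ell\,T(\theta)=\theta\,\ell$, which follows from naturality of $\ell$ with respect to $m$, the Cartesian compatibility of $\ell$, the projection identities $\ell\,T(p)=p\,0$ and $\ell\,p_{TM}=p\,0$, and the identity $0\,\ell=0\,T(0)$ (the ``preserves zero'' clause of the bundle morphism $(\ell,0)$). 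Granting $\ell\,T(\psi)=\psi\,\ell_u$, the defining property of $\{-\}$ gives $\{\ell\,T(\psi)\}=\{\psi\,\ell_u\}=\psi\,\{\ell_u\}=\psi$, using $\{\ell_u\}=1$ from Lemma~2.12 of \cite{cockett2016diffbundles}. Substituting the two blocks back and precomposing with $\phi^{-1}$, using $\phi^{-1}p=\pi_0$ (Table~\ref{tab:translations}) and $\phi^{-1}\psi=\pi_1$, yields $\langle\pi_0,!0_u,!0_u,\pi_1\rangle$, as claimed.

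The main obstacle I expect is bookkeeping rather than anything conceptual: threading the Cartesian coherence isomorphisms $TA\times TB\cong T(A\times B)$ through every occurrence of $T(\phi)$, $T(\psi)$, $\ell_{G\times G}$ and $T(m)$, and deducing equalities involving $\{-\}$ purely from its defining equation $f=\langle\{f\}\ell_u,fp0\rangle T(+_u)$ rather than from an explicit formula. An alternative route that sidesteps $\{-\}$ is to apply Theorem~\ref{thm:group_tangent} to the group object $T(G)$ of Lemma~\ref{lemma:tangent_group}, obtaining a trivialization of $T^{2}(G)$ from which $\ell$ can be read off directly; but that approach merely shifts the work into identifying that trivialization with the one appearing in the statement.
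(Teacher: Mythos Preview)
Your proposal is correct and follows essentially the same route as the paper: both compute the four components separately, dispatch the first three using $\ell\,T(p)=p\,0$, $\ell\,p_{T}=p\,0$ (via $\ell c=\ell$), and $0\psi=\,!0_u$, and reduce the fourth to $\{\ell\,T(\psi)\}=\psi$ via $\ell\,T(\psi)=\psi\,\ell_u$ and $\{\ell_u\}=1$. Your concrete route~(b) for $\ell\,T(\theta)=\theta\,\ell$ is line-for-line the paper's calculation (expand $T(\theta)$, use $\ell\,T(p)=p\,0$, naturality of $0$, $0\,\ell=0\,T(0)$, then naturality of $\ell$ with respect to $m$).

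One small caution: your route~(a), citing that an additive bundle morphism between differential bundles automatically preserves the lift, is not a theorem in \cite{cockett2016diffbundles}; preserving the lift is part of the \emph{definition} of a linear (differential) bundle morphism there, so this shortcut would be circular. Since you also supply route~(b), the argument stands.
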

This shows that the inclusion in the first and last components corresponds to the vertical lift $\ell$, similar to the formulas in Examples \ref{ex:manifolds_as_a_tangent_cat} and \ref{ex:modules_as_a_tangent_cat}.
\begin{proof}
As we know that $\ell T(\varphi) \pi_0 = \ell T(p) = p 0$, it follows that the first two components are $\pi_0$ and $0_u$. The third component is zero as by naturality
$$
\varphi^{-1} \ell  T(\varphi) \pi_1 p = \varphi^{-1}  \ell p \varphi \pi_1  =  \pi_0 \langle 1 , 0 \rangle \pi_1=  ! 0_u.
$$
The fourth component is given by the following calculation:
\begin{align*}
\varphi^{-1} \ell T(\varphi) \pi_1 \{ 1 \} &= \varphi^{-1} \ell T(\langle p , \langle ! , \langle p \iota 0 , 1\rangle T(m) \rangle \rangle) \pi_1 \{ 1 \} \\
&= \varphi^{-1} \langle ! , \langle  \ell T(p) T(\iota)T(0) ,  \ell \rangle T^2(m) \rangle \{ 1 \}\\
&= \varphi^{-1} \langle ! , \langle  p 0 T(\iota)T(0) ,  \ell \rangle T^2(m) \rangle \{ 1 \}\\
&= \varphi^{-1} \langle ! , \langle  p \iota 0 \ell ,  \ell \rangle T^2(m) \rangle \{ 1 \}\\
&= \varphi^{-1} \langle ! , \langle  p \iota 0  , 1 \rangle T(m) \ell  \rangle \{ 1 \}\\
&= \varphi^{-1} \langle ! , \langle  p \iota 0  , 1 \rangle T(m) \rangle  \ell_u \{ 1 \} \\
&= \varphi^{-1} \langle ! , \langle  p \iota 0  , 1 \rangle T(m) \rangle = \varphi^{-1} \varphi \pi_1 = \pi_1
\end{align*}
Here $\ell_u \{1\} = \{\ell_u \} = 1$ as $(T(G)_u,1,!,+_u, 0_u,\ell_u)$ is a differential bundle because $T(G)_u$ is a differential object. 
\end{proof}

\begin{proposition}[The canonical flip]\label{prop:canonical_flip_translation}
Suppose $G$ is a group object in a Cartesian tangent category for which the pullback $T(G)_u$ exists and is preserved by T. Then the diagram
\begin{equation}
\begin{tikzcd}
	{G \times T(G)_u \times T(G)_u \times T(G)_u} && {G \times T(G)_u \times T(G)_u \times T(G)_u} \\
	{T(G) \times TT(G)_u} && {T(G) \times TT(G)_u} \\
	TT(G) && TT(G)
	\arrow["{\langle \pi_0, \pi_2,\pi_1,\pi_3\rangle}", from=1-1, to=1-3]
	\arrow["{\phi \times \langle p,\{1\}\rangle}","\cong"', from=2-1, to=1-1]
	\arrow["{\phi \times \langle p,\{1\}\rangle}"', "\cong", from=2-3, to=1-3]
	\arrow["T\phi","\cong"', from=3-1, to=2-1]
	\arrow["c"', from=3-1, to=3-3]
	\arrow["T\phi"',"\cong", from=3-3, to=2-3]
\end{tikzcd}\label{diagram:flip_phi}
\end{equation}
commutes, where $\phi$ is the isomorphism from Theorem \ref{thm:group_tangent}.
\end{proposition}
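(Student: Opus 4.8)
The plan is to reduce the square to four equations, one for each projection out of $G\times T(G)_u^3$, and then to dispatch the first three with the elementary properties of the canonical flip and the last with the differential‑object structure of $T(G)_u$ already exploited in Proposition~\ref{prop:lift in T(G)_u world}.

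First I would abbreviate the common vertical composite by $\Psi:=T\phi\cdot(\phi\times\langle p,\{1\}\rangle):TT(G)\to G\times T(G)_u^3$, where, exactly as in Proposition~\ref{prop:lift in T(G)_u world}, the symbol $T\phi$ tacitly absorbs the Cartesian isomorphism $T(G\times T(G)_u)\cong T(G)\times TT(G)_u$. Unwinding $\Psi$ against the four projections, using $\phi=\langle p,\phi_2\rangle$ with $\phi_2=\langle !,\langle p\iota 0,1\rangle T(m)\rangle$ and naturality of $p$, one obtains
$$\Psi\pi_0=T(p)\,p,\quad \Psi\pi_1=T(p)\,\phi_2,\quad \Psi\pi_2=p_{T(G)}\,\phi_2,\quad \Psi\pi_3=T(\phi_2)\,\{1\}.$$
Since $\langle\pi_0,\pi_2,\pi_1,\pi_3\rangle$ fixes the zeroth and third coordinate and transposes the first two, the square commutes if and only if $c\,\Psi\pi_0=\Psi\pi_0$, $c\,\Psi\pi_1=\Psi\pi_2$, $c\,\Psi\pi_2=\Psi\pi_1$ and $c\,\Psi\pi_3=\Psi\pi_3$.

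The first three are immediate. The canonical flip, being an additive bundle morphism $(c,1):(T(p),T(+),T(0))\to(p_{T(G)},+_{T(G)},0_{T(G)})$, satisfies $c\cdot p_{T(G)}=T(p)$, hence also $c\cdot T(p)=p_{T(G)}$ by $c^2=1$; together with $T(p)\,p=p_{T(G)}\,p$ (naturality of $p$) this gives $c\,\Psi\pi_0=c\,T(p)\,p=p_{T(G)}\,p=\Psi\pi_0$, while $c\,\Psi\pi_1=c\,T(p)\,\phi_2=p_{T(G)}\,\phi_2=\Psi\pi_2$ and $c\,\Psi\pi_2=c\,p_{T(G)}\,\phi_2=T(p)\,\phi_2=\Psi\pi_1$.

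The fourth equation, $c\cdot T(\phi_2)\cdot\{1\}=T(\phi_2)\cdot\{1\}$, is the crux and where I expect the real work. The plan is: expand $T(\phi_2)=\langle !,\langle T(p)T(\iota)T(0),1\rangle T^2(m)\rangle$ (valid since $T$ is functorial and preserves the pullback $T(G)_u$), then push the leading $c$ through this expression using naturality of $c$ across $T^2(m)$ — which rests on the identification $c_{G\times G}=c_G\times c_G$ valid in a Cartesian tangent category — together with the flip identity $c\cdot T(p)=p_{T(G)}$ and the zero‑compatibility $T(0)\cdot c=0_{T(G)}$ coming from $(c,1)$ being an additive bundle morphism. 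This should rewrite $c\cdot T(\phi_2)$ so that it differs from $T(\phi_2)$ only by a flip acting purely in the $TT(G)_u$ direction; post‑composing with $\{1\}$ and invoking Theorem~4.15 of~\cite{Cockett2014DifferentialST} — that $T(G)_u$ is a differential object, so that the canonical flip on $T^2(T(G)_u)$ is the transposition of the two inner summands — reduces the claim to a statement about the differential bundle $T(G)_u$ that is settled by $\ell c=\ell$ and $\ell_u\{1\}=1$. I expect the main obstacle to be exactly this bookkeeping: tracking how the flips and zero sections pass through $T^2(m)$ so that the group‑theoretic conjugation hidden inside $\phi_2$ cancels, with the decisive cancellations being the same ones used in the proof of Proposition~\ref{prop:lift in T(G)_u world}.
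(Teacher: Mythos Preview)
Your decomposition into four coordinate checks is a legitimate and in some ways cleaner route than the paper's, and your arguments for $\pi_0,\pi_1,\pi_2$ are correct: they follow at once from $c\,T(p)=p_{T}$, $c\,p_{T}=T(p)$, and $T(p)\,p=p_{T}\,p$. The paper instead works with the inverse isomorphisms, expands the vertical composite as a single product $\langle\pi_0\,0\,T(0),\,\pi_1\,p_u^*\,T(0),\,\pi_2\,p_u^*\,0,\,\pi_3\,p_u^*\,\ell\rangle\,T^2(m_4)$, and then moves $c$ through that.

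The genuine gap is $\pi_3$. Pushing $c_G$ through by naturality gives
\[
c\,\langle T(p)T(\iota)T(0),1\rangle\,T^2(m)=\langle p_{T}\,T(\iota)\,T(0),\,c\rangle\,T^2(m),
\]
and from here the identities $T(0)c=0_{T}$, $0_{T}c=T(0)$, $\ell c=\ell$ do not by themselves collapse the discrepancy with $\langle T(p)T(\iota)T(0),1\rangle T^2(m)$ after applying $\{1\}$. Your appeal to the differential-object structure of $T(G)_u$ is misplaced: the flip in play is $c_G$ on $T^2G$, not $c_{T(G)_u}$ on $T^2(T(G)_u)$, and $\phi_2$ is not of the form $T(g)$, so there is no naturality that converts one into the other. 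The cancellations in Proposition~\ref{prop:lift in T(G)_u world} are not analogous either, since that proof never has to exchange two middle factors under $c$.

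What the paper actually uses at this point, and what your proposal omits, is Theorem~\ref{thm:eckmann_hilton_for_m_and_plus}: the Eckmann--Hilton argument makes $T(m)_u$ commutative, and that commutativity is precisely what allows the two middle factors in the $T^2(m_4)$ expansion to be swapped. Without that input the fourth equation does not close. If you want to rescue your projection-by-projection approach, you will need to invoke the commutativity of $T(m)_u$ explicitly in the $\pi_3$ step, not just the bare tangent-category axioms.
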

This shows that in our coordinates the canonical flip just flips the middle two components, analogous to the formula in Example \ref{ex:modules_as_a_tangent_cat} and the local formula in Example \ref{ex:manifolds_as_a_tangent_cat}.
\begin{proof}
For this proof we will actually use Theorem \ref{thm:eckmann_hilton_for_m_and_plus}, the fact that $+_u$ and $T(m)_u$ coincide and are commutative. Since we don't use the canonical flip when proving Theorem \ref{thm:eckmann_hilton_for_m_and_plus} this is logically sound.
We will show the following diagram commutes
\begin{equation}
\begin{tikzcd}[column sep=large]
	{G \times T(G)_u \times T(G)_u \times T(G)_u} && {G \times T(G)_u \times T(G)_u \times T(G)_u} \\
	{T(G) \times TT(G)_u} && {T(G) \times TT(G)_u} \\
	TT(G) && TT(G)
	\arrow["{\langle \pi_0, \pi_2,\pi_1,\pi_3\rangle}", from=1-1, to=1-3]
	\arrow["{\phi^{-1} \times (0 \times \ell_u)T(+_u)}"',"\cong", from=1-1, to=2-1]
	\arrow["{\phi^{-1} \times (0 \times \ell_u)T(+_u)}","\cong"', from=1-3, to=2-3]
	\arrow["{T\phi^{-1}}"',"\cong", from=2-1, to=3-1]
	\arrow["{T\phi^{-1}}","\cong"', from=2-3, to=3-3]
	\arrow["c"', from=3-1, to=3-3].
\end{tikzcd}.
\label{diagram:flip_phi_inverse}
\end{equation}
Because the vertical arrows in Diagram \ref{diagram:flip_phi_inverse} are the inverses of the vertical arrows in Diagram \ref{diagram:flip_phi}, this is equivalent to the statement we need to show. 

First we will write out the vertical arrows of Diagram \ref{diagram:flip_phi_inverse} in components
\begin{align*}
&((\phi^{-1}) \times (0 \times \ell_u) T(+_u))T(\phi^{-1}) 
\\=& \langle \langle \pi_0 0 , \pi_1 p_u^* \rangle T(m) , \langle \pi_2 0 , \pi_3 \ell_u \rangle T(+_u) \rangle \langle \pi_0 T(0) , \pi_1 T(p_u^*) \rangle T^2(m)
\\=&
\langle \langle \pi_0 0 , \pi_1 p_u^* \rangle T(m) T(0) , 
 \langle \pi_2 0 , \pi_3 \ell_u \rangle T(+_u) T(p_u^*) \rangle T^2(m)
\intertext{
Now we use Theorem \ref{thm:eckmann_hilton_for_m_and_plus} to replace $+_u$ with $T(m)_u$ and use $T(m)_u p_u^* = p_u^* Tm$, $\ell_u T(p_u^*)= p_u^* \ell$ and naturality of 0 to obtain}
&
\langle \langle \pi_0 0 , \pi_1 p_u^* \rangle T(m) T(0) , 
\langle \pi_2 0 , \pi_3 \ell_u \rangle T(T(m)_u) T(p_u^*) \rangle T^2(m)
\\=& \langle \langle \pi_0 0 T(0) , \pi_1 p_u^* T(0) \rangle T^2(m) , 
\langle \pi_2 p_u^* 0 , \pi_3 p_u^* \ell \rangle T^2(m) \rangle T^2(m)
\\=& \langle  \pi_0 0 T(0), \pi_1 p_u^* T(0),\pi_2 p_u^* 0 ,\pi_3 p_u^* \ell \rangle T^2(m_4).
\end{align*}
Now if we follow the left path of the diagram we have
\begin{align*}
((\phi^{-1}) \times (0 \times \ell_u) T(+_u))T(\phi^{-1}) c &= \langle  \pi_0 0 T(0), \pi_1 p_u^* T(0),\pi_2 p_u^* 0 ,\pi_3 p_u^* \ell \rangle T^2(m_4) c
\\&= \langle  \pi_0 0 T(0) c, \pi_1 p_u^* T(0) c,\pi_2 p_u^* 0 c ,\pi_3 p_u^* \ell c \rangle T^2(m_4)
\\&=\langle  \pi_0 0 T(0), \pi_1 p_u^* 0,\pi_2 p_u^* T(0) ,\pi_3 p_u^* \ell \rangle T^2(m_4)
\\&=\langle  \pi_0 0 T(0), \pi_1  0 T(p_u^*),\pi_2 T_u(0) T(p_u^*) ,\pi_3 p_u^* \ell \rangle T^2(m_4)
\\&= \langle  \pi_0 0 T(0),\pi_2 T_u(0) T(p_u^*) , \pi_1  0 T(p_u^*) ,\pi_3 p_u^* \ell \rangle T^2(m_4)
\\&= \langle  \pi_0 0 T(0),\pi_2 p_u^* T(0)  , \pi_1 p_u^* 0  ,\pi_3 p_u^* \ell \rangle T^2(m_4)
\\&= \langle \pi_0 , \pi_2 , \pi_1 , \pi_3 \rangle ((\phi^{-1}) \times (0 \times \ell_u) T(+_u))T(\phi^{-1})
\end{align*}
which is the right path through Diagram \ref{diagram:flip_phi_inverse}. When going from the fourth to fifth line we used that $T(m)_u$ is commutative and therefore 
$$
\langle a p_u^* , b p_u^* \rangle T(m) = \langle a , b \rangle T(m)_u p_u^* = \langle b , a \rangle T(m)_u p_u^* = \langle b p_u^* , a p_u^* \rangle T(m).
$$
\end{proof}
This concludes the verification of all the structures summarized in Table \ref{tab:translations}, with the exception of showing that $+_u = T(m)_u$, which we will verify in section \ref{section:eckmann-hilton}.

\subsection{Comparing addition and multiplication}\label{section:eckmann-hilton}
There are two monoid structures on $T(G)_u$, one of them is the group multiplication and it is a group (i.e. has an inverse). The other one is the addition of tangent vectors and it is commutative. In this section we show, that they are in fact the same monoid structure and therefore both of them are abelian group structures. This implies that the two monoid structures (addition of tangent vectors and group multiplication) on $T(G)$ coincide up to conjugation with $\phi$. Theorem \ref{thm:eckmann_hilton_for_m_and_plus}, the central result of this subsection will make this explicit.

Let $\mathbb X$ be a Cartesian tangent category and $G$ a group object in $\mathbb X$ such that the pullback $T(G)_u$ exists.
Then there are two monoid structures on $T(G)_u$, one of them is given by $T(m)_u$, the morphism induced by
\[\begin{tikzcd}
	& {T(G) \times T(G)} \\
	{T(G)_u \times T(G)_u} & {T(G)_u} & T(G) \\
	& 1 & G
	\arrow["{T(m)}", from=1-2, to=2-3]
	\arrow["{p_u^* \times p_u^*}", from=2-1, to=1-2]
	\arrow["{T(m)_u}", dashed, from=2-1, to=2-2]
	\arrow["{!}"', from=2-1, to=3-2]
	\arrow["{p_u^*}", from=2-2, to=2-3]
	\arrow["{!}"', from=2-2, to=3-2]
	\arrow["\lrcorner"{anchor=center, pos=0.125}, draw=none, from=2-2, to=3-3]
	\arrow["p", from=2-3, to=3-3]
	\arrow["u"', from=3-2, to=3-3]
\end{tikzcd}.\]
The other one is $+_u$, the morphism induced by
\[\begin{tikzcd}
	& {T_2G} \\
	{T(G)_u \times T(G)_u} & {T(G)_u} & T(G) \\
	& 1 & G
	\arrow["{+_G}", from=1-2, to=2-3]
	\arrow["{p_u^* \times_{G} p_u^*}", from=2-1, to=1-2]
	\arrow["{+_u}", dashed, from=2-1, to=2-2]
	\arrow["{!}"', from=2-1, to=3-2]
	\arrow["{p_u^*}", from=2-2, to=2-3]
	\arrow["{!}"', from=2-2, to=3-2]
	\arrow["\lrcorner"{anchor=center, pos=0.125}, draw=none, from=2-2, to=3-3]
	\arrow["p", from=2-3, to=3-3]
	\arrow["u"', from=3-2, to=3-3]
\end{tikzcd},\]
where $p_u^* \times_G p_u^*$ stands for $\langle \pi_0 p_u^* , \pi_1 p_u^* \rangle$. 
In the following theorem, we show that these operations coincide and are commutative. The unit has a convenient expression as $0_u 
= \langle !, u 0_G\rangle = \langle!, 0_1T(u)\rangle: 1 \to T(G)_u$.

\begin{theorem}\label{thm:eckmann_hilton_for_m_and_plus}
The two binary operations on $T(G)_u$ satisfy the following:
\begin{enumerate}[(i)]
    \item They coincide, $+_u = T(m)_u$.
    \item They are commutative, $T(m)_u = \langle \pi_1, \pi_0 \rangle T(m)_u$.
    \item Their unit is $0_u$.
    \item Their inverse is $T(\iota)_u$.
\end{enumerate}
\end{theorem}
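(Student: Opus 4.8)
The plan is to exploit the classical Eckmann--Hilton argument: if a set carries two unital binary operations that are \emph{mutually distributive} (each is a homomorphism for the other) and share the same unit, then they coincide and are commutative (and associative). Here the two operations are $+_u$ and $T(m)_u$ on $T(G)_u$, so the strategy reduces to (a) checking that $0_u$ is a unit for both, and (b) checking the interchange law
$$
\langle \langle \pi_0, \pi_2 \rangle +_u, \langle \pi_1, \pi_3 \rangle +_u \rangle T(m)_u
= \langle \langle \pi_0, \pi_1 \rangle T(m)_u, \langle \pi_2, \pi_3 \rangle T(m)_u \rangle +_u
: (T(G)_u)^4 \to T(G)_u.
$$
Once (i) and (ii) are in hand, (iii) is immediate and (iv) follows because $T(m)_u$ is a commutative monoid operation with inverse $T(\iota)_u$ (inherited from the group object $T(G)$ of Lemma \ref{lemma:tangent_group}), and an abelian group has a unique inverse operation which must therefore equal the additive negation one would otherwise expect.

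First I would verify the unit conditions. That $0_u$ is the unit for $T(m)_u$ follows by pushing everything into $T(G)$ along the monic-on-total $p_u^*$: we have $0_u p_u^* = u0_G = 0_1 T(u)$, which is exactly the unit of the group multiplication $T(m)$ on $T(G)$ by Lemma \ref{lemma:tangent_group}, so $\langle 0_u, 1 \rangle T(m)_u p_u^* = \langle u0_G, p_u^* \rangle T(m) = p_u^*$ and hence $\langle 0_u, 1\rangle T(m)_u = 1$ since $p_u^*$ is a partial mono (here total). The analogous statement for $+_u$ is the unit axiom of the additive bundle $p : TG \to G$ transported to the fibre $T(G)_u$, which is already recorded in Proposition \ref{prop:trivial-translations} via the description of $+_u$ as $\langle !, \langle p_u^*, p_u^* \rangle + \rangle$ and the unit law for $+_G$.

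The main work — and the expected obstacle — is the interchange law. The key input is the fact that $T(m) : TG \times TG \to TG$ is an additive bundle morphism over $m$, i.e. $T(m)$ commutes with the fibrewise addition $+$; this is part of the tangent-category axioms (functoriality of $T$ applied to the additive bundle structure, or equivalently that $T$ preserves products and $m$ is a map of the relevant additive structures). Concretely, naturality of $+_G$ together with $T$ being a product-preserving functor gives $\langle T(m), T(m) \rangle_{T_2} T(+_G) = (\text{diagonal rearrangement}) \, T(+_G) \, T(m)$ on $T(G \times G)$, and restricting this identity along the pullback defining $T(G)_u$ yields precisely the interchange identity for $+_u$ and $T(m)_u$. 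The delicate part is bookkeeping: one must check all the composites land in the correct fibre over $u$ (so that the induced maps into $T(G)_u$ exist), which amounts to repeatedly using $p_u^* p = {!}\,u$, naturality of $p$, and the unit/inverse laws of $G$ — exactly the style of computation already carried out in Proposition \ref{prop:trivial-translations} and Lemma \ref{lemma:the_swap}. I would organize this as: (1) state the interchange law on $T(G \times G \times G \times G)$ level, (2) restrict to the fibre, (3) invoke Eckmann--Hilton to conclude (i), (ii), (iii), (4) deduce (iv) from uniqueness of inverses in the resulting abelian group. The only real risk is a sign/order slip in the four-variable rearrangement, which the commutativity output of Eckmann--Hilton will in fact retroactively justify.
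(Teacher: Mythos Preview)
Your proposal is correct and follows essentially the same route as the paper: verify that $0_u$ is a common unit, establish the interchange law via naturality of $+_G$ with respect to $T(m)$ (the paper phrases this as ``$+_u$ is natural with respect to $T(m)_u$''), invoke Eckmann--Hilton, and deduce (iv) from the fact that $T(\iota)_u$ inverts $T(m)_u$. The only cosmetic difference is that the paper records the commutativity of $+_u$ up front as inherited from $+_G$, whereas you extract it from Eckmann--Hilton; either is fine.
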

\begin{proof}
The morphism $+_u$ is commutative, since it is induced by the commutative monoid addition $+_G$ on $T_2(G)$.
The morphism $0_u$ is the unit for $T(m)_u$ because the unit for $T(m)$ is $0 T(u) = u 0$ and $T(m)_u$ is induced by $T(m)$. The morphism $0_u$ is the unit for $+_u$ because $0_G$ is the unit for $+_G$.
Since $+_G: T_2G \to T(G)$ is natural the morphism $+_u: T(G)_u^2$ is also natural. That implies the naturality diagram of $+_u$ with respect to the morphism $T(m)_u: T(G)_u \times T(G)_u \to T(G)_u$, the diagram
\[\begin{tikzcd}[column sep=large]
	{T(G)_u^2 \times TuG^2} &&& {T(G)_u^2} \\
	{T(G)_u \times T(G)_u} &&& {T(G)_u}
	\arrow["{(T(m)_u)^2}", from=1-1, to=1-4]
	\arrow["{+_u}", from=1-4, to=2-4]
	\arrow["{T(m)_u}"', from=2-1, to=2-4]
	\arrow["{+_u \times +_u}"', from=1-1, to=2-1]
\end{tikzcd},\]
commutes. Now an Eckmann–Hilton style argument implies that the two operations coincide:
\begin{align*}
T(m)_u \overset{\text{neut.}}{=} \langle \langle \pi_0 , 0_u \rangle +_u, \langle 0_u , \pi_1 \rangle +_u  \rangle T(m)_u 
\overset{\text{nat.}}{=} \langle \langle \pi_0 , 0_u \rangle T(m)_u , \langle 0_u , \pi_1 \rangle T(m)_u \rangle +_u \overset{\text{neut.}}{=} +_u.
\end{align*}
Since the two monoid structures coincide and $T(\iota)_u$ is an inverse for $T(m)_u$, it is an inverse for both of them.
\end{proof}

From this we will conclude in the following theorem that the additions on $T(G)$ have a negation.
This result is surprising, as we did not assume that the addition of tangent vectors has negatives. However, for the tangent vectors of the group object $G$ it is defined. It is only defined on $T(G)$, it is not a natural transformation.

\begin{theorem}\label{thm:addition_and_multiplication_on_TG}
Let $\mathbb X$ be a Cartesian tangent category and let $G$ be a group object in $\mathbb X$ such that the pullback $T(G)_u$ exists.
\begin{enumerate}[(i)]
\item Then the addition $+_G : T_2G \to T(G)$ is explicitly given by the composition
$$
T_2G \xrightarrow{\phi_2^{-1}} G \times T(G)_u \times T(G)_u \xrightarrow{(1_G \times T(m)_u)} G \times T(G)_u \xrightarrow{\phi} T(G)
$$
where $\phi: G \times T(G)_u \to T(G)$ is the isomorphism from theorem \ref{thm:group_tangent}.
\item The addition $+_G : T_2(G) \to T(G)$ has a negation that is given by the composition
$$
T(G) \xrightarrow{\phi^{-1}} G \times T(G)_u \xrightarrow{(1_G \times T(\iota)_u)} G \times T(G)_u  \xrightarrow{\phi} T(G) . 
$$
\item The addition $+_{T(G)}: T_2^2G \to T^2G$ has a negation.
\end{enumerate}
\end{theorem}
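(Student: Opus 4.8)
The plan is to get (i) and (ii) almost for free from the material already assembled in Table~\ref{tab:translations}, and then to reduce (iii) to (ii) by a single application of the tangent functor combined with the canonical flip.

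For (i), the diagram of Proposition~\ref{prop:trivial-translations} pertaining to $+$ already records that, after conjugating by the isomorphism $\phi$ of Theorem~\ref{thm:group_tangent} (with its two-fold version $\phi_2$ on $T_2G$), the additive bundle addition $+_G\colon T_2G\to T(G)$ becomes $1_G\times +_u$. Theorem~\ref{thm:eckmann_hilton_for_m_and_plus}(i) identifies $+_u=T(m)_u$, so substituting gives exactly the stated composite; there is nothing further to compute. For (ii), one first observes that the diagrams of Proposition~\ref{prop:trivial-translations} for $p$, $+$ and $0$ together say precisely that $\phi\colon T(G)\to G\times T(G)_u$ is an isomorphism of additive bundles over $\mathrm{id}_G$, from $(p_G,+_G,0_G)$ to $(\pi_0,\ 1_G\times T(m)_u,\ \langle 1_G,!0_u\rangle)$. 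By Theorem~\ref{thm:eckmann_hilton_for_m_and_plus}(iv), $T(G)_u$ carries the inverse map $T(\iota)_u$ for the operation $T(m)_u$, so the product bundle $\pi_0\colon G\times T(G)_u\to G$ has the negation $1_G\times T(\iota)_u$. Since negations transport along additive bundle isomorphisms (a routine chase using only that $\phi$ commutes with $p$, $+$ and $0$), the morphism $\phi^{-1}(1_G\times T(\iota)_u)\phi$ is a negation for $(p_G,+_G,0_G)$, which is the claim; set $n_G:=\phi^{-1}(1_G\times T(\iota)_u)\phi$.

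For (iii), apply the tangent functor $T$ to the negation identity established for $+_G$ in (ii). Because $T$ preserves the pullback powers of $p$ (Definition~\ref{def:tan_cat}), it preserves the pairing into $T_2G$, so $T(n_G)\colon T^2G\to T^2G$ is a negation for the additive bundle $(T(p_G),\ T(+_G),\ T(0_G))$ on $T^2G$. The subtlety is that this is not yet the bundle appearing in (iii): the addition $+_{T(G)}$ belongs to the bundle $(p_{T(G)},\ +_{T(G)},\ 0_{T(G)})$, a different additive bundle structure on the same object $T^2G$. But Definition~\ref{def:tan_cat} supplies exactly the comparison needed: $(c_G,1)$ is an additive bundle morphism $(Tp_G,T(+_G),T(0_G))\to(p_{T(G)},+_{T(G)},0_{T(G)})$, and $c_G$ is invertible (indeed $c_G^2=1$). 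Transporting the negation $T(n_G)$ along this isomorphism gives that $c_G\,T(n_G)\,c_G$ is a negation for $+_{T(G)}\colon T_2^2G\to T^2G$, which completes the proof.

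I expect the only genuine obstacle to be in (iii): one must recognize that $T(+_G)$ and $+_{T(G)}$ are the additions of two distinct additive bundle structures on $T^2G$ (the tangent functor applied to the bundle $T(G)\to G$, versus the bundle $T^2G\to T(G)$), so that naively applying $T$ to (ii) is not enough and the canonical flip must be interposed. Once that is seen, all that remains is the routine verification that negations are preserved by additive bundle isomorphisms, and for (i) and (ii) there is essentially nothing to do beyond invoking Proposition~\ref{prop:trivial-translations} and Theorem~\ref{thm:eckmann_hilton_for_m_and_plus}.
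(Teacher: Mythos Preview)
Your proposal is correct and essentially follows the paper's approach. For (i) you cite Proposition~\ref{prop:trivial-translations} directly, whereas the paper re-derives $\phi_2 +_G \phi^{-1} = 1_G \times T(m)_u$ by an explicit component calculation; your route is tidier since that calculation is already recorded in the proposition. For (iii) your argument coincides with the paper's explicit description of the negation as $c\,T(n_G)\,c$; the paper additionally offers a conceptual alternative---it checks that the pullback $T(T(G))_{u_{T(G)}}$ exists (as a composite of three pullbacks) so that part (ii) may be reapplied with $T(G)$ in place of $G$---but your direct transport along $c$ already suffices and avoids that verification.
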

\begin{proof}
The addition $+_G : T_2G \to T(G)$ can be explicitly described as the composition
$$
G \times T(G)_u \times T(G)_u \xrightarrow{\phi_2} T_2G \xrightarrow{+_G} T(G) \xrightarrow{\phi^{-1}}G \times T(G)_u
$$
where $\phi: G \times T(G)_u \to T(G)$ is the isomorphism from Theorem \ref{thm:group_tangent}. The second component of this composition can be explicitly calculated as the following (the first component is just $p$, as $+$ preserves the basepoint): 
\begin{align*}
\phi_2 +_G \phi^{-1} \pi_1 &= \langle \langle \pi_0 0, \pi_1 p^*_u \rangle T(m), \langle \pi_0 0, \pi_2 p^*_u\rangle T(m)\rangle + \langle ! , \langle p \iota 0, 1 \rangle T(m) \rangle
\\&= 
\langle \langle \pi_0 0, \pi_1 p^*_u \rangle , \langle \pi_0 0, \pi_2 p^*_u \rangle \rangle +^2 T(m) \langle ! , \langle p!0, 1 \rangle T(m) \rangle
\\&=
\langle \langle \pi_0 0, \pi_0 0 \rangle + , \langle \pi_1 p^*_u, \pi_2 p^*_u \rangle + \rangle T(m) \langle ! , \langle p \iota 0, 1 \rangle T(m) \rangle
\\&=
\langle \pi_0 0 , \langle \pi_1 p^*_u, \pi_2 p^*_u \rangle T(m) \rangle T(m) \langle ! , \langle p \iota 0, 1 \rangle T(m) \rangle
\\&=
\langle ! , \langle \langle \pi_0 , !u\rangle m \iota 0, \langle \pi_0 0 , \langle \pi_1 p^*_u, \pi_2 p^*_u \rangle T(m) \rangle T(m) \rangle T(m)\rangle 
\\&=
\langle ! , \langle \pi_1 p^*_u , \pi_2 p^*u \rangle T(m) \rangle 
\\&= 
\langle \pi_1, \pi_2 \rangle T(m)_u
\end{align*}
This calculation now tells us that $ \phi_2 +_G \phi^{-1} = 1_G \times T(m)_u$
Pre- and postcomposing this equality with the isomorphism $\phi$ results in
$$
+_G = \phi_2^{-1} (1_G \times T(m)_u) \phi, 
$$
the equality we needed to show for (i).
This explicit formula shows directly that 
$$
- := \phi^{-1} (1 \times T(\iota)_u) \phi
$$
defines a negation on $T(G)$ implying part (ii) of the theorem.

It follows that $T^2G$ has negatives with respect to $+_{T(G)}$ since $T(G)$ is a group object and
\[\begin{tikzcd}
	{T(G)_u \times T(G)_u} & {T_2G} & {T^2G} & {T^2G} \\
	{1} & G & T(G) & T(G)
	\arrow["{\langle\pi_0,\pi_1\rangle}", from=1-1, to=1-2]
	\arrow["{!}"', from=1-1, to=2-1]
	\arrow["u"', from=2-1, to=2-2]
	\arrow["{\pi_0 p}"', from=1-2, to=2-2]
	\arrow["{T(p)}"', from=1-3, to=2-3]
	\arrow["0"', from=2-2, to=2-3]
	\arrow["p"', from=1-4, to=2-4]
	\arrow["c", from=1-3, to=1-4]
	\arrow["1"', from=2-3, to=2-4]
	\arrow["\nu"', from=1-2, to=1-3]
	\arrow["\lrcorner"{anchor=center, pos=0.125}, draw=none, from=1-3, to=2-4]
	\arrow["\lrcorner"{anchor=center, pos=0.125}, draw=none, from=1-2, to=2-3]
	\arrow["\lrcorner"{anchor=center, pos=0.125}, draw=none, from=1-1, to=2-2]
\end{tikzcd}\]
is a pullback composed of three pullbacks. This negation can be explicitly described by using the fact that the canonical flip is an additive bundle morphism and thus $+ = c_2 T(+) c: T^2_2G \to T^2G$. From this formula we can see that if $-: T(G) \to T(G)$ is a negation with respect to $+: T(G) \to T(G)$, then  $c T(-) c: T^G \to T^2G$ is a negation with respect to $+: T^2G \to T^2G$, proving part (iii).
\end{proof}
The negation $-$ is only defined for the tangents of the group object $G$, it is not a natural transformation.
However the negation is compatible with multiplication and addition in the following sense.
\begin{lemma}\label{lemma:minuscommutes}
The diagrams
\[\begin{tikzcd}
	{T(G) \times T(G)} && {T(G) \times T(G)} && {T_2G} && {T_2G} \\
	T(G) && T(G) && T(G) && T(G)
	\arrow["{- \times -}", from=1-1, to=1-3]
	\arrow["{-}"', from=2-1, to=2-3]
	\arrow["{T(m)}"', from=1-1, to=2-1]
	\arrow["{T(m)}", from=1-3, to=2-3]
	\arrow["{- \times_G -}", from=1-5, to=1-7]
	\arrow["{-}"', from=2-5, to=2-7]
	\arrow["{+}"', from=1-5, to=2-5]
	\arrow["{+}", from=1-7, to=2-7]
\end{tikzcd}\]
commute.
\end{lemma}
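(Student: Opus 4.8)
The plan is to read both squares as instances of a single principle: each of the two maps involved — the multiplication $T(m)$ in the left square and the addition $+_G$ in the right square — is a morphism of additive bundles (Definition \ref{def:additive_bundle}), and a morphism of additive bundles whose source and target both admit a negation automatically commutes with those negations. The required negations exist by Theorem \ref{thm:addition_and_multiplication_on_TG}(ii).

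For the square with $T(m)$: naturality of $p$, of $0$, and of $+\colon T_2 \to T$ shows that the pair $(T(m), m)$ is an additive bundle morphism from $p\colon T(G\times G)\to G\times G$ to $p\colon T(G)\to G$. Transporting along the canonical isomorphism $T(G\times G)\cong T(G)\times T(G)$, which is an isomorphism of additive bundles over $G\times G$ because $\mathbb X$ is a Cartesian tangent category, yields that $T(m)\colon T(G)\times T(G)\to T(G)$ is an additive bundle morphism over $m$. Now $T(G)\to G$ carries the negation $-$ of Theorem \ref{thm:addition_and_multiplication_on_TG}(ii), and $T(G)\times T(G)\to G\times G$ carries the componentwise negation $-\times -$ (this is a negation for the product bundle because $-$ preserves $p$ and the fibrewise addition and zero on $T(G)\times T(G)$ are componentwise). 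Hence $T(m)$ commutes with $-\times -$ and $-$, which is exactly the first square.

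For the square with $+_G$: commutativity and associativity of the fibrewise addition on $T(G)\to G$ show that $(+_G, 1_G)$ is an additive bundle morphism from $p\colon T_2 G\to G$ to $p\colon T(G)\to G$ — it preserves the fibrewise zero since $0+0=0$ and the fibrewise addition by the interchange law, which is a consequence of commutativity. The power bundle $T_2 G\to G$ carries the componentwise negation $-\times_G -$, so again $+_G$ commutes with $-\times_G -$ and $-$, giving the second square.

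Two points remain to be filled in, and I expect the general lemma to be the part most worth writing carefully. First, one must check that $-\times -$ and $-\times_G -$ genuinely are negations for the product and power bundles; this is routine given Theorem \ref{thm:addition_and_multiplication_on_TG}(ii) and the fact that a negation is automatically a bundle endomorphism over the identity. Second, one needs the general fact that an additive bundle morphism $(f,g)$ whose source and target have negations $n_X$, $n_Y$ satisfies $f n_Y = n_X f$: this is the usual argument that a homomorphism of monoids between groups preserves inverses, namely $f n_Y$ and $n_X f$ are both additive inverses of $f$ in the fibrewise commutative monoid over $g$, hence equal by uniqueness of inverses. Note that neither square uses the vertical lift or the canonical flip. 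As an alternative, more computational route one could instead invoke the explicit formulas $+_G = \phi_2^{-1}(1_G\times T(m)_u)\phi$ and $- = \phi^{-1}(1_G\times T(\iota)_u)\phi$ from Theorem \ref{thm:addition_and_multiplication_on_TG} together with Proposition \ref{prop:multiplication in T(G)_u world}, which reduces the second square to the statement that $T(\iota)_u$ is a homomorphism of the abelian group $(T(G)_u, T(m)_u)$ (Theorem \ref{thm:eckmann_hilton_for_m_and_plus}) and the first square to that together with the equivariance $\langle \pi_0 T(\iota)_u, \pi_1\rangle \Adj_r = \Adj_r T(\iota)_u$; the obstacle on that route is the naturality bookkeeping, just as in Propositions \ref{prop:multiplication in T(G)_u world} and \ref{prop:inverse in T(G)_u world}.
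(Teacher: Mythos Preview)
Your approach is correct and genuinely different from the paper's. The paper proves Lemma \ref{lemma:minuscommutes} by the computational route you mention as an alternative at the end: it transports both squares through the trivialization $\phi$ of Theorem \ref{thm:group_tangent}, reducing the first square to the compatibility of $1\times T(\iota)_u$ with the trivialized multiplication (which they summarize by the slogan $(g^{-1}vg)^{-1}w = gv^{-1}g^{-1}w$) and the second square to the formula $+_G = \phi_2^{-1}(1_G\times T(m)_u)\phi$ together with $T(\iota)_u$ being the inverse in the abelian group $(T(G)_u, T(m)_u)$.

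Your primary argument instead isolates the structural reason both squares commute: $T(m)$ and $+_G$ are additive bundle morphisms over $m$ and $1_G$ respectively, and any additive bundle morphism between bundles with negations preserves those negations, by the internal version of ``monoid homomorphisms between groups preserve inverses.'' This is cleaner and more reusable; it also makes transparent that neither the lift nor the flip is needed. The only cost is that you must verify the two ingredients you flag --- that $-\times -$ and $-\times_G -$ are negations on the product and pullback-power bundles, and the general uniqueness-of-inverses lemma for additive bundle morphisms --- but both are straightforward. The paper's route, by contrast, is shorter to state given the table of trivialized structure maps already assembled, but leans on those earlier computations (Propositions \ref{prop:multiplication in T(G)_u world} and Theorem \ref{thm:addition_and_multiplication_on_TG}) and its central step is left at the level of an informal element-wise identity.
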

\begin{proof}
In the diagram
\[\begin{tikzcd}
	{T(G) \times T(G)} && {T(G) \times T(G)} \\
	{G \times T(G)_u \times G \times T(G)_u} && {G \times T(G)_u \times G \times T(G)_u} \\
	{G \times T(G)_u} && {G \times T(G)_u} \\
	T(G) && T(G)
	\arrow["{-}"', from=4-1, to=4-3]
	\arrow["{1 \times T(\iota)}"', from=3-1, to=3-3]
	\arrow["\phi"', from=3-1, to=4-1]
	\arrow["\phi", from=3-3, to=4-3]
	\arrow["{T(m)}"', from=2-1, to=3-1]
	\arrow["{T(m)}", from=2-3, to=3-3]
	\arrow["{1 \times T(\iota)\times 1 \times T(\iota)}", from=2-1, to=2-3]
	\arrow["{- \times -}", from=1-1, to=1-3]
	\arrow["{\phi^{-1} \times \phi^{-1}}"', from=1-1, to=2-1]
	\arrow["{\phi^{-1} \times \phi^{-1}}", from=1-3, to=2-3]
\end{tikzcd}\]
the top and bottom square commute by the definition of $-$. The commutativity of the central square is that 
$(g^{-1}vg)^{-1} w = g v^{-1} g^{-1} w$.
The first diagram commutes by the definition of $-$. Then the second diagram commutes because $+ = \phi_2 (1 \times T(m)_u) \phi^{-1}$.
\end{proof}

\subsection{Left-invariant vector fields}
A \textbf{left-invariant vector field} is a section $\xi : G \to T(G)$ of $p: T(G) \to G$ such that the diagram
\[\begin{tikzcd}
	{G \times G} & G \\
	{T(G)\times T(G)} & T(G)
	\arrow["\xi", from=1-2, to=2-2]
	\arrow["{T(m)}"', from=2-1, to=2-2]
	\arrow["{0\times \xi}"', from=1-1, to=2-1]
	\arrow["m", from=1-1, to=1-2]
\end{tikzcd}\]
commutes. As shown in Proposition \ref{prop:zero_is_left_invariant}, the zero vector field is an example for the left-invariant vector field.

Classically, the vector space of left-invariant vector fields is an alternate description of a Lie group's Lie algebra. In this case, one can prove that the vector space of left-invariant vector fields is isomorphic to $T(G)_u$. Unfortunately, we were not able to fully generalize this result to the general tangent category setting. At least, it is true for the set of left-invariant vector fields and the set of elements, i.e. morphisms $1 \to T(G)_u$.

Technically, the word "elements´´ is used here with two meanings, morphisms $1 \to T(G)_u$ and the classical meaning of elements of the underlying set of the manifold. These two meanings correspond to each other by identifying a morphism $1 \to T(G)_u$ with its image (remembering that 1 has the one point-set as its underlying set).

\begin{proposition}[Left-invariant vector fields]\label{prop:left-invariant_isomorphic}
The set of left invariant vector fields $\xi: G \to T(G)$ is isomorphic to the set of elements $V: 1 \to T(G)_u $ of $T(G)_u$.
\end{proposition}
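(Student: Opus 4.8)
The plan is to transport everything along the trivialization $\phi\colon T(G)\xrightarrow{\cong}G\times T(G)_u$ of Theorem \ref{thm:group_tangent}, under which a left-invariant vector field is nothing but the ``left translate'' of a single element of $T(G)_u$. For $V\colon 1\to T(G)_u$ I would set
\[
\xi_V \;:=\; \langle 0_G,\ !\,V p_u^*\rangle\,T(m)\;=\;\langle 1_G,\,!V\rangle\,\phi^{-1}\colon G\to T(G),
\]
using $\phi^{-1}=(0\times p_u^*)T(m)$ from Theorem \ref{thm:group_tangent}; and in the other direction I would send a section $\xi$ of $p$ to the element
\[
\mathrm{ev}_u(\xi)\;:=\;\langle !,\,u\xi\rangle\colon 1\to T(G)_u,
\]
which is well defined since $u\xi p = u\,1_G = u$, so that $u\xi$ factors through the pullback defining $T(G)_u$. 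The claim is then that $V\mapsto\xi_V$ and $\mathrm{ev}_u$ are mutually inverse bijections between $\{V\colon 1\to T(G)_u\}$ and the set of left-invariant vector fields.

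First I would check that each $\xi_V$ is indeed a left-invariant vector field. It is a section of $p$ by the unit law of $G$ in Definition \ref{def:group_object}: $\xi_V p=\langle 0_G,!Vp_u^*\rangle p m=\langle 1_G,!u\rangle m=1_G$, using $0_G p=1_G$, naturality of $p$, and $p_u^* p=!u$. For left invariance I would expand both legs of the defining square; using associativity of the multiplication of the group object $T(G)$ (Lemma \ref{lemma:tangent_group} together with Definition \ref{def:group_object}(i)) and naturality of $0$ — which gives $\langle\pi_0 0_G,\pi_1 0_G\rangle T(m)=m\,0_G$ — both $m\,\xi_V$ and $(0\times\xi_V)T(m)$ collapse to $\langle m\,0_G,\ !\,V p_u^*\rangle T(m)$, so the square commutes.

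Next I would verify the two maps compose to the identity. For $V\mapsto\xi_V\mapsto\mathrm{ev}_u(\xi_V)$ one computes $u\xi_V=\langle u\,0_G,\ V p_u^*\rangle T(m)$ (using $u\,!=1_1$); since $u\,0_G=0_1 T(u)$ by naturality of $0$, this is exactly the group unit of $T(G)$ as in Lemma \ref{lemma:tangent_group}, so the left unit law of $T(G)$ yields $u\xi_V=V p_u^*$, and then the universal property of the pullback $T(G)_u$ forces $\mathrm{ev}_u(\xi_V)=\langle !,V p_u^*\rangle=V$. Conversely, for left-invariant $\xi$ we have $\mathrm{ev}_u(\xi)p_u^*=u\xi$, hence $\xi_{\mathrm{ev}_u(\xi)}=\langle 0_G,\ !\,u\xi\rangle T(m)$; instantiating the left-invariance square along $\langle 1_G,!u\rangle\colon G\to G\times G$ (``evaluating the second argument at the unit'') turns its top leg into $\langle 1_G,!u\rangle m\,\xi=\xi$ and its bottom leg into precisely $\langle 0_G,!u\xi\rangle T(m)$, so $\xi_{\mathrm{ev}_u(\xi)}=\xi$.

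The only real friction I anticipate is bookkeeping: tracking which $!$ appears where, checking at each stage that the map in question genuinely factors through the pullback $T(G)_u$, and — the one load-bearing move — identifying $u\,0_G$ with the group unit $0_1 T(u)$ of $T(G)$ so that the unit law of $T(G)$ applies in the computation $u\xi_V=V p_u^*$. Everything else is routine chasing of the group-object axioms for $G$ and, through Lemma \ref{lemma:tangent_group}, for $T(G)$. Alternatively, the same result drops out of Table \ref{tab:translations}: a section of $p$ corresponds to $\langle 1_G,\eta\rangle$ with $\eta\colon G\to T(G)_u$, and using the table entries for $0$, $p$ and $T(m)$ — together with the facts that $\mathrm{Ad}_r$ fixes $0_u$ and that $0_u$ is the unit of $T(m)_u$ — the left-invariance square translates into the equation $m\,\eta=\pi_1\,\eta$, which holds precisely when $\eta$ factors through the terminal object, i.e. when $\eta=!V$ for a unique $V\colon 1\to T(G)_u$.
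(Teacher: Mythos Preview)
Your proof is correct and follows essentially the same approach as the paper: the same two maps $V\mapsto \xi_V=\langle 0_G,!Vp_u^*\rangle T(m)$ and $\xi\mapsto \langle !,u\xi\rangle$, and the same two round-trip computations using the unit of $T(G)$ and an instantiation of the left-invariance square at $\langle 1_G,!u\rangle$. You are in fact more thorough than the paper, which silently omits the checks that $\xi_V$ is a section of $p$ and that $\xi_V$ is left-invariant; your verification of both is clean and correct. The alternative argument via Table~\ref{tab:translations} is a pleasant addition and also sound.
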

\begin{proof}
The morphism $\mathbb X_\mathrm{Linv} (G , T(G)) \to \mathbb X (1 , T(G)_u)$ is given by $\xi \mapsto V_\xi := \langle 1_1, u \xi \rangle: 1 \to T(G)_u$.

The morphism $\mathbb X (1 , T(G)_u) \to \mathbb X_\mathrm{Linv} (G , T(G))$ is given by sending $V: 1 \to T(G)_u$ to the composition of 
$$
G \cong G \times 1 \xrightarrow{1_G \times V} G \times T(G)_u \xrightarrow{ 0 \times p^*_u} T(G) \times T(G) \xrightarrow{T(m)}T(G),
$$
i.e $V \mapsto \xi_V := (0 \times V p^*_u) T(m)$.

These two constructions are inverse to each other as
$$
\xi_{V_\xi} = (0 \times \langle 1_1, u \xi \rangle p^*_u)T(m) = (0 \times u \xi )T(m) = (1 \times u)(0\times \xi) T(m) = (1 \times u) m \xi = \xi
$$
and 
$$
V_{\xi_V} = \langle 1_1 , u (0 \times V p^*_u) T(m) \rangle = \langle 1_1 , (u 0 \times V p^*_u)T(m) \rangle = \langle 1_1, V p^*_u \rangle = V
$$
where the second to last equality holds true as $u0 = 0T(u)$ is the unit of the multiplication $T(m)$ by Lemma \ref{lemma:tangent_group}.
\end{proof}
This gives a new perspective on Theorem \ref{thm:group_tangent}. Theorem \ref{thm:group_tangent} says that the tangent space $T(G)$ is $G \times T(G)_u$. Classically, an element of $T(G)$ is a vector $v \in T(G)_u$ which is "shifted´´ by an action of $g \in G$. Now Proposition \ref{prop:left-invariant_isomorphic} says that furthermore, we can think of v as a left-invariant vector field $\xi_v$ rather than a vector, namely the left-invariant vector field that is given by "shifting´´ the vector $v \in T(G)_u$ to any group element $g\in G$. The inverse is given by sending $(g,\xi)$ to the vector $\xi(g) \in T(G)$.

Given two vector fields $\xi, \zeta: G \to T(G)$, we call $\langle \xi , \zeta \rangle +$ the addition of $\xi$ and $\zeta$. We call $\xi -$ the negation of $\xi$. Together with the zero vector field this turns the set of all vector fields into an abelian group. The following Proposition \ref{prop:zero_is_left_invariant} shows that the left-invariant vector fields are a subgroup.

\begin{proposition}\label{prop:zero_is_left_invariant}
Let $\mathbb X$ be a Cartesian tangent category and let $G$ be a group object such that the pullback $T(G)_u$ exists. Then
\begin{enumerate}[(i)]
\item The zero vector field is left-invariant,
\item The addition of two left-invariant vector fields is left-invariant, and 
\item The negation of a left-invariant vector field is left invariant.
\end{enumerate}
\end{proposition}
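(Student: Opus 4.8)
The plan is, for each of the three parts, to verify that the morphism in question is a section of $p\colon T(G)\to G$ and then to check the left-invariance square $m\xi = (0\times\xi)\,T(m)$, where $T(m)\colon T(G)\times T(G)\to T(G)$ denotes the multiplication on $T(G)$ from Lemma~\ref{lemma:tangent_group}. The section condition is immediate throughout: $0\,p = 1_G$ by the additive bundle axioms; $\langle\xi,\zeta\rangle\,{+}\,p = \langle\xi,\zeta\rangle\,\pi_0\,p = \xi p = 1_G$ since $+\,p = \pi_0\,p$; and $(\xi\,{-})\,p = \xi p = 1_G$ because the negation $-$ of Theorem~\ref{thm:addition_and_multiplication_on_TG}(ii), being a negation for the additive bundle $p$, satisfies $-\,p = p$.

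For (i), left-invariance of the zero section reads $m\,0_G = (0_G\times 0_G)\,T(m)$. Naturality of $0\colon 1_{\mathbb X}\to T$ at $m$ gives $m\,0_G = 0_{G\times G}\,T(m)$, and since the Cartesian structure is a strong morphism of tangent categories, $0_{G\times G}$ corresponds to $0_G\times 0_G$ under the canonical isomorphism $T(G\times G)\cong T(G)\times T(G)$ (apply naturality of $0$ at $\pi_0$ and $\pi_1$). Substituting proves the claim.

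For (iii), using left-invariance of $\xi$ and then the first square of Lemma~\ref{lemma:minuscommutes}, which states $T(m)\,(-) = ({-}\times{-})\,T(m)$, we obtain $m(\xi\,{-}) = (0\times\xi)\,T(m)\,(-) = (0\times\xi)\,({-}\times{-})\,T(m) = \langle\pi_0\,(0\,{-}),\,\pi_1\,(\xi\,{-})\rangle\,T(m)$. Since negatives in an additive bundle are unique and the zero section is idempotent under $+$, we have $0\,{-} = 0$ (equivalently, precompose the identity $\langle 1,{-}\rangle\,{+} = p\,0$ with $0$ and invoke the unit law). Hence $m(\xi\,{-}) = (0\times(\xi\,{-}))\,T(m)$, which is left-invariance of $\xi\,{-}$.

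Part (ii) is the step requiring genuine care. Starting from $m\,\langle\xi,\zeta\rangle\,{+} = \langle m\xi, m\zeta\rangle\,{+} = \bigl\langle (0\times\xi)\,T(m),\ (0\times\zeta)\,T(m)\bigr\rangle\,{+}$, I would commute the final $+$ past $T(m)$ using that $(T(m),m)$ is a morphism of additive bundles — a consequence of the naturality of $+$ at $m$ together with the compatibility of the Cartesian structure with addition. This rewrites the expression as $\langle 0\times\xi,\ 0\times\zeta\rangle\,{+}^{\,T(G)\times T(G)}\,T(m)$, where ${+}^{\,T(G)\times T(G)}$ is the fibrewise addition on the additive bundle $T(G)\times T(G)\to G\times G$. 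Under the identification with $T(G\times G)$ this addition is componentwise, so the first $G$-component of $\langle 0\times\xi,\ 0\times\zeta\rangle\,{+}^{\,T(G)\times T(G)}$ is $\langle\pi_0\,0,\,\pi_0\,0\rangle\,{+} = \pi_0\,0$ — a sum of two copies of a zero, collapsed by the unit law — and its second component is $\langle\pi_1\,\xi,\,\pi_1\,\zeta\rangle\,{+} = \pi_1\,(\langle\xi,\zeta\rangle\,{+})$. The whole expression therefore equals $\bigl(0\times(\langle\xi,\zeta\rangle\,{+})\bigr)\,T(m)$, which is exactly left-invariance of $\langle\xi,\zeta\rangle\,{+}$. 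The only real subtlety — and the main obstacle — is keeping track of the additive bundle structure as it is transported across the Cartesian isomorphism $T(G\times G)\cong T(G)\times T(G)$ and checking that the $0$ in the first slot of $0\times\xi$ is absorbed correctly; the rest is direct rewriting.
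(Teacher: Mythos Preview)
Your proof is correct and follows essentially the same approach as the paper's: part (i) is the naturality of $0$, part (iii) combines Lemma~\ref{lemma:minuscommutes} with $0\,{-}=0$, and part (ii) hinges on the fact that $(T(m),m)$ is an additive bundle morphism (naturality of $+$ at $m$) together with $\langle 0,0\rangle\,{+}=0$. The only cosmetic differences are that you spell out the section condition explicitly and, in (ii), run the computation starting from $m\,\langle\xi,\zeta\rangle\,{+}$ rather than from $(0\times\langle\xi,\zeta\rangle\,{+})\,T(m)$ as the paper does; the intermediate steps and the key ingredient are identical.
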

\begin{proof}~
\begin{enumerate}[(i)]
\item The zero vector field is left-invariant as 
\[\begin{tikzcd}
	{G \times G} & G \\
	{T(G)\times T(G)} & T(G)
	\arrow["m", from=1-1, to=1-2]
	\arrow["Tm"', from=2-1, to=2-2]
	\arrow["0", from=1-2, to=2-2]
	\arrow["{0\times 0}"', from=1-1, to=2-1]
\end{tikzcd}\]
commutes by the naturality of $0$.
\item In order to show that the addition of the two left-invariant vector fields $\xi$ and $\zeta$ is left-invariant, we need to show that
\[\begin{tikzcd}
	{G \times G} & G \\
	{T(G)\times T(G)} & T(G)
	\arrow["m", from=1-1, to=1-2]
	\arrow["Tm"', from=2-1, to=2-2]
	\arrow["{\langle \xi,\zeta\rangle +}", from=1-2, to=2-2]
	\arrow["{0 \times \langle \xi,\zeta\rangle +}"', from=1-1, to=2-1]
\end{tikzcd}\]
commutes. It commutes because
\begin{align*}
(0 \times \langle \xi , \zeta \rangle + ) Tm =& (\langle 0,0 \rangle + \times \langle \xi , \zeta \rangle + ) Tm 
\\=& \langle (0 \times \xi)Tm , (0 \times \zeta) Tm \rangle + 
\\=&\langle m \xi , m \zeta \rangle + 
\\=& m \langle \xi , \zeta \rangle +.
\end{align*}
\item The negation preserves left-invariance because in the diagram
\[\begin{tikzcd}
	{G \times G} & G \\
	{T(G) \times T(G)} & T(G) \\
	{T(G) \times T(G)} & T(G)
	\arrow["Tm"', from=3-1, to=3-2]
	\arrow["Tm"', from=2-1, to=2-2]
	\arrow["m", from=1-1, to=1-2]
	\arrow["{0 \times \xi}"', from=1-1, to=2-1]
	\arrow["\xi", from=1-2, to=2-2]
	\arrow["{- \times -}"', from=2-1, to=3-1]
	\arrow["{-}", from=2-2, to=3-2]
\end{tikzcd}\]
the upper part commutes because of left-invariance, the lower part commutes because of Lemma \ref{lemma:minuscommutes} and the whole diagram shows the left-invariance of $\xi -$ since $0- = 0$.
\end{enumerate}
\end{proof}

\subsection{Defining the Lie bracket}
As the tangent bundle $T(G)$ has negatives it is possible to define a Lie-bracket on sections. 
For two vector fields $w_1, w_2: G \to T(G)$, \cite{Cockett2014DifferentialST}*{Definition 3.14} defines the Lie bracket as 
$$
[w_1, w_2] = \{ \langle w_1 T(w_2), w_2 T(w_1) c -\rangle+ \}
$$
where the brackets turn a morphism $f:A \to T^2G$ equalizing $pp0$ and $T(p)$ to a morphism $\{f\}: A \to T(G)$ given by the composition of the induced morphism into the equalizer $T_2G$ and the projection $\pi_0: T_2G \to T(G)$. This means $\{f\}$ it is the unique morphism such that $f = \langle \{f\} \ell , f p 0\rangle  T(+) $.

We will now use this to define a Lie bracket on the left-invariant vector fields mimicking classical differential geometry where the left-invariant vector fields form the Lie-Algebra of a Lie group.
\begin{lemma}\label{lem:preservation_of_invariance}
If the vector fields $w_1, w_2 : G \to T(G)$ are left-invariant, then $[w_1, w_2]$ is left-invariant.
\end{lemma}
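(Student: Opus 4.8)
The plan is to use that left-invariance of a vector field $\xi : G \to TG$ says exactly that the vector field $0_G \times \xi$ on $G \times G$ (formed via the canonical isomorphism $T(G\times G)\cong TG\times TG$) is ``$m$-related'' to $\xi$, namely $(0_G\times\xi)\,T(m) = m\,\xi$, and that the bracket construction respects such related vector fields. Write $F_{x,y} := \langle x\,T(y),\, y\,T(x)\,c\,-\rangle +$ for a pair of vector fields $x,y$ on a common object, so that $[x,y] = \{F_{x,y}\}$; the negation $-$ makes sense on $T^2(-)$ of any group object by Theorem \ref{thm:addition_and_multiplication_on_TG}. First I would record the two formal properties of $\{-\}$ that follow at once from its defining equation $f = \langle \{f\}\ell, fp0\rangle T(+)$ together with naturality of $\ell,+,p,0$ (these are of the kind of Lemma 2.10 and 2.12 of \cite{cockett2016diffbundles}): for any morphism $g$, $\{g\,f\} = g\,\{f\}$ and $\{f\,T^2(g)\} = \{f\}\,T(g)$.

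The core step is the identity $m\,F_{w_1,w_2} = F_{0_G\times w_1,\,0_G\times w_2}\,T^2(m)$ of morphisms $G\times G \to T^2 G$. Starting from $m\,F_{w_1,w_2} = \langle m w_1 T(w_2),\, m w_2 T(w_1)\,c\,-\rangle +$, I substitute $m w_i = (0_G\times w_i)T(m)$ twice (once ``outside'' and once ``inside a $T$''), using functoriality of $T$ to produce terms of the shape $(0_G\times w_i)\,T(0_G\times w_j)\,T^2(m)\,(\cdots)$. I then commute $T^2(m)$ to the far right: past $c$ by naturality of $c$, past the negation by the fact that a negation commutes with additive bundle morphisms — it is unique, $(T^2(m), T(m))$ is an additive bundle morphism, and negatives are present on both $T^2(G\times G)$ and $T^2 G$ by Theorem \ref{thm:addition_and_multiplication_on_TG} — and finally past the outer $+$ by naturality of $+$. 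What is left inside is precisely $F_{0_G\times w_1, 0_G\times w_2}$. Applying $\{-\}$ and the two naturality properties above then yields
\begin{align*}
m\,[w_1,w_2] &= m\,\{F_{w_1,w_2}\} = \{m\,F_{w_1,w_2}\}\\
&= \{F_{0_G\times w_1,\,0_G\times w_2}\,T^2(m)\} = [0_G\times w_1,\,0_G\times w_2]\,T(m).
\end{align*}

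It remains to identify $[0_G\times w_1, 0_G\times w_2]$ with $0_G\times[w_1,w_2]$. Since $\mathbb X$ is Cartesian, $T$ and all of $c,\ell,+,-$ act componentwise under $T^n(G\times G)\cong T^n G\times T^n G$, so $F_{0_G\times w_1,0_G\times w_2}$ is, up to this iso, the pairing of $F_{0_G,0_G}$ on the first factor with $F_{w_1,w_2}$ on the second, and $\{-\}$ likewise respects the pairing; combined with $[0_G,0_G] = 0_G$ (the bracket of the zero vector field with itself vanishes, by naturality of the tangent maps) this gives $[0_G\times w_1, 0_G\times w_2] = 0_G\times[w_1,w_2]$. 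Substituting into the displayed equation gives $m\,[w_1,w_2] = (0_G\times[w_1,w_2])\,T(m)$, which is exactly left-invariance of $[w_1,w_2]$. I expect the main obstacle to be the bookkeeping in the core identity — tracking the canonical isomorphisms $T(G\times G)\cong TG\times TG$ and, above all, verifying that the negation (which, as emphasised after Theorem \ref{thm:addition_and_multiplication_on_TG}, is \emph{not} a natural transformation) genuinely commutes with $T^2(m)$; the latter rests on the uniqueness of negatives for an additive bundle, the one ingredient not already isolated in the text.
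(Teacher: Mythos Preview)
Your proposal is correct and follows essentially the same route as the paper's proof: both pull $m$ inside $\{-\}$, unfold left-invariance twice, commute $T^2(m)$ past $c$, $-$, and $+$, pull $T^2(m)$ back out of $\{-\}$ as $T(m)$, and finish with the Cartesian componentwise identification. The paper handles the negation step by citing Lemma~\ref{lemma:minuscommutes} (applied to the group $TG$), whereas you argue it via uniqueness of negatives for additive bundles; both are valid and amount to the same thing.
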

\begin{proof}
We need to show that $m [w_1, w_2] = (0 \times [w_1 , w_2] ) T(m)$ using the fact that $w_1$ and $w_2$ are left-invariant, i.e. $m w_1 = (0 \times w_1) T(m)$.

\begin{align*}
& m [w_1, w_2]  =
\\& m \{\langle w_1 T(w_2), w_2 T(w_1) c - \rangle +\}
\\=&\{\langle m w_1 T(w_2), m w_2 T(w_1) c - \rangle +\} &&\text{$f\{g\}=\{fg\}$}
\\=&
\{\langle (0 \times w_1) T(m) T(w_2), ( 0 \times w_2) T(m) T(w_1) c - \rangle +\}&&\text{left-invariance}
\\=&
\{\langle (0 \times w_1)  (0 \times T(w_2)) T^2(m), ( 0 \times w_2) (0 \times T(w_1)) T^2(m) c - \rangle +\} && \text{left-invariance}
\\=&
\{\langle (0 \times w_1)  (0 \times T(w_2)), ( 0 \times w_2) (0 \times T(w_1)) c - \rangle +  T^2(m)\}
&& \text{Lemma \ref{lemma:minuscommutes}}
\\=& 
\{00 \times \langle  w_1  T(w_2), w_2 T(w_1) c - \rangle +  T^2(m)\}  && \text{composition}
\\=& 
\{0 0 \times \langle  w_1  T(w_2), w_2 T(w_1) c - \rangle + \} T(m)
&& \text{\cite{Cockett2014DifferentialST}*{Lemma 2.14}} 
\\=&
(0 \times [w_1, w_2]) T(m)&& \text{\cite{Cockett2014DifferentialST}*{Lemma 2.14}}
\end{align*}
\end{proof}

This defines a Lie-bracket on the set of left invariant vector fields. Using Proposition \ref{prop:left-invariant_isomorphic} we may also consider the Lie-bracket to be a pairing on $\mathbb X(1, T(G)_u)$:
$$
[\bullet,\bullet]_{1\to T(G)_u}:
\mathbb X (1, T(G)_u) \times \mathbb X (1, T(G)_u) \to \mathbb X (1, T(G)_u)
$$ 
That is a pairing on the elements of $T(G)_u$, i.e. the morphisms from the terminal object $1$ into $T(G)_u$. We call this the external Lie Algebra.

\begin{theorem}\label{thm:Lie_Algebra}
    Let $G$ be a group object in a Cartesian tangent category $\mathbb X$ for which the pullback $T(G)_u$ exists. Then the set $\mathbb X(1, T(G)_u)$ is a Lie-Algebra over the ring of integers $\mathbb Z$.
\end{theorem}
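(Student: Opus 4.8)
Recall that a Lie algebra over $\mathbb{Z}$ is a $\mathbb{Z}$-module (equivalently, an abelian group) equipped with a $\mathbb{Z}$-bilinear bracket which is alternating ($[v,v]=0$) and satisfies the Jacobi identity. The plan is to build the abelian group structure on $\mathbb{X}(1,T(G)_u)$ directly, and to transport the bracket along the bijection of Proposition \ref{prop:left-invariant_isomorphic}, thereby reducing the Lie axioms to the already-available theory of left-invariant vector fields. First I would observe that $\mathbb{X}(1,T(G)_u)$ is an abelian group: by Theorem \ref{thm:eckmann_hilton_for_m_and_plus} the object $T(G)_u$ is a commutative group object, with multiplication $+_u = T(m)_u$, unit $0_u$ and inverse $T(\iota)_u$, so applying the representable functor $\mathbb{X}(1,-)$ yields an abelian group with operations computed pointwise. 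For the bracket, given $V,W\colon 1\to T(G)_u$ let $\xi_V,\xi_W\colon G\to T(G)$ be the corresponding left-invariant vector fields of Proposition \ref{prop:left-invariant_isomorphic}, form the bracket $[\xi_V,\xi_W]$ of \cite{Cockett2014DifferentialST}*{Definition 3.14} (which is again left-invariant by Lemma \ref{lem:preservation_of_invariance}), and set $[V,W] := V_{[\xi_V,\xi_W]}$.

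The key reduction is that the assignment $\xi\mapsto V_\xi$ of Proposition \ref{prop:left-invariant_isomorphic} is an isomorphism of abelian groups. It is a bijection by that proposition, it clearly sends the zero vector field to $0_u$, and a short computation using $V_\xi p_u^* = u\xi$ together with the formula $+_u = \langle !,\langle p_u^*,p_u^*\rangle +\rangle$ gives $V_{\langle\xi,\zeta\rangle +} = \langle V_\xi,V_\zeta\rangle +_u$; a bijective monoid homomorphism between groups is a group isomorphism. By Proposition \ref{prop:zero_is_left_invariant} the left-invariant vector fields form an abelian subgroup of all vector fields on $G$, and by Lemma \ref{lem:preservation_of_invariance} they are closed under the bracket. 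Hence it suffices to show that the left-invariant vector fields, with the bracket of \cite{Cockett2014DifferentialST} and the pointwise abelian group structure, form a Lie algebra over $\mathbb{Z}$; since a $\mathbb{Z}$-scalar is iterated addition and negation, this reduces to bi-additivity of the bracket, the alternating identity $[w,w]=0$, and the Jacobi identity for the bracket of vector fields. All three are standard properties of the bracket of vector fields in a tangent category, provided the pertinent additive bundles admit negatives; here Theorem \ref{thm:addition_and_multiplication_on_TG}(ii)--(iii) furnishes negatives on $T(G)\to G$ and on $T^2(G)\to T(G)$, which is precisely the hypothesis under which these identities hold, so one may invoke the corresponding results of \cite{Cockett2014DifferentialST} and the ambient tangent-category calculus of vector fields. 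Since the bracket is computed entirely inside $T(G)$ and $T^2(G)$, only these negatives are ever used, and it is irrelevant that they are not natural transformations.

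I expect the Jacobi identity to be the main obstacle: verifying it means pushing the definition $[w_1,w_2] = \{\langle w_1 T(w_2), w_2 T(w_1) c -\rangle +\}$ through the $\{-\}$-calculus, the naturality of $\ell$ and $c$, and the additive-bundle-morphism property of the canonical flip, and it is the one step where genuine negatives (rather than merely a commutative monoid) are indispensable. By contrast, bi-additivity and the alternating property are comparatively routine, following from the linearity of $T$ on the vertical-lift equalizer of Definition \ref{def:tan_cat} together with the relations $\ell c = \ell$ and $c^2 = 1$.
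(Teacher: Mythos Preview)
Your proposal is correct and follows essentially the same route as the paper: transport the bracket along the bijection of Proposition~\ref{prop:left-invariant_isomorphic}, use Proposition~\ref{prop:zero_is_left_invariant} and Lemma~\ref{lem:preservation_of_invariance} to see that left-invariant vector fields form an abelian group closed under the bracket, and then invoke the existing tangent-category literature for bilinearity and the Jacobi identity, with the negatives supplied by Theorem~\ref{thm:addition_and_multiplication_on_TG}. The only minor discrepancy is bibliographic: the paper cites bilinearity to \cite{Cockett2014DifferentialST}*{Theorem~3.17} but the Jacobi identity to \cite{cockett_cruttwell_jacobi_identity}*{Theorem~4.2} rather than to \cite{Cockett2014DifferentialST}, so your anticipated ``main obstacle'' is in fact already handled in the literature.
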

\begin{proof}
    Instead of $\mathbb X(1, T(G)_u)$, we use the set of left-invariant vector fields in this proof. By Proposition \ref{prop:left-invariant_isomorphic} this is isomorphic to $\mathbb X(1, T(G)_u)$. The addition $+$, the zero section $0$ and the negation from Theorem \ref{thm:addition_and_multiplication_on_TG} (ii) induce an abelian group structure on the set of left-invariant vector fields which induces a $\mathbb Z$-module structure. It follows from Lemma \ref{lem:preservation_of_invariance} that the bracket is an operation on the set of left-invariant vector fields. It follows from Theorem 3.17 of \cite{Cockett2014DifferentialST} that the Lie-bracket is bilinear with respect to this $\mathbb Z$-module structure. The Jacobi identity has been proved by in Theorem 4.2 of \cite{cockett_cruttwell_jacobi_identity}. Thereby the set of left-invariant vector fields is a Lie Algebra.
\end{proof}

Theorem 7.5 of \cite{Loryaintablian2025differentiablegroupoidobjectsabstract} is a related result, though with different assumptions and a different perspective. 

A natural question to ask is whether this external Lie algebra structure comes from a internal Lie-bracket on $T(G)_u$:
Is there an antisymmetric morphism fulfilling the Jacobi identity
$
[\bullet, \bullet]_{T(G)_u} : T(G)_u \times T(G)_u \to T(G)_u
$
such that $[\chi,\xi]_{1\to T(G)_u} = \langle \chi, \xi \rangle [\bullet, \bullet]_{T(G)_u}$?
We doubt that there is such an internal Lie-bracket this Lie-Algebra structure comes from. The reason for this is that the external Lie-bracket is defined on left-invariant vector fields and this only gives a Lie-bracket that is a set map. This does not relate to the type of objects in an arbitrary Cartesian tangent category. In some well known special cases it does work:
\begin{corollary}\label{cor:Diffgeo_Alggeo_Lie_theory} ~
\begin{enumerate}[(i)]
\item Differential geometry: Let $G$ be a Lie group (a group object in the category of smooth manifolds). Then its tangent space $T(G)_u$ at the unit has an internal Lie-bracket $T(G)_u \times T(G)_u \to T(G)_u$ that is bilinear with respect to the addition of tangent vectors.
\item Algebraic geometry: Let $G$ be an algebraic Group (a group object in the category of algebraic varieties) . Then its tangent space $T(G)_u$ at the unit has an internal Lie-bracket $T(G)_u \times T(G)_u \to T(G)_u$  that is bilinear with respect to the addition of tangent vectors.
\end{enumerate}
\end{corollary}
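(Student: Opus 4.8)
The plan is to reduce, in each of the two cases, to the classical fact that the Lie bracket on the Lie algebra of a Lie group (respectively algebraic group) is an $\mathbb R$-bilinear (respectively $k$-bilinear) map, and then to observe that such a map is automatically a morphism in the category at hand. The only point that requires genuine checking is that the external Lie bracket produced by Theorem \ref{thm:Lie_Algebra} agrees, on global points, with the classical Lie bracket; everything else is then formal.

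\textbf{Differential geometry.} In $\SmMan$ the tangent structure of Example \ref{ex:manifolds_as_a_tangent_cat} is the usual one, so for a Lie group $G$ the object $T(G)_u$ is the classical tangent space $\mathfrak g := T_u G$, a finite-dimensional real vector space, and $\mathbb X(1, T(G)_u)$ is just the underlying set of $\mathfrak g$. Under Proposition \ref{prop:left-invariant_isomorphic}, an element $v \in \mathfrak g$ corresponds to the left-invariant vector field $\xi_v = (0 \times v p^*_u) T(m)$, which is precisely the classical left-translate of $v$. I would then invoke the standard fact that the tangent-categorical Lie bracket of vector fields $\{ \langle w_1 T(w_2), w_2 T(w_1) c - \rangle + \}$, computed with the structure maps $p, 0, +, \ell, c$ of Example \ref{ex:manifolds_as_a_tangent_cat}, unwinds in local coordinates to the ordinary Lie bracket of smooth vector fields (this is precisely why the tangent-category axioms are set up as they are; see \cite{Cockett2014DifferentialST}). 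Hence the external bracket of Theorem \ref{thm:Lie_Algebra} is the restriction to left-invariant vector fields of the ordinary vector-field bracket, i.e. it is the classical Lie bracket $[\cdot,\cdot] \colon \mathfrak g \times \mathfrak g \to \mathfrak g$ of $\mathrm{Lie}(G)$. That map is $\mathbb R$-bilinear, hence polynomial, hence smooth, so it is a morphism $[\bullet,\bullet]_{T(G)_u} \colon T(G)_u \times T(G)_u \to T(G)_u$ of $\SmMan$; since $\mathbb X(1,-)$ carries products to products of sets, the identity $[\chi,\xi]_{1 \to T(G)_u} = \langle \chi,\xi\rangle [\bullet,\bullet]_{T(G)_u}$ holds, and bilinearity with respect to the addition of tangent vectors is exactly $\mathbb R$-bilinearity of the classical bracket, the addition $+_u$ on $T(G)_u$ being the vector-space addition of $\mathfrak g$ by Proposition \ref{prop:trivial-translations} and Theorem \ref{thm:eckmann_hilton_for_m_and_plus}.

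\textbf{Algebraic geometry.} The argument is parallel. For an algebraic group $G$ over a field $k$, the tangent structure on the category of varieties is the Zariski tangent bundle, and $T(G)_u$ is the tangent space at the identity, whose underlying variety is the affine space attached to the finite-dimensional $k$-vector space $\mathfrak g := T_u G$. The classical Lie algebra $\mathrm{Lie}(G)$, defined via left-invariant derivations of $\mathcal O_G$ (equivalently via the commutator of the group scheme evaluated on dual numbers), carries a $k$-bilinear bracket, which as a $k$-bilinear map $\mathfrak g \times \mathfrak g \to \mathfrak g$ is polynomial and therefore a morphism of varieties $T(G)_u \times T(G)_u \to T(G)_u$. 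One checks, as before, that the tangent-categorical bracket of left-invariant vector fields recovers this classical bracket, after which the same global-points reasoning yields the internal bracket satisfying $[\chi,\xi]_{1 \to T(G)_u} = \langle \chi,\xi\rangle [\bullet,\bullet]_{T(G)_u}$, bilinear over $k$ and in particular over $\mathbb Z$ compatibly with Theorem \ref{thm:Lie_Algebra}.

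\textbf{The main obstacle.} The one non-formal step is the identification, in each concrete category, of the abstract bracket $\{ \langle w_1 T(w_2), w_2 T(w_1) c - \rangle + \}$ with the classical Lie bracket of vector fields. This is well known but does require a local-coordinate computation, or an appeal to the literature on tangent categories such as \cite{Cockett2014DifferentialST} and \cite{cockett_cruttwell_jacobi_identity}. Once that is in hand the corollary follows with no further work, since the classical brackets are already known to be linear maps and linear maps between affine spaces are morphisms in both $\SmMan$ and the category of varieties; thus the abstractly constructed external Lie algebra is literally the classical Lie algebra, internal bracket and all. Note that we do not need the global-sections functor $\mathbb X(1,-)$ to be faithful on these objects: we are handed a concrete candidate morphism, the classical bracket, and only need that its effect on points is the external bracket, which is immediate once the identification above is granted.
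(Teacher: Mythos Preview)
Your proposal is correct and follows essentially the same approach as the paper: both rest on the observation that in $\SmMan$ and in the category of varieties, a (bi)linear set-map between finite-dimensional vector spaces is automatically a morphism, so the external bracket upgrades to an internal one. The paper's proof is terser --- it simply asserts that the external Lie bracket on points is a linear map and hence smooth (respectively algebraic) --- whereas you insert the intermediate step of identifying the tangent-categorical bracket of left-invariant vector fields with the classical Lie bracket and then borrowing the $\mathbb R$- (respectively $k$-) bilinearity from the classical theory. That identification is exactly what the paper leaves implicit, and as you correctly flag, it is the only non-formal ingredient; once granted, the two arguments coincide.
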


The key property in the two setups of Corollary \ref{cor:Diffgeo_Alggeo_Lie_theory} is that an arbitrary linear map of elements is a morphism in the category and that two such morphisms coincide if they coincide on elements.

\begin{proof}
Part (i) follows because the category of smooth manifolds is a tangent category in which tangent spaces exist. In this category the group objects are Lie groups. The Lie bracket, defined on points, is a linear map. Since it is linear, it is a smooth map $T(G)_u \otimes T(G)_u \to T(G)_u$ and therefore the Lie bracket is a smooth map between manifolds.
Part (ii) follows because the category of algebraic varieties and algebraic maps is a tangent category in which tangent spaces exist. The group objects in this category are algebraic groups. The Lie bracket is a linear map. As it is linear, it is algebraic and therefore the Lie bracket is an internal morphism.
\end{proof}

Part (i) is the recovered classical result from differential geometry, however in more generality.

In classical differential geometry the Lie algebra determines the Lie group locally via the exponential map. We doubt that this is the case in full generality because in general there is no exponential morphism in an arbitrary tangent category.

\section{Principal bundles}\label{sec:principal_bundles_part}
Physical systems that have an inherent symmetry or redundancy are often described as principal $G$-bundles where $G$ is the system's symmetry group.
For example, the electromagnetic interaction between charged particles naturally displays symmetry of the complex unit circle, $U(1)$.  The details of this example can be found in \cite{Nakahara_geo_top_phy_2003}*{Chapter 10}.  The electromagnetic field has an associated $U(1)$-principal bundle.  This bundle is important in electromagentism - the electromagnetic potential is the expression in local coordinates of a connection on this bundle.  Physically inspired examples such as this one make it clear that principal bundles are important.

The central idea of principal bundles is that locally they are the Cartesian product $M \times G$ of a base-manifold $M$ and the symmetry group $G$, while globally they may have a nontrivial geometry.
The purpose of this section is to generalize this geometric construction to an arbitrary join restriction category. The structure of join restriction categories allows for a particularly nice realization of fibre bundles in which local trivializations are expressed as partial isomorphisms.

\subsection{Fibre Bundles}\label{sec:fibre_bundles}
In differential geometry, given manifolds $M$ and $F$, a fibre bundle over $M$ with typical fibre $F$ is a manifold $E$ with a smooth map $q:E \to M$ such that for every $p\in M$ there is an open neighbourhood $U\ni p$ such that $q^{-1}(U)$ is diffeomorphic to $U \times F$. 
In this section we generalize this definition into the setting of join-restriction categories.

In the following definition it will be useful to recall from Definition \ref{def:total_map_partial_inverse} that a total morphism is a morphism $f:A \to B$ in a restriction category fulfilling $\bar f = 1_A$ and that a morphism $g:A \to B$ is called a partial isomorphism if it has a partial inverse, denoted $g^*$. Recall in addition from Remark \ref{rmrk:commutative_diagrams_in_restriction} that a diagram commutes if the compositions along all paths are equal. Two partial maps are equal if both their values and their domains coincide.
\begin{definition}\label{def:fibre_bundle}
Let $M$ and $F$ be objects in a join restriction category $\mathbb X$.
\begin{enumerate}[(i)]
\item a \textbf{fibre bundle} over $M$ with typical fibre $F$ in a join restriction category $\mathbb X$ is an object $E$ with a total morphism $q:E \rightarrow M$ and partial isomorphisms $\alpha_i: E \rightarrow M \times F$, called \textbf{local trivializations}, such that the diagram
\begin{equation}\label{diagram:fibre_bundle_alpha_i}
\begin{tikzcd}
	E && {M \times F} \\
	E && M
	\arrow["{\alpha_i}", from=1-1, to=1-3]
	\arrow["{\bar \alpha_i}"', from=1-1, to=2-1]
	\arrow["{\pi_0}", from=1-3, to=2-3]
	\arrow["q"', from=2-1, to=2-3]
\end{tikzcd}
\end{equation}
commutes and $\bigvee_{i \in I} \bar \alpha_i = 1_E$ and there exists a restriction idempotent $e_i: M \to M$ such that $\bar \alpha_i^* = e_i \times 1_F$. 
\item A morphism between bundles $(E,M,F,q,(\alpha_i)_{i\in I}) \to (E',M',F',q',(\alpha'_k)_{k\in K})$ is a pair of morphisms $\Phi: E \to E', \phi : M \to M'$ such that
\begin{center}
\begin{tikzpicture}
\path 
(0,1.5) node (a) {$E$}
(3,1.5) node (b) {$E'$}
(0,0) node(c) {$M$}
(3,0) node (d) {$M'$}
;
\draw [->] (a) -- node[above] {$\Phi$} (b);
\draw [->] (c) -- node[below] {$\phi$} (d);
\draw [->] (a) -- node[left] {$q$} (c);
\draw [->] (b) -- node[right] {$q'$} (d);
\end{tikzpicture}
\end{center}
commutes.
\item A fibre bundle $(E,M,q,(\alpha_i)_{i \in I})$ is called \textbf{totally fibred} if, for all $i \in I$,
$$
\bar \alpha_i = \overline{qe_i}.
$$
\end{enumerate}
\end{definition}
The morphisms $\Phi$ and $\phi$ forming the principal bundle morphisms are not total, in general.
Fibre bundles and fibre bundle morphisms, as in Definition \ref{def:fibre_bundle}, form a join restriction category $FibBun(\mathbb X)$.
The restrictions structure is given by $\overline {(\Phi , \phi )} = (\bar \Phi, \bar \phi)$ and the join is $\bigvee_i (\Phi_i, \phi_i)= (\bigvee_i \Phi_i, \bigvee_i \phi_i)$.

The condition for a bundle to be totally fibred will be relevant in Section \ref{sec:classical_bundles}. 
Remember from Remark \ref{rmk:partial_set_map_interpretations},(ii) that the restriction idempotent of a composition can be interpreted as a preimage. 
In the category \ParMfld{} of smooth manifolds and principal bundles total fibredness describes that the local trivialization $\alpha_i: E \to M \times F$ is not just defined on an arbitrary open subset $V \subset E$, but that, in fact, it is of the form $q^{-1}(U)$ for some $U \subset M$.

\begin{remark}\label{rmk:totally_fibred_equivalent}
    An equivalent characterization of a fibre bundle that is totally fibred is that the diagram
    \[\begin{tikzcd}
    	E & {M \times G} \\
    	M & M
    	\arrow["{\alpha_i}", from=1-1, to=1-2]
    	\arrow["q"', from=1-1, to=2-1]
    	\arrow["{\pi_0}", from=1-2, to=2-2]
    	\arrow["{e_i}"', from=2-1, to=2-2]
    \end{tikzcd}\]
    commutes. This shows how total fibredness forces the morphism $\alpha_i$ to be total on fibres and the partiality to be contained in the basespace component.
\end{remark}
The formal reason why the diagram in Remark \ref{rmk:totally_fibred_equivalent} holds, is that Diagram \ref{diagram:fibre_bundle_alpha_i} commutes, that in the totally fibred case $\bar \alpha_i = \overline{qe_i}$ and that with this the left path in the diagram simplifies to $q \bar e_i$. It is an equivalent condition since taking the restriction idempotent of $\alpha_i \pi_0 = q e_i$ gives exactly the totally fibred condition $\bar \alpha_i = \overline{q e_i}$.

Recall from Definition \ref{def:atlas} that an atlas is an index set $I$ together with objects $(U_i)_{i \in I}$ and morphisms $u_{ij}:U_i \to U_j$ for each $i,j \in I$.
The object $M\times F$ together with a collection of morphisms $u_{ij} = \alpha_i^* \alpha_j : M \times F \to M \times F$, indexed by the index set $I$ of $(\alpha_i)_{i \in I}$ and given by the composition of the partial inverses $\alpha_i^*$ and the local trivializations $\alpha_j$, form an atlas. The atlases produced by this construction are the same as the atlases which are characterized in Definition \ref{def:fibre_atlases}.  
 Theorem \ref{corollary:equivalence_fibre}, which states that if gluings exist, bundle atlases are equivalent to fibre bundles, will provide a proof of this statement. First, we need some preliminary definitions and properties.

\begin{definition}\label{def:fibre_atlases}
Let $M$ and $F$ be objects in a join restriction category $\mathbb X$. Then a \textbf{bundle atlas} is an atlas $(u_{ij}: M \times F \to M\times F)_{i,j \in I}$ such that 
\begin{itemize}
\item for all $i,j \in I$, $u_{ij} = \langle\pi_0, u_{ij} \pi_1\rangle$, and
\item for $i=j$, $ u_{ii} = e_{i} \times 1_F$ for a restriction idempotent $e_{i} = \bar e_{i}$.
\end{itemize}
\end{definition}

Here the expression $u_{ij} = \langle \pi_0, u_{ij} \pi_1\rangle$ does not mean that $u_{ij} \pi_0 = \pi_0$ or that $u_{ij}$ is total in some first component sense. In fact, due to the definition of restriction limits in Definition \ref{def:restriction_limit}(ii), $\langle f, g\rangle \pi_0 = \bar g f$ which in our case means $u_{ij} \pi_0 = \bar{u_{ij}} \pi_0$.

Recall the notion of a gluing from Definition \ref{def:gluing}.  In the following lemma, we show that $E$ is the gluing of a bundle atlas.
\begin{lemma}\label{lemma:fibrebundleatlasonedirection}

For a fibre bundle $(E,M,q,F,(\alpha_i)_{i \in I})$ in a join restriction category, $E$ is a gluing of the atlas $u_{ij} = \alpha_i^* \alpha_j$. This atlas is a bundle atlas as defined in Definition \ref{def:fibre_atlases}.
\end{lemma}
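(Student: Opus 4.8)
Lemma \ref{lemma:fibrebundleatlasonedirection} packages three assertions, and the plan is to check them in turn: that the family $u_{ij} := \alpha_i^*\alpha_j$ is an atlas in the sense of Definition \ref{def:atlas}; that it moreover satisfies the two extra clauses of Definition \ref{def:fibre_atlases}, hence is a bundle atlas; and that $E$ together with the maps $g_i := \alpha_i^*$ realises a gluing of it via the explicit criterion of Remark \ref{rmk:gluing_explicit}. All three reduce to manipulations with the restriction identities \textbf{[R.1]}--\textbf{[R.4]}, the partial-inverse equations $\alpha_i\alpha_i^* = \bar\alpha_i$ and $\alpha_i^*\alpha_i = \overline{\alpha_i^*}$, the commuting square of Diagram \ref{diagram:fibre_bundle_alpha_i} (that is, $\bar\alpha_i q = \alpha_i\pi_0$), and the two global hypotheses on $(\alpha_i)_{i\in I}$ from Definition \ref{def:fibre_bundle}(i), namely $\overline{\alpha_i^*} = e_i\times 1_F$ and $\bigvee_{i}\bar\alpha_i = 1_E$.

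For the atlas axioms I would compute directly. The identity $u_{ii}u_{ij} = u_{ij}$ follows from $\alpha_i^*\bar\alpha_i = \overline{\alpha_i^*}\,\alpha_i^* = \alpha_i^*$. For $u_{ij}u_{jk} = \alpha_i^*\bar\alpha_j\alpha_k \le u_{ik}$ one rewrites, using \textbf{[R.4]}, $\alpha_i^*\bar\alpha_j = \overline{\alpha_i^*\alpha_j}\,\alpha_i^*$, so $u_{ij}u_{jk} = \overline{\alpha_i^*\alpha_j}\,(\alpha_i^*\alpha_k)$, which lies below $u_{ik}$ since its restriction $\overline{\alpha_i^*\alpha_j}\,\overline{\alpha_i^*\alpha_k}$ absorbs into $\alpha_i^*\alpha_k$. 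For $u_{ij}u_{ji} = \alpha_i^*\bar\alpha_j\alpha_i = \overline{\alpha_i^*\alpha_j}\,\overline{\alpha_i^*}$, one uses $\overline{fg}\le\bar f$ (proved in the excerpt) to see $\overline{\alpha_i^*\alpha_j}\,\overline{\alpha_i^*} = \overline{\alpha_i^*\alpha_j} = \overline{u_{ij}}$.

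For the bundle-atlas clauses, $u_{ii} = \alpha_i^*\alpha_i = \overline{\alpha_i^*} = e_i\times 1_F$ is immediate from the hypothesis. The clause $u_{ij} = \langle\pi_0, u_{ij}\pi_1\rangle$ I would first reduce, using that every map into a restriction product is the pairing of its components and that $\langle f,g\rangle\pi_0 = \bar g f$, to the single equation $u_{ij}\pi_0 = \overline{u_{ij}}\,\pi_0$ (this is exactly the reading recorded in the remark after Definition \ref{def:fibre_atlases}). To obtain it I would first establish the auxiliary identity $\alpha_i^* q = (e_i\times 1_F)\pi_0$, which comes out of $\alpha_i^* q = \alpha_i^*\alpha_i\alpha_i^* q = \alpha_i^*(\alpha_i\alpha_i^*)q = \alpha_i^*\bar\alpha_i q = \alpha_i^*\alpha_i\pi_0 = \overline{\alpha_i^*}\pi_0$ using Diagram \ref{diagram:fibre_bundle_alpha_i}; then $u_{ij}\pi_0 = \alpha_i^*\alpha_j\pi_0 = \alpha_i^*\bar\alpha_j q = \overline{\alpha_i^*\alpha_j}\,\alpha_i^* q = \overline{\alpha_i^*\alpha_j}\,\overline{\alpha_i^*}\,\pi_0 = \overline{u_{ij}}\,\pi_0$, again invoking \textbf{[R.4]} and $\overline{fg}\le\bar f$.

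Finally, for the gluing I would invoke Remark \ref{rmk:gluing_explicit} with $G_U := E$ and the partial isomorphisms $g_i := \alpha_i^*\colon M\times F\to E$ (whose partial inverses are $g_i^* = \alpha_i$, since a partial isomorphism is the partial inverse of its partial inverse). Then $u_{ij} = g_i g_j^* = \alpha_i^*\alpha_j$ holds by the very definition of $u_{ij}$; $1_{G_U} = \bigvee_i g_i^* g_i = \bigvee_i \alpha_i\alpha_i^* = \bigvee_i\bar\alpha_i = 1_E$ is the covering hypothesis; and the remaining inequality $u_{ij}g_j = \alpha_i^*\bar\alpha_j\le \alpha_i^* = g_i$ is automatic (and, as noted after Remark \ref{rmk:gluing_explicit}, redundant). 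I expect the only genuinely delicate step to be the bundle-atlas clause $u_{ij} = \langle\pi_0,u_{ij}\pi_1\rangle$: this is the one point where the bundle geometry enters — through the commuting square and the product shape $\overline{\alpha_i^*} = e_i\times 1_F$ of the trivialization domains — and where one must resist claiming genuine totality of $u_{ij}$ in the base component, equality with $\pi_0$ holding only after restricting along $\overline{u_{ij}}$, exactly as the remark following Definition \ref{def:fibre_atlases} warns.
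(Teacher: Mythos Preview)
Your proposal is correct and follows essentially the same route as the paper: verify the gluing via Remark \ref{rmk:gluing_explicit} with $g_i := \alpha_i^*$, check $u_{ii} = \overline{\alpha_i^*} = e_i\times 1_F$ directly, and establish the first bundle-atlas clause by reducing it to $u_{ij}\pi_0 = \overline{u_{ij}}\,\pi_0$ and then computing this via $\alpha_i^*\alpha_j\pi_0 = \alpha_i^*\bar\alpha_j q$ together with the auxiliary identity $\alpha_i^* q = \overline{\alpha_i^*}\,\pi_0$. If anything, your write-up is slightly more complete than the paper's, since you explicitly verify the atlas axioms of Definition \ref{def:atlas} (the paper takes these as implicit in the gluing check), and your chain for $u_{ij}\pi_0$ is cleaner than the paper's version of the same calculation.
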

\begin{proof}
The partial isomorphisms $\alpha_i^*: M \times F \to E$ fulfill the required properties of Definition \ref{rmk:gluing_explicit} because the atlas $u_{ij}$ was constructed from the partial isomorphisms $\alpha_i$. The components $g_j$ from Remark \ref{rmk:gluing_explicit} are the inverses $\alpha_i^*$. They fulfill \ref{rmk:gluing_explicit}(ii) by the definition of $u_{ij}$. They fulfill (iii) because $\bigvee_{i \in I} \bar \alpha_i = \bigvee_{i \in I} \alpha_i \alpha_i^* = 1_E$.

For any $i,j \in I$ is of the form $u_{ij} = (\pi_0, u_{ij} \pi_1)$ since 
$$
u_{ij} \pi_0 = \alpha_{i}^* \alpha_j \pi_0 = 
\alpha_i^* \bar \alpha_j q = 
\overline{\alpha_i^* \bar \alpha_j} \bar \alpha_i^* \alpha_i^* q = 
\overline{\alpha_i^*  \alpha_j}  \alpha_i^* \bar \alpha_i q = 
\bar{\alpha_i^*}  \alpha_i^* \alpha_i \pi_0  = 
\bar{\alpha_i^*}  \bar \alpha_i^* \pi_0 .
$$
For $i=j$, it is of the form $ u_{ii} = e_{i} \times 1_F$ for an $e_{i} = \bar e_{i}$ as
$$
u_{ii}= \alpha_i^* \alpha_i =\bar \alpha_i^* = e_i \times 1.
$$
\end{proof}
\begin{example}
A simple class of examples are the bundles that are given by the projection morphisms $\pi_0 : M \times F \to M$. In this case the structure is given by a single local trivialization:
$$
q=\pi_0 : M \times F \to M , \alpha_0 = 1_{M \times F}
$$
This bundle's atlas is $\mathrm{At}(1_{M \times F})$, consisting of the object $M \times F$ and the identity morphism $1_{M \times F}$.
\end{example}
A well known nontrivial example is the Möbius strip \ParMfld{}.
\begin{example}
Let $E=\mathrm{Mo}$ be the Möbius strip. While defined as an abstract manifold, it can be embedded into $\mathbb R^3$ as
$$
\left\lbrace \left( \sin(2\alpha)(3\!+\!b\sin(\alpha)), \cos(2\alpha)(3\!+\!b\sin(\alpha)) , b \cos(\alpha) \right)\!\in\!\mathbb R^3 \! \left| \! \alpha \!\in\! [0,2 \pi] , b \!\in\! \left(-1,1 \right) \right\rbrace  \right.\!\subset\!\mathbb R^3.
$$
Let $M=S^1$ be the circle and $F = (-1,1)$ the open interval.
\begin{figure}
    \centering
    \includegraphics[width=  \linewidth]{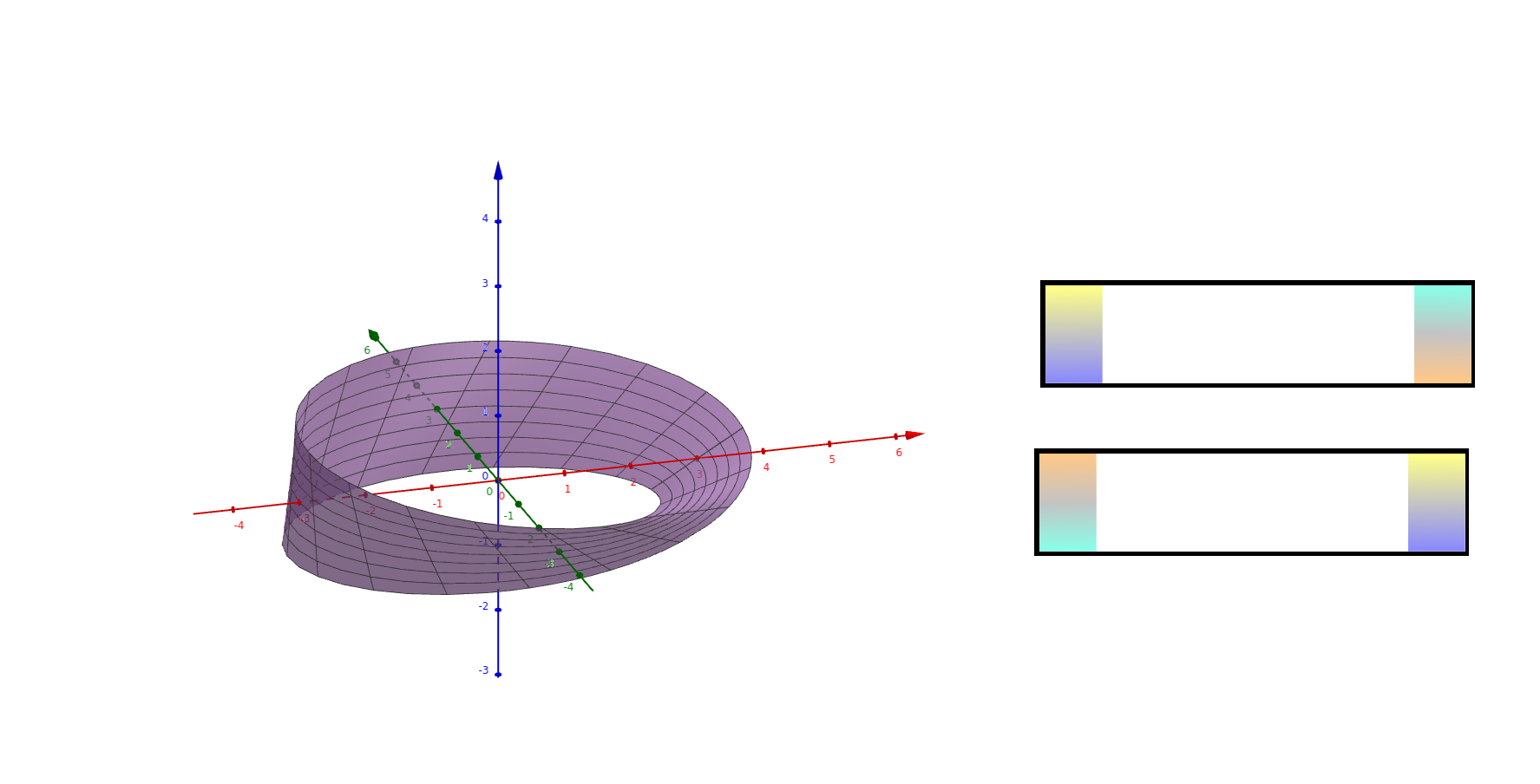}
    \caption{On the left is an embedding of the moebius strip into $\mathbb R^3$. On the left is the atlas, out of which it can be glued together. The two stripes are the domains of $u_{00}$ and $u_{11}$ respectively. The colours show how the change of chart partial isomorphisms $u_{ij}$ glue together parts of their domains. The figure was created with GeoGebra3D, Gimp and LibreOffice.}
    \label{fig:moebius_strip}
\end{figure}
Then there is a map $E \to S^1$ given by the angle to the vertical axis. If you embed $S^1$ into $\mathbb R^2$ as 
$$S^1=\{(\sin(\gamma),\cos(\gamma)) \in \mathbb R^2 | \gamma \in [0,2 \pi]\} \subset \mathbb R^2,$$ this projection map sends a point $(x,y,z) \in \mathrm{Mo} \subset \mathbb R^3$ to 
$$(x/\sqrt{x^2+y^2} , y / \sqrt{x^2 + y^2})\in S^1 \subset \mathbb R^2.$$ Let $\varepsilon$ be in $(0,\pi/2)$, for example 1/2. The maps $\alpha_0$ and $\alpha_1$ are the partial isomorphisms that identify the left and the right half of the Möbius strip with $(-\varepsilon , \pi + \varepsilon) \times F$ and $(\pi-\varepsilon , 2\pi + \varepsilon) \times F$, both seen as a subset of $S^1 \times F$.

The moebius strip is defined by an atlas, $U_0 = U_1 = S^1 \times F$, where we describe the circle by its angle
$S^1 = \mathbb R/\sim$ with $2 \pi n \sim 0 \, \forall n \in \mathbb N$.  The change of chart maps $u_{ij}$ are 
$$
u_{10}(x,y) = \left\lbrace
\begin{matrix}
(x,y) & \text{if }x \in(-\epsilon,\epsilon)\\
(x,-y) & \text{if }x \in(\pi-\epsilon, \pi + \epsilon)
\end{matrix}\right., \quad
u_{00} = 1|_{(-\epsilon,\pi+\epsilon)\times F},
$$$$
u_{01}(x,y) = \left\lbrace
\begin{matrix}
(x,y) & \text{if }x \in(-\epsilon,\epsilon)\\
(x,-y) & \text{if }x \in(\pi-\epsilon, \pi + \epsilon)
\end{matrix}
\right.,\text{ and }
u_{11}= 1|_{(\pi-\epsilon, 2\pi+\epsilon)\times F}
.
$$
This atlas can be seen as having a straight connection on the one side and a twisted connection on the other side. Its gluing is the Möbius strip $\operatorname{Mo}$, as depicted in Figure \ref{fig:moebius_strip}.
\end{example}

If we are in a category where gluings exist, the implication of Lemma \ref{lemma:fibrebundleatlasonedirection} becomes a one to one correspondence between atlases and bundles. In the following two lemmas we will establish more precisely what that means.  First, we show that one can produce a fibre bundle from an atlas.

\begin{lemma}\label{lemma:fibrebundleatlas} Let $M$ and $F$ be objects in a join restriction category $\mathbb X$.
Let $u_{ij}: M \times F \to M \times F$ be a bundle atlas.
Then if the gluing of the atlas $(M \times F ,u_{ij})$ exists (see definition \ref{def:gluing}), it is a fibre bundle $E \xrightarrow{q} M$ over $M$ with typical fibre $F$ such that
$$
u_{ij} = \alpha_i^* \alpha_j
$$
\end{lemma}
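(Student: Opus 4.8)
The plan is to let $E$ be the gluing of the bundle atlas and read the local trivializations off the partial isomorphisms presenting it. By Remark~\ref{rmk:gluing_explicit}, the gluing $E$ comes with partial isomorphisms $g_i\colon M\times F \to E$ with partial inverses $g_i^*$ satisfying $u_{ij} = g_i g_j^*$ and $\bigvee_{i\in I} g_i^* g_i = 1_E$. I would set $\alpha_i := g_i^*$, so that $\alpha_i$ is a partial isomorphism with $\alpha_i^* = g_i$ and hence $\alpha_i^*\alpha_j = g_i g_j^* = u_{ij}$, which is the final claim of the lemma. Two of the conditions of Definition~\ref{def:fibre_bundle}(i) are then already visible: $\bigvee_{i\in I}\bar\alpha_i = \bigvee_{i\in I} g_i^* g_i = 1_E$, and $\bar{\alpha_i^*} = \bar g_i = g_i g_i^* = u_{ii}$, which equals $e_i\times 1_F$ by the bundle-atlas structure (and $u_{ii}$ is indeed a restriction idempotent, since the atlas axioms give both $u_{ii}u_{ii} = u_{ii}$ and $u_{ii}u_{ii} = \overline{u_{ii}}$).

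Next I would construct the total projection $q\colon E \to M$ by gluing the ``restricted projections'' $q_i := u_{ii}\pi_0 = (e_i\times 1_F)\pi_0\colon M\times F \to M$. The key point to verify is that $(q_i)_{i\in I}$ is an atlas morphism $(M\times F, u_{ij}) \to \mathrm{At}(1_M)$ in the sense of Definition~\ref{def:atlas-morphism}: conditions (i), (ii), (iv), (v) are immediate from $u_{ii}u_{ii} = u_{ii}$ and \textbf{[R.1]}, while the one substantive condition, $u_{ij}q_j \leq q_i$, follows by composing the atlas inequality $u_{ij}u_{jj} \leq u_{ij} = u_{ii}u_{ij}$ on the right with $\pi_0$ and then using that $u_{ij}\pi_0 = \overline{u_{ij}}\pi_0$ (this is exactly the shape $u_{ij} = \langle\pi_0, u_{ij}\pi_1\rangle$ of a bundle atlas) together with $u_{ii}\overline{u_{ij}} \leq u_{ii}$ from \textbf{[R.4]}. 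The universal property of the gluing (Definition~\ref{def:gluing}) then delivers $q\colon E \to M$ with $g_i q = q_i$ for every $i$.

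Finally I would check Diagram~(\ref{diagram:fibre_bundle_alpha_i}) and totality of $q$. Composing $g_i q = q_i$ on the left with $g_i^*$ gives $\bar\alpha_i q = g_i^* g_i q = g_i^* q_i = g_i^* u_{ii}\pi_0 = g_i^*\pi_0 = \alpha_i\pi_0$ (using $g_i^* u_{ii} = g_i^* g_i g_i^* = g_i^*$), which is precisely the commutativity of Diagram~(\ref{diagram:fibre_bundle_alpha_i}). Totality of $q$ then falls out: from $\bar\alpha_i q = \alpha_i\pi_0$ we get $\bar\alpha_i\bar q = \overline{\bar\alpha_i q} = \overline{\alpha_i\pi_0} = \bar\alpha_i$ (by \textbf{[R.3]} and totality of $\pi_0$), so $\bar\alpha_i \leq \bar q$ for all $i$, and joining over $i$ yields $1_E = \bigvee_i\bar\alpha_i \leq \bar q$, i.e.\ $\bar q = 1_E$.

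The step I expect to be the main obstacle is the verification that $(q_i)$ is an atlas morphism --- really just its condition $u_{ij}q_j \leq q_i$ --- since that is the one place where both defining features of a bundle atlas (the shape $u_{ij} = \langle\pi_0, u_{ij}\pi_1\rangle$ and $u_{ii} = e_i\times 1_F$) have to be played off against the atlas axioms for $u_{ij}$; everything after that is formal manipulation with \textbf{[R.1]}--\textbf{[R.4]} and the join axioms, closely paralleling the computation in Lemma~\ref{lemma:fibrebundleatlasonedirection}.
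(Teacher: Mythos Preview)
Your proposal is correct and follows essentially the same approach as the paper's proof: both take the gluing's partial isomorphisms as the (inverses of the) local trivializations, construct $q$ via the universal property applied to the atlas morphism $q_i = u_{ii}\pi_0$, and then verify Diagram~(\ref{diagram:fibre_bundle_alpha_i}) and totality of $q$. You supply more detail than the paper does on the atlas-morphism check for $(q_i)$ and on the totality argument, but the architecture is identical.
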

\begin{proof}
Let $E$ be a gluing and let $(\alpha_i^*)_{i \in I}: (M\times F, u_{ij}) \to \mathrm{At}(1_E)$ be the resulting bundle morphism as in Definition \ref{def:gluing}. 
We must check that the $\alpha_i^*$ fulfill the conditions in Definition \ref{def:fibre_bundle}. Due to the remarks following Definition \ref{def:gluing}, the morphisms $\alpha_i^*$ are partial isomorphisms. We denote their partial inverses as $\alpha_i$. In addition the remarks following Definition \ref{def:gluing} guarantee that
 $\bigvee_i \alpha_i  \alpha_i^* = 1$ and $u_{ij} = \alpha_i^* \alpha_j$. Due to the second condition in Definition \ref{def:fibre_atlases}, $\bar \alpha_i^*  = \bar u_{ii} = (e_{ii} \times 1)$.

Since $f_i = \bar u_{ii}\pi_0: M \times F \to M$ is an atlas morphism from ($M \times F , u_{ij}$) to the atlas $\mathrm{At}(1_M)$ there is an induced morphism $q: E \to M$ such that $f_i = \alpha_i^* q$. Precomposing $f_i=\alpha_i^*q$ with $\alpha_i$ ensures that Diagram \ref{diagram:fibre_bundle_alpha_i} commutes. Taking the join, $q = \bigvee_i (\alpha_i \pi_0)$, thus $\bar q = \bigvee_i \bar{\alpha_i} =1_E$. Thus $q$ is total.

Therefore $(E,M,q,(\alpha_i)_{i \in I})$ is a fibre bundle.
\end{proof}

Next, we show that fibre bundle morphisms correspond to atlas morphisms of the form $a_{ik} = (\pi_0 \phi, A_{ik} \pi_1 )$. This is the complement of Lemma \ref{lemma:fibrebundleatlas} on morphisms.

\begin{lemma}\label{lemma:fibreatlasmorphism} Let $\mathbb X$ be a join restriction category.
\begin{enumerate}[(i)]
\item For each fibre bundle morphism 
$$
(\Phi, \phi) : ( E,M,F,q,(\alpha_i)_{i \in I} )
\to ( E',M',F',q',(\alpha_k')_{k \in K} )
$$
the morphisms
$$
A_{ik} = \alpha_i^* \Phi \alpha_k' : M \times F \to M' \times F'
$$
are atlas morphism whose first components are $\phi$, i.e. atlas morphisms of the form $A_{ik}=(\pi_0 \phi, A_{ik} \pi_1)$ between the atlases given by $u_{ij} = \alpha_i^* \alpha_j$ and $u'_{kl} = \alpha'^*_k \alpha'_l$.


\item Let $u_{ij}$ and $u'_{kl}$ be fibre atlases which have gluings. For each atlas morphism 
$$
A_{ik}: (M \times F,u_{ij}) \to (M' \times F',u_{kl}')
$$
of the form $(\pi_0 \phi, A_{ik} \pi_1)$,
with $ u_{ij} $ and $u_{kl}'$
there is a unique morphism between their gluings $\Phi: G_U \to G_{U'}$ such that $(\Phi, \phi)$ is a fibre bundle morphism.
$$
A_{ik} = \alpha_i^* \Phi \alpha_k : M \times F \to M' \times F'
$$
\end{enumerate}
\end{lemma}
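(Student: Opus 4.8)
The plan is to move back and forth between the bundle data and the atlas data through the partial isomorphisms $\alpha_i$, treating (i) as a direct computation with the restriction identities and (ii) as an application of the universal property of the gluing. Throughout I would use the restriction‑idempotent identities of the Lemma after Definition \ref{def:restriction_categories}, the explicit description of a gluing in Remark \ref{rmk:gluing_explicit}, and the formulas $u_{ij}=\alpha_i^*\alpha_j$, $u_{ii}=\bar\alpha_i^*$, $\alpha_i^*q=\bar u_{ii}\pi_0$ and $q=\bigvee_i\alpha_i\pi_0$ established in Lemmas \ref{lemma:fibrebundleatlasonedirection} and \ref{lemma:fibrebundleatlas} (and their primed analogues for $E'$).

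For (i): given the bundle morphism $(\Phi,\phi)$, so $\Phi q'=q\phi$, put $A_{ik}:=\alpha_i^*\Phi\alpha'_k$. Each of the five clauses of Definition \ref{def:atlas-morphism} is a one‑line check: $u_{ii}A_{ik}=\bar\alpha_i^*\alpha_i^*\Phi\alpha'_k=A_{ik}$ and dually $A_{ik}u'_{kk}=A_{ik}$ give (i)--(ii); the sliding identity $\alpha_i^*\bar\alpha_j=\overline{\alpha_i^*\alpha_j}\,\alpha_i^*=\overline{u_{ij}}\,\alpha_i^*$ turns $u_{ij}A_{jk}$ into $\overline{u_{ij}}A_{ik}\le A_{ik}$ and, applied on the other side, turns $A_{ik}u'_{kl}$ into $\overline{A_{ik}}A_{il}$, which are (iii) and (v), while (iv) follows from (v) as remarked after Definition \ref{def:atlas-morphism}. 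The commuting square enters only to see that the first component is $\phi$: $A_{ik}\pi_0=\alpha_i^*\Phi\alpha'_k\pi_0=\alpha_i^*\Phi\bar\alpha'_k q'=\overline{A_{ik}}\,\alpha_i^*\Phi q'=\overline{A_{ik}}\,\alpha_i^*q\phi=\overline{A_{ik}}\,\bar u_{ii}\pi_0\phi=\overline{A_{ik}}\pi_0\phi$ (using $\overline{A_{ik}}\le\bar u_{ii}$ from clause (i)), and a short calculation with $\langle-,-\rangle$ upgrades this to $A_{ik}=\langle\pi_0\phi,A_{ik}\pi_1\rangle$.

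For (ii): let $(\alpha'^*_k)\colon(M'\times F',u'_{kl})\to\mathrm{At}(1_{E'})$ be the gluing atlas morphism of $E'$. The family $h_i:=\bigvee_k A_{ik}\alpha'^*_k\colon M\times F\to E'$ is again an atlas morphism $(M\times F,u_{ij})\to\mathrm{At}(1_{E'})$ -- it is the composite of $A$ with the gluing morphism of $E'$, compatibility of the $A_{ik}\alpha'^*_k$ coming from the atlas‑morphism axioms for $A$ -- so the universal property of the gluing $E=G_U$ gives a unique $\Phi\colon E\to E'$ with $\alpha_i^*\Phi=h_i$ for all $i$; uniqueness of $\Phi$ also follows for the property ``$A_{ik}=\alpha_i^*\Phi\alpha'_k$'' alone, since that forces $\alpha_i^*\Phi=\bigvee_k A_{ik}\alpha'^*_k=h_i$ via $\bigvee_k\bar\alpha'_k=1_{E'}$. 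Composing with $\alpha'_l$ and using $\alpha'^*_k\alpha'_l=u'_{kl}$ and clause (v), $\alpha_i^*\Phi\alpha'_l=\bigvee_k A_{ik}u'_{kl}=\bigvee_k\overline{A_{ik}}A_{il}=A_{il}$ (the $k=l$ term already equals $A_{il}$ and every term is $\le A_{il}$), which recovers $A_{ik}=\alpha_i^*\Phi\alpha'_k$. It remains to check $(\Phi,\phi)$ is a bundle morphism: from $\alpha_i^*\Phi q'=\bigvee_k A_{ik}\bar u'_{kk}\pi_0=\bigvee_k A_{ik}\pi_0=\bigvee_k\overline{A_{ik}}\pi_0\phi$ (using clause (ii) and the form of $A_{ik}$) one gets $\Phi q'=\bigvee_i\alpha_i\alpha_i^*\Phi q'=\bigvee_{i,k}\alpha_i\overline{A_{ik}}\pi_0\phi$, to be compared with $q\phi=\bigvee_i\alpha_i\pi_0\phi$.

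The main obstacle is exactly this last comparison: proving $\Phi q'=q\phi$ as morphisms of the restriction category, including equality of domains of definition. This comes down to the identity $\bigvee_{k}\overline{A_{ik}}=\bar u_{ii}\,\overline{\pi_0\phi}$ in $M\times F$ for each $i$ (equivalently $\overline\Phi=\overline{q\phi}$): the inequality ``$\le$'' is immediate from clause (i) and the form $A_{ik}=\langle\pi_0\phi,A_{ik}\pi_1\rangle$, but the reverse inequality -- that the target charts pulled back along $A_{i\bullet}$ cover exactly the part of the $i$‑th source chart lying over $\mathrm{dom}(\phi)$ -- is where clause (iii) of Definition \ref{def:atlas-morphism}, the covering $\bigvee_k\bar\alpha'_k=1_{E'}$, and totality of $q'$ all have to be combined, and this book‑keeping with restriction idempotents and joins is the delicate part; the rest is routine. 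Together with Lemmas \ref{lemma:fibrebundleatlasonedirection} and \ref{lemma:fibrebundleatlas}, parts (i) and (ii) then yield the equivalence between bundle atlases and fibre bundles of Theorem \ref{corollary:equivalence_fibre}.
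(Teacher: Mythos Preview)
Your proof follows the paper's exactly: part (i) is the same direct verification of the five atlas-morphism axioms together with the first-component computation, and part (ii) composes the given atlas morphism with the target gluing cocone $(\alpha'^*_k)$, invokes the universal property of the source gluing to produce $\Phi$, and then checks $\Phi q'=q\phi$ by taking joins over the chart indices. If anything your write-up is more careful than the paper's on two points: you spell out the join $h_i=\bigvee_k A_{ik}\alpha'^*_k$ (the paper abbreviates this to ``$\alpha_i^*\Phi=A_{ik}\alpha_k^*$''), and you correctly isolate the domain equality $\overline{\Phi}=\overline{q\phi}$ as the subtle step, which the paper dispatches with the single line ``taking the join over all $i$ and $k$ gives that $\Phi q'=q\phi$''.
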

\begin{proof}
\begin{enumerate}[(i)]
\item We have to check the 5 conditions in the definition of an atlas morphism. All of them follow directly from the definition $A_{ik} = \alpha_i^* \Phi \alpha_k'$.
\begin{align*}
u_{ii}A_{ik} &= \alpha_i^* \alpha_i \alpha_i^* \Phi \alpha_k' = \alpha_i^* \Phi \alpha_k' = A_{ik}
\\
A_{ik}u_{kk}' &= \alpha_i^* \Phi \alpha_k' \alpha_k'^* \alpha_k' = \alpha_i^* \Phi \alpha_k' = A_{ik}
\\
u_{ij} A_{jk} &= \alpha_i^* \alpha_j \alpha_j^* \Phi \alpha_k' = \alpha_i^* \bar \alpha_j \Phi \alpha_k' \leq \alpha_i^* \Phi \alpha_k' = A_{ik}
\\
A_{ik} u_{kl}' &= \alpha_i^* \Phi \alpha_k' \alpha_k'^* \alpha_l' = \alpha_i^* \Phi \bar \alpha_k' \alpha_l' \leq \alpha_i^* \Phi \alpha_k'
\\
A_{ik} u_{kl}' &= \alpha_i^* \Phi \bar \alpha_k' \alpha_l' = \overline{\alpha_i^* \Phi \alpha_k'} \alpha_i^* \Phi \alpha_l' = \bar A_{ik} A_{il}
\end{align*}
Since $(\Phi, \phi)$ is a fibre bundle morphism, 
$$
A_{ik} \pi_0 = \alpha_i^* \Phi \alpha'_k \pi_0  \leq \alpha_i^* \Phi q' = \alpha_i^* q \phi \leq \pi_0 \phi
$$
Thus $A_{ik}= \bar A_{ik}(\pi_0 \phi, A_{ik} \pi_1)= (\pi_0 \phi, A_{ik} \pi_1)$.
\item Let $G_U$ be the gluing of $u_{ij}$ and $G_{U'}$ the gluing of $u_{kl}$. Because of the previous lemma, they form a fibre bundle. Since $A_{ik}$ is an atlas morphism from $u_{ij}$ to $u_{kl}'$ and $\alpha_k^*$ form an atlas morphism $u_{kl}'$ to $\mathrm{At}(E')$, their composition has target $\mathrm{At}(E')$. Therefore, by the universal property of the gluing $E$, there is a unique morphism $\Phi: E \to E'$ fulfilling 
$$
\alpha_i^* \Phi = A_{ik} \alpha_k^*
$$
Thus 
\begin{align*}
\alpha_i^* \Phi \bar \alpha_k q' = A_{ik} \alpha_k^* \bar \alpha_k q' = A_{ik} \alpha_k^* \alpha_k \pi_0 = \overline{A_{ik} \alpha_k^*} A_{ik} \pi_0 = \overline{A_{ik} \alpha_k^*} \pi_0 \phi
\end{align*}
Precomposing with $\alpha_i$ gives:
$$
\bar \alpha_i \Phi \bar \alpha_k q' = \overline{\alpha_i A_{ik} \alpha_k^*} \alpha_i \pi_0 \phi
$$
As $\alpha_i \pi_0 = \bar \alpha_i p$ this becomes 
$$
\overline {\bar \alpha_i  \Phi \alpha_k} \Phi q' = \overline{\alpha_i A_{ik} \alpha_k^*} q \phi
$$
Because $\alpha_i^* \Phi = A_{ik}\alpha_k^*$, we obtain
$$
\overline {\alpha_i A_{ik} \alpha_k^*} \Phi q' = \overline {\alpha_i A_{ik} \alpha_k^*} \pi_0 \phi
$$
and taking the join over all $i$ and $k$ gives that $\Phi p' = p \phi$.
\end{enumerate}
\end{proof}
Lemma \ref{lemma:fibrebundleatlas} and Lemma \ref{lemma:fibreatlasmorphism} imply together that there is an equivalence of categories between fibre bundles and atlases that fulfill special conditions.
Recall from the beginning of section \ref{sec:fibre_bundles} that we call the category of fibre bundles with fibre bundle morphisms FibBun($\mathbb X$).
We also define the category BunAtlas($\mathbb X$) which has
\begin{itemize}
\item as objects atlases fulfilling the conditions of Lemma \ref{lemma:fibrebundleatlas}
\item as morphisms fibre bundle morphisms fulfilling the conditions of Lemma \ref{lemma:fibreatlasmorphism}.
\end{itemize}

\begin{theorem}[Classification of fibre bundles]\label{corollary:equivalence_fibre}
Let $\mathbb X$ be a join restriction category with gluings.
There is an equivalence of categories between 
FibBun($\mathbb X$) and BunAtlas($\mathbb X$)
\end{theorem}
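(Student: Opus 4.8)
The plan is to repackage Lemmas \ref{lemma:fibrebundleatlasonedirection}--\ref{lemma:fibreatlasmorphism} into a single functor $\mathcal A\colon\mathrm{FibBun}(\mathbb X)\to\mathrm{BunAtlas}(\mathbb X)$ and to verify that $\mathcal A$ is fully faithful and essentially surjective, which by the standard criterion makes it an equivalence. On objects, $\mathcal A$ sends a fibre bundle $(E,M,q,F,(\alpha_i)_{i\in I})$ to the bundle atlas $(M\times F,\;u_{ij}=\alpha_i^*\alpha_j)$; Lemma \ref{lemma:fibrebundleatlasonedirection} guarantees this genuinely lies in $\mathrm{BunAtlas}(\mathbb X)$. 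On a bundle morphism $(\Phi,\phi)\colon B\to B'$ it sends it to the family $A_{ik}=\alpha_i^*\,\Phi\,\alpha_k'$, which Lemma \ref{lemma:fibreatlasmorphism}(i) shows is an atlas morphism of the admissible form $(\pi_0\phi,\,A_{ik}\pi_1)$, hence a morphism of $\mathrm{BunAtlas}(\mathbb X)$.

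First I would check that $\mathcal A$ is a functor. The identity $(1_E,1_M)$ is sent to the family $\alpha_i^*\,1_E\,\alpha_j=u_{ij}$, which is precisely the identity endomorphism of $(M\times F,u_{ij})$ in the atlas category (the atlas data itself serves as the identity, in the sense of Definition \ref{def:atlas-morphism}). For a composite with $(\Psi,\psi)\colon B'\to B''$ (charts $(\alpha_l'')$), the composite atlas morphism has components
\[
\bigvee_{k}(\alpha_i^*\Phi\alpha_k')(\alpha_k'^*\Psi\alpha_l'')
=\alpha_i^*\,\Phi\,\big(\bigvee_{k}\alpha_k'\alpha_k'^*\big)\,\Psi\,\alpha_l''
=\alpha_i^*\,\Phi\Psi\,\alpha_l'',
\]
where the first equality distributes the join past pre- and post-composition and the second uses $\bigvee_k\alpha_k'\alpha_k'^*=\bigvee_k\bar\alpha_k'=1_{E'}$ from Definition \ref{def:fibre_bundle}; this is exactly $\mathcal A(\Phi\Psi,\phi\psi)$.

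Next, essential surjectivity: given any bundle atlas $(M\times F,u_{ij})$, its gluing exists by the standing hypothesis on $\mathbb X$, and Lemma \ref{lemma:fibrebundleatlas} turns it into a fibre bundle $B$ with $\alpha_i^*\alpha_j=u_{ij}$, so $\mathcal A(B)$ is the original atlas (with the identity atlas morphism as witness). For full faithfulness, fix fibre bundles $B=(E,\dots)$ and $B'=(E',\dots)$; by Lemma \ref{lemma:fibrebundleatlasonedirection} their total spaces $E,E'$ are gluings of $u_{ij}$ and $u'_{kl}$, so Lemma \ref{lemma:fibreatlasmorphism}(ii) applies: every admissible atlas morphism $(\pi_0\phi,A_{ik}\pi_1)\colon\mathcal A(B)\to\mathcal A(B')$ equals $\mathcal A(\Phi,\phi)$ for a unique bundle morphism $(\Phi,\phi)$. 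Hence $(\Phi,\phi)\mapsto(\alpha_i^*\Phi\alpha_k')$ is a bijection on hom-sets; injectivity is also visible directly, since precomposing the defining relation $\alpha_i^*\Phi=\bigvee_k A_{ik}\alpha_k'^*$ with $\alpha_i$ and joining over $i$ yields $\Phi=\bigvee_{i,k}\alpha_i\,A_{ik}\,\alpha_k'^*$, so $\Phi$ is determined by the $A_{ik}$. A fully faithful and essentially surjective functor is an equivalence, proving the theorem. (Alternatively, one could build the quasi-inverse explicitly as the gluing functor of Lemma \ref{lemma:fibrebundleatlas}, but the criterion above avoids the extra naturality bookkeeping.)

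The step I expect to be the \emph{main obstacle} is the join manipulation underlying functoriality and the faithfulness formula: both rest on ``resolving the identity'' $1_{E'}=\bigvee_k\alpha_k'\alpha_k'^*$ inside a composite and then pulling the resulting join out past pre- and post-composition, which is exactly where the join axioms of Section \ref{sec:res_cat} and the coherence condition $\bigvee_i\bar\alpha_i=1_E$ of Definition \ref{def:fibre_bundle} are used; none of the individual moves is deep, but one must keep the partial-isomorphism identities $\alpha_i\alpha_i^*=\bar\alpha_i$, $\alpha_i^*\alpha_i=\bar\alpha_i^*$ and the indices straight throughout. A secondary subtlety is that identity and composition in the atlas category (Definition \ref{def:atlas-morphism}) are not the naive ones, so one must genuinely confirm that $\mathcal A$ respects that structure --- this is the only place where one leans on the fact that the atlas data $u_{ij}$ is its own identity endomorphism.
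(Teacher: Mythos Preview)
Your argument is correct and rests on exactly the same three lemmas as the paper. The only difference is the \emph{direction} of the functor exhibited as an equivalence: the paper applies the essentially-surjective/fully-faithful criterion to the gluing functor $\mathrm{BunAtlas}(\mathbb X)\to\mathrm{FibBun}(\mathbb X)$, whereas you apply it to its would-be inverse $\mathcal A\colon\mathrm{FibBun}(\mathbb X)\to\mathrm{BunAtlas}(\mathbb X)$. Your choice costs you the explicit functoriality check (the join computation for composites, which you carry out correctly using $\bigvee_k\bar\alpha_k'=1_{E'}$), while the paper's choice leaves the functoriality of the gluing assignment implicit in the universal property of Definition~\ref{def:gluing}. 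In either case, Lemma~\ref{lemma:fibrebundleatlasonedirection} (a fibre bundle is a gluing of its own atlas) supplies essential surjectivity for the chosen direction, and Lemma~\ref{lemma:fibreatlasmorphism} supplies the hom-set bijection; your closing parenthetical already anticipates the paper's variant.
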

\begin{proof}
We consider the functor sending an atlas in BunAtlas to its gluing. According to Lemmas \ref{lemma:fibrebundleatlasonedirection} and \ref{lemma:fibrebundleatlas} this is essentially surjective and according to Lemma \ref{lemma:fibreatlasmorphism} it is fully faithful.\end{proof}
\subsection{G-bundles and principal G-bundles}\label{sec:g-bundles}
Now we will take a group object $G$ which, by Definition \ref{def:group_object}, is an object with multiplication $m: G \times G \to G$, unit $u:1 \to G$ and inverse $\iota: G \to G$. We use this group object to determine the structure of a fibre bundle by acting on the fibre.
Having such an action, the fibre is a $G$-object:
\begin{definition}
Let $G$ be a group object in a Cartesian category $\mathbb X$. 
\begin{enumerate}[(i)]
    \item 
A \textbf{$G$-object} is an object $F$ together with a left $G$-action, i.e. a total morphism $a: G \times F \to F$ fulfilling 
\[\begin{tikzcd}
	{G \times G \times F} & {G \times F} & F & {G \times F} \\
	{G \times F} & F && F
	\arrow["{1_G \times a}", from=1-1, to=1-2]
	\arrow["{m \times 1_F}"', from=1-1, to=2-1]
	\arrow["a", from=1-2, to=2-2]
	\arrow["{u \times 1_F}", from=1-3, to=1-4]
	\arrow["{1_F}"', from=1-3, to=2-4]
	\arrow["m", from=1-4, to=2-4]
	\arrow["a"', from=2-1, to=2-2]
\end{tikzcd}\]
    \item A \textbf{morphism of $G$-objects} from $(F,a_F)$ to $(E,a_E)$ is a morphism $f: F \to E$ in the underlying restriction category, such that 
    \[\begin{tikzcd}
	{G \times F} & {G\times E} \\
	F & E
	\arrow["{1_G \times f}", from=1-1, to=1-2]
	\arrow["{a_F}"', from=1-1, to=2-1]
	\arrow["{a_G}", from=1-2, to=2-2]
	\arrow["f"', from=2-1, to=2-2]
    \end{tikzcd}\]
    commutes.
    \item With the composition, unit and restriction of the underlying category, $G\mathrm{-obj}(\mathbb X)$ is the category of $G$-objects and morphisms of $G$-objects in a given restriction category $\mathbb X$. 
\end{enumerate}

\end{definition}
This definition is analogous to the beginning of Section 5.10 in \cite{michor2008topics}.
In topology and differential geometry, $G$-spaces are commonly used and they denote G-objects in Top and Mfld respectively.
\begin{example}
Given a topological group $G$, a $G$-space is a topological space $Y$ equipped with a continuous action map $a: G \times Y \to Y$, often denoted like $ a(g,y)=: g \rhd y$ such that $1 \rhd y = y$, and $(g \cdot g') \rhd y = g \rhd (g' \rhd y)$ for all $g,g' \in G$ and $y \in Y$. In the category $\mathrm{Top}$ of topological spaces and continuous maps, $G$-spaces are exactly $G$-objects.
\end{example}
An important example is the group object itself.
\begin{example}
A group object $G$ is an example of a $G$-object. The action is given by
$$
a = m : G \times G \to G .
$$
\end{example}
Now we are going to use these in a Cartesian join restriction category. There we will always demand for the morphisms $u,m,\iota,a$ to be total.
\begin{definition}
Let $G$ be a group object and $M$ any other object in a join restriction category $\mathbb X$. Then
a  \textbf{$G$-atlas} on $M$ consists of partial morphisms
$\tau_{ij}: M \to G$ such that
$$
(\tau_{ij}, \tau_{jk})m \leq \tau_{ik} \qquad \tau_{ii} \leq \, !u \qquad \tau_{ji} = \tau_{ij} \iota
$$
\end{definition}

\begin{definition}\label{def:G-bundle}
Let $G$ be a group object in a Cartesian join restriction category and $M$ another object. Then
\begin{enumerate}[(i)]
\item A \textbf{$G$-bundle} over $M$ with typical fibre $F$ consist of a $G$-object $F$
a fibre bundle $E \to M$ with fibre $F$ and a $G$-atlas $\tau_{ij}$ on $M$ such that 
$$
u_{ij} = \alpha_i^* \alpha_j = (\pi_0, (\pi_0 \tau_{ji}, \pi_1)a ): M\times F \to M \times F
$$
\item A \textbf{principal $G$-bundle} over $M$ is a $G$-bundle with fibre $G$ and the multiplication from the left $a=m: G \times G \to G$ as the action.
\end{enumerate}
\end{definition}
In Definition \ref{def:PBun_morphism} we will define morphisms in a way so that we will obtain a join-restriction category of $G$-bundles and a join-restriction category of principal $G$-bundles.

First we construct a $G$-action on a principal bundle:
\begin{theorem}[Total right action on principal bundles]\label{thm:right_action_principal}
Let $P \to M$ be a principal $G$-bundle. Then there exists a total morphism (a right action)
$$
r: P \times G \to P
$$
such that 
$$
(r \times 1)r = (1 \times m)r: P \times G \times G \to P, \quad (1 \times e) r = 1_P
\quad \text{and} \quad 
rq = \pi_0 q: P \times G \to M.
$$
\end{theorem}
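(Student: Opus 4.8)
The plan is to construct $r$ locally on each trivializing piece of $P$ and then glue. On the chart $\alpha_i : P \to M \times G$, the principal bundle looks like $M \times G$, and the natural right action there is multiplication on the $G$-factor: $M \times G \times G \xrightarrow{1_M \times m} M \times G$. So I would define the candidate local action on the domain of $\bar\alpha_i$ by
$$
r_i := (\alpha_i \times 1_G)(1_M \times m)\alpha_i^* : P \times G \to P,
$$
and then set $r := \bigvee_{i \in I} r_i$. For this to make sense I must check the family $\{r_i\}_{i\in I}$ is compatible; this is where the $G$-atlas transition condition $u_{ij} = \alpha_i^*\alpha_j = \langle\pi_0, \langle\pi_0\tau_{ji},\pi_1\rangle a\rangle$ does the work. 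Concretely, on the overlap one compares $r_i$ and $r_j$ by inserting $\alpha_i^*\alpha_i$ and using that left multiplication by $\tau_{ji}(x)$ commutes with right multiplication by the $G$-argument (associativity of $m$, Definition \ref{def:group_object}(i)); the two local actions agree exactly because the transition is given by a \emph{left} translation while the action is a \emph{right} translation. Compatibility then lets me take the join, and Lemma \ref{lem:join_and_tangent}-style facts about joins (really just the join axioms from Section \ref{sec:res_cat}) give $r$.

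Next I would verify the three stated identities. Totality: $\bar r = \bigvee_i \bar r_i$, and each $\bar r_i = \overline{\alpha_i\times 1_G}\cdots = \bar\alpha_i \times 1_G$ (since $m$ and the rest are total), so $\bar r = (\bigvee_i \bar\alpha_i)\times 1_G = 1_P \times 1_G = 1_{P\times G}$, using $\bigvee_i \bar\alpha_i = 1_P$ from Definition \ref{def:fibre_bundle}. For $rq = \pi_0 q$: compute $r_i q = (\alpha_i\times 1)(1_M\times m)\alpha_i^* q$; since $\alpha_i^* q = \bar\alpha_i^*\,(\text{projection to }M)$ by the defining square of the local trivialization (Diagram \ref{diagram:fibre_bundle_alpha_i} transported along $\alpha_i^*$), this collapses the $G$-factors and yields $\bar\alpha_i\,\pi_0 q$ on $P\times G$; joining over $i$ gives $\pi_0 q$. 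For the unit law $(1_P\times e)r = 1_P$: on each chart, $(1\times e)r_i = (\alpha_i\times e)(1\times m)\alpha_i^* = \alpha_i\langle 1_M, \text{(unit)}\rangle$-shaped expression which reduces to $\bar\alpha_i$ because $e$ is the unit of $m$; join to get $1_P$. For associativity $(r\times 1)r = (1\times m)r$: both sides reduce chart-wise to $(\alpha_i\times m)(1_M\times m)\alpha_i^* = (\alpha_i \times 1 \times 1)(1_M\times m)(1_M\times m)\alpha_i^*$ using associativity of $m$, and joins are preserved by precomposition so the equality survives passing to $r$.

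The main obstacle I anticipate is the compatibility check for $\{r_i\}$: one must be careful that the overlaps $\bar\alpha_i\bar\alpha_j$ are handled correctly with restriction idempotents (the transition maps $u_{ij}$ are only \emph{partial} isomorphisms), and that the interchange between the transition left-translation and the action right-translation is pushed through $\alpha_i^*$ cleanly — this is exactly the place where restriction-category bookkeeping ([R.1]--[R.4] and the partial-inverse identities $\alpha_i\alpha_i^* = \bar\alpha_i$, $\alpha_i^*\alpha_i = \bar\alpha_i^*$) replaces the naive ``it's obvious on points'' argument. A secondary, more bureaucratic point is confirming that joins commute with the composites appearing above (pre- and post-composition distribute over joins by the join axioms, but the $\times 1_G$ and $\langle-,-\rangle$ constructions need the observation that they are built from such composites), and that the resulting $r$ is independent of the chosen atlas — which follows since any two compatible atlases for the same bundle induce the same join. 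Once compatibility is in hand, everything else is the routine chart-wise computation sketched above followed by taking joins.
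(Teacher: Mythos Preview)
Your proposal is correct and follows essentially the same route as the paper: define $r_i := (\alpha_i \times 1_G)(1_M \times m)\alpha_i^*$, verify compatibility of the family $\{r_i\}$ via the principal-bundle transition law and associativity of $m$, take the join, and read off totality from $\bigvee_i \bar\alpha_i = 1_P$. The paper's proof concentrates almost entirely on the compatibility computation (showing $\bar r_i r_j \leq \bar r_j r_i$, hence equality by symmetry) and on the totality calculation $\bar r_i = \bar\alpha_i \times 1_G$, while dismissing the three action identities with ``it will automatically fulfill all the required properties, as its components fulfill it''; your plan makes these verifications explicit, which is a harmless elaboration rather than a different argument.
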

\begin{proof}
In order to define the morphism $r$ we define the partial morphisms 
$$
r_i = (\alpha_i \times 1_G) (1_M \times m) \alpha_i^* : P \times G \to G.
$$
This means, $r_i$ is the following composition:
$$
P \times G \xrightarrow{\alpha_i \times 1_G} M \times G \times G \xrightarrow{1_M \times m} M \times G \xrightarrow{\alpha_i^*} P
$$
If the join $r = \bigvee_{i \in I} r_i$ is defined, it will automatically fulfill all the required properties, as its components fulfill it. It will be a total morphism as
\begin{align*}
\bar r_i &= \overline{(\alpha_i \times 1)(1 \times m) \alpha_i^*}
\\&=\overline{(\alpha_i \times 1_G)(1_M \times m) \bar \alpha_i^*}
\\&=\overline{(\alpha_i \times 1_G)(1_M \times m) (e \times 1_G)}
\\&=\overline{(\alpha_i \times 1_G) (e \times 1_{G\times G})(1 \times m)}
\\&= \overline{(\alpha_i \times 1_G) (e \times 1_{G\times G})}
\\&= \overline{(\alpha_i \times 1_G) (\bar \alpha_i^* \times 1)}
\\&= \overline{(\alpha_i \times 1_G) (\alpha_i^* \times 1)}
\\&= \overline{(\bar \alpha_i \times 1_G)}
\\&= \bar \alpha_i \times 1_G
\end{align*}
and $\bigvee_i (\bar \alpha_i^* \times 1_G) = \bigvee_i \bar \alpha^*_i \times 1_G = 1_{P} \times 1_G$, and thus 
$$
\overline {\bigvee r_i} = \bigvee \bar r_i = \bigvee (\bar \alpha_i \times 1) = 1_{P \times G}.
$$
Let's now prove that the join exists, i.e. that $r_i \smile r_j$. For this we need to show that $\bar r_i r_j = \bar r_j r_i$. Observe that
\begin{align*}
\bar r_i r_j &= \overline{(\alpha_i \times 1_G) (1_M \times m) \alpha_i^*} (\alpha_j \times 1_G) (1_M \times m) \alpha_j^*
\\
&= \overline{(\bar \alpha_i \times 1_G)(\alpha_i \times 1_G) (1_M \times m) \alpha_i^*} \, \overline{(\alpha_j \times 1_G)} (\alpha_j \times 1_G) (1_M \times m) \alpha_j^*
\\
&= \overline{(\bar \alpha_j \times 1_G)(\alpha_i \times 1_G) (1_M \times m) \alpha_i^*}  (\bar \alpha_i \alpha_j \times 1_G) (1_M \times m) \alpha_j^*
\\
&= \overline{( \alpha_j u_{ji}  \times 1_G) (1_M \times m) \alpha_i^*}  (\bar \alpha_i  \alpha_j \times 1_G) (1_M \times m) \alpha_j^*
\\
&= \overline{( \alpha_j \times 1_G) (\pi_0,(\pi_0 \tau_{ij} , \pi_1)m, \pi_2) (1_M \times m) \alpha_i^*}  (\bar\alpha_i \alpha_j \times 1_G) (1_M \times m) \alpha_j^* . 
\intertext{By associativity this equals}
&= \overline{( \alpha_j \times 1_G) (\pi_0,\pi_0 \tau_{ij} , (\pi_1, \pi_2)m) (1_M \times m) \alpha_i^*}  (\bar \alpha_i  \alpha_j \times 1_G) (1_M \times m) \alpha_j^*
\\&= \overline{( \alpha_j \times 1_G) (1 \times m)((\pi_0,\pi_0 \tau_{ij} ) \times 1_G) (1_M \times m) \alpha_i^*}  (\bar \alpha_i  \alpha_j \times 1_G) (1_M \times m) \alpha_j^* . 
\intertext{As the multiplication with $\tau_{ij}$ gives $u_{ij}=\alpha_j^* \alpha_i \alpha_i^*$ this equals}
&= \overline{( \alpha_j \times 1_G) (1 \times m) \alpha_j^* \alpha_i \alpha_i^*}  (\bar \alpha_i  \alpha_j \times 1_G) (1_M \times m) \alpha_j^*
\\&= \overline{( \alpha_j \times 1_G) (1 \times m) \alpha_j^* \bar \alpha_i}  (\alpha_i u_{ij} \times 1_G)  (1_M \times m) \alpha_j^*
\\&= \overline{( \alpha_j \times 1_G) (1 \times m) \alpha_j^* \bar \alpha_i}  (\alpha_i  \times 1_G) ((1, \tau_{ij})\times 1_G \times 1_G) (1_M \times m \times 1_G)  (1_M \times m) \alpha_j^*.
\intertext{By associativity this is}
&= \overline{( \alpha_j \times 1_G) (1 \times m) \alpha_j^* \bar \alpha_i}  (\alpha_i  \times 1_G) ((1_M, \tau_{ij})\times m)(1_M \times m) \alpha_j^*
\\&= \overline{( \alpha_j \times 1_G) (1 \times m) \alpha_j^* \bar \alpha_i}  (\alpha_i  \times 1_G) (1_M \times m) \alpha_i^* \alpha_j \alpha_j^*
\\&= \overline{( \alpha_j \times 1_G) (1 \times m) \alpha_j^* \bar \alpha_i}  (\alpha_i  \times 1_G) (1_M \times m) \alpha_i^* \bar \alpha_j
\\&\leq \overline{( \alpha_j \times 1_G) (1 \times m) \alpha_j^*}  (\alpha_i  \times 1_G) (1_M \times m) \alpha_i^* 
\\&=\bar r_j r_i.
\end{align*}
Thus $\bar r_i r_j \leq \bar r_j r_i$. As $i$ and $j$ were arbitrary, we also obtain $\bar r_i r_j \geq \bar r_j r_i$. Thus they are equal.
\end{proof}
It is not possible to define such a right $G$ action for arbitrary $G$-bundles because $F$ does not have a right action, in general.

Now let us define morphisms for $G$-bundles and principal $G$-bundles.
\begin{definition}\label{def:PBun_morphism}
For a fixed group-object $G$, \textbf{a morphism of principal $G$-bundles} is a morphism of fibre bundles
$$
(\Phi, \phi) : (P,M) \to (P',M')
$$
such that
$$
r_P \Phi = (\Phi \times 1_G) r_{P'}
$$
i.e. the right action commutes with the morphism.
\end{definition}
In a join-restriction category $\mathbb X$, the principal $G$-bundles and morphisms of principal $G$-bundles form the restriction category $\mathrm{PBun}_G(\mathbb X)$.

We will now relate this to the way the principal bundle is built out of an atlas.
\begin{proposition}\label{prop:principal_morphism} Let $G$ be a group object in a join-restriction category $\mathbb X$.
Giving a morphism of principal $G$-bundles  $(\Phi, \phi) : (P , M)$ to $(P' \to M')$ is equivalent to giving
\begin{itemize}
\item the morphism $\phi: M \to M'$
\item and morphisms $T_{ik}: M \to G$ such that
\begin{itemize}
    \item $T_{ik} \geq (T_{jk},\tau_{ij})m$
    \item $T_{ik} \geq (\phi \tau_{lk}', T_{il})m$
    \item $\bar T_{ik} T_{ih} = (\phi \tau_{kh},T_{ik})m$ (this actually implies the second point).
\end{itemize}
\end{itemize}
The composition is given by:
$$
(\phi,T_{ik})(\tilde \phi , \tilde T_{kl}) = (\phi \tilde \phi, \bigvee_k (T_{ik} , T_{kl})m)
$$
\end{proposition}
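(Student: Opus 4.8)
The plan is to reduce the statement to the classification of fibre bundles already obtained, and then to isolate the extra information carried by the right action. By Lemma \ref{lemma:fibreatlasmorphism}, a morphism of the underlying fibre bundles $(\Phi,\phi)\colon(P,M)\to(P',M')$ is the same thing as an atlas morphism $A_{ik}=\alpha_i^*\Phi\alpha'_k\colon M\times G\to M'\times G$ of the form $(\pi_0\phi,A_{ik}\pi_1)$ between the bundle atlases $u_{ij}=\alpha_i^*\alpha_j$ and $u'_{kl}=\alpha'^*_k\alpha'_l$. Since being a morphism of principal $G$-bundles means exactly that such a $(\Phi,\phi)$ additionally satisfies $r_P\Phi=(\Phi\times 1_G)r_{P'}$ (Definition \ref{def:PBun_morphism}), it suffices to: (1) characterise, among the atlas morphisms $A_{ik}$ of the above shape, those that arise from a right-equivariant $\Phi$, in terms of $\phi$ and maps $T_{ik}\colon M\to G$; (2) rewrite the atlas-morphism conditions of Definition \ref{def:atlas-morphism} on $A_{ik}$ as the three conditions on the $T_{ik}$; and (3) match composition.

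For step (1), recall that in each local trivialization the right action $r_P$ becomes right multiplication on the $G$-factor, namely $(1_M\times m)\colon M\times G\times G\to M\times G$, and similarly for $P'$; concretely this is because $r_i=(\alpha_i\times 1_G)(1_M\times m)\alpha_i^*$. Hence $r_P\Phi=(\Phi\times 1_G)r_{P'}$ is equivalent to each $A_{ik}$ commuting with right multiplication in the fibre. Define $T_{ik}:=\langle 1_M,!u\rangle A_{ik}\pi_1\colon M\to G$, i.e.\ insert the group unit into the fibre and read off the resulting fibre coordinate. Equivariance of $A_{ik}$ together with the unit law of $G$ then forces $A_{ik}=(\pi_0\phi,(\pi_0 T_{ik},\pi_1)m)$, and conversely any $A_{ik}$ of this form is right-equivariant by associativity of $m$. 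This is the abstract version of the classical fact that a right-$G$-equivariant map $M\times G\to M'\times G$ over $\phi$ is left translation by a map $M\to G$; one just has to carry the restriction idempotents along so that the \emph{domains}, and not merely the values, of the two partial maps agree. Note also that, because $A_{ik}=\alpha_i^*\Phi\alpha'_k$ has its domain contained in that of $\alpha_i^*$, the atlas-morphism conditions \ref{def:atlas-morphism}.(i) and \ref{def:atlas-morphism}.(ii) hold automatically; and \ref{def:atlas-morphism}.(iv) follows from \ref{def:atlas-morphism}.(v), so only \ref{def:atlas-morphism}.(iii) and \ref{def:atlas-morphism}.(v) carry content.

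For step (2), substitute $u_{ij}=(\pi_0,(\pi_0\tau_{ji},\pi_1)m)$, $u'_{kl}=(\pi_0,(\pi_0\tau'_{lk},\pi_1)m)$ (from Definition \ref{def:G-bundle}) and $A_{ik}=(\pi_0\phi,(\pi_0 T_{ik},\pi_1)m)$ into the two remaining atlas-morphism conditions. Cancelling the total $\pi_1$-slot using associativity of $m$ and the restriction identities \textbf{[R.1]}--\textbf{[R.4]}, condition \ref{def:atlas-morphism}.(iii), $u_{ij}A_{jk}\le A_{ik}$, becomes $(T_{jk},\tau_{ij})m\le T_{ik}$; condition \ref{def:atlas-morphism}.(v), $A_{ik}u'_{kl}=\overline{A_{ik}}A_{il}$, becomes $\overline{T_{ik}}T_{ih}=(\phi\tau_{kh},T_{ik})m$; and condition \ref{def:atlas-morphism}.(iv) becomes the (now redundant) inequality $(\phi\tau'_{lk},T_{il})m\le T_{ik}$, which are exactly the three conditions in the statement. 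The converse passage, from a family $(\phi,(T_{ik}))$ satisfying these conditions to a principal-bundle morphism, runs the same computation backwards: set $A_{ik}:=(\pi_0\phi,(\pi_0 T_{ik},\pi_1)m)$, verify it is an atlas morphism of the required form, apply Lemma \ref{lemma:fibreatlasmorphism}.(ii) to obtain $(\Phi,\phi)$, and check that the right-equivariance of each $A_{ik}$ forces $r_P\Phi=(\Phi\times 1_G)r_{P'}$. For step (3), the composite of $(\Phi,\phi)$ and $(\tilde\Phi,\tilde\phi)$ is $(\Phi\tilde\Phi,\phi\tilde\phi)$; inserting $1_{E'}=\bigvee_k\alpha'_k\alpha'^*_k$ gives $\alpha_i^*\Phi\tilde\Phi\alpha''_l=\bigvee_k A_{ik}\tilde A_{kl}$ since joins distribute over composition, and applying $\langle 1_M,!u\rangle(-)\pi_1$ with the unit and associativity laws turns this into $\bigvee_k (T_{ik},\phi\tilde T_{kl})m$, which is the displayed composition formula. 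The two assignments are mutually inverse because $T_{ik}$ is recovered from $A_{ik}$, and $A_{ik}$ from $T_{ik}$, by the explicit formulas above, while Lemma \ref{lemma:fibreatlasmorphism} already supplies the bijection at the level of the underlying fibre bundles.

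The main obstacle I anticipate is carrying out step (1) rigorously with partial maps: proving that right-equivariance of the partial morphism $A_{ik}$---including the behaviour of its domain of definition---is precisely equivalent to $A_{ik}$ having the shape $(\pi_0\phi,(\pi_0 T_{ik},\pi_1)m)$, and then reassembling the local equivariance statements into the single global identity $r_P\Phi=(\Phi\times 1_G)r_{P'}$ via the covering $\bigvee_i\bar\alpha_i=1$. Once this translation dictionary is in place, steps (2) and (3) are routine diagram chases with the group axioms and the restriction identities, the only delicate point being the placement of indices and the order of the group products, which should be read off the conventions fixed in Definitions \ref{def:G-bundle} and \ref{def:PBun_morphism}.
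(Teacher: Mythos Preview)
Your proposal is correct and follows essentially the same route as the paper's proof: both pass through the atlas-morphism description of fibre-bundle maps (Lemma \ref{lemma:fibreatlasmorphism}), use right-equivariance of $A_{ik}$ together with the unit law to write $A_{ik}=(\pi_0\phi,(\pi_0 T_{ik},\pi_1)m)$ with $T_{ik}=\langle 1_M,!u\rangle A_{ik}\pi_1$, and then read the three conditions on $T_{ik}$ off the atlas-morphism axioms. You are somewhat more explicit than the paper in matching each atlas-morphism axiom to its corresponding $T_{ik}$ condition and in flagging the restriction-domain bookkeeping, but the argument is the same.
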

The idea of this is that $\Phi$ is locally given by the left-multiplication with $T_{ik}$.
\begin{proof}
Let $\Phi,\phi$ be a principal bundle morphism from $P$ to $P'$. Then $\Phi$ is a morphism between gluings and therefore induces an atlas morphism $A_{ik}: M\times G \to M' \times G$ between the atlas $u_{ij}$ of $P$ and the atlas $u_{kl}$ of $P'$. Explicitly $A_{ik}=\alpha_i^* \Phi \alpha_k$. As the diagram
\begin{center}
\begin{tikzpicture}
\path 
(0,1.5) node (a1) {$M \times G$}
(3,1.5) node (a2) {$P$}
(6,1.5) node (b1) {$P'$}
(9,1.5) node (b2) {$M \times G'$}
(0,0) node (c1) {$M\times G$}
(3,0) node (c2) {$P$}
(6,0) node (d1) {$P'$}
(9,0) node (d2) {$M'\times G$}
;
\draw [->] (a1) -- node[above] {$\alpha_i^*$} (a2);
\draw [->] (a2) -- node[above] {$\Phi$} (b1);
\draw [->] (b1) -- node[above] {$\alpha_k$} (b2);
\draw [->] (c1) -- node[above] {$=$} (c2);
\draw [->] (c2) -- node[above] {$\phi$} (d1);
\draw [->] (d1) -- node[above] {$=$} (d2);
\draw [->] (a1) -- node[left] {$\pi_0$} (c1);
\draw [->] (a2) -- node[left] {$1_P$} (c2);
\draw [->] (b1) -- node[left] {$1_{p'}$} (d1);
\draw [->] (b2) -- node[left] {$\pi_0$} (d2);
\end{tikzpicture}
\end{center}
commutes in the sense that the lower path $\geq$ the upper path we know that $A_{ik} \pi_0 \leq \pi_0 \phi$. Thus $A_ik$ can be written in the form
$A_{ik}=(\pi_0 \phi, \tilde A_{ik})$, with $\tilde A_{ik}: M \times G \to G$. As the right action commutes with the morphism, we have that
$$
(\tilde A_ik \times 1_G)(1_M \times m) = (1_M \times m)\tilde A_ik .
$$
But because of the unit property of the group object $G$, we have that
$$
\tilde A_{ik} = (1_M \times u \times 1_G) (1_M \times m) \tilde A_{ik} = (1_M \times u \times 1_G)  (\tilde A_{ik}\times 1_G) (1_M \times m) = (T_{ik} \times 1_G) (1_M \times m)
$$
where $T_{ik} = (1_M \times u) \tilde A_{ik}: M \to G$ fulfills all three properties that we were asking for, because $A_{ik} = (\phi, (T_{ik}\times 1_G)m)$ is an atlas morphism.

Let now conversely $\phi$ and $T_{ik}$ be given. Then because of the properties of $T_{ik}$, $A_{ik}:=(\phi, (T_{ik}\times 1_G)m)$ is an atlas morphism, thus it induces a morphism $\Phi$ between the gluings.

The composition is given as the composition of atlas morphisms as in \cite{Cockett2014DifferentialST}.
\end{proof}
We now generalize the characterization from Proposition \ref{prop:principal_morphism} as the definition for morphisms between arbitrary (not just principal) $G$-bundles. 
The morphism $\psi: F \to F'$ allows to define morphisms between fibre bundles with different fibres, as long as $\psi$ preserves the group actions.
\begin{definition}
Let $(E,M,F,q,\alpha_i,\tau_{ij})$ and $(E',M',F',q',\alpha_i',\tau_{kl}')$ be $G$-bundles. Then a \textbf{morphism of $G$-bundles} between them consists of
\begin{itemize}
\item A morphism $\phi: M \to M'$
\item A morphism $\psi: F \to F'$ such that $(\psi \times 1_G)a' = a \psi$
\item A collection of morphisms $T_{ik}: M \to G$ such that
\begin{itemize}
    \item $T_{ik} \geq (T_{jk},\tau_{ij})m$
    \item $T_{ik} \geq (\phi \tau_{lk}', T_{il})m$
    \item $\bar T_{ik} T_{ih} = (\phi \tau_{kh},T_{ik})m$
\end{itemize}
\item A morphism $\Phi: E \to E'$ such that
\begin{center}
\begin{tikzpicture}
\path 
(0,1.5) node (a) {$M \times F$}
(5,1.5) node (b) {$M' \times F'$}
(0,0) node(c) {$E$}
(5,0) node (d) {$E'$}
;
\draw [->] (a) -- node[above] {$(\phi , (T_{ik} \times \psi)a' )$} (b);
\draw [->] (c) -- node[below] {$\Phi$} (d);
\draw [->] (a) -- node[left] {$\alpha_i$} (c);
\draw [->] (d) -- node[right] {$\alpha_k^*$} (b);
\end{tikzpicture}
\end{center}
commutes.
\end{itemize}
\end{definition}
We denote the restriction category of $G$-bundles and morphisms of $G$-bundles as $\mathrm{Bun}_G(\mathbb X)$.

Summarizing, we have the following categories which are all join restriction categories:
\begin{enumerate}[(i)]
\item FibBun$(\mathbb X)$: Objects are fibre bundles, morphisms are fibre bundle morphisms.
\item $G\mathrm{-obj}(\mathbb X)$ : Objects are $G$-objects, morphisms are morphisms commuting with the group action.
\item Bun$_G(\mathbb X)$ : Objects are G-bundles, morphisms are the G-bundle morphisms from the above definition.
\item PBun$_G (\mathbb X)$ : Objects are principal G-bundles, morphisms are the G-bundle morphisms from the above definition.
\end{enumerate}

Given a join-restriction category $\mathbb X$ with gluings, there are the following restriction functors (as defined in Definition \ref{def:restriction_categories}(iv)):
\begin{itemize}
\item fibre: Bun$_G \to G$-space that sends a $G$-bundle to its fibre $F$ and a morphism of $G$-bundles to its fibre component $\psi$.
\item principal: Bun$_G \to$ PBun$_G$ that sends a $G$-bundle $E$ to the principal bundle $P_E$ that is the gluing of the atlas 
$$
u_{ij}:=(\tau_{ij} \times 1_G ) (1_M \times m) : M \times G \to M \times G
$$
and a morphism $(\phi, \psi, T_{ik}, \Phi)$ to $(\phi, T_{ik})$ which is, according to Proposition \ref{prop:principal_morphism} a way of giving a principal bundle morphism.
\item build: PBun$_G \times G$-Space $\to$ Bun$_G$ that sends a principal bundle $P$ and a $G$-object $F$ to the $G$-bundle given as the gluing of the atlas 
$$
u_{ij}:= ((1_M, \tau_{ij}) \times 1_F) (1_M \times a): M \times F \to M \times F.
$$
where $(1_M,\tau_{ij}): M \to M\times G$ gives a group element that acts through $a: G \times F \to F$, the $G$-action on $F$.
For a morphism $((\phi,T_{ik}),(\psi))$ of PBun$_G \times G$-Space, consider the atlas morphism given by 
$$
A_{ik}=((\phi,T_{ik})\times \psi)(1_{M'} \times a')
$$
where $(\phi,T_{ik})\times \psi): M \times F \to M' \times G \times F'$ produces a group element that acts through $(1_{M'} \times a'):M' \times G \times F' \to M' \times F'$. This atlas morphism induces a morphism $\Phi$ between the gluings. The result of the functor is then the morphism of $G$-bundles given by $(\phi, \psi, T_{ik}, \Phi)$.
\end{itemize}
All this works as the $T_{ik}$ in the definition of a morphisms of $G$-bundles was required to fulfill the conditions of an atlas-morphism.
\begin{theorem}
The functors
\begin{align*}
\mathrm{Bun}_G \xrightarrow{(\mathrm{principal},\mathrm{fibre})} & \mathrm{PBun}_G \times G\mathrm{-space}
\\
\mathrm{Bun}_G \xleftarrow{\mathrm{build}} &\mathrm{PBun}_G \times G\mathrm{-space}
\end{align*}
form an equivalence of categories between $\mathrm{Bun}_G$ and $\mathrm{PBun}_G \times G\mathrm{-Space}$.
\end{theorem}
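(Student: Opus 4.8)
The plan is to verify that the two functors are quasi-inverse, i.e.\ that both composites are naturally isomorphic to the respective identity functors. The guiding principle is that, after Theorem~\ref{corollary:equivalence_fibre} and Lemma~\ref{lemma:fibrebundleatlasonedirection}, a $G$-bundle is nothing more than a $G$-object $F$ together with a $G$-atlas $\tau_{ij}$ on $M$: Definition~\ref{def:G-bundle} forces the bundle atlas to be $u_{ij}=(\pi_0,(\pi_0\tau_{ji},\pi_1)a)$, which is determined by $\tau_{ij}$ and the action $a$ on $F$, while a principal $G$-bundle is nothing more than a $G$-atlas $\tau_{ij}$, its fibre being $G$ with the multiplication action. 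On this classifying data both $\mathrm{principal}$ and $\mathrm{build}$ act as the identity, and the content of the proof is to make that precise as honest natural isomorphisms.

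First I would compute the composite $(\mathrm{principal},\mathrm{fibre})\circ\mathrm{build}$. Given an object $(P,F)$ with $P$ the gluing of a $G$-atlas $\tau_{ij}$, the bundle $E:=\mathrm{build}(P,F)$ is by definition the gluing of the atlas $((1_M,\tau_{ij})\times 1_F)(1_M\times a)$; reading off its $G$-component shows $E$ is a $G$-bundle whose $G$-atlas is $\tau_{ji}$ and whose fibre is $F$. Hence $\mathrm{fibre}(E)=F$ on the nose, while $\mathrm{principal}(E)$ is the gluing of $(\tau_{ji}\times 1_G)(1_M\times m)$, which is exactly the atlas defining $P$ (compare Definition~\ref{def:G-bundle}(i) with $a=m$). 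Since a gluing is unique up to a unique compatible isomorphism (Definition~\ref{def:gluing}), this yields a canonical isomorphism $\mathrm{principal}(E)\cong P$ of principal $G$-bundles, and with $\mathrm{fibre}(E)=F$ a candidate isomorphism $(\mathrm{principal},\mathrm{fibre})(\mathrm{build}(P,F))\cong(P,F)$.

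Next I would compute the composite $\mathrm{build}\circ(\mathrm{principal},\mathrm{fibre})$. Given $E\in\mathrm{Bun}_G(\mathbb X)$ over $M$ with fibre $F$, local trivializations $\alpha_i$ and $G$-atlas $\tau_{ij}$, Lemma~\ref{lemma:fibrebundleatlasonedirection} says $E$ is the gluing of $u_{ij}=\alpha_i^*\alpha_j=(\pi_0,(\pi_0\tau_{ji},\pi_1)a)$. Unwinding the definitions, $\mathrm{principal}(E)$ carries the $G$-atlas $\tau_{ji}$, so $\mathrm{build}(\mathrm{principal}(E),F)$ is the gluing of $((1_M,\tau_{ji})\times 1_F)(1_M\times a)=(\pi_0,(\pi_0\tau_{ji},\pi_1)a)=u_{ij}$, i.e.\ of the very atlas $E$ glues. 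The universal property again produces a canonical comparison isomorphism $\mathrm{build}(\mathrm{principal}(E),F)\cong E$; it respects the local trivializations, hence the $G$-atlas, hence is an isomorphism of $G$-bundles.

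Finally, for naturality I would use that, by Proposition~\ref{prop:principal_morphism} and the definitions of $\mathrm{PBun}_G$- and $\mathrm{Bun}_G$-morphisms, every morphism in all four categories is uniquely presented by its local data — a pair $(\phi,T_{ik})$ for principal bundles, a tuple $(\phi,\psi,T_{ik})$ for $G$-bundles — with the total-space component determined from that data by the universal property of the gluing. The functors act on this local data by plain copying (discarding $\psi$, retaining $\psi$, or recombining $(\phi,T_{ik})$ with $\psi$), so each naturality square already commutes at the level of local data, and uniqueness of the induced total-space maps upgrades this to commutativity of the actual squares. I expect the genuine work to sit entirely in this last step: one must check that the canonical gluing comparison isomorphisms intertwine the group actions, so that they are morphisms in $\mathrm{PBun}_G$ and $\mathrm{Bun}_G$ rather than merely in $\mathrm{FibBun}$, and that they assemble into natural transformations — a careful but routine chase using the universal property of gluings together with the cocycle conditions on the $T_{ik}$ and on $\tau_{ij}$.
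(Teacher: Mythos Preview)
Your proposal is correct and follows essentially the same route as the paper: both directions rest on the observation that a $G$-bundle is determined (up to canonical isomorphism of gluings) by its $G$-atlas $\tau_{ij}$ together with the $G$-object $F$, so the composites return a gluing of the very same atlas, and naturality is read off from the local presentation of morphisms via Proposition~\ref{prop:principal_morphism}. Your explicit flag that the comparison isomorphisms must be checked to lie in $\mathrm{PBun}_G$ and $\mathrm{Bun}_G$ (not merely $\mathrm{FibBun}$) is a point the paper treats rather briskly.
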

This means that all possible $G$-bundles can already be described by its fibre and a principal bundle containing the bundle information.
\begin{proof}
In order to prove that these functors form an equivalence, we need to show that their composition is naturally isomorphic to the identity.
Let $(P,F)\in \mathrm{PBun}_G \times G\mathrm{-Space}$ be given. Then build$(P,F)$ has fibre $F$ and the same $\tau_{ij}$ as $P$. Thus 
$$
(\mathrm{principal},\mathrm{fibre})(\mathrm{build}(P,F))=(\tilde P, F)
$$
where $\tilde P$ is the gluing of $(\tau_{ij}\times 1_G)(1_M \times m)$. But $P$ is also a gluing of this. Thus there is an isomorphism $\varphi: \tilde P \to P$ with $\alpha_i = \varphi \tilde \alpha_i$.

by Proposition \ref{prop:principal_morphism} this morphism is completely described by $(1_M,!u)$ (it is the identity atlas morphism, given by the multiplication with the group unit). Let $\phi,T_{ik}$ describe any principal bundle morphism. Then the compositions equal:
$$
(1_M, !u);(\phi,T_{ik}) = (\phi, T_{ik}) = (\phi, T_{ik})(1_{M'},!u)
$$
As build;principal does not change what a morphism does on atlases, this proves that $\varphi$ is natural. Thus $(\varphi \times 1_{G\mathrm{-Space}})$ is the required natural isomorphism.

In the other direction let $E$ be a $G$-bundle. Then $\mathrm{build}(\mathrm{fibre}(E),\mathrm{principal}(E))$ is by definition the gluing of the atlas 
$$
u_{ij}:= (\tau_{ij} \times 1_F)(1_M \times a): M \times F \to M \times F
$$
But so is $E$, so there is an isomorphism $\varphi_E: \mathrm{build}(\mathrm{fibre}(E),\mathrm{principal}(E)) \to E$. Naturality again follows by considering the atlas description of $\varphi_E$, where it is just the identity and therefore natural.
\end{proof}
\subsection{Examples}\label{sec:classical_bundles}
We will now explore how the principal bundles we defined relate to the classical notions of principal bundles in different aspects of geometry. Differential geometry, point-set topology and algebraic geometry each have their version of principal bundles. All these notions have in common that a principal bundle is locally isomorphic to the Cartesian product of a base space with a group.

We directly recover principal bundles from differential geometry and point-set topology. For algebraic geometry, future work is needed, but we strongly suspect that principal bundles in algebraic geometry can also be described using the language of restriction categories. A way to include Grothendieck topologies and, in particular, étale topology in restriction categories still needs to be further developed though.

\subsubsection{Principal and fibre bundles in differential geometry}\label{subsubsec:classical}
Here we will connect the notions defined above with the classical notions of fibre bundles and principal bundles from differential geometry.
\begin{definition}~
Let \ParMfld{} be the following join restriction category:
\begin{enumerate}[(i)]
\item Objects are smooth manifolds
\item Morphisms from a manifold $M$ to a manifold $M'$ are are smooth maps from an open subset $U \subset M$ to $M'$. Here the smooth structure on the open subset $U$ is defined as a restriction of the smooth structure of $M$.
\item Composition, identity and restrictions are analogous to the category of partial maps on sets. In particular the restriction of a map $f:A \to B$ that is defined on $U \subset A$ is the inclusion $U \to A$. The composition gives a smooth map defined on an open subset since the composition of smooth maps is smooth, composing partial maps intersects domains of definition and finite intersections of open sets are open.
\end{enumerate}
\end{definition}
This restriction category of manifolds with smooth partial maps is the category where the categorical constructions defined in Sections \ref{sec:fibre_bundles} and \ref{sec:g-bundles} correspond to constructions from classical differential geometry.
\begin{definition}[Definition 20.1 in \cite{michor2008topics}]
A \textbf{classical fibre bundle} $(E, q, M, F)$ consists of manifolds $E, M , F$ and
a smooth mapping $q : E \to M$ ; furthermore each $x \in M$ has an open neighbourhood
$U$ such that $E|_U := q^{-1} (U )$ is diffeomorphic to $U \times  F$ via a fibre respecting diffeomorphism, $\alpha$:
\[\begin{tikzcd}
	{E|_U} && {U \times F} \\
	& U
	\arrow["q"', from=1-1, to=2-2]
	\arrow["{\pi_0}", from=1-3, to=2-2]
	\arrow["\alpha", from=1-1, to=1-3]
\end{tikzcd}\]
\end{definition}
This is the notion of fibre bundles from classical differential geometry that Definition \ref{def:fibre_bundle} is meant to generalize. 
However, a fibre bundle as in Definition \ref{def:fibre_bundle}{(i)} is not the same as a classical fibre bundle. We need two extra conditions. Interestingly, the results of section \ref{sec:fibre_bundles} and \ref{sec:g-bundles} hold without assuming these extra conditions.
Recall from Definition \ref{def:fibre_bundle}{(iii)} that a fibre bundle $(E,M,q,F,(\alpha_i)_{i \in I}, (e_i)_{i \in I})$ is totally fibred if $\overline{q e_i} = \bar \alpha_i$ for all $i \in I$.

\begin{theorem}\label{thm:classical_fiber_correspondence_diffgeo}
	Let $M$ and $F$ be smooth manifolds.
	A classical fibre bundle $E$ over $M$ with fibre $F$ is exactly the same as a categorical bundle $(E,M,q,F,(\alpha_i)_{i \in I}, (e_i)_{i \in I})$ in \ParMfld{} which
	\begin{itemize}
		\item is totally fibred, and
		\item for which that the projection $q: E \to M$ is an epimorphism.
	\end{itemize}
\end{theorem}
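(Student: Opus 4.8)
The plan is to exhibit, for a fixed smooth surjection $q\colon E\to M$, a bijection between trivializing atlases in the sense of classical differential geometry and the extra data $\bigl((\alpha_i)_{i\in I},(e_i)_{i\in I}\bigr)$ making $(E,M,q,F,(\alpha_i),(e_i))$ a totally fibred categorical bundle with $q$ an epimorphism; the equivalence of the two notions is then immediate, since the local-triviality clause of Definition~20.1 is precisely the assertion that such an atlas exists. The geometric content is light --- all the work is in translating between two bookkeeping conventions --- so I would organise the argument as two passages and then check that they are mutually inverse.

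From a classical bundle to a categorical one: given a trivializing cover $\{U_i\}_{i\in I}$ of $M$ together with fibre-respecting diffeomorphisms $\widehat\alpha_i\colon q^{-1}(U_i)\xrightarrow{\cong} U_i\times F$, I regard each $\widehat\alpha_i$ as a partial isomorphism $\alpha_i\colon E\to M\times F$ with domain of definition $q^{-1}(U_i)$, and let $e_i\colon M\to M$ be the restriction idempotent of the open subset $U_i\subseteq M$. Then $\bar\alpha_i$ is the idempotent of $q^{-1}(U_i)$, while $\alpha_i^*$ is defined on the image $U_i\times F$, so $\overline{\alpha_i^*}=e_i\times 1_F$; Diagram~\ref{diagram:fibre_bundle_alpha_i} commutes because $\widehat\alpha_i$ is fibre-respecting, i.e.\ $\alpha_i\pi_0=q$ on $q^{-1}(U_i)$; $\bigvee_{i\in I}\bar\alpha_i=1_E$ because $\{U_i\}$ covers $M$, hence $\{q^{-1}(U_i)\}$ covers $E$; and total fibredness $\bar\alpha_i=\overline{q e_i}$ holds since the composite $q e_i$ is, by the description of composition in $\ParMfld$, exactly $q$ with its domain cut down to $q^{-1}(U_i)$. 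Finally $q$ is a total surjection --- assuming $F\neq\emptyset$, the empty-fibre case being degenerate and customarily excluded --- hence an epimorphism: if $qg=qh$ as partial maps then $q^{-1}(\mathrm{dom}\,g)=q^{-1}(\mathrm{dom}\,h)$ forces $\mathrm{dom}\,g=\mathrm{dom}\,h$ by surjectivity, and then pointwise surjectivity of $q$ onto that domain forces $g=h$.

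Conversely, from a totally fibred categorical bundle with $q$ epi I recover the classical datum: let $U_i\subseteq M$ be the open set named by $e_i$, so that $\overline{\alpha_i^*}=e_i\times 1_F$. Since $q$ is epi it is surjective --- otherwise, for $x_0\notin q(E)$, the restriction idempotent $e$ of $M\setminus\{x_0\}$ would satisfy $q e=q$ with $e\neq 1_M$, contradicting epi --- and total fibredness gives $\mathrm{dom}\,\alpha_i=q^{-1}(U_i)$, so $\bigvee_i\bar\alpha_i=1_E$ unpacks to $\bigcup_i q^{-1}(U_i)=E$ and hence $\bigcup_i U_i=q(E)=M$, a genuine open cover. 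The partial isomorphism $\alpha_i$ with $\overline{\alpha_i^*}=e_i\times 1_F$ then restricts to a diffeomorphism $q^{-1}(U_i)\xrightarrow{\cong} U_i\times F$, and Diagram~\ref{diagram:fibre_bundle_alpha_i} says exactly that this diffeomorphism is fibre-respecting. One checks directly that the two passages are mutually inverse: in each direction the substantive data retained is the trivializing cover together with the local trivializations, and the idempotents $e_i$ and the domains of the $\alpha_i$ are uniquely forced by the totally fibred condition.

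I expect the only real obstacle to be the restriction-categorical subtleties rather than any geometry: concretely, (a) pinning down that ``$q$ is an epimorphism in $\ParMfld$'' is equivalent to ``$q$ is surjective on underlying sets'', the nontrivial direction using the restriction idempotent of the complement of a missed point as a separating map (and here one uses that manifolds are $T_1$); and (b) verifying that the totally fibred axiom $\bar\alpha_i=\overline{q e_i}$ is precisely what forces the domain of each $\alpha_i$ to be a full fibrewise preimage $q^{-1}(U_i)$, so that in the classical picture each chart is literally of the form $E|_U\cong U\times F$ rather than being defined on some ad hoc open subset of $E$. The remaining verifications --- commutativity of Diagram~\ref{diagram:fibre_bundle_alpha_i}, the join condition $\bigvee_i\bar\alpha_i=1_E$, and the mutual inverseness of the two passages --- are routine once this dictionary is fixed.
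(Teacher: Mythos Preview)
Your proposal is correct and follows essentially the same two-direction argument as the paper: unpack the categorical data as open sets $U_i$ and diffeomorphisms $q^{-1}(U_i)\cong U_i\times F$, and conversely package a classical trivializing atlas as partial isomorphisms $\alpha_i$ with the $e_i$ the idempotents of the $U_i$. You are in fact more thorough than the paper on two points: the paper simply asserts ``epimorphism (i.e.\ surjective)'' without justification, whereas you supply the separating-idempotent argument, and you flag the $F\neq\emptyset$ caveat that the paper leaves implicit.
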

\begin{proof}
	Suppose $(E,M,q,F,(\alpha_i)_{i \in I}, (e_i)_{i \in I})$ is a totally fibred categorical principal bundle in \ParMfld{} for which $q$ is an epimorphism (i.e. surjective). Then for every $x \in M$ there is a $y \in q^{-1}(\{x\})$. Because $\bigvee_{i \in I} \bar \alpha_i = 1_E$, there exists an $i \in I$ such that $\alpha_i(y)$ is defined. 

    Recall from Remark \ref{rmk:partial_set_map_interpretations} that restriction idempotents describe the subset that is their domain of definition.   
    
    Since $\bar \alpha_i^*$ is of the form $e_i \times 1_F$, it follows that the domain of definition of $\alpha_i^*$ is of the form $U \times F$ where $U$ is the domain of definition of $e_i$. Thus the partial isomorphism $\alpha_i$ is an isomorphism of open subsets $V \xrightarrow{\cong} U \times F$ where $V$ is the domain of definition for $\alpha_i$.
    
	Since $\bar \alpha_i$ describes $V$, $\bar e_i$ describes $U$ and the restriction idempotent of a composition describes a preimage, $\bar \alpha_i = \overline{q e_i}$ implies $V = q^{-1}(U)$. Thus $\alpha_i$ is an isomorphism between $E|_U$ and $U \times F$. Since $y \in V = q^{-1}(U)$, it follows that $x \in U$.
	
	We have thus constructed for every $x \in M$ a open neighbourhood $U \subset M$ and a isomorphism $\alpha_i: q^{-1}(U) \to U \times F$ such that
	\[\begin{tikzcd}
		{E|_U} && {U \times F} \\
		& U
		\arrow["q"', from=1-1, to=2-2]
		\arrow["{\pi_0}", from=1-3, to=2-2]
		\arrow["\alpha", from=1-1, to=1-3]
	\end{tikzcd}\]
	commutes. Therefore $(E,q,M,F)$ is a classical fibre bundle, showing that every totally fibred categorical fibre bundle with an epimorphism $q$ is a classical fibre bundle.
	
	Conversely, let $p: E \to M$ be a classical fibre bundle. Then we take the set $M$ of points in the underlying manifold as our index-set. By definition of the classical fibre bundle, for every point $i\in M$ there is a partial isomorphism $\alpha_i: E \to M \times F$ with domain $p^{-1}(U)$.
	As the domain of $\alpha_i^*$ is $U \times F$, its restriction idempotent is of the form $\bar \alpha_i^* = e \times 1$.
	As every point $i \in M$ has an $\alpha_i$ defined in its preimage, the join satisfies $\bigvee_i \alpha_i = 1_E$.
	Thus every classical fibre bundle is a categorical fibre bundle.
\end{proof}

Similarly we will prove that a classical principal bundle is exactly a totally fibred categorical principal bundle with surjective $q$-map in the category of manifolds.

We will consider the classical definitions from \cite{michor2008topics} or \cite{Waldmann2021} (these are the same) and prove that they are equivalent to Definition \ref{def:G-bundle}.

\begin{definition}[Definition 21.1 in \cite{michor2008topics}]
Let $G$ be a Lie group and let $(E, q, M, F)$ be a classical fibre bundle. A \textbf{classical $G$-bundle} structure on the fibre bundle consists of the following data.
\begin{enumerate}[(i)]
    \item A left action $a : G \times  F \to F$ of the Lie group on the standard fibre.
    \item A fibre bundle atlas $(U_i , \alpha_i )$ whose transition functions $(u_{ij} )$ act on $F$
via the $G$-action. That is, there is a family of smooth mappings $(\tau_{ij} : U_i \cap U_j \to G)$
which satisfies the cocycle condition $\tau_{ij}(x) \tau_{jk} (x) = \tau_{ik} (x)$ for $x \in U_i \cap U_j \cap U_k$
and $\tau_{ii} (x) = u$, the unit in the group, such that $u_{ij} (x, f) = a(\tau_{ij}(x), f)$.
\end{enumerate}
A fibre bundle with a $G$-bundle structure is called a $G$-bundle.
\end{definition}
Comparing this with Definition \ref{def:G-bundle} we see that a classical $G$-bundle structure is exactly the same as a $G$-bundle structure in \ParMfld.
\begin{proposition}
A classical $G$-bundle is exactly the same as a $G$-bundle in the join-restriction category \ParMfld{},
\begin{itemize}
    \item that is totally fibred, and
    \item where the projection map $q:E \to M$ is surjective.
\end{itemize}
\end{proposition}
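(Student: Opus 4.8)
The plan is to build on Theorem~\ref{thm:classical_fiber_correspondence_diffgeo}, which already identifies classical fibre bundles with totally fibred categorical fibre bundles over \ParMfld{} whose projection is an epimorphism. It therefore suffices to check that, on top of this identification, the extra data of a classical $G$-bundle structure (a left action $a\colon G\times F\to F$ and a cocycle $(\tau_{ij})$) corresponds bijectively to the extra data of a $G$-bundle in the sense of Definition~\ref{def:G-bundle} (a total $G$-action making $F$ a $G$-object together with a $G$-atlas $(\tau_{ij})$ satisfying $u_{ij}=\alpha_i^*\alpha_j=(\pi_0,(\pi_0\tau_{ji},\pi_1)a)$). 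Since neither assignment alters the underlying smooth maps, once both directions are set up they are visibly mutually inverse.

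First I would treat the forward direction. Given a classical $G$-bundle, the action $a$ is a genuine smooth map, hence a total morphism of \ParMfld{}, and the classical action identities $a(u,-)=\mathrm{id}_F$ and $a(m(g,h),f)=a(g,a(h,f))$ are exactly the two commuting squares defining a $G$-object, checked pointwise. The transition functions $\tau_{ij}\colon U_i\cap U_j\to G$ are partial smooth maps $M\to G$; the cocycle identity $\tau_{ij}(x)\tau_{jk}(x)=\tau_{ik}(x)$ on $U_i\cap U_j\cap U_k$ says precisely that $(\tau_{ij},\tau_{jk})m$ and $\tau_{ik}$ agree on the smaller domain $U_i\cap U_j\cap U_k\subseteq U_i\cap U_k$, i.e.\ $(\tau_{ij},\tau_{jk})m\le\tau_{ik}$; the identity $\tau_{ii}(x)=u$ on $U_i$ gives $\tau_{ii}\le{!u}$; and from $\tau_{ij}(x)\tau_{ji}(x)=\tau_{ii}(x)=u$ we get $\tau_{ji}=\tau_{ij}\iota$. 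Thus $(\tau_{ij})$ is a $G$-atlas, and the classical compatibility $u_{ij}(x,f)=a(\tau_{ij}(x),f)$ is, after matching the diagrammatic composition convention and the index labelling, exactly the required equation $u_{ij}=(\pi_0,(\pi_0\tau_{ji},\pi_1)a)$.

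Conversely, starting from a totally fibred $G$-bundle over \ParMfld{} with surjective $q$: Theorem~\ref{thm:classical_fiber_correspondence_diffgeo} makes the underlying fibre bundle classical, with each $\alpha_i$ realized as a diffeomorphism $q^{-1}(U_i)\xrightarrow{\cong}U_i\times F$. The $G$-object structure supplies a total, hence honest smooth, left action $a\colon G\times F\to F$. It remains to see that the partial maps $\tau_{ij}$ are defined exactly on $U_i\cap U_j$ and satisfy the classical cocycle conditions there: using that $a$ is total together with the defining equation $u_{ij}=(\pi_0,(\pi_0\tau_{ji},\pi_1)a)$ one computes $\overline{u_{ij}}=\overline{\pi_0\tau_{ji}}$, which by Remark~\ref{rmk:partial_set_map_interpretations} is the preimage under $\pi_0$ of the domain of $\tau_{ji}$; comparing with $\overline{u_{ij}}=\overline{\alpha_i^*\alpha_j}$ and invoking total fibredness pins this domain down as the cylinder over $U_i\cap U_j$. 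The $G$-atlas inequalities then become the pointwise cocycle equalities on these intersections, and the defining equation unwinds to $u_{ij}(x,f)=a(\tau_{ij}(x),f)$, so we have recovered a classical $G$-bundle structure.

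The main obstacle will be this domain bookkeeping: identifying the domain of definition of each $\tau_{ij}$ with $U_i\cap U_j$ starting from the single equation $u_{ij}=\alpha_i^*\alpha_j=(\pi_0,(\pi_0\tau_{ji},\pi_1)a)$ and total fibredness, and keeping the diagrammatic (left-to-right) composition order and the index placement in $\tau_{ij}$ versus $\tau_{ji}$ consistent with the classical applicative convention throughout. Everything else is a routine pointwise translation, entirely parallel to the proof of Theorem~\ref{thm:classical_fiber_correspondence_diffgeo}.
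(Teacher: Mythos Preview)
Your proposal is correct and follows the same approach as the paper: reduce to Theorem~\ref{thm:classical_fiber_correspondence_diffgeo} for the underlying fibre bundle, then observe that the additional $G$-structure (action plus cocycle) is specified identically in the classical and categorical definitions. The paper's own proof is a single sentence to this effect; your sketch supplies considerably more detail, in particular the domain computation showing $\operatorname{dom}(\tau_{ji})=U_i\cap U_j$ via $\overline{u_{ij}}=\overline{\pi_0\tau_{ji}}$ and total fibredness, which the paper leaves implicit.
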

This proposition follows from Theorem \ref{thm:classical_fiber_correspondence_diffgeo} since categorical $G$-bundles are defined from categorical fibre bundles the same way as classical $G$-bundles from classical fibre bundles.

Now we consider the definition of principal bundles in classical differential geometry. 

\begin{definition}(Classical principal bundle)
Let $M$ be a manifold and let $G$ be a Lie group. Then
a principal fibre bundle with structure group $G$ over $M$ is a $G$-bundle over $M$ where the typical fibre is $G$ endowed with the left multiplications as left action.
\end{definition}
Again we see that this is the same as part 2 of Definition \ref{def:G-bundle}.
\begin{proposition}\label{prop:diffgeo_principal_bundles}
A classical principal $G$-bundle is exactly the same as a principal $G$-bundle in the join-restriction category \ParMfld{},
\begin{itemize}
    \item that is totally fibred, and
    \item where the projection map $q:E \to M$ is surjective.
\end{itemize}
\end{proposition}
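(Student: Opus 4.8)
The plan is to reduce this statement to the preceding proposition comparing classical $G$-bundles with $G$-bundles in \ParMfld{}, since both sides of the claimed equivalence are, by definition, the special case in which the typical fibre is the group object $G$ itself and the action is the left multiplication $a = m : G \times G \to G$.

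First I would observe that in \ParMfld{} the group object $G$ (a Lie group) carries, as in the example following the definition of $G$-objects, the canonical left $G$-action given by $a = m$; this is the fibre datum appearing in Definition \ref{def:G-bundle}(ii). On the classical side, a principal fibre bundle with structure group $G$ over $M$ is defined to be precisely a classical $G$-bundle whose typical fibre is $G$ with left multiplication as the $G$-action. Thus on both sides "principal" is obtained from the notion of "($G$-)bundle" by the same specialization of the fibre and the action.

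Next I would invoke the Proposition immediately preceding this one, which states that a classical $G$-bundle is exactly a $G$-bundle in \ParMfld{} that is totally fibred and whose projection $q : E \to M$ is surjective; this in turn rested on Theorem \ref{thm:classical_fiber_correspondence_diffgeo}. Applying that equivalence to the case $F = G$, $a = m$, the transition functions $\tau_{ij} : U_i \cap U_j \to G$ and the cocycle/normalization conditions $\tau_{ij}\tau_{jk} = \tau_{ik}$, $\tau_{ii} = u$ of a classical principal bundle are exactly the data of a $G$-atlas on $M$ together with the compatibility $u_{ij}(x,f) = a(\tau_{ij}(x), f) = \tau_{ij}(x)\cdot f$ required by Definition \ref{def:G-bundle}(i)--(ii), now subject to total fibredness and surjectivity of $q$. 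Hence the two notions carry the same data and satisfy the same conditions, giving the desired identification.

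The only point requiring a little care—and the one I would single out as the main (though minor) obstacle—is checking that the identification of the fibre is on the nose: namely that the $G$-object structure $a = m$ on $G$ in \ParMfld{} matches the classical left-translation action on the standard fibre, and correspondingly that the formula $u_{ij} = (\pi_0, (\pi_0\tau_{ji}, \pi_1)a)$ of Definition \ref{def:G-bundle} specializes, under $a = m$, to the classical change-of-chart rule $(x,f)\mapsto (x,\tau_{ji}(x) f)$ (modulo the usual left/right bookkeeping already tracked by the $\iota$ in the $G$-atlas axioms). Once this is verified—which is a direct unwinding of the definitions—the statement follows.
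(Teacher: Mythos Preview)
Your proposal is correct and follows essentially the same approach as the paper: the paper does not give a separate proof for this proposition, simply noting that it follows from the preceding proposition on $G$-bundles because the passage from ``$G$-bundle'' to ``principal $G$-bundle'' is, on both the classical and categorical sides, the identical specialization to $F=G$ with $a=m$. Your additional remark about verifying that the change-of-chart formula matches under $a=m$ is a fair elaboration, but nothing beyond unwinding definitions is needed.
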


\subsubsection{Principal bundles in point-set topology}\label{def:classical_topology}
In this section we will look at the definition of a principal bundle in point-set topology and prove that it coincides with the notion developed here. 

A \textbf{right-$G$-object} is a topological space $P$ with a right-group action $a: P \times G \to P$. Given two right-$G$-objects $P, P'$, a \textbf{$G$-equivariant} map is a smooth map $f: P \to P'$ such that 
\[\begin{tikzcd}
	{P\times G} & {P' \times G} \\
	P & {P'}
	\arrow["f"', from=2-1, to=2-2]
	\arrow["{f \times 1_G}", from=1-1, to=1-2]
	\arrow["a"', from=1-1, to=2-1]
	\arrow["{a'}", from=1-2, to=2-2]
\end{tikzcd}\]
commutes.
In \cite{mitchell_2011} a principal bundle is defined as follows.
\begin{definition}
Suppose that $P$ is a right $G$-object equipped with a $G$-map $q: P \to M$ where $G$ acts trivially on $M$. It is a \textbf{principal $G$-bundle} over $M$ if $q$ satisfies the following local triviality condition. $B$ has a covering by open sets $(U_i)_{i \in I}$ such that there exist $G$-equivariant homeomorphisms $(\phi_i : q^{-1}(U_i) \to U_i \times G)_{i \in I}$ such that the diagram
\[\begin{tikzcd}
	{q^{-1}(U_i)} & {U_i \times G} \\
	{U_i}
	\arrow["{\phi_i}", from=1-1, to=1-2]
	\arrow["{\pi_0}", from=1-2, to=2-1]
	\arrow["q"', from=1-1, to=2-1]
\end{tikzcd}\] commutes.
Here $U \times G$ has the right $G$-action $(u,g)h = (u,gh)$.

A morphism $\sigma: P \to Q$ of principal bundles over $M$ is an equivariant morphism $\sigma: P \to Q$ over the identity of $M$, i.e. a morphism $\sigma : P \to Q$ such that
\[\begin{tikzcd}
	{P\times G} & {Q\times G} && P & Q \\
	P & Q &&& M
	\arrow["{G\text{-action}}"', from=1-1, to=2-1]
	\arrow["{G\text{-action}}", from=1-2, to=2-2]
	\arrow["\sigma"', from=2-1, to=2-2]
	\arrow["{\sigma \times 1_G}", from=1-1, to=1-2]
	\arrow["{q_P}"', from=1-4, to=2-5]
	\arrow["{q_Q}", from=1-5, to=2-5]
	\arrow["\sigma", from=1-4, to=1-5]
\end{tikzcd}\]
commute.
\end{definition}
We will now show this definition actually is the same as a surjective, totally fibred principal bundle in the join-restriction category of topological spaces. For this, we first define this join restriction category.
\begin{definition}
Let $ParTop$ be the following join restriction category:
\begin{enumerate}[(i)]
    \item Objects are topological spaces
    \item Morphisms $A \to B$ are continuous maps from an open subset $U \in A$ to $B$.
    \item Composition, identity and restrictions are like for partial maps of sets. In particular the restriction of a map $f: A \to B$ that is defined on $U \subset A$ is the inclusion $U \to A$.
\end{enumerate}
\end{definition}
\begin{lemma}
Given a classical principal bundle $(P,M,q,(U_i)_{i \in I},(\phi_i)_{i \in I})$ like in Definition \ref{def:classical_topology} the change of charts is given by the left-multiplication by a group element: There exists a map $\tau_{ij}: U_i \cap U_j \to G$ such that 
$$
\varphi_i^{-1} \varphi_j = (\langle  1_{U_i \cap U_j} , \tau_{ij} \rangle \times 1_G )(1_{U_i \cap U_j} \times m) : (U_i \cap U_j) \times G  \to (U_i \cap U_j) \times G .
$$

\end{lemma}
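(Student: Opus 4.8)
The plan is to use $G$-equivariance to force the transition map into the shape of a left translation. Write $V := U_i \cap U_j$. Since the triangles in Definition \ref{def:classical_topology} commute we have $q^{-1}(U_i) \cap q^{-1}(U_j) = q^{-1}(V)$, and both $\varphi_i$ and $\varphi_j$ restrict to homeomorphisms $q^{-1}(V) \xrightarrow{\cong} V \times G$. Consequently the composite $\theta := \varphi_i^{-1}\varphi_j$, where $\varphi_i^{-1}$ is the partial inverse of $\varphi_i$, is a well-defined homeomorphism $V \times G \to V \times G$ whose domain of definition, as a morphism of $ParTop$, is exactly $V \times G$.

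Next I would record two properties of $\theta$. First, $\theta$ lies over $V$, i.e.\ $\theta \pi_0 = \pi_0$ on $V \times G$: this follows from the commuting triangle $\varphi_i \pi_0 = \bar\varphi_i q$ (and the dual fact for $\varphi_j$) together with the restriction identities $\varphi_i^{-1}\varphi_i = \overline{\varphi_i^{-1}}$ and $\varphi_i^{-1}\bar\varphi_i = \varphi_i^{-1}$, which collapse $\varphi_i^{-1}\varphi_j\pi_0$ to $\pi_0$ over $V$. Second, $\theta$ is $G$-equivariant for the right action $(x,g)\cdot h = (x, gh)$ on $V \times G$: the $\varphi_i$ are $G$-equivariant by hypothesis, equivariance (being the commutativity of a square) is preserved when one passes to the inverse of an isomorphism, and it is preserved under composition and restriction, so $\theta$ inherits it.

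Then I would define $\tau_{ij} := \langle 1_V, {!}u\rangle\,\theta\,\pi_1 : V \to G$, the continuous map sending a point $x$ to the $G$-component of $\theta(x,e)$, where $e$ denotes the unit of $G$ (the element picked out by $u : 1 \to G$). Because $\theta$ lies over $V$ one gets $\langle 1_V, {!}u\rangle\,\theta = \langle 1_V, \tau_{ij}\rangle$, i.e.\ $\theta(x,e) = (x, \tau_{ij}(x))$. Using equivariance, for all $x \in V$ and $g \in G$,
$$
\theta(x,g) = \theta\big((x,e)\cdot g\big) = \theta(x,e)\cdot g = \big(x, \tau_{ij}(x)\big)\cdot g = \big(x, \tau_{ij}(x)\,g\big),
$$
which is precisely the pointwise description of $\big(\langle 1_V, \tau_{ij}\rangle \times 1_G\big)(1_V \times m)$. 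As both sides are defined on exactly $V \times G$, this is an equality of morphisms of $ParTop$, which is the claim.

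The computation is elementary; the step that needs genuine care — and which I regard as the main (though minor) obstacle — is the bookkeeping of partiality: one must verify $q^{-1}(U_i)\cap q^{-1}(U_j) = q^{-1}(V)$ and that the relevant restriction idempotents agree, so that $\theta$ has domain exactly $V \times G$, that $\tau_{ij}$ has domain exactly $V$ (rather than some smaller open set), and that the displayed identity is an honest equality of partial maps and not merely one valid after further restriction. Continuity of $\tau_{ij}$ is immediate, since it is a composite of continuous maps.
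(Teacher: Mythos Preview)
Your proof is correct and follows essentially the same route as the paper: both define $\tau_{ij}$ as the $G$-component of $\varphi_i^{-1}\varphi_j$ evaluated at the unit, and both verify the formula pointwise via $G$-equivariance. You are somewhat more careful than the paper about the partiality bookkeeping (domains of definition) and about recording that $\theta$ lies over $V$, but the underlying argument is identical.
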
\label{lemma:Top_change_of_charts_group_mul}
\begin{proof}
We will define $\tau_{ij}$ as the composition of 
$$U_i \cap U_j \xrightarrow{\langle 1, u \rangle } (U_i \cap U_j) \times G \xrightarrow{\phi_i^{-1}\phi_j} (U_i \cap U_j) \times G \xrightarrow{\pi_1} G . $$
In other words, $\tau_{ij}$ is the $G$-component of $\phi_i^{-1}\phi_j$ evaluated on the unit of the group.
In order to show that $\varphi_i^{-1} \varphi_j = (\langle  1_{U_i \cap U_j} , \tau_{ij} \rangle \times 1_G )(1_{U_i \cap U_j} \times m)$, we will evaluate it on an element $(x,g)$ of $(U_i \cap U_j)\times G $:
$$
\varphi_i^{-1} \varphi_j (x,g)\!=\! (1_{U_i \cap U_j}) \phi_i^{-1} \phi_j (x,u,g)\! =\! m (\phi_i^{-1} \phi_j(x,u), g) \! = \! m(x, \tau_{ij}(x) , g) \! = \! (x,m(\tau_{ij}, g)),
$$
where the second equality holds as both $\phi_i^{-1}$ and $\phi_j$ are $G$-equivariant.
\end{proof}

\begin{theorem}\label{prop:topology_principal_bundles}
Let $G$ be a total group object in the join restriction category ParTop, i.e. a topological group.
The category of classical principal $G$-bundles over $M$ (defined in Definition \ref{def:classical_topology}) is isomorphic to the subcategory of categorical principal bundles (defined in Definition \ref{def:G-bundle}) where
\begin{itemize}
    \item The base space of every object is $M$.
    \item The underlying fibre bundle is totally fibred.
    \item The projection map $q: P \to M$ is surjective.
    \item Every morphism $(f,g): (P,M,q) \to (P',M,q')$ has the identity as its second component: $g = 1_M$
\end{itemize}
\end{theorem}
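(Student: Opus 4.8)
The plan is to construct explicit functors in both directions and check that they are mutually inverse, rather than merely an equivalence. In the direction from classical to categorical, given a classical principal $G$-bundle $(P,M,q,(U_i)_{i\in I},(\phi_i)_{i\in I})$, I would take the index set $I$ as given, set $\alpha_i := \phi_i : P \to M \times G$ (a partial isomorphism whose domain of definition is $q^{-1}(U_i)$), let $e_i : M \to M$ be the restriction idempotent on $U_i$, and define $\tau_{ij} : M \to G$ via Lemma~\ref{lemma:Top_change_of_charts_group_mul}, which shows exactly that $u_{ij} = \alpha_i^*\alpha_j = (\pi_0,(\pi_0\tau_{ji},\pi_1)a)$ with $a = m$. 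The cocycle/unit/inverse conditions for a $G$-atlas follow from the classical cocycle condition, $\tau_{ii}(x)=u$, and uniqueness of inverses in a group. Surjectivity of $q$ and $\bigvee_i\bar\alpha_i = 1_P$ are immediate from the covering property; total fibredness $\bar\alpha_i = \overline{q e_i}$ holds because the domain of $\phi_i$ is precisely $q^{-1}(U_i)$. Finally I must verify that a categorical principal bundle (fibre $G$, action $m$) built this way has its canonical right action $r$ from Theorem~\ref{thm:right_action_principal} equal to the given classical right action $a_P : P\times G \to P$; locally both are $(\alpha_i\times 1)(1\times m)\alpha_i^*$, so they agree after taking the join.

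In the opposite direction, given a categorical principal bundle $(P,M,q,F=G,(\alpha_i),(e_i),(\tau_{ij}))$ that is totally fibred with $q$ epic, I would let $U_i \subset M$ be the open set named by $e_i$. Total fibredness plus $\bar\alpha_i^* = e_i\times 1_G$ forces the domain of $\alpha_i$ to be exactly $q^{-1}(U_i)$, so $\phi_i := \alpha_i$ restricts to a homeomorphism $q^{-1}(U_i)\xrightarrow{\cong} U_i\times G$; Remark~\ref{rmk:totally_fibred_equivalent} gives $\phi_i\pi_0 = q e_i$, i.e. the triangle over $U_i$ commutes. Surjectivity of $q$ together with $\bigvee_i\bar\alpha_i=1_P$ guarantees the $U_i$ cover $M$. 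Using Theorem~\ref{thm:right_action_principal} again, $P$ carries a total right action $r$ with $rq = \pi_0 q$, so $q$ is a $G$-map with $G$ acting trivially on $M$; and since locally $r$ is conjugate through $\alpha_i$ to $(u,g)h = (u,gh)$, each $\phi_i$ is $G$-equivariant. Thus we recover precisely the data of Definition~\ref{def:classical_topology}.

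For morphisms I would match them directly. A categorical principal bundle morphism $(\Phi,\phi)$ with the extra hypothesis $\phi = 1_M$ is, by Definition~\ref{def:PBun_morphism}, a partial map $\Phi : P \to P'$ commuting with $q$, $q'$ and with $r_P\Phi = (\Phi\times 1_G)r_{P'}$; when both bundles are totally fibred with surjective $q,q'$, such a $\Phi$ is forced to be total (its domain restriction must satisfy $\bar\Phi q = q$ with $q$ epic on the common base, hence $\bar\Phi = 1_P$), and totality of an equivariant map over $1_M$ is exactly a classical principal-bundle morphism. Conversely a classical morphism $\sigma$ is a continuous (total) $G$-equivariant map over $1_M$, which is visibly such a $\Phi$. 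These assignments are clearly functorial, preserve composition and identities, and are inverse on objects and morphisms on the nose, giving the claimed isomorphism of categories rather than just an equivalence.

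The main obstacle I anticipate is bookkeeping around \emph{totality}: one must carefully use total fibredness, $\bar\alpha_i^* = e_i\times 1_G$, and surjectivity of $q$ to pin down domains of definition as honest preimages $q^{-1}(U_i)$ and to argue that morphisms over $1_M$ are automatically total maps of topological spaces — these facts, encoded categorically via restriction idempotents and the interpretation in Remark~\ref{rmk:partial_set_map_interpretations}, are where the categorical and point-set pictures have to be reconciled; once domains are identified, equivariance and the triangle identities transfer mechanically.
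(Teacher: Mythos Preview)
Your proposal is correct and follows essentially the same approach as the paper: both directions are built by identifying $\alpha_i$ with $\phi_i$, invoking Lemma~\ref{lemma:Top_change_of_charts_group_mul} to extract the $\tau_{ij}$, and using Theorem~\ref{thm:right_action_principal} to supply the right $G$-action in the categorical-to-classical direction. You are in fact more thorough than the paper on two points it glosses over: you verify that the derived action $r$ agrees with the given classical action, and you argue that morphisms over $1_M$ are automatically total.

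One small correction on that last point: your argument appeals to $q$ being epic, but epicness is right-cancellation and does not give $\bar\Phi q = q \Rightarrow \bar\Phi = 1_P$. The conclusion is correct for a simpler reason: from the bundle-morphism square $\Phi q' = q\phi = q$ with $q'$ and $q$ total, one has $\bar\Phi = \overline{\Phi q'} = \bar q = 1_P$ directly, with no cancellation needed.
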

\begin{proof}
Given a categorical principal bundle $P,M,q,(\alpha_i)_{i \in I}$ we obtain a classical principal bundle by taking $U_i$ to be the subset of $M$ where $\alpha_i^*: M \times G \to P$ is defined and $\phi_i:= \alpha_i: P \to M \times G$, defined on $q^{-1}(U_i)$ because the underlying fibre bundle is totally fibred. The space $P$ is a right-$G$-object by Theorem \ref{thm:right_action_principal} and $q$ is a $G$-map to the trivial $G$-object $M$ as $rq = \pi_0 q: P \times G \to M$.

Given a morphism $(f,1_M): (P \xrightarrow{q} M) \to (P' \xrightarrow{q'} M)$, the first component $f: P \to P'$ forms a morphism of classical principal bundles as, by Definition \ref{def:PBun_morphism} it commutes with the $G$-action.

Conversely, given a classical principal bundle $E \xrightarrow{q} M$, then it is also a categorical principal bundle with the $\alpha_i$ constructed from the $\phi_i$ as the partial map given by the span $P \hookleftarrow q^{-1}(U_i) \xrightarrow{\phi_i} M \times G$. As the $(U_i)_{i \in I}$ form a covering, $\bigvee_{i \in\iota} \bar a_i = 1$.
The partial inverse $\alpha_i^*$ is given by the span $M \times G \hookleftarrow U_i \times G \xrightarrow{\phi_i^{-1}} E $. The change of charts is given by the left-multiplication with a group element according to Lemma \ref{lemma:Top_change_of_charts_group_mul}. 
\end{proof}
\subsubsection{Prinicipal bundles in algebraic geometry}
In \cite{Achar2021PerverseSA}, Definition 6.1.6, a notion of principal bundle over an algebraic variety is defined through algebraic properties instead of the local triviality condition used in Definition \ref{def:fibre_bundle} and the differential geometry literature. These principal bundles are not necessarily locally trivial in the Zariski-topology. However they are locally trivial in the étale topology. Whether or not there is a join-restriction category where the categorical principal bundles are the principal bundles defined in \cite{Achar2021PerverseSA} remains an open problem.

The difficulty here is that for a general Grothendieck topology and, in particular, for étale topology open subsets are described by morphisms that are not necessarily embeddings. Thus, if one follows the span construction of \cite{cockett_lack_2007}, the restriction idempotents are not idempotent, $\bar f \bar f \neq \bar f$. The resulting partial map category is not a restriction category.

We still think it is possible to describe principal bundles in algebraic geometry because it should be possible to describe Grothendieck topologies in the language of restriction categories using density relations.

\subsection{The torsor property}
Let $P$ be a principal bundle in a restriction category $\mathbb X$.
In classical differential geometry, the group action $r: P \times G \to P$ coming from Theorem \ref{thm:right_action_principal} is known to be free and proper and act transitively on fibres, as mentioned for example in the proof of Theorem 21.18(3) in \cite{michor2008topics}.
In algebraic geometry, free and transitive group actions play such an important role that they were given a special name, torsors, in \cite{Milne_Etale_cohomology}*{Proposition III.4.1}.
 In \cite{nlab:principalbundle} principal bundles are even introduced as torsors in the slice category.
Often torsors are also thought of as groups without a specified unit, which is encoded as a ternary operation. In \cite{street_torsors_herds_flocks} these definitions are shown to be equivalent.

While there are multiple different definitions of torsors, we will use a generalization of the definition in \cite{street_torsors_herds_flocks}*{Section 2}.
A torsor is an object with a free and transitive group action, which is described by the morphism in the definition below being an isomorphism.

\begin{definition}\label{def:Torsor_vanilla}
    Given a group object $G$ in a category, a \textbf{right $G$-torsor} is an object $T$ with a right $G$-action $a: T \times G \to G$ such that the morphism
    $$
    T \times G \xrightarrow{\langle \pi_0, a\rangle} T \times T
    $$
    is an isomorphism.
\end{definition}

The Definition of \cite{street_torsors_herds_flocks} in addition requires the map from $T$ to the terminal object to be a regular epimorphism in order to exclude the empty set from being a torsor. However, this would require us to make additional conditions on colimits which would be artificial in our setup. Thus we do not require this condition.

We will adapt Definition \ref{def:Torsor_vanilla} slightly in the restriction category setting and distinguish partial and total torsors. Depending on the situation, we will either obtain partial or total torsors from principal bundles.

\begin{definition}\label{def:partial_and_total_torsor}
    Let $G$ be a group object in a restriction category and let $T$ an object with a right $G$-action $a: T \times G \to G$.
    \begin{enumerate}[(i)]
        \item Then $(T,a)$ is a \textbf{partial right $G$-torsor} if the morphism
        $$
        T \times G \xrightarrow{\langle \pi_0, a\rangle} T \times T
        $$
        is a partial isomorphism, i.e. it has a partial inverse.
        \item Then $(T,a)$ is a \textbf{total right $G$-torsor} if the morphism
        $$
        T \times G \xrightarrow{\langle \pi_0, a\rangle} T \times T
        $$
        is a total isomorphism, i.e. it is total and has a total inverse.
    \end{enumerate}
\end{definition}
Since $\langle \pi_0, a\rangle$ is total, a partial right $G$-torsor is total if and only if the inverse $T \times T \to T \times G$ is total. 
We now want to investigate if a principal bundle in a join restriction category is a torsor in some kind of slice category.
For this we define a restriction slice category. This definition stems from current work in progress by Robin Cockett, Goeff Cruttwell, Jonathan Gallagher and Dorette Pronk.

\begin{definition}
Given an object $M$ in a restriction category $\mathbb X$, we define the \textbf{restriction slice category}
$\mathbb X \downsquigarrow M$ as follows:
\begin{enumerate}[(i)]
    \item objects are pairs $(A,f)$ consisting of an object $A$ and a morphism $f: A \to M$,
    \item morphisms from $(A,f)$ to $(B,g)$ are morphisms $\varphi: A \to B$ of $\mathbb X$ such that $f \geq \varphi g$, and
    \item the restriction idempotent $\bar \varphi$ in the restriction slice category $X \downsquigarrow M$ is the same as the restriction idempotent $\bar \varphi$ in the underlying category $\mathbb X$.
\end{enumerate}
Composition and identity are given by the composition and identity operations of the underlying category.

\end{definition}
A restriction slice category is, in particular, a restriction category. 
In order to describe $G$-torsors in the slice category the next lemma constructs a group object in the slice category $\mathbb X \downsquigarrow M$ out of a group object in the underlying category $\mathbb X$.
\begin{lemma}\label{lem:group_object_in_slice}
    Given a group object $(G,m,u,\iota)$ in a Cartesian restriction category $\mathbb X$, the pair $(M \times G, \pi_0)$ is a group object in the restriction slice category $\mathbb X \downsquigarrow M$. The group multiplication, unit and inverse are 
    \begin{align*}
        1_M \times m &: M \times G \times G\cong (M \times G) \times_M (M \times G) \to M \times G,
        \\
        1_M \times u &: M \times 1  \cong M \to M \times G , \text{ and}
        \\
        1_M \times \iota &: M \times G \to M \times G .
    \end{align*}
\end{lemma}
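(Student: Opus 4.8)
The strategy is to verify directly that the three proposed morphisms are genuine morphisms in the restriction slice category $\mathbb X \downsquigarrow M$, and then to check that they satisfy the group-object axioms of Definition \ref{def:group_object}, transporting the axioms for $(G,m,u,\iota)$ along the projections. First I would record that the relevant restriction products in $\mathbb X \downsquigarrow M$ are computed as pullbacks over $M$: for objects $(A,f)$ and $(B,g)$, the product is $(A \times_M B, \cdot\, \pi_0 f)$, with the projections inherited from $\mathbb X$. In the special case at hand, $(M\times G,\pi_0) \times_M (M \times G, \pi_0) \cong (M \times G \times G, \pi_0)$ because the pullback of $\pi_0 : M \times G \to M$ against itself is $M \times G \times G$ with both legs collapsing the ``other'' $G$-factor; this is the isomorphism implicit in the statement. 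Similarly the terminal object of $\mathbb X \downsquigarrow M$ is $(M,1_M)$, so the domain of the unit morphism is $(M \times 1, \pi_0) \cong (M, 1_M)$ as claimed.

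Next I would check that $1_M \times m$, $1_M \times u$, and $1_M \times \iota$ are morphisms in the slice, i.e. that they are total (hence their restriction idempotents are identities, which is automatic since $m,u,\iota$ are total and products of total maps are total) and that each one commutes over $M$ in the required lax sense. Since all maps involved are total, the inequality $f \ge \varphi g$ in the definition of a slice morphism becomes an equality, so I only need $(1_M \times m)\pi_0 = \pi_0$ on $M \times G \times G$, $(1_M \times u)\pi_0 = 1_M$ on $M$, and $(1_M \times \iota)\pi_0 = \pi_0$ on $M \times G$ — all of which are immediate from the definition of $f \times g = \langle \pi_0 f, \pi_1 g\rangle$ and the fact that these maps act as the identity on the $M$-component.

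The remaining work is to verify the three group-object equations of Definition \ref{def:group_object} in $\mathbb X \downsquigarrow M$. Because composition and the pairing $\langle -,- \rangle$ in the slice category agree with those of $\mathbb X$, each equation reduces to an equation between morphisms with codomain $M \times G$, and such an equation holds if and only if it holds after postcomposing with $\pi_0$ and with $\pi_1$. Postcomposing with $\pi_0$ always gives a trivial identity (every map in sight is the identity on $M$), so everything collapses to the corresponding equation for $(G,m,u,\iota)$ postcomposed into the $G$-component, which holds by hypothesis. Concretely: associativity of $1_M \times m$ follows from associativity of $m$ under the identification $(M\times G)\times_M(M\times G)\times_M(M\times G)\cong M\times G\times G\times G$; the unit laws follow from the unit laws for $u$ together with the computation of the terminal object and the diagonal-type maps $\langle !u, 1\rangle$ in the slice; and the inverse laws follow from those for $\iota$.

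I do not expect a genuine obstacle here — the lemma is essentially bookkeeping — but the one point that needs care, and which I would state explicitly rather than gloss over, is the identification of restriction products in $\mathbb X \downsquigarrow M$ with pullbacks over $M$ in $\mathbb X$, and the verification that the canonical isomorphism $(M\times G)\times_M(M\times G)\cong M\times G\times G$ is compatible with the slice structure. Once that is in place, all the diagrams of Definition \ref{def:group_object} are obtained from the corresponding diagrams for $G$ by applying the functor $M \times (-)$ and restricting attention to the fibrewise component, so the proof is short.
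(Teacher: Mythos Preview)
Your proposal is correct and follows essentially the same approach as the paper's proof: verify that $1_M\times m$, $1_M\times u$, $1_M\times\iota$ are slice morphisms because they preserve the first projection, and then observe that the group-object axioms reduce component-wise to those of $(G,m,u,\iota)$. If anything, you are more explicit than the paper about identifying restriction products in $\mathbb X\downsquigarrow M$ with pullbacks over $M$ (and the terminal object with $(M,1_M)$), which the paper leaves implicit in the statement of the lemma.
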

\begin{proof}
The morphisms $1_M \times m ,  1_M \times u $ and $1_M \times \iota$ are morphisms in the slice category since they preserve the projection to the group object,
$$
(1_M \times m ) \pi_0 = \pi_0~, ~~ (1_M \times u) \pi_0 = \pi_0 ~,\text{and}~~ (1_M \times \iota) \pi_0 = \pi_0 .
$$
The fact that $1_M \times m ,  1_M \times u $ and $1_M \times \iota$ fulfill the identities required in Definition \ref{def:group_object} follows from the fact that $m,u$ and $\iota$ satisfy the same identities.
\end{proof}
In light of Lemma \ref{lem:group_object_in_slice} we will call a $(G \times M, \pi_1)$-torsor in the slice category $\mathbb X \downsquigarrow M$ a $G$-torsor in $\mathbb X \downsquigarrow M$ for brevity.

With these definitions, we can now formulate the main results of this section, that principal bundles are in fact torsors in the slice category $\mathbb X \downsquigarrow M$. However, we need to distinguish between the general case and the totally fibred case. In the general case, we obtain only a partial right torsor, but in the totally fibred case, we will obtain a total right torsor.

In the category of sets, every group action on is transitive on a certain subset, namely the orbit of a point.
Therefore, in our opinion, total torsors are more useful than partial torsors, since they can be interpreted as group actions that are transitive, not just transitive on a subset.

\begin{proposition}\label{prop:partial_right_torsor}
    Let $\mathbb X$ be a Cartesian join restriction category,
    let $G$ be a group object in $\mathbb X$, let $(P,M,q,(\alpha_i)_{i \in I},(\tau_{ij})_{i,j \in I})$ be a principal bundle in $\mathbb X$ such that $P \times_M P$ exists and let $r:P\times G \to P$ be the group action from Theorem \ref{thm:right_action_principal}. Then $(P,q)$ is a partial right $G$-torsor in the restriction slice category $\mathbb X \downsquigarrow M$.
    The partial inverse $d^*$ of 
    $$
    \langle \pi_0 , r\rangle : P \times G \cong  P \times_M(M \times G) \to P \times_M  P
    $$ 
    consists of components 
    $$
    P \times_M P \xrightarrow{\alpha_i \times_M\, \alpha_i} M \times G \times G \xrightarrow{\langle \pi_0, \pi_1, \langle \pi_1 \iota , \pi_2 \rangle \rangle} M \times G \times G \xrightarrow{ \alpha_i^* \times 1_G} P \times G \cong P \times_M(M \times G).
    $$
\end{proposition}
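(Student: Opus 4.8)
's plan to prove Proposition \ref{prop:partial_right_torsor}.

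The plan is to verify that the claimed formula for $d^*$ gives a well-defined morphism $P \times_M P \to P \times G$ and that it is a partial inverse to $d := \langle \pi_0, r\rangle$ in the sense of Definition \ref{def:total_map_partial_inverse}, i.e. $d d^* = \bar d$ and $d^* d = \overline{d^*}$. Since the restriction idempotent in the slice category agrees with that of $\mathbb X$, it suffices to work entirely in the underlying join restriction category $\mathbb X$; one only needs to observe at the start that $d$ is a morphism in $\mathbb X \downsquigarrow M$ because $\langle \pi_0, r\rangle (\pi_0 q) = \pi_0 q$ by the identity $rq = \pi_0 q$ from Theorem \ref{thm:right_action_principal}, and similarly that the components of $d^*$ respect the projection to $M$ via $\alpha_i^* q = \bar\alpha_i^* \pi_0$ and the totally-fibred-independent fact $\alpha_i \pi_0 = \bar\alpha_i q$.

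First I would define the local pieces $d^*_i := (\alpha_i \times_M \alpha_i)\langle \pi_0, \pi_1, \langle \pi_1\iota, \pi_2\rangle\rangle (\alpha_i^* \times 1_G)$ and show the family $(d^*_i)_{i \in I}$ is compatible, so that the join $d^* = \bigvee_i d^*_i$ exists; this compatibility is a restriction-category calculation analogous to the one establishing $r_i \smile r_j$ in the proof of Theorem \ref{thm:right_action_principal}, using the change-of-chart formula $u_{ij} = \alpha_i^*\alpha_j = (\pi_0, (\pi_0\tau_{ji}, \pi_1)m)$ from Definition \ref{def:G-bundle} and the group axioms (particularly $\langle \iota, 1\rangle m = {!u}$ to cancel the $\tau$'s). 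Next I would verify locally that $d\, d^*_i = \overline{\bar\alpha_i \times 1_G}$ and $d^*_i d = \overline{(\alpha_i\times_M\alpha_i)\cdots}$: the first is a direct computation where $r$ restricted to the domain of $\alpha_i$ is $(\alpha_i\times 1_G)(1\times m)\alpha_i^*$, so that $r$ followed by $\alpha_i$ is multiplication on the fibre, and the composite with $\langle \pi_1\iota, \pi_2\rangle$ undoes it via $\langle \iota, 1\rangle m = {!u}$ together with the unit law $\langle !u, 1\rangle m = 1_G$; the second uses $\langle \pi_0, \pi_1, \langle\pi_1\iota,\pi_2\rangle\rangle$ being an involution on $M \times G \times G$ paired with $\langle\pi_1, \pi_2\rangle m$ recovering the second projection. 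Then joining over $i$, using join-axiom (ii) $g(\bigvee s) = \bigvee(gs)$ and the restriction-join compatibility $\overline{\bigvee s} = \bigvee \bar s$, together with $\bigvee_i \bar\alpha_i = 1_E$ and hence $\bigvee_i (\bar\alpha_i \times 1_G) = 1_{P\times G}$, gives $d\, d^* = 1_{P\times G} = \overline{d}$ (note $d$ is total since $\pi_0$ and $r$ are total) and $d^* d = \overline{d^*}$, which is exactly the statement that $d$ is a partial isomorphism with partial inverse $d^*$.

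The main obstacle I anticipate is the compatibility check $d^*_i \smile d^*_j$, i.e. $\overline{d^*_i}\, d^*_j = \overline{d^*_j}\, d^*_i$. This is where one must carefully push the change-of-chart isomorphisms through the action, the inversion $\iota$, and the multiplication, keeping track of which restriction idempotents are being introduced; the bookkeeping is essentially the same flavour as the long calculation in Theorem \ref{thm:right_action_principal}, and I would structure it as a chain of equalities interspersed with inequalities (using $uv \leq u$ and monotonicity of composition), arriving at $\overline{d^*_i}\, d^*_j \leq \overline{d^*_j}\, d^*_i$ and then invoking symmetry in $i,j$. The remaining verifications are routine applications of the group-object axioms from Definition \ref{def:group_object} and the restriction-category laws [R.1]--[R.4].
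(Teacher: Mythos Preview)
Your plan is correct and matches the paper's approach closely. The paper proves exactly the two identities $d\,d^* = \overline{d}$ and $d^*d = \overline{d^*}$ and then establishes compatibility $d_i^* \smile d_j^*$ via the criterion $\overline{d_i^*}\,d_j^* \leq d_i^*$, just as you propose; the change-of-chart formula and the group axioms are used in the same way.

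One small organizational difference is worth noting. Rather than computing $d\,d_i^*$ and $d_i^* d$ exactly as you suggest, the paper expands both $d = \langle \pi_0, r\rangle$ and $d^*$ as joins simultaneously and invokes the inequality $\bigvee_i f_i \,\bigvee_j g_j \geq \bigvee_i f_i g_i$ (their Lemma~\ref{lem:technicalities_on_inverse_joins}(i)) to reduce to the diagonal terms, then uses the observation that $fg \geq \bar f$ forces $fg = \bar f$ (Lemma~\ref{lem:technicalities_on_inverse_joins}(ii)) to turn the resulting inequalities into equalities. Your route---computing the local pieces directly and then joining---also works once you note that the image of $d_i^*$ lies inside $\bar\alpha_i \times 1_G$, so that postcomposing with $d$ only sees $r_i$; this is essentially the same content packaged differently. (A minor slip: $\langle \pi_0,\pi_1,\langle\pi_1\iota,\pi_2\rangle m\rangle$ is not an involution on $M\times G\times G$, but you do not actually need it to be---the relevant identity is $\langle \pi_1, \langle \pi_1\iota,\pi_2\rangle m\rangle m = \pi_2$, which is what the paper uses.)
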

In order to prove this we will use the following technical observations about join restriction categories:
\begin{lemma}\label{lem:technicalities_on_inverse_joins}~
\begin{enumerate}[(i)]
    \item In a join restriction category, let $(f_i)_{i \in I}$ and $(g_i)_{i \in I}$ be families of compatible functions indexed by the same set $I$, then 
    $$
    \bigvee_{i \in I} f_i \bigvee_{j \in I} g_j \geq \bigvee_{i \in I} f_i g_i .
    $$
    \item In a restriction category, let $fg \geq \bar f$. Then $fg= \bar f$.
    \item In a restriction category, let $\bar f g \leq f$. Then $\bar f g = \bar g f$ and therefore $f \smile g$.
\end{enumerate}
\end{lemma}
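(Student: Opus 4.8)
The plan is to prove the three parts separately; each reduces to a short manipulation with the restriction axioms [R.1]--[R.4], the join axioms, and the standard fact that composition is monotone with respect to $\leq$.

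For part (i), I would first note that $f_i \leq \bigvee_{k} f_k$ and $g_i \leq \bigvee_{j} g_j$ for every $i$, so by monotonicity of composition $f_i g_i \leq \bigl(\bigvee_k f_k\bigr)\bigl(\bigvee_j g_j\bigr)$ for every $i$. Hence the family $(f_i g_i)_{i\in I}$ is bounded above by a single morphism, so it is compatible and $\bigvee_i f_i g_i$ exists; and by the supremum property of joins (the third join axiom) it satisfies $\bigvee_i f_i g_i \leq \bigl(\bigvee_k f_k\bigr)\bigl(\bigvee_j g_j\bigr)$, which is exactly the claimed inequality.

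For part (ii), I would unfold $fg \geq \bar f$ using the definition of $\leq$: it says $\overline{\bar f}\,(fg) = \bar f$. Since $\overline{\bar f} = \bar f$ and $\bar f f = f$ by [R.1], the left-hand side is just $fg$, so $fg = \bar f$ immediately. For part (iii), I would similarly unfold $\bar f g \leq f$ as $\overline{\bar f g}\, f = \bar f g$; by [R.3] one has $\overline{\bar f g} = \bar f \bar g$, so the equation reads $\bar f \bar g f = \bar f g$, and using [R.2] to commute the two restriction idempotents and then [R.1] to absorb $\bar f f = f$, the left-hand side equals $\bar g f$. Thus $\bar f g = \bar g f$, which is precisely the condition $f \smile g$.

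None of these steps is a real obstacle; the only thing requiring care is in part (i), where one must verify that the morphisms in sight actually form compatible families so that the relevant joins exist, and invoke monotonicity of composition rather than attempting to distribute composition over the join from the right — right-distributivity is a standard consequence of the axioms but is not literally among the three listed join axioms, so routing through monotonicity keeps the argument self-contained.
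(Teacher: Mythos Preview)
Your proof is correct. Parts (ii) and (iii) are essentially identical to the paper's argument, word for word at the level of the key manipulations. For part (i) the paper takes a slightly different route: it expands $\bigl(\bigvee_i f_i\bigr)\bigl(\bigvee_j g_j\bigr) = \bigvee_{i,j} f_i g_j$ using distributivity of composition over joins on both sides, and then observes that the diagonal $\{(i,i)\}\subset I\times I$ gives a subjoin. Your approach via monotonicity of composition plus the supremum property avoids invoking right-distributivity explicitly (which, as you note, is derivable but not among the listed axioms), so it is marginally more self-contained; the paper's version is perhaps more transparent about \emph{why} the inequality can be strict. Either way the content is the same.
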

\begin{proof}[Proof of Lemma \ref{lem:technicalities_on_inverse_joins}.]~
    \begin{enumerate}[(i)]
        \item This follows since the join is associative and $\bigvee_{i\in I} f_i \bigvee_{j \in I} g_j =\bigvee_{i\in I} \bigvee_{j \in I} f_i g_j$ is a join over $I \times I$ and $\bigvee_{i \in I} f_i g_i$ is the join over the diagonal subset $\{(i,i)| i \in I\} \subset I \times I$. 
        \item This is true since  $fg \geq \bar f$ means by definition of $\leq$ that $\bar f = \bar {\bar f} fg$ and therefore
        $$
        fg = \bar f fg = \bar{ \bar f} fg = \bar f .
        $$
        \item This is true since $\bar f g \leq f$ means by definition of $\leq$ that
        $$
            \bar f g = \overline{\bar f g} f = \bar f \bar g f = \bar g f.
        $$
    \end{enumerate}
\end{proof}

\begin{proof}[Proof of Proposition \ref{prop:partial_right_torsor}.]
In order to show that $(P,q)$ is a partial right $G$-torsor in $\mathbb X \downsquigarrow M$ we need to show that 
$$
\langle \pi_0 , r\rangle : P \times G \to P \times_M P
$$
is a partial isomorphism in $\mathbb X$. We do this by explicitly constructing its inverse as
$$
d^* = \bigvee_{i \in I} (\alpha_i \times_M \alpha_i) \langle \pi_0 , \pi_1 , \langle \pi_1 \iota, \pi_2 \rangle m \rangle (\alpha_i^* \times 1_G)
$$
Before we show that this join is well-defined, we first show that, if it exists, it is a partial inverse for $\langle \pi_0 , r\rangle$.
We expand the definition of $r$ from Theorem \ref{thm:right_action_principal} and rewrite it in a form more useful for this proof.
\begin{align*}
\langle \pi_0 , r\rangle & =  \left\langle 
\pi_0  \bigvee_{i \in I} \alpha_i \alpha_i^* , 
\bigvee_{i \in I} (\alpha_i \times 1_G)(1_M \times m) \alpha_i^*
\right\rangle
\\
& = \bigvee_{i \in I} (\alpha_i \times 1_G) \langle \langle \pi_0 , \pi_1 \rangle  \alpha_i ^* , \langle \pi_0 , \langle \pi_1 , \pi_2 \rangle m  \alpha_i ^* \rangle \rangle 
\end{align*}
We show that $\langle \pi_0 , r \rangle d^* = 1_{P\times G} = \overline{\langle \pi_0 , r \rangle}$. We expand 
\begin{align*}
    &~\langle \pi_0 , r \rangle d^*  =
    \\
    &
    \bigvee_{i \in I} (\alpha_i \times 1_G) \langle \langle \pi_0 , \pi_1 \rangle  \alpha_i ^* , \langle \pi_0 , \langle \pi_1 , \pi_2 \rangle m  \alpha_i ^* \rangle \rangle 
    \bigvee_{j \in I} (\alpha_j \times_M \alpha_j) \langle \pi_0 , \pi_1 , \langle \pi_1 \iota , \pi_2 \rangle m \rangle (\alpha_j^* \times 1_G).
    \\
    \intertext{Applying Lemma \ref{lem:technicalities_on_inverse_joins}(i) to the previous expansion we obtain the inequality}
    & \langle \pi_0 , r \rangle d^* \geq 
    \\& \bigvee_{i \in I} (\alpha_i \times 1_G) \langle \langle \pi_0 , \pi_1 \rangle  \alpha_i ^* , \langle \pi_0 , \langle \pi_1 , \pi_2 \rangle m  \alpha_i ^* \rangle \rangle 
    (\alpha_i \times_M \alpha_i) \langle \pi_0 , \pi_1 , \langle \pi_1 \iota, \pi_2 \rangle m \rangle (\alpha_i^* \times 1_G)
    \\ =& \bigvee_{i \in I} (\alpha_i \times 1_G) \Bigg\langle \langle \pi_0 , \pi_1 \rangle \alpha_i^* \alpha_i , \Big\langle \langle \pi_0 , \pi_1 \rangle \alpha_i^* \alpha_i \pi_1 \iota , \langle \pi_0 , \langle \pi_1, \pi_2\rangle m \rangle \alpha_i^* \alpha_i \pi_1 \Big\rangle m \Bigg\rangle (\alpha_i^*\times 1_G).
    \\ \intertext{Since $\alpha_i^* \alpha_i =\bar {\alpha_i^*} = e_i \times 1_G$, this equals}
    &
    \bigvee_{i \in I} (\alpha_i \times 1_G)
    \Bigg\langle \pi_0e_i , \pi_1 , \Big\langle \langle \pi_0 e_i, \pi_1 \rangle \pi_1 \iota , \langle \pi_0 e_i , \langle \pi_1, \pi_2\rangle m \rangle \pi_1 \Big\rangle m \Bigg\rangle
    (\alpha_i^* \times 1_G)
    \\ =&
    \bigvee_{i \in I} (\alpha_i \times 1_G)
    \overline{\pi_0 e_i} \Bigg\langle
    \pi_0 , \pi_1, \Big\langle \pi_1 \iota, \langle \pi_1, \pi_2 \rangle m \Big\rangle m \Bigg\rangle 
    (\alpha_i^* \times 1_G).
    \\ \intertext{By associativity of $m$ this equals}
    &
    \bigvee_{i \in I} (\alpha_i \times 1_G)
    \overline{\pi_0 e_i} \langle \pi_0 , \pi_1, \pi_2\rangle
    (\alpha_i^* \times 1_G)
    \\  \intertext{and using $\bar \alpha_i^* = e_i \times 1_F = \overline{\pi_0 e_i}$ we obtain}
    &
    \bigvee_{i \in I} (\alpha_i \times 1_G)
    (\bar{\alpha_i^*} \times 1_G)
    (\alpha_i^* \times 1_G)
    \\=& \bigvee_{i\in I} (\bar \alpha_i \times 1_G)
    \\=& \bigvee_{i\in I}\overline {\pi_0 \alpha_i}
    =1_{P \times G}.
    \end{align*}
    This equations shows that $\langle \pi_0 , r \rangle d^* \geq 1_{P\times G} = \overline{\langle \pi_0 , r \rangle}$ which implies by Lemma \ref{lem:technicalities_on_inverse_joins}(i) that $\langle \pi_0 , r \rangle d^* = 1_{P\times G} = \overline{\langle \pi_0 , r \rangle}$.
    Next we will show that $d^* \langle \pi_0 , r \rangle = \bar{d^*}$.
    First we simplify the left hand side into    
    \begin{align*}
        &~d^* \langle \pi_0 , r \rangle  =
        \\
        &
    \bigvee_{j \in I} (\alpha_j \times_M \alpha_j) \langle \pi_0 , \pi_1 , \langle \pi_1 \iota , \pi_2 \rangle m \rangle (\alpha_j^* \times 1_G)
    \bigvee_{i \in I} (\alpha_i \times 1_G) \langle \langle \pi_0 , \pi_1 \rangle  \alpha_i ^* , \langle \pi_0 , \langle \pi_1 , \pi_2 \rangle m  \alpha_i ^* \rangle \rangle 
    \\
    \geq & \bigvee_{i \in I} (\alpha_i \times_M \alpha_i) \Bigg\langle \langle \pi_0, \pi_1\rangle \alpha_i^* , \Big\langle \pi_0, \big\langle \pi_1 , \langle \pi_1 \iota,\pi_2\rangle m \big\rangle m \Big \rangle \alpha_i^* \Bigg\rangle
    \\
    = & \bigvee_{i \in I} (\alpha_i \times_M \alpha_i) \Bigg\langle \langle \pi_0, \pi_1\rangle \alpha_i^* , \langle \pi_0, \pi_2 \rangle \alpha_i^* \Bigg\rangle
    \\
    = & \bigvee_{i \in I} \langle \pi_0 \alpha_i \alpha_i^* , \pi_1 \alpha_i \alpha_i^*\rangle
    \\
    = & \bigvee_{i \in I}\overline{\pi_0 \alpha_i} \, \overline{\pi_1 \alpha_i}.
    \\
    \intertext{The right hand side is simplified to}
        \bar {d^*} &= \overline{\bigvee_{i \in I} (\alpha_i \times_M \alpha_i) \langle \pi_0 , \pi_1 , \langle \pi_1 \iota, \pi_2 \rangle m \rangle (\alpha_i^* \times 1_G)}
        \\
        &= \bigvee_{i \in I} \overline{\big\langle \pi_0 \alpha_i \alpha_i^* , (\alpha_i \times_M \alpha_i) \langle \pi_1 \iota, \pi_2 \rangle m \big\rangle}
        \\
        &= \bigvee_{i \in I} \overline{\pi_0 \alpha_i} \, \overline{\langle \pi_0 \alpha_i , \pi_1 \alpha_i \rangle}
        \\
        &= \bigvee_{i \in I} \overline{\pi_0 \alpha_i} \, \overline{\pi_1 \alpha_i}.
    \end{align*}
    Therefore we showed that, assuming $d^*$ is defined, it is a partial inverse to $\langle \pi_0 , r \rangle$. It remains to show that the join
    $$
    \bigvee_{i \in I} (\alpha_i \times_M \alpha_i) \langle \pi_0 , \pi_1 , \langle \pi_1 \iota , \pi_2 \rangle m \rangle (\alpha_i^* \times 1_G)
    $$
    is defined, i.e. that for
    $$
    d_i^* = (\alpha_i \times_M \alpha_i) \langle \pi_0 , \pi_1 , \langle \pi_1 \iota , \pi_2 \rangle m \rangle (\alpha_i^* \times 1_G)
    $$
    the compatibility 
    $
    d_i^*\smile d_j^*
    $
    holds for all $i,j \in I$. By Lemma \ref{lem:technicalities_on_inverse_joins}(iii) this amounts to showing the inequality
    $$
    \bar d_i^* d_j^* \leq d_i^*.
    $$
    For this we use that $\bar {d_i^*} = (\bar \alpha_i \times_M \bar \alpha_i)$. We see that
    \begin{align*}
        \bar d_i^* d_j^* =& (\bar \alpha_i \times_M \bar \alpha_i)  (\alpha_j \times_M \alpha_j) \langle \pi_0 , \pi_1 , \langle \pi_1 \iota, \pi_2 \rangle m \rangle (\alpha_j^* \times 1_G)
        \\ 
        =&\Big\langle \pi_0 \bar \alpha_i \bar \alpha_j , (\alpha_i \alpha_i^* \alpha_j \times_M \alpha_i \alpha_i^* \alpha_j) \langle \pi_1 \iota, \pi_2\rangle m \Big\rangle
        \\
    \intertext{Since $(P,M,q,(\alpha_i)_{i \in I})$ is a principal bundle, we know that $\alpha_i^* \alpha_j \pi_1 = \langle \pi_0 \tau_{ij} , \pi_1 \rangle m$. Therefore the previous expression equals}
        =& \Bigg\langle \pi_0 \bar \alpha_i \bar \alpha_j ,(\alpha_i \times_M \alpha_i) \Big\langle \langle \pi_0 \tau_{ij} , \pi_1 \rangle m \iota , \langle \pi_0 \tau_{ij} , \pi_2 \rangle m \Big\rangle \Bigg\rangle 
    \\
    \intertext{where the indices of the projections stem from the identification of $(M \times G)\times_M (M \times G))$ with $M \times G \times G$. The inverse of a product $\langle f, g\rangle m \iota$ is the same as the reversed product of the inverses $\langle g \iota, f \iota \rangle m$ and therefore this equals}
        & \Bigg\langle \pi_0 \bar \alpha_i \bar \alpha_j ,(\alpha_i \times_M \alpha_i) \Big\langle \langle  \pi_1  \iota  , \pi_0 \tau_{ij} \iota\rangle m,  \langle \pi_0 \tau_{ij} , \pi_2 \rangle m \Big\rangle \Bigg\rangle 
    \\
    =& \Big\langle \pi_0 \bar \alpha_i \bar \alpha_j ,(\alpha_i \times_M \alpha_i)\overline{\pi_0 \tau_{ij}} \langle \pi_1 \iota , \pi_2 \rangle m \Big \rangle
    \\
    \leq & \Big\langle \pi_0 \alpha_i \alpha_i^* , (\alpha_i \times_M \alpha_i)  \langle \pi_1 \iota , \pi_2 \rangle m \Big \rangle
    \\
     =&  (\alpha_i \times_M \alpha_i) \langle \pi_0 , \pi_1 , \langle \pi_1 \iota , \pi_2 \rangle m \rangle (\alpha_i^* \times 1_G) = d_i.
    \end{align*}
    The preceding calculation shows that $\bar d_i^* d_j^* \leq d_i^*$. Now it follows from Lemma \ref{lem:technicalities_on_inverse_joins}(iii) that $d_i^* \smile d_j^*$ which concludes the proof of Theorem \ref{prop:partial_right_torsor}.
\end{proof}
While Proposition \ref{prop:partial_right_torsor} gives a partial isomorphism, this isomorphism is, in general, not total. In order to obtain a total right $G$-torsor, as in Definition \ref{def:partial_and_total_torsor}(iii), we need to require $(P,M,q,(\alpha_i)_{i \in I})$ to be totally fibred, as in Definition \ref{def:fibre_bundle}(iii).
\begin{theorem}\label{thm:total_right_torsor}
    Let $(P,M,q,(\alpha_i)_{i \in I})$ be a totally fibred principal $G$-bundle in a join-restriction category $\mathbb X$. Then $P$ is a total right $G$-torsor in the slice category $\mathbb X \downsquigarrow M$ via the isomorphism in Proposition \ref{prop:partial_right_torsor}.
\end{theorem}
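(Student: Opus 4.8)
The plan is to build directly on Proposition~\ref{prop:partial_right_torsor}. That proposition already exhibits $\langle \pi_0, r\rangle : P \times G \cong P \times_M (M\times G) \to P \times_M P$ as a partial isomorphism with an explicit partial inverse $d^*$, and the pairing $\langle \pi_0, r\rangle$ is total because both $\pi_0$ and $r$ (the latter by Theorem~\ref{thm:right_action_principal}) are total. By the remark immediately following Definition~\ref{def:partial_and_total_torsor}, it therefore suffices to prove that in the totally fibred case the partial inverse $d^*$ is total, i.e.\ $\overline{d^*} = 1_{P\times_M P}$; once that is known, $d^*$ becomes a genuine two-sided inverse of $\langle\pi_0,r\rangle$, so $\langle\pi_0,r\rangle$ is a total isomorphism and $P$ is a total right $G$-torsor in $\mathbb X \downsquigarrow M$ via exactly the map of Proposition~\ref{prop:partial_right_torsor}.

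To compute $\overline{d^*}$ I would reuse the computation at the end of the proof of Proposition~\ref{prop:partial_right_torsor}, which gives $\overline{d^*} = \bigvee_{i \in I} \overline{\pi_0\alpha_i}\,\overline{\pi_1\alpha_i}$, where $\pi_0,\pi_1 : P\times_M P \to P$ now denote the two legs of the restriction pullback. Total fibredness enters here: by Definition~\ref{def:fibre_bundle}(iii) we have $\bar\alpha_i = \overline{q e_i}$, so using $\overline{f\bar g}=\overline{fg}$ twice we obtain $\overline{\pi_0\alpha_i} = \overline{\pi_0\bar\alpha_i} = \overline{\pi_0 q e_i}$ and likewise $\overline{\pi_1\alpha_i} = \overline{\pi_1 q e_i}$. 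Since $P\times_M P$ is the restriction pullback of $q$ along $q$, its legs satisfy $\pi_0 q = \pi_1 q$, so these two restriction idempotents coincide; being equal idempotents, their product equals each of them, and hence $\overline{\pi_0\alpha_i}\,\overline{\pi_1\alpha_i} = \overline{\pi_0\bar\alpha_i}$ for every $i$.

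It then remains to observe that $\bigvee_{i\in I}\overline{\pi_0\bar\alpha_i} = \overline{\pi_0\bigvee_{i\in I}\bar\alpha_i}$ by the join axioms (first $\pi_0\bigvee_i\bar\alpha_i = \bigvee_i\pi_0\bar\alpha_i$, then $\overline{\bigvee_i\pi_0\bar\alpha_i}=\bigvee_i\overline{\pi_0\bar\alpha_i}$), that $\bigvee_{i\in I}\bar\alpha_i = 1_P$ by the fibre bundle axioms of Definition~\ref{def:fibre_bundle}(i), and that $\pi_0 : P\times_M P\to P$ is total because the legs of a restriction limit are total (Definition~\ref{def:restriction_limit}(ii)). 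Combining these, $\overline{d^*} = \overline{\pi_0\cdot 1_P} = \overline{\pi_0} = 1_{P\times_M P}$, so $d^*$ is total, which finishes the argument.

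I do not expect any serious obstacle: the only real content is the interplay between the totally fibred equation $\bar\alpha_i = \overline{qe_i}$ and the pullback identity $\pi_0 q = \pi_1 q$, which is precisely what collapses the ``doubled'' idempotent $\overline{\pi_0\alpha_i}\,\overline{\pi_1\alpha_i}$ back to a single $\overline{\pi_0\bar\alpha_i}$; after that, totality of the pullback leg $\pi_0$ does the rest. The only care needed is to check that the families of restriction idempotents appearing are compatible so that the identities $g\bigvee s = \bigvee gs$ and $\overline{\bigvee s}=\bigvee\bar s$ apply, but this is automatic since any family of restriction idempotents is pairwise compatible and the relevant join $\bigvee_i\bar\alpha_i$ is assumed to exist.
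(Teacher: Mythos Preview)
Your proposal is correct and follows essentially the same approach as the paper's own proof: both reduce to showing $\overline{d^*}=1_{P\times_M P}$, invoke the computation $\overline{d^*}=\bigvee_i \overline{\pi_0\alpha_i}\,\overline{\pi_1\alpha_i}$ from Proposition~\ref{prop:partial_right_torsor}, use total fibredness together with $\pi_0 q=\pi_1 q$ to collapse the doubled idempotent to $\overline{\pi_0\bar\alpha_i}$, and finish via $\bigvee_i\bar\alpha_i=1_P$ and totality of $\pi_0$. Your write-up is slightly more explicit about the restriction-category identities being used, but the argument is the same.
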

\begin{proof}
    Since the map $\langle\pi_0, r\rangle$ is total, we only need to show that 
    $$
    d^* = \bigvee_{i \in I} (\alpha_i \times_M \alpha_i) \langle \pi_0 , \pi_1 , \langle \pi_1 \iota, \pi_2 \rangle m \rangle (\alpha_i^* \times 1_G)
    $$
    is total too.
    As we calculated in the proof of Proposition \ref{prop:partial_right_torsor}.
    $$
    \bar d^* = \bigvee_{i \in I} \bar d_i^* = \bigvee_{i \in I} (\bar \alpha_i \times_M \bar \alpha_i).
    $$
    Since we assumed that $(P,M,q,(\alpha_i)_{i \in I})$ is totally fibred, $\bar \alpha_i = \overline {q e_i}$ implies
    $$
        \bar d^* = \bigvee_{i \in I} (\overline {q e_i} \times _M \overline {q e_i}).
    $$
    Since $\pi_0 q = \pi_1 q : P \times_M P \to M$, 
    $$
        \bar d^* = \bigvee_{i \in I} \overline {\pi_0 q e_i} =  \bigvee_{i \in I} \overline {\pi_0 \bar \alpha_i} =  \overline { \bigvee_{i \in I}\pi_0 \bar \alpha_i} = \overline { \pi_0 \bigvee_{i \in I} \bar \alpha_i} = \bar \pi_0 = 1_{P \times_M P}.
    $$
    This shows that $d^*$ is a partial inverse to $\langle \pi_0 ,r\rangle$ that is total and therefore a total inverse.
\end{proof}

\subsection{The vertical bundle of a principal bundle}\label{sec:vertical_bundle}
Given a group object $G$ and a principal $G$-bundle $P\xrightarrow{q} M$ in a Cartesian tangent join restriction category $\mathbb X$, the \textbf{vertical bundle} $T_0P$ is defined to be the pullback
\[\begin{tikzcd}
	{T_0(P)} & {T(P)} \\
	M & {T(M).}
	\arrow["{Tq^*(0)}", from=1-1, to=1-2]
	\arrow["pq"', from=1-1, to=2-1]
	\arrow["\lrcorner"{anchor=center, pos=0.125}, draw=none, from=1-1, to=2-2]
	\arrow["{T(q)}", from=1-2, to=2-2]
	\arrow["0"', from=2-1, to=2-2]
\end{tikzcd}\]
It is known, for example from Section 21.18 of \cite{michor2008topics}, that in classical
differential geometry the vertical bundle is isomorphic to $P \times T(G)_u$, where $T(G)_u$ is the tangent space of $G$ at the unit given by the (restriction) pullback
\[\begin{tikzcd}
	{T(G)_u} & {T(G)} \\
	1 & G
	\arrow["p_u^*", from=1-1, to=1-2]
	\arrow["!", from=1-1, to=2-1]
	\arrow["\lrcorner"{anchor=center, pos=0.125}, draw=none, from=1-1, to=2-2]
	\arrow["p", from=1-2, to=2-2]
	\arrow["u"', from=2-1, to=2-2]
\end{tikzcd}\]
as defined in Section \ref{sec:trivializing_the_tangent_bundle}. We now prove the same in the more general setting of a tangent restriction category.

\begin{theorem}\label{thm:vertical_bundle}
Given a group object $G$ and a principal $G$-bundle $P\xrightarrow{q} M$ in a Cartesian tangent join restriction category, if $T(G)_u$ exists, the vertical bundle exists and is isomorphic to $P \times T(G)_u$.
\end{theorem}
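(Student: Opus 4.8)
The plan is to exhibit $P\times T(G)_u$, equipped with an explicit total map to $T(P)$, as the restriction pullback defining $T_0P$, and to discharge the verification via Lemma \ref{lem:restriction_pullback_of_total}, which replaces the lax universal property of restriction pullbacks by a strict (equality-based) one for cospans of total morphisms. First I would observe that $q$, $0_M$ are total and $T$ is restriction-preserving, so $T(q)\colon T(P)\to T(M)$ is total and the cospan $T(P)\xrightarrow{T(q)}T(M)\xleftarrow{0_M}M$ consists of total maps; hence Lemma \ref{lem:restriction_pullback_of_total} is applicable.

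Next I would build the candidate cone. Using the total right action $r\colon P\times G\to P$ of Theorem \ref{thm:right_action_principal} and the canonical isomorphism $T(P)\times T(G)\cong T(P\times G)$, set
$$\psi=\langle \pi_0 0_P,\ \pi_1 p^*_u\rangle\, T(r)\colon P\times T(G)_u\longrightarrow T(P),$$
which is total, together with $\pi_0 q\colon P\times T(G)_u\to M$. A short computation gives $\psi\,T(q)=\pi_0 q\, 0_M$, using $rq=\pi_0 q$ from Theorem \ref{thm:right_action_principal}, the Cartesian tangent axioms (so that $T(\pi_0)$ is the projection), and naturality of $0$; one also checks $\psi\, p_P=\pi_0$, using $p^*_u p=!u$ and $(1_P\times u)r=1_P$. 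So $(P\times T(G)_u,\psi,\pi_0 q)$ is a cone over the cospan, and by Lemma \ref{lem:restriction_pullback_of_total} it remains to verify the strict universal property: for each $Z$ with $r_{TP}\colon Z\to T(P)$, $r_M\colon Z\to M$ satisfying $r_{TP}T(q)=r_M 0_M$, there is a unique $\phi\colon Z\to P\times T(G)_u$ with $\phi\psi=r_{TP}$ and $\phi\pi_0 q=r_M$.

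For existence I would take $\phi=\langle\phi_P,\phi_V\rangle$ with $\phi_P:=r_{TP}\, p_P$, so that $\phi_P q=r_{TP}T(q)p_M=r_M$ by naturality of $p$ and $0p=1$. The Lie-algebra component is assembled from charts: $P$ is the gluing of the bundle atlas $(M\times G,u_{ij})$, $u_{ij}=\langle\pi_0,\langle\pi_0\tau_{ji},\pi_1\rangle m\rangle$ (Definition \ref{def:G-bundle}, Lemma \ref{lemma:fibrebundleatlas}), with partial isomorphisms $\alpha_i^*\colon M\times G\to P$; on chart $i$ put $\phi_V^{(i)}:=r_{TP}\,T(\alpha_i)$ followed by projecting the $T(G)$-coordinate and applying $\phi\pi_1=\langle !,\langle p\iota 0,1\rangle T(m)\rangle\colon T(G)\to T(G)_u$, the Lie-algebra component of the trivialization isomorphism of Theorem \ref{thm:group_tangent}. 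The heart of the argument is that these agree on overlaps: since $\psi$ is locally the composite $M\times G\times T(G)_u\to M\times T(G)\xrightarrow{0_M\times 1}T(M)\times T(G)$ followed by $T(\alpha_i^*)$, the change of chart acts on the $T(G)$-coordinate by left translation by $0_G\,\tau_{ij}$ (this uses $0_M T(\tau_{ij})=\tau_{ij}0_G$ and that $0_G\colon G\to T(G)$ is a group homomorphism), and a calculation with the cocycle identity for the $\tau_{ij}$ together with Theorem \ref{thm:group_tangent} shows $\phi_V^{(i)}$ and $\phi_V^{(j)}$ are actually equal wherever both are defined. Hence $\phi_V:=\bigvee_i\phi_V^{(i)}$ exists, has restriction equal to $\overline{r_{TP}}=\overline{\phi_P}$ (because $\bigvee_i\overline{T\alpha_i}=T(\bigvee_i\bar\alpha_i)=1$ by Lemma \ref{lem:join_and_tangent}), and the resulting $\phi$ satisfies $\phi\pi_0 q=r_M$ and, chartwise hence globally, $\phi\psi=r_{TP}$. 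Uniqueness then follows since $\psi p_P=\pi_0$ forces the $P$-component, and postcomposing $\phi\psi=r_{TP}$ with $T(\alpha_i)$ recovers the $T(G)_u$-component chartwise.

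The main obstacle is the bookkeeping with partiality in the existence step: $r_{TP}$ need not be total, so the $T(G)_u$-component must genuinely be glued from the charts, and one must track restriction idempotents carefully to see both that the pieces are compatible and that the assembled $\phi$ has the correct domain. Conceptually, the content is that pulling $T(P)$ back along the zero section turns the transition maps $T(u_{ij})$ into $u_{ij}\times 1_{T(G)_u}$ — exactly the transition maps presenting $P\times T(G)_u$ — which is why the two objects coincide; an alternative route makes this precise directly, realizing $T_0P$ and $P\times T(G)_u$ as gluings of the same atlas, but this needs that base change along a total map and the functor $-\times T(G)_u$ preserve gluings, which amounts to the same computations.
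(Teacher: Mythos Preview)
Your proposal is correct and follows essentially the same route as the paper: the same candidate cone $(P\times T(G)_u,\ (0\times p_u^*)T(r),\ \pi_0 q)$, the same reduction to a strict universal property via Lemma \ref{lem:restriction_pullback_of_total}, and the same construction of the mediating map by gluing chartwise pieces built from $T(\alpha_i)$ composed with the Lie-algebra component of the trivialization from Theorem \ref{thm:group_tangent}. The paper packages the induced map as $\phi=\bigvee_i f\,\xi_i$ with $\xi_i=\langle p,\langle !,T(\alpha_i)\pi_1\langle p\iota 0,1\rangle T(m)\rangle\rangle$, whereas you split it as $\langle\phi_P,\bigvee_i\phi_V^{(i)}\rangle$; since the first component $fp$ is independent of $i$, these agree. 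One point to make more explicit in your compatibility step: the cancellation that makes $\phi_V^{(i)}$ and $\phi_V^{(j)}$ agree on overlaps genuinely requires the cone hypothesis $r_{TP}T(q)=r_M 0_M$ (in the paper's notation, $fT(q)p0=fT(q)$), not merely naturality of $0$ and the cocycle identity—this is what forces the $T(M)$-component of $r_{TP}T(\alpha_i)$ to lie in the zero section so that $T(\tau_{ij})$ acts as $\tau_{ij}0_G$ rather than a general tangent.
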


In local charts (i.e. when considering partial isomorphisms) $P\cong M \times G$ and thus $T(P) \cong T(M) \times T(G) \cong T(M) \times G \times T(G)_u$ due to Theorem \ref{thm:group_tangent}.  Thus in local charts the diagram is
\[\begin{tikzcd}
	{M \times G \times T(G)_u} && {T(M) \times G \times T(G)_u} \\
	M && {T(M)}
	\arrow["{0 \times 1_G \times 1_{T(G)_u}}", from=1-1, to=1-3]
	\arrow["{\pi_0}"', from=1-1, to=2-1]
	\arrow["{\pi_0}", from=1-3, to=2-3]
	\arrow["0"', from=2-1, to=2-3]
\end{tikzcd}\]
which is a pullback. This shows that the statement of Theorem \ref{thm:vertical_bundle} makes sense in local coordinates.  The result of Theorem \ref{thm:vertical_bundle} is stronger though. It states that there is a total isomorphism going globally between $T_0(P)$ and $P \times T(G)_u$. 

\begin{proof}
We will show that the following is a pullback square:
\[\begin{tikzcd}
	{P \times T(G)_u} && {T(P)} \\
	M && {T(M)}
	\arrow["{(0 \times p^*_u)T(r)}", from=1-1, to=1-3]
	\arrow["{\pi_0q}"', from=1-1, to=2-1]
	\arrow["{T(q)}", from=1-3, to=2-3]
	\arrow["0_M"', from=2-1, to=2-3]
\end{tikzcd}\]
The diagram commutes because the right action $r$ preserves the basepoint (i.e. $rq=\pi_0 q$) and therefore
$$
(0_P \times p^*(u)) T(r) T(q) = (0_P \times p^*(u)) \pi_0 T(q) = \pi_0 0_P T(q) = \pi_0 q 0_M.
$$
Since all morphisms in the square are total we can use Lemma \ref{lem:restriction_pullback_of_total} to verify that the diagram is a pullback. We assume there is an object $Z$ with (not necessarily total) morphisms $f: Z \to T(P)$ and $g: Z \to M$ such that $g0 = f T(q)$. We need to show that there is a unique morphism $\phi: Z \to P \times T(G)_u$ such that the diagram 
\begin{equation}
\begin{tikzcd}
	Z \\
	& {P \times T(G)_u} && {T(P)} \\
	& M && {T(M)}
	\arrow["\phi"{description}, dashed, from=1-1, to=2-2]
	\arrow["f", curve={height=-6pt}, from=1-1, to=2-4]
	\arrow["g"', curve={height=6pt}, from=1-1, to=3-2]
	\arrow["{(0 \times p^*_u)T(r)}", from=2-2, to=2-4]
	\arrow["{\pi_0q}"', from=2-2, to=3-2]
	\arrow["{T(q)}", from=2-4, to=3-4]
	\arrow["0"', from=3-2, to=3-4]
\end{tikzcd} \label{diag:limit_for_vertical_bundle}
\end{equation}
commutes.
We construct the morphism $\phi$ using maps 
$$
\xi_i = \langle p ,\langle !,  T(\alpha_i) \pi_1 \langle p \iota 0,1\rangle T(m) \rangle \rangle : T(P) \to P \times T(G)_u.
$$
Define the morphism $\phi$ to be 
$$
\phi = \bigvee_{i \in I} f \xi_i.
$$
It may be surprising that we do not need to use $g$. However, since $f T(q) p = g$, $f$ contains all the information we need.
The remainder of this proof consists of the following three steps.
\begin{enumerate}[(1)]
    \item In order for $\phi$ to be well-defined we need to check that the morphisms $f \xi_i$ are compatible with each other, i.e. that $\overline{f \xi_i} f \xi_j = \overline{f \xi_j} f \xi_i$. 
    \item The morphism $\phi$ must make the outer triangles of Diagram \ref{diag:limit_for_vertical_bundle} commute, i.e. $\phi (0 \times p_u^*) T(r) = f$ and that $\phi \pi_0q = g$.
    \item The morphism $\phi$ must be the unique such morphism.
\end{enumerate}
For step (1) we will show that $\overline{f \xi_i} f \xi_j \leq f \xi_i$ which implies compatiblity by Lemma \ref{lem:technicalities_on_inverse_joins}. We simplify and expand $\overline{f \xi_i} f \xi_j$. By R.4 of Definition \ref{def:restriction_categories},
\begin{align*}
\overline{f \xi_i} f \xi_j =& f \bar \xi_i \xi_j
\\
=&f \overline{T(\alpha_i)} \langle p , \langle ! ,  T(\alpha_j) \pi_1 \langle p \iota 0 , 1 \rangle T(m)\rangle \rangle .
\\
\intertext{Here we used $\bar \xi_i = \overline{T(\alpha_i)}$ and the definition of $\xi_i$ in order to simplify and expand the expression. In order to use the interaction between $\alpha_i$ and $\alpha_j$ we now compose}
&\langle fp , \langle !, f \overline{T(\alpha_i)} T(\alpha_j) \pi_1 \langle p \iota 0 , 1 \rangle T(m)\rangle \rangle
\\
=& \langle fp , \langle !, f T(\alpha_i) T(\alpha_i^*) T(\alpha_j) \pi_1 \langle p \iota 0 , 1 \rangle T(m)\rangle \rangle.
\\
\intertext{Since $P$ is a principal bundle we can rewrite $\overline {T(\alpha_i)} T(\alpha_j)$ into an expression of $T(\alpha_i)$ using Definition \ref{def:G-bundle}. Since $P$ is a principal bundle, this equals}
& \langle fp , \langle !, f T(\alpha_i) \langle \pi_0 T(\tau_{ij}) , \pi_1 \rangle T(m) \pi_1 \langle p \iota 0 , 1 \rangle T(m)\rangle \rangle
\\
=& \langle fp , \langle !, \langle f T(\alpha_i) \pi_1 p \iota 0 , f T(\alpha_i) \pi_0 T(\tau_{ij}) p \iota 0 , f T(\alpha_i) \pi_0 T(\tau_{ij}) , f T(\alpha_i) \pi_1 \rangle  T(m_4) \rangle \rangle.
\\
\intertext{Now we have an expression containing only $T(\alpha_i)$ and we will continue by simplifying this expression. Since $\alpha_i \pi_0 = q$ the previous expression produces the inequality}
\overline{f \xi_i} f \xi_j \leq & \langle fp , \langle !, \langle f T(\alpha_i) \pi_1 p \iota 0 , f T(q) p 0 T(\tau_{ij}) T(\iota)  , f T(q) T(\tau_{ij}) , f T(\alpha_i) \pi_1 \rangle  T(m_4) \rangle \rangle.
\\
\intertext{Since $f T(q) p 0 = g 0 = f T(q)$ the previous expression produces the inequality}
\overline{f \xi_i} f \xi_j
\leq & \langle fp , \langle !, \langle f T(\alpha_i) \pi_1 p \iota 0 , f T(\alpha_i) \pi_1 \rangle  T(m_4) \rangle \rangle 
\\
= & f \langle p , \langle ! , T(\alpha_i) \pi_1 \langle p \iota 0 , 1 \rangle T(m) \rangle \rangle = f \xi_i.
\end{align*}
Using Lemma \ref{lem:technicalities_on_inverse_joins}(iii) this implies that the morphisms $(f\xi_i)_{i \in I}$ are compatible with each other.

For step (2) we directly show the first equation $\phi (0 \times p_u^*) T(r) = f$ by calulating
\begin{align*}
&\phi (0 \times p_u^*)T(r)
\\=& \bigvee_{i \in I} f \xi_i (0 \times p_u^*) T(r)
\\
=&  \bigvee_{i \in I} f \langle p , \langle !, T(\alpha_i) \pi_1 \langle p \iota 0 , 1_{T(G)} \rangle T(m) \rangle \rangle (0 \times p_u^*) T(r).
\intertext{For every morphism $h$ into $T(G)$, satisfying $hp=!u$, the universal property of the pullback guarantees that the equation $\langle !,h\rangle p_u^* = h$ holds.  Using $h= T(\alpha_i) \pi_1 \langle p \iota 0 , 1_{T(G)}\rangle T(m) ,$ the previous line equals 
}
& \bigvee_{i \in I} f \langle p 0 , T(\alpha_i) \pi_1 \langle p \iota 0 , 1_{T(G)} \rangle T(m)
\rangle T(r).
\intertext{Now, expanding the definition of $r$ from Theorem \ref{thm:right_action_principal} the previous line equals}
& \bigvee_{i \in I}\bigvee_{j \in I} f \langle p 0 , T(\alpha_i) \pi_1 \langle p \iota 0 , 1_{T(G)} \rangle T(m)
\rangle (T(\alpha_j) \times 1_{T(G)})(1_{T(M)} \times T(m)) T(\alpha_j^*)
\\
\geq& \bigvee_{i \in I} f \langle p 0 , T(\alpha_i) \pi_1 \langle p \iota 0 , 1_{T(G)} \rangle T(m)
\rangle (T(\alpha_i) \times 1_{T(G)})(1_{T(M)} \times T(m)) T(\alpha_i^*)
\\=& \bigvee_{i \in I} f \langle T(\alpha_i) p 0 , T(\alpha_i) \pi_1 \langle p \iota 0 , 1_{T(G)} \rangle T(m) \rangle (1_{T(M)} \times T(m)) T(\alpha_i^*)
\\
=& \bigvee_{i \in I} f \langle T(\alpha_i) \pi_0 p 0, T(\alpha_i) \pi_1 p 0 , \langle T (\alpha_i) \pi_1 p \iota 0 , T(\alpha_i) \pi_1 \rangle T(m) \rangle (1 \times T(m)) T(\alpha_i^*) .
\intertext{Now due to the associativity, the inverse and the neutral property of the multiplication 
$ T(\alpha_i) \pi_1 p 0$ and $T (\alpha_i) \pi_1 p \iota 0$
cancel each other out. In addition $\alpha_i \pi_0 = \bar \alpha_i q$ and thus the previous expression simplifies to}
& \bigvee_{i \in I} f \langle T(q) p 0, T(\alpha_i) \pi_1  \rangle T(\alpha_i^*)
\\
=& \bigvee_{i \in I} \langle g 0 p 0, f T(\alpha_i) \pi_1  \rangle T(\alpha_i^*)
\\
=& \bigvee_{i \in I} \langle g 0, f T(\alpha_i) \pi_1  \rangle T(\alpha_i^*)
\\
=& f \bigvee_{i \in I} \langle T(\alpha_i) \pi_0, T(\alpha_i) \pi_1  \rangle T(\alpha_i^*) = f \bigvee_{i \in I} T(\alpha_i) T(\alpha_i^*) = f
\end{align*}
where the last equality is due to Lemma \ref{lem:join_and_tangent}.
Since $\bar \phi = \bigvee_{i \in I} \overline{f \xi_i} \leq \bar f$, we also have that $\overline{\phi (0 \times p_u^*)T(r)} \leq \overline f$. Together 
$$
\phi (0 \times p_u^*)T(r) \geq f ~ \text{ and } ~ \overline{\phi (0 \times p_u^*)T(r)} \leq \overline f
$$
imply that $\phi (0 \times p_u^*)T(r) = f$ as required.
Now we use $g 0 = f T(q)$ and $\pi_0 q 0 = (0 \times p_u^*) T(r) T(q)$ to show the second equation 
$$
\phi \pi_0 q = \phi \pi_0 q 0 p = \phi (0 \times p_u^*) T(r) T(q) p = fT(q) p= g0p = g .
$$
Lastly, step (3), the uniqueness of $\phi$, needs to be shown. Suppose there is another morphism $\phi': Z \to P \times T(G)_u$ making these diagrams commute, then $\phi' (0 \times p_u^*)T(r) \xi_i \smile \phi' (0 \times p_u^*)T(r) \xi_j$ for all $i,j \in I$ because $\phi' (0 \times p_u^*)T(r) \xi_i = f \xi_i$ and $f \xi_i \smile f \xi_j$. 
Then we have  
\begin{equation}\label{eqn:phi_phi'}
\phi' = \bigvee_{i \in I} \phi' (0 \times p_u^*)T(r) \xi_i = \bigvee_{i \in I} f \xi_i = \phi .
\end{equation}
The first equality in \ref{eqn:phi_phi'} holds because
\begin{align*}
&\bigvee_{i \in I} \phi' (0 \times p_u^*) T(r) \xi_i
\\
=& \bigvee_{i \in I} \phi' (0 \times p_u^*) (T(\alpha_i) \times 1_{T(G)})(1_{T(M)} \times T(m)) T(\alpha_i^*) \langle p , \langle ! , T(\alpha_i) \pi_1 , \langle p \iota 0 , 1_{T(G)}\rangle T(m) \rangle \rangle 
\\
=& \bigvee_{i \in I} \phi' \langle T\pi_0 0 T(\alpha_i) T(\pi_0), \langle \pi_0 0 T(\alpha_i) \pi_1 , \pi_1 p_u^* \rangle T(m) \rangle T(\alpha_i^*)
\langle p , \langle ! , T(\alpha_i) \pi_1 \langle p \iota 0 , 1_{T(G)}\rangle T(m) \rangle \rangle .
\end{align*}
We will now calculate the two components of this morphism separately and show that $\bigvee_{i \in I} \phi' (0 \times p_u^*) T(r) \xi_i = \phi'$ by showing that the first component is $\phi' \pi_0$ and the second component is $\phi'\pi_1$. The first component is
\begin{align*}
    &\bigvee_{i \in I } \phi' \Big\langle \pi_0 0 T(\alpha_i) T(\pi_0) , \langle \pi_0 0 T(\alpha_i) T(\pi_1) , \pi_1 p_u^* \rangle T(m) \Big\rangle T(\alpha_i^*) p.
    \\
    \intertext{Due to naturality of $p$, $0p=1$ and $p_u^*p = !u$, the previous line simplifies to}
    &\bigvee_{i \in I} \phi' \Big\langle \pi_0 \alpha_i \pi_0 , \langle \pi_0 \alpha_i \pi_1 , ! u\rangle m \Big\rangle \alpha_i^* .
    \\
    \intertext{Since $u$ is the unit for $m$ this equals}
    & \bigvee_{i \in I} \phi' \langle \pi_0 \alpha_i \pi_0 , \pi_0 \alpha_i \pi_1 \rangle \alpha_i^* = \bigvee_{i \in I} \phi' \pi_0 \alpha_i \alpha_i^* = \phi'\pi_0.
\end{align*}
The second component of $\bigvee_{i \in I} \phi' (0 \times p_u^*) T(r) \xi_i$ is
\begin{align*}
    &\bigvee_{i \in I} \Big\langle \pi_0 0 T(\alpha_i) T(\pi_0) , \langle \pi_0 0 T(\alpha_i) T(\pi_1), \pi_1 p_u^* \rangle T(m) \Big\rangle T(\alpha_i^*) \Big\langle ! , T(\alpha_i) \pi_1 \langle p \iota 0 , 1 \rangle T(m) \Big\rangle
    \intertext{which equals}
    &
    \bigvee_{i \in I} \Bigg\langle ! , \phi' \Big\langle \pi_0 0 T(\alpha_i) T(\pi_0) , \langle \pi_0 0 T(\alpha_i) T(\pi_1) , \pi_1 p_u^*\rangle T(m) \Big\rangle T(\alpha_i^*) T(\alpha_i) \pi_1 \langle p \iota 0 , 1 \rangle T(m) \Bigg\rangle .
    \\
    \intertext{Since $\alpha_i^*\alpha_i = \bar \alpha_i^* = e_i \times 1_G$ and for all morphisms $a$ and $b$ $\langle a ,b\rangle \pi_1 = \bar a b$, this equals}
    &
    \bigvee_{i \in I} \Big\langle ! , \phi' \overline{\pi_0 0 T(\alpha_i) T(\pi_0)T(e_i)} 
    \langle \pi_0 0 T(\alpha_i) T(\pi_1) , \pi_1 p_u^* \rangle T(m) 
    \langle p \iota 0 , 1 \rangle T(m) \Big\rangle.
    \intertext{Using naturality and totality of $0$, $\pi_0 e_i = \bar \alpha_i^* \pi_0$ and $\alpha_i \bar \alpha_i^* = \alpha_i$ this equals}
    &
    \bigvee_{i \in I} \Bigg\langle ! , \phi' \overline{\pi_0 \alpha_i}
    \Big\langle
    \langle \pi_0 0 T(\alpha_i) T(\pi_1) , \pi_1 p_u^* \rangle T(m) p \iota 0
    ,
    \langle \pi_0 0 T(\alpha_i) T(\pi_1) , \pi_1 p_u^* \rangle T(m)
    \Big\rangle T(m) \Bigg\rangle.
    \\
    \intertext{Using naturality of $0$, that $0$ is a section of $p$, and the fomula $\langle a ,b \rangle m \iota = \langle b \iota , a \iota \rangle m$ for the inverse of a product, this becomes}
    &
    \bigvee_{i \in I}\Bigg\langle ! , \phi' \overline{\pi_0 \alpha_i}
    \Big\langle
    \langle  \pi_1 p_u^* p 0 T(\iota) , \pi_0 0 T(\alpha_i \pi_1 \iota) \rangle T(m)
    ,
    \langle \pi_0 0 T(\alpha_i) T(\pi_1) , \pi_1 p_u^* \rangle T(m)
    \Big\rangle T(m) \Bigg\rangle.
    \\
    \intertext{Due to the associativity and inverse property of $m$ and the defining property of $p_u^*$, $p_u^* p 0 = !u0 = !T(u)$, this equals}
    & \bigvee_{i \in I} \Big\langle ! , \phi' \overline{\pi_0 \alpha_i} \langle ! T(u) T(\iota) , \pi_1 p_u^* \rangle T(m) \Big\rangle .
    \intertext{Since $\bigvee_{i \in I} \bar \alpha_i = 1_P$ and $u\iota = u$ is the unit for m this equals}
    & \langle ! , \phi' \pi_1 p_u^*\rangle = \phi' \pi_1 \langle ! , p_u^* \rangle = \phi' \pi_1
\end{align*}
where the last equality holds since $!$ and $p_u^*$ are the projections out of the pullback $T(G)_u$ and thus $\langle ! , p_u^*\rangle = 1_{T(G)_u}$.
The two components together imply that $\phi' = \bigvee_{i \in I} \phi' (0 \times p_u^*)T(r) \xi_i$ which was missing to prove equation \ref{eqn:phi_phi'} which concludes the proof by proving uniqueness of $\phi$.

\end{proof}

\section{Concluding remarks}

The goal of this paper was to understand group objects and principal bundles in the general categorical settings of tangent categories and join restriction categories. These structures have been introduced and we have shown that many results from classical differential geometry can be recovered. In particular our construction is a generalization of principal bundles in differential geometry and topology.


Aintablian and Blohmann have independently developed a categorical approach to Lie groupoids and Lie algebroids in \cite{Loryaintablian2025differentiablegroupoidobjectsabstract} which is similar to some of our results in Section 4.  In particular, Theorem 7.5 of \cite{Loryaintablian2025differentiablegroupoidobjectsabstract} is (up to small differences) equivalent to Theorem \ref{thm:Lie_Algebra} in this paper.  The main focus of \cite{Loryaintablian2025differentiablegroupoidobjectsabstract} is Lie groupoids and their behavior, while the main focus of our work is group objects and principal bundles in restriction categories. In particular, in Section 4, we were primarily interested in examining the tangent bundle of a group object. 
These results are not contained in \cite{Loryaintablian2025differentiablegroupoidobjectsabstract} since it works in the more general setup of groupoid objects, where these results may not hold.
We relate the result about vector fields to the tangent space in Theorem \ref{thm:Lie_Algebra}. A technical difference between the constructions of \cite{Loryaintablian2025differentiablegroupoidobjectsabstract} and our construction is that we do not assume that the tangent bundles have negatives (the fibres are monoids, not groups) nor scalar multiplication, though Theorem \ref{thm:eckmann_hilton_for_m_and_plus} shows that the relevant negatives always exist.

In future work, we hope, in particular, to explore how the notion of group objects and principal bundles behaves in algebraic geometry.

In addition we are interested to see in which cases we can recover a group object from its Lie algebra. For this we need a notion of differential equations, like the one provided in \cite{cockett2019diffeq}.

Another important continuation will be the connection with differential bundles. In particular we would like to generalize the classical result that principal $\mathrm{GL}_n$-bundles correspond to vector bundles by showing that $G$-bundles, whose fibre is a differential object, are differential bundles.
\section*{Declarations}
\textbf{Conflict of interest:} The authors have no Conflict of interest to declare that are relevant to the content of this
article.

\noindent
\textbf{Funding:} Florian Schwarz was funded by the Alberta Innovates Graduate Student Scholarship. Robin Cockett was partially funded by NSERC.
\bibliography{sample}
\end{document}